\documentclass[12pt]{article}
\usepackage{amsmath,amssymb,amsfonts,amsthm}
%\thispagestyle{empty}
%\pagestyle{empty}
%\pagenumbering{}
\setlength{\parindent}{18pt}
\textwidth14cm
\textheight21.6cm
\newenvironment{myabstract}{\par\noindent
{\bf Abstract . } \small }
{\par\vskip8pt minus3pt\rm}
\newcounter{item}[section]
\newcounter{kirshr}
\newcounter{kirsha}
\newcounter{kirshb}
\newenvironment{enumroman}{\setcounter{kirshr}{1}
\begin{list}{(\roman{kirshr})}{\usecounter{kirshr}} }{\end{list}}
\newenvironment{enumarab}{\setcounter{kirshb}{1}
\begin{list}{(\arabic{kirshb})}{\usecounter{kirshb}} }{\end{list}}
\newtheorem{theorem}{Theorem}[section]

\newtheorem{lemma}[theorem]{Lemma}
\newtheorem{corollary}[theorem]{Corollary}
\newenvironment{demo}[1]{\noindent{\bf #1.}\upshape\mdseries}
{\nopagebreak{\hfill\rule{2mm}{2mm}\nopagebreak}\par\normalfont}
\theoremstyle{definition}

\newtheorem{example}[theorem]{Example}
\newtheorem{definition}[theorem]{Definition}
\def\R{\mathbb{R}}

\def\C{{\mathfrak{C}}}

\def\At{{\bf At}}
\def\Nr{{\mathfrak{Nr}}}

\def\Sg{{\mathfrak{Sg}}}

\def\A{{\mathfrak{A}}}
\def\B{{\mathfrak{B}}}
\def\C{{\mathfrak{C}}}
\def\D{{\mathfrak{D}}}
\def\M{{\mathfrak{M}}}
\def\N{{\mathfrak{N}}}

\def\CA{{\bf CA}}

\def\SC{{\bf SC}}
\def\QEA{{\bf QEA}}

\def\Lf{{\bf Lf}}

\def\PA{{\bf PA}}
\def\PEA{{\bf PEA}}

\def\K{{\bf K}}
\def\K{{\bf K}}

\def\RCA{{\bf RCA}}

\def\Rd{{\ Rd}}
\def\(R)RA{{\bf (R)RA}}
\def\RA{{\bf RA}}
\def\RRA{{\bf RRA}}

\def\R{\mathbb{R}}

\def\c #1{{\cal #1}}
 \def\CA{{\sf CA}}
\def\B{{\sf B}}
\def\G{{\sf G}}
\def\w{{\sf w}}
\def\y{{\sf y}}
\def\g{{\sf g}}

\def\r{{\sf r}}
\def\K{{\sf K}}

 \def\Cm{{\mathfrak{Cm}}}
\def\Nr{{\mathfrak{Nr}}}
\def\SNr{{\bf S}{\mathfrak{Nr}}}

\def\cyl#1{{\sf c}_{#1}}

\def\Ra{{\mathfrak{Ra}}}
\def\Ca{{\mathfrak{Ca}}}
\def\set#1{\{#1\} }
\def\Ra{{\mathfrak{Ra}}}
\def\Nr{{\mathfrak{Nr}}}
\def\Tm{{\mathfrak{Tm}}}
\def\A{{\mathfrak{A}}}
\def\B{{\mathfrak{B}}}
\def\C{{\mathfrak{C}}}
\def\D{{\mathfrak{D}}}

\def\A{{\mathfrak{A}}}
\def\B{{\mathfrak{B}}}
\def\C{{\mathfrak{C}}}
\def\D{{\mathfrak{D}}}

\def\GG{{\mathfrak{GG}}}

\def\L{{\mathfrak{L}}}
\def\Rd{{\mathfrak{Rd}}}

\def\At{{\mathfrak{At}}}
\def\L{{\mathfrak{L}}}
\def\Bl{{\mathfrak{Bl}}}
\def\CA{{\bf CA}}
\def\RA{{\bf RA}}
\def\RRA{{\bf RRA}}
\def\RCA{{\bf RCA}}

\def\G{{\bf G}}

\def\CRA{{\sf CRA}}

\def\F{{\mathfrak{F}}}
\def\At{{\sf{At}}}
\def\N{\mathbb{N}}
\def\R{\mathfrak{R}}

\def\CRA{{\sf CRA}}

\def\RPEA{{\sf RPEA}}

\def\cyl#1{{\sf c}_{#1}}

\def\c #1{{\cal #1}}

\def\pa{$\forall$}
\def\pe{$\exists$}

\def\nodes{{\sf nodes}}

\def\Ra{{\mathfrak{Ra}}}
\def\Nr{{\mathfrak{Nr}}}
\def\Z{{\cal Z}}
\def\CA{{\bf CA}}
\def\RCA{{\bf RCA}}
\def\c#1{{\mathcal #1}}

\def\set#1{ \{#1\}}

\def\Ca{{\mathfrak Ca}}

\def\pe{$\exists$}
\def\pa{$\forall$}
\def\Cm{{\mathfrak Cm}}
\def\Sg{{\mathfrak Sg}}
\def\P{{\mathfrak P}}
\def\Rl{{\mathfrak Rl}}
\def\N{{\cal N}}

\def\At{{\sf At}}

\def\rng{{\sf rng}}
\def\dom{{\sf dom}}
\def\Cm{{\sf Cm}}
\def\Mat{{\sf Mat}}
\def\w{{\sf w}}
\def\g{{\sf g}}
\def\y{{\sf y}}
\def\r{{\sf r}}
 %\b means bold, I think

\def\cyl#1{{\sf c}_{#1}}

\def\ws{winning strategy}

 \def\CA{{\sf CA}}

\def\RCA{{\sf RCA}}

\def\y{{\sf y}}
\def\g{{\sf g}}
%not to be confused with ``bold''
\def\r{{\sf r}}
\def\w{{\sf w}}

\def\Z{{\mathbb{Z}}}
\def\N{{\mathbb{N}}}

\def\RA{{\sf RA}}
\def\RRA{{\sf RRA}}
\def\EF{{\sf EF}}

\title{Blowing up and blurring Monk's algebras, and rainbow algebras}
\author{Tarek Sayed Ahmed \\
Department of Mathematics, Faculty of Science,\\
Cairo University, Giza, Egypt.
  }
%%% ----------------------------------------------------------------------
%
\begin{document}
\maketitle
\begin{myabstract} We abstract an existing theme in the literature which Andr\'eka and N\'emeti call a a blow up and blur construction,
a very indicative term.
The idea is that one starts with a finite, hence atomic  algebra that is not representable. Then one
splits every atom of it into infinitely many to get a new atom structure.
The term algebra will be representable by a finite set of blurs, which are essentially non principal ultrafilters used as colours in the representation.
The complex algebra will not be representable because the the finite algebra  embeds into it
taking every atom to the join of its $\omega$ copies. This does not contradict the representability of the term
because these joins
do not exist in it, it is not complete (only finite and cofinite joins will exist in it ).
One can start with a Monk algebra or a  Rainbow algebra, and the potential of obtaining new results this
way is huge, as we show in the present paper.

We present several known examples in a general setting
and suggest new ones. We also  abstract a lifting  argument due to Monk
that enables one to transfer deep theorems in the finite dimensional case to infinite dimensions like the famous
problem 2.12 \cite{HMT1}, by applying this method to Hirsch-Hodkinson's
algebras, solving the finite dimensional case.(This idea was already implemented by the first author and Robin Hirsch in a submitted article
but in a narrower context).
We also give several sufficient very plausible conditions (concerning the existence
of a finite (possibly rainbow) relation or cylindric algebra),
that implies that classes of subneat reducts, namely, $S\Nr_n\CA_{n+k}$, $n\geq 3$ finite and $k\geq 4$
are not closed under Dedekind completions, and indeed, we provide and prove this result.
This is a long standing open problem that was, to the best of our knowledge, first explicitly
formulated in \cite{HHbook}.
\end{myabstract}
%section{Omitting types for finite first order definable extensions of $\L_n$}

\section{Introduction}

We follow the notation of \cite{1} which is in conformity with that of \cite{HMT1}.
Assume that we have a class of Boolean algebras with operators for which we have a semantical notion of representability
(like Boolean set algebras or cylindric set algebras).
A weakly representable atom structure is an atom structure such that at least one atomic algebra based on it is representable.
It is strongly representable  if all atomic algebras having this atom structure are representable.
The former is equivalent to that the term algebra, that is, the algebra generated by the atoms,
in the complex algebra is representable, while the latter is equivalent  to that the complex algebra is representable.

Could an atom structure be possibly weakly representable but {\it not} strongly representable?
Ian Hodkinson, showed that this can indeed happen for both cylindric  algebras of finite dimension $\geq 3$, and relation algebras,
in the context of showing that the class of representable algebras, in both cases,  is not closed under completions.
Witnessed on  atomic algebras, it follows
the variety of representable algebras is not atom-canonical, for finite dimensions $>2$.(The complex algebra of an atom structure is
the completion of the term algebra.)
This construction was horribly complicated using a rainbow atom structure; it was simplified and streamlined,
by many authors, including us, but Hodkinson's construction, as we indicate below, has a very large potential to prove other theorems on completions
concerning subvarieties of
the representable algebras, and in fact, we realize this potential.

We start by presenting two distinct constructions for weakly representable atom structures that are not strongly representable.
We consider relation algebras and cylindric algebras. We will present these two constructions by blowing up a little
the  {\it blow up and blur} construction,
a very appropriate and suggestive term and construction invented by Andr'eka and N\'emeti.

The construction  we abstract here is to blow up a finite structure, replacing each 'colour or atom' by infinitely many, using blurs
to  represent the resulting term algebra, but the blurs are not enough to blur the structure of the finite structure in the complex algebra.
Then, the latter cannot be representable due to a {\it finite- infinite} contradiction.
This structure can be a finite clique in a graph or a finite relation algebra or a finite
cylindric algebra.

The main idea is to {\it split and blur}. Split  what? You can split a clique by taking $\omega$ many disjoint copies of it,
you can split  a finite relation algebra,
by splitting each atom into $\omega$ many, you can split a finite cylindric algebra. Generally, the splitting has to do with blowing up a finite structure
into infinitely many. And indeed, the splitting here has a lot of affinity with Andr\'eka's methods of splitting.

Then blur what? On this split one adds a subset of  a set of fixed in advance blurs, usually finite,
and then define an infinite atom structure, induced by the properties of the finite structure he originally started with.
It is not this atom structure that is blurred but rather the original finite structure.
This means that the term algebra built on this new atom structure,
that is the algebra generated by the atoms,
coincides with a carefully chosen partition of the set of atoms obtained after splitting and blurring
{\it up to minimal  deviations}, so the original finite relation algebra is blurred to the extent that is invisible on this level.

The term algebra will be representable, using all such blurs as colours,
But the original algebra structure re-appears in the completion of this term algebra, that is the complex algebra of the atom structure,
forcing it to be
non representable, due to a finite-infinite discrepancy. However, if the blurs are infinite, then, they will blur also the structure
of the small algebra in the complex algebra, and the latter could be representable, inducing a complete representation of
the term algebra.

We give two constructions, each one for both relation and cylindric algebras,
of weakly representable atom structures that are not strongly
representable, in the context of the blow up and blur construction.
In the first case we start with the cylindric algebra atom structure, which will be model theoretically defined
from a class of labelled graphs; the labels coming from a fixed in advance graph,
then extract from it a relation algebra atom structure, with an $n$ dimensional cylindric basis, such that algebra we started
with contains the term cylindric algebra generated by the basic matrices.
In the second case, we follow the more usual convention, we
start with the relation algebra with an $n$ dimensional cylindric bases; the required cylindric algebra atom structure
will be again the basic matrices. 
The second construction is due to Andr\'eka and N\'emeti, though we do not present the construction in its most general form.

These non- representable complex algebras each having a finite number of blurs converge in a precise
sense to a representable one, that in  an also precise sense, has infinitely many  blurs. The finite blurs viewed from the
graph theoretic point of view are colours, so such algebras can be viewed as based on graphs with finite chromatic number.

This is a typical Monk argument theme, non-representable algebras based on graphs with finite
chromatic number converging to one that is based on a graph having infinite chromatic number,
hence, representable. (The limit on the level of algebras is the ultraproduct, and that on the graphs it can be a disjoint union, or an ultraproduct as well).
It follows immediately that the variety of representable algebras is not finitely axiomatizable.

Monk's seminal result proved in 1969, showing that the class of representable cylindric algebras is not finitely axiomatizable had a
shatterring effect on  algebraic logic, in many respects. The conclusions drawn from this result, were that either the
extra non-Boolean basic operations of cylindrifiers and diagonal elements were not properly chosen, or that the notion of
representability was inappropriate; for sure it was concrete enough, but perhaps this is precisely the reason, it is far {\it too concrete.}

Research following both paths, either by changing the signature or/and altering the notion of concrete representability have been pursued
ever since, with amazing success.  Indeed there are  two conflicting but complementary facets
of such already extensive  research referred to in the literature, as 'attacking the representation problem'.
One is to delve deeply in investigating the complexity of potential axiomatizations for existing varieties
of representable algebras, the other is to try to sidestep
such wild unruly complex axiomatizations, often referred to as {\it taming methods}.

Those taming methods can either involve passing to (better behaved) expansions of the algebras considered,
or even completely change the signature  bearing in mind that the essential operations like cylindrifiers are
term definable or else change the very  notion of representability involved, as long as it remains concrete enough.

The borderlines are difficult to draw, we might not know what is {\it not} concrete enough, but we can
judge that a given representability notion is satisfactory, once we have one.
%(This is analogous to  undecidability issues, with the main difference that we do know what we mean
%by {\it not decidable}. We do not have an analogue of a 'recursive representability notion').

One can find well motivated appropriate notions of semantics by first locating them while giving up classical semantical prejudices.
It is hard to give a precise mathematical underpinning to such intuitions. What really counts at the end of the day
is a completeness theorem stating a natural fit between chosen intuitive concrete-enough, but perhaps not {\it excessively} concrete,
semantics and well behaved axiomatizations.
The move of altering semantics has radical philosophical repercussions, taking us away from the conventional
Tarskian semantics captured by Godel-like axiomatization; the latter completeness proof is effective
but highly undecidable; and this property is inherited by finite variable fragments
of first order logic as long as we insist on Tarskian semantics.

We have learnt from the history of development of algebraic logic that certain `undesirable'  properties follow from square
semantics. But undesirable depends on the point of view. To our mind, these results are not 'negative' at all.
On the contrary, they were proved using very sophisticated and versatile techniques, ranging from deep neat embeddings theorems to
the use of probabilistic Erdos graphs. These graphs were used to give anti-Monk ultraproducts,
that is algebras based on graphs with infinite chromatic number converging to one that is based
on a $2$ colourable graph. This amazing construction due to Hirsch and Hodkinson, proves that the class of strongly representable
atom structures is
not elementary.

This construction will be generalized, inspired by the ideas and constructions of Hirsch and Hodkinson, in the late \cite{1},
to give a Monk like atomic algebra based on an arbitrary graph $\G$,
constructed from the basic matrices of an atomic relation algebra also based on $\G$.
such that the weak and strong representability of both structures depend on the chromatic number of the graph $\G$.
Both are strongly representable or both are weakly representable. Using Erdos' probabilistic graphs, one can obtain the results of Hirsch and
Hodkinson that the class of strongly representable atom structures for both relation and cylindric algebras, is not elementary, but in one go.

One of our new result in this paper is that the class $S\Nr_n\CA_{n+4}$ is not closed under completions, in fact, it is not even elementary. 
The notion of neat reducts and neat embeddings
wil be prominent in our paper, as well. The reason is that we shall have occasion to deal with different types of representable algebras, 
like strongly representable algebras, weakly representable algebras (these are just atomic representable algebras), 
and completely reporesentable algebras. (These notions are obtained from the corresponding notions alraedy existing for atom structures in 
the obvious way). 

Representable algebras are just the algebras that have the neat embedding property, 
thus it is to be expected that such types of representable algebras
can be characterized via special need embeddings. Indeed, this has been already done for the completely representable ones, by the present author and 
Robin Hirsch independently..

For our results concerning neat reducts, we 
use techniques of Hirsch's in \cite{r} that deal with relation algebras,  and those of Hirsch and Hodkinson in \cite{hh} 
on complete representations. The results in the latter 
had to do with investigating the existence of complete representations for cylindric algebras  and for
this purpose, an infinite (atomic) game that tests complete representability was devised, and such a game was used
on a rainbow relation algebra.
The rainbow construction has a very wide scope of applications, 
and it  proved to be a nut cracker in solving many hard problems for relation algebras,
particularly for constructing counterexamples distinguishing between classes that are very subtly related, or rather unrelated. 

Unfortunately, relation rainbow algebras do not posses cylindric basis for $n\geq 4$ 
(so it seems that we cannot have our cake and eat it), so to
prove the analogous  result for cylindric algebra the construction had to be considerably modified to adapt the new situution, but the essence of
the two constructions is basically the same.
Instead of using atomic networks, in the cylindric algebra case games are played on {\it coloured graphs}. 
On the one hand, such graphs have edges which code the
relation algebra construction, but they also have hyperdges of length $n-1$, reflecing the cylindric algebra structure. 

It seems that there is no general theorem for rainbow constructions when it comes to cylindric like algebras, 
namely one relating winning strategies for pebble games on two structures or graphs $A,B$, to winning strategies for \pe\ in the cylindric rainbow 
algebra based $A$ and $B$, \cite{HHbook2}. However, below we show that in many 
concrete cases a \ws\ for either player in the pebble game  on two relational structures can be transferred to a \ws\ for 
the same player on the rainbow cylindric algebras, except that like the relation algebra case, the player needs two more pebbles 
(expressed by nodes of a graph) and one more round. 

Nevertheless, 
in the latebook on cylindric algebra \cite{1}, a general rainbow construction
is given in \cite{HHbook} in the context of building algebras from graphs giving rise to a class of models, 
from which the rainbow atom structure is defined, but just referring to one graph as a parameter, rather than two structures as done in their 
earlier book \cite{HHbook2}.
The second graph is fixed to be the greens; these are the set of colours that \pe\ never uses.
The class of models we deal with will be coloured graphs, viewed as structures for a natural signature (the rainbow signature where each colour is 
viewed as a relation symbol). 
To draw the analogy with relation algebras we treat the greens as an abstract 
structure, namely, an irreflexive complete graph. 

A \ws\ for \pe\ boils down to labeling edges that are appexes of  two cones ( a cone being a special coloured graph) 
inducing the same linear order on a face $F$ 
provided by \pa\ as part of his move. So here we have two nodes $\delta$ which is new and $\beta$.
This is the hardest part. 
Forced a red, \pe\ takes the red clique induced by all appexes of cones based on $F$, 
and she labels the new edge between $\delta$ and $\beta$ by a red, with double subscript, one  with the index of one of the nodes, namely $\beta$ 
(this is uniquely determined by the clique and
$\beta$),  the other suffix is the pebble $b$,  which is the pebble that  \pe\  responds to, in her private game to the pebble $a$
where $a$ is the tint of the cone with base $F$, and appex $\beta$. Here, \pe\ also plays the pebble 
$a$ playing the role of \pa\ ; her \ws\ in the private game allows her to do that. 
(This will be elaborated upon below).

\pa\ s strategy involves bombarding \pe\ with cones, whose tints are determined by his \ws\ in his private pebble game,
forcing a win on a red clique by forcing 
\pe\ to play an inconsistent red. 

For cylindric algebras, we take the $n$ neat reducts of algebras in higher dimension, ending up with a $\CA_n$, 
but we can also take {\it relation algebra reducts}, getting instead a relation algebra. 
The class of relation algebra reducts of cylindric algebras of dimension $n\geq 3$, denoted 
by $\Ra\CA_n$. The $\Ra$ reduct of a $\CA_n$, $\A$, is obtained by taking the $2$ neat reduct of $\A$, then defining composition and converse 
using one space dimension.
For $n\geq 4$, $\Ra\CA_n\subseteq \RA$. Robin Hirsch dealt primarily with this class in \cite{r}.   
This class has also been investigated by many authors, 
like Monk, Maddux, N\'emeti and Simon (A chapter in Simon's dissertation is devoted to such a class, 
when $n=3$). 
After a list of results and publications, Simon proved $\Ra\CA_3$ is not closed under subalgebras for $n=3$, 
with a persucor by Maddux proving the cases $n\geq 5$,
and Monk proving the case $n=4$.

In \cite{r}, Hirsch deals only the relation algebras proving that the $\Ra$ reducts of $\CA_k$s, $k\geq 5$, 
is not elementary, and he ignored the $\CA$ case, probably
because  of analogous results proved by the author on neat reducts \cite{IGPL}.

But  the results in these two last  papers are not identical (via a replacement of relation algebra via a cylindric algebra and vice versa).
There are differences and similarities that are illuminating 
for both, and the differences go both ways. 

For example in the $\RA$  case Hirsch proved that the elementary subalgebra that is not an $\Ra$ reduct 
is not a complete subalgebra of the one that is.
In the cylindric algebra case, the elementary subalgebra that is not a neat reduct 
constructed is a complete subalgebra of the neat reduct.

Hirsch \cite{r} also proved that any $\K$, such that $\Ra\CA_{\omega}\subseteq \K\subseteq S_c\Ra\CA_k$, $k\geq 5$ 
is not elementary;  here, using a rainbow construction for cylindric algebras, we prove its $\CA$ analogue.
In the same paper \cite{r}. In op.cit Robin asks whether the inclusion $\Ra\CA_n\subseteq S_c\Ra\CA_n$ is proper, the construction 
in \cite{IGPL}, shows that for $n$ neat reducts, 
it is. 

Besides giving a unified  proof of all cylindric like algebras for finite dimensions, 
we show that the inclusion is proper given that a certain $\CA_n$ term exists. 
(This is a usual first order formula using $n$ variables).
And indeed  using the technique in \cite{IGPL} we prove an analogous result for relation algebras,
answering the above  question of Hirsch's in \cite{r}. We show that there is an $\A\in \Ra\CA_{\omega}$ with a  an elmentary subalgebra
$\B\in S_c\Ra\CA_{\omega}$, that is not in $\Ra\CA_k$ when $\leq 5$. In particular, $\Ra\CA_k\subseteq S_c\Ra\CA_5$, for
$k\geq 5$.

Now it is worthwhile to reverse the deed, and generalize Hirsch's construction using rainbow cylindric algebras, to more results than that
obtained for cylindric algebras on neat reducts in \cite{IGPL}. 
For example, our construction here will give the following result not proved in {\it op.cit}:
There is an algebra $\A\in \Nr_n\CA_{\omega}$ with an elementary subalgebra, that is not completely
representable. But since the algebra $\A$ has countably many atoms, then it is completely representable.
This gives the result in \cite{hh}.

The transfer from results on relation algebras to 
cylindric algebras is not mechanical at all. More often than not, this is not an easy task, 
indeed it is far from being  trivial.

The layout of the paper is as follows.  In section one we present the blowing up and blurring of both Monk and Rainbow algebras,
giving a sufficient condition on the existence of certain finite relation algebras (having enough blurs), or a finite rainbow cylindric algebra
that implies that the class $S\Nr_n\CA_{n+k}$ $n\geq 3$ and $k\geq 4$ is not atom canonical.
In section 2 we abstract a lifting argument due to Monk implemented via ultraproducts, that enables one to transfer deep results proved
for the finite dimensional case, to  infinite dimension. As an example we solve problem  2.12 for cylindric algebras and polyadic algebras,
by lifting Monk-like finite dimensional algebras constructed by Hirsch and Hodkinson.
We discuss the possibility of lifting anti-Monk algebras to infinite dimensions, based on Erdos graphs, to
obtain that the class of strongly representable atom  structures is not elementary, even for infinite dimension.

Then we present several cylindric rainbow constructions. We prove that the class of completely
relativized $n$ square representable algebras is not elementary for $n\geq 5.$

Next, we concentrate on results concerning neat embeddings.We also construct our desired rainbow finite cylindric algebra, by constructing an algebra for which \pa\ has a \ws\ in a
certain atomic finite rounded game on coloured graphs using $n+4$ pebbles, implying that the
algebra is not in $S\Nr_n\CA_{n+4}$. The term algebra based on the same colours,
with the exception that every red is replaced by  $\omega$ many copies, will witness that the class $S\Nr_n\CA_{n+4}$ is not atom canonical, for
the finite algebra embeds into the completion of the blown up and blurred term algebra
by mapping every red to the join of its copies, and the term algebra is representable using a flexible
shade of red. Also it does not contain these joins, for it is not complete (it only contains finite or cofinite joins).
The consequences of this result are endless, we do not formulate them here, instead we refer to \cite{Hodkinson} for an all rounded picture
of this kind of results.
Then we prove that several classes related to the class of $n$ neat reducts  of $m$ dimensional cylindric algebras
for various $2<n<m$ are not elementary,  also using a rainbow construction
for cylindric algebras.

The paper will be divided into two complete wholes, the first deals with Monk-like algebras very occasionally intervened with a rainbow flavor,
the second deals with rainbow algebras only.

\section*{Part 1}
\bigskip
\section{Blow up and blur constructions}

We start by giving an abstract  definition of blowing up and blurring a finite structure. All our examples will fit this somewhat general
definition. In what follows, by an atom structure,
we mean an atom structure of any class of completely additive Boolean algebras.

Let $N$ be a finite structure, in our subsequent investigations $N$ will be further specified, it can be a clique, a relation algebra, a cylindric algebra,
any graph.  But there is no reason to impose any further restrictions on $N$ our next definition, which we try
keep as general as possible.
% By induce, we mean define in a natural way, and we keep `natural' at this level of ambiguity.
%We are also happy with the undefined term  {\it faithfully represented}. Its meaning will unfold as we go along.
\begin{definition}
\begin{enumarab}

\item A {\it splitting} of $N$ is a product $N\times I$, where $I$ is an infinite set, so that it is forming $I$ copies of $N$.

\item $N$ is bad if $N$ is a graph that has a finite colouring, and so it follows that $N\times I$ is also bad.

\item$ N$ is good if $N$ is a graph  that has chromatic number $\infty$.

\item  A {\it blur} for $N$ is any  set  $J$.

\item $N$ is {\it blown up} if, there exists a set $J$ of blurs and a splitting $I$, and an atom structure
with underlying set $X=N\times I \times J$; the latter atom structure is called a blur of $N$ via $J$,
and is denoted by $\alpha(N,J).$  Furthermore, there is a one to one correspondence between $J$ and a a subset of non-principal
Boolean ultrafilters in  $\Cm X$,

\item $\alpha(N,J)$ is weak if $J$ is finite, in which case it is said to be weakly blurred,  else (if $J$ is infinite)
it is strong, in which case
we say that it is strongly blurred.

\item We formulate this for relation algebras. An atom structure $\alpha(N, J)$ {\it reflects}  a graph $N$, if the chromatic number of $N$
is coded in $\Cm\alpha(N,J)$.  By this we mean  that there is a $k=|J|$, possibly infinite, such that $N$
induces a partition $\P$ of $\Cm\alpha(N,J)$ into $N\times k$
sets, $P_0,\ldots P_{(N\times k)}$ that can be viewed as a partition of a coloured graph, namely $N\times I\times  k$,  into
independent sets (no edges between nodes), and each such set is monochromatic (its elements have the same colour).
(so that for all $P$ such that $P\in \P$, $(P;P).P=0$, composition will be defined in all cases to
allow all polychromatic triangles, and forbid independent monochromatic ones.)

\item A complex  algebra of an atom structure $\alpha(N, J)$ is good if a graph of infinite chromatic number is coded in it, otherwise it is bad.

\end{enumarab}
\end{definition}

Variants of the first and blow up construction we present now, appeared initially in \cite{Hodkinson},
which will largely influence our blow up and blur constructions, and in \cite{weak} and \cite{k}.
The last two references simplify Hodkinson's construction, one builds two relativized
set algebras based on a certain model that is in turn a Fraisse limit of a class of certain
class of labelled graphs, with the labels coming from $\G\cup \{\rho\}\times n$, where $\G$ is an arbitrary graph and $\rho$ is a new colour.
Under certain conditions on $\G$, the first set algebra based on $L_n$ will be representable, the second, its completion, based on $L_{\infty, n}$ is not.
Hodkinson's construction is as rainbow construction. The construction to be presented is Monk-like.

Let $\G$ be a graph. One can  define a family of coloured graphs $\cal F$ such that every edge of each graph $\Gamma\in {\cal F}$,
is coloured  by a unique label from
$\G\cup \{\rho\}\times n$, $\rho\notin \G$, in a carefully chosen way. The colour of $(\rho, i)$ is
defined to be $i$. The \textit{colour} of $(a, i)$ for $a \in \G$  is $i$.
$\cal F$ consists of all complete labelled graphs $\Gamma$ (possibly
the empty graph) such that for all distinct $ x, y, z \in \Gamma$,
writing $ (a, i) = \Gamma (y, x)$, $ (b, j) = \Gamma (y, z)$, $ (c,l) = \Gamma (x, z)$, we have:\\
\begin{enumarab}
\item $| \{ i, j, l \} > 1 $, or
\item $ a, b, c \in \G$ and $ \{ a, b, c \} $ has at least one edge
of $\G$, or
\item exactly one of $a, b, c$ -- say, $a$ -- is $\rho$, and $bc$ is
an edge of $\G$, or
\item two or more of $a, b, c$ are $\rho$.
\end{enumarab}

One forms a labelled graph $M$ which can be viewed as an $n$ homogeneous model of a natural signature,
namely, the one with relation symbols $(a, i)$, for each $a \in \G \cup \{\rho\}$, $i<n$.

Then one takes a subset $W\subseteq {}^nM$, by roughly dropping assignments whose edges not satisfy $(\rho, l)$ for every $l<n$.
Formally, $W = \{ \bar{a} \in {}^n M : M \models ( \bigwedge_{i < j < n,
l < n} \neg (\rho, l)(x_i, x_j))(\bar{a}) \}.$
Basically, we are throwing away assignments $\bar{a}$ whose edges between two of its elements are labelled by $\rho$, and keeping those
whose edges of its elements are not.
All this can be done with an arbitrary graph.

Now for particular choices of $\G$; for example if $\G$ is a certain rainbow graph, like Hodkinson's
or more simply a countable infinite collection of pairwise
union of disjoint $N$ cliques with $N\geq n(n-1)/2,$ \cite{k} or  is the graph whose nodes are the natural numbers, and the edge relation is defined by
$iEj$ iff $0<|i-j|<N$ \cite{weak}, for some finite $N$; here, the choice of $N$ is not haphazard,
but it a bound of edges of complete graphs having $n$ nodes, the
relativized set algebras based on $M$, but permitting as assignments satisfying formulas only $n$ sequences in $W$ will be an atomic
representable algebra.

This algebra, call it $\A$, has universe $\{\phi^M: \phi\in L^n\}$ where $\phi^M=\{s\in W: M\models \phi[s]\}.$ (This is not representable by its definition
because its unit is not a square.) Here $\phi^M$ denotes the permitted assignments satisfying $\phi$ in $M$.
Its completion is the relativized set algebra $\C$ which has universe the larger $\{\phi^M: \phi\in L^n_{\infty,\omega}\}$,
which turns out not representable. (All logics are taken in the above signature).
The isomorphism from  $\Cm\At\A$ to $\C$ is given by $X\mapsto \bigcup X$.

Let us formulate this construction in the context of split and blur.
Take the $n$ disjoint copies of $N\times \omega=\G$. Let $a\in \G\times n$. Then $a\in N\times \omega\times n$.
Then for every $(a,i)$ where $a\in N\times \omega$, and $i<n$, we have an atom $(a,i)^{\M}\in \A$.
The term algebra of $\A$ is generated by those.

Hence $N\times \omega\times n$ is the atom structure of $\A$ which can be weakly represented using the $n$ blurs, namely the set
$\{(\rho, i): i<n\}$.
The clique  $N$ is coded in the complex algebra level, forcing a finite $N$ colouring,
so that the complex algebra cannot be representable; a representation necessarily contradicts Ramsey's theorem.

We note that if $N$ is infinite, then the complex algebra (which is the completion of the algebra constructed
as above) will be representable and so $\A$, together the term algebra, will be  completely
representable.

$\Mat_n{\sf R}$ is the atom structure of basic matrices on ${\sf R}$.
From the above construction, we easily get:

\begin{theorem} Let $\G$ be a graph that is a disjoint union of cliques having size $N\geq n(n-1)/2$, $n\geq 3$.
Then there is a strongly $n$ homogeneous
labelled graph $M$ (when viewed as a model in a suitable signature consisting only of binary relation symbols),
every edge is labelled by an element from $\G\cup \{\rho\}\times n$,
$W\subseteq {}^nM$, such that the set algebra based on $W$ is an atomic $\A\in \RPEA_n$, and there is an atomic
$\R\in \RRA$, the latter with an $n$ dimensional polyadic basis,
such that $\A\cong {\sf Mat}_n\R$,  and the completions of the diagonal free reduct of $\A$, and $\R$ are not representable,
hence they are not completely
representable.
\end{theorem}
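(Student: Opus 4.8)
\begin{demo}{Sketch of proof}
The model $M$ and the relativized algebra $\A$ have effectively been described in the paragraphs preceding the statement, so the plan is to organise the argument into four tasks: produce $M$ and establish its strong $n$-homogeneity; show that $\A$ is atomic and lies in $\RPEA_n$; extract $\R\in\RRA$ carrying an $n$-dimensional polyadic basis and identify $\At\A$ with $\Mat_n\R$; and finally show that passing to completions destroys representability. For the first task I would build $M$ as the countable limit of the class $\cal F$ of finite coloured graphs satisfying the coherence conditions (1)--(4). The key point is that $\cal F$ is closed under amalgamation, so a Fra\"{\i}ss\'e-type construction yields a countable model that is $n$-homogeneous: any partial isomorphism between two induced subgraphs on fewer than $n$ vertices extends to an automorphism. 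I would verify amalgamation directly, the label $\rho$ together with the multiplicity of the index set $\{0,\dots,n-1\}$ giving enough freedom to colour the newly created edges consistently with (1)--(4), and then read off strong $n$-homogeneity from the usual back-and-forth argument.

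Next, with $W=\{\bar a\in {}^nM: M\models(\bigwedge_{i<j<n,\,l<n}\neg(\rho,l)(x_i,x_j))(\bar a)\}$, I would take $\A$ to be the algebra with universe $\{\phi^M:\phi\in L^n\}$ under the polyadic equality operations relativized to $W$. Atomicity becomes clear once one sees that the atoms are exactly the edge-colour formulas $(a,i)^M$ with $a\in N\times\omega$ and $i<n$: homogeneity of $M$ guarantees that each such formula is minimal and nonzero and that every element of $\A$ is a finite join of these. For representability one cannot use $W$ itself, since its unit is not a square; instead I would exhibit a genuine square representation using the finite blur $J=\{(\rho,i):i<n\}$. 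The idea is that the $\rho$-labels act as the blur colours that can be pasted onto the edges discarded in forming $W$, so that the forbidden monochromatic configurations are avoided at the level of the term algebra. This is precisely the case of a \emph{weak} (finite) blur, and it is what makes $\A$ representable though, as we shall see, not completely so.

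For the relation algebra I would define $\R$ on the atom set of edge-colours, with composition arranged (as in the ``reflects'' clause of the Definition) to permit every polychromatic triangle and to forbid independent monochromatic ones, and with converse and identity defined the obvious way. The same blur $J$ that represents $\A$ represents $\R$, giving $\R\in\RRA$, and a direct matching of $n$-dimensional basic matrices with the colour-types of $n$-tuples in $W$ yields the required $n$-dimensional polyadic basis together with the isomorphism $\At\A\cong\Mat_n\R$, whence $\A\cong\Mat_n\R$. The genuinely delicate point in these two tasks is the representability of the term algebras: one must show that the finitely many non-principal ultrafilters coded by $J$ suffice to colour all missing edges consistently, which is exactly where the hypothesis $N\geq n(n-1)/2$ and the $n$-homogeneity of $M$ are used.

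Finally, for the completions I would argue by a finite--infinite (Ramsey) contradiction. In $\Cm\At\A$, equivalently the algebra $\C$ with universe $\{\phi^M:\phi\in L^n_{\infty,\omega}\}$ obtained via $X\mapsto\bigcup X$, the infinite joins absent from $\A$ now exist, and collapsing each family of $\omega$ copies recovers the original finite structure, so that the clique structure of $\G$ reappears and codes a graph of finite chromatic number $N$. A representation of the completion would then produce an $N$-colouring of the edges of an infinite complete graph containing no monochromatic triangle of the forbidden kind; since $N\geq n(n-1)/2$, Ramsey's theorem forbids this, so neither the diagonal-free reduct of $\C$ nor the corresponding completion of $\R$ is representable. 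As a complete representation of an atomic algebra extends to a representation of its completion, it follows at once that $\A$ and $\R$ are not completely representable. I expect this Ramsey step, together with the representability of the term algebra, to be the main obstacles; the homogeneity and basis bookkeeping should be routine by comparison.
\end{demo}
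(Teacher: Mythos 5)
Your overall route---the Fra\"{\i}ss\'e limit for $M$, the relativized algebra on $W$, the Monk-style relation atom structure on the colours with independent monochromatic triangles forbidden, the basic-matrix isomorphism, and the Ramsey contradiction for the completion---is exactly the paper's route. Two points, however, need repair. First, the atoms of $\A$ are \emph{not} the single edge-colour formulas $(a,i)^M$: such a formula constrains only one edge of an assignment and says nothing about the remaining edges of the $n$-tuple, so $(a,i)^M$ splits into many nonzero pieces and is not minimal. The atoms are the formulas $\alpha_m^W=\bigl(\bigwedge_{i,j<n}\alpha_{ij}\bigr)^W$ attached to the $n$-dimensional basic matrices $m$ over the relation atom structure $\alpha(\G)$, i.e.\ the complete quantifier-free colour-types of $n$-tuples in $W$; this is what the paper's map $m\mapsto\alpha_m^W$ exhibits, and it is the only reading under which $\At\A\cong\Mat_n\R$ is even type-correct. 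Your later sentence matching basic matrices with ``colour-types of $n$-tuples'' already uses the correct atoms, so your atomicity paragraph is internally inconsistent with your basis paragraph.

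Second, the theorem asserts non-representability of the completion of the \emph{diagonal-free reduct} of $\A$, and this does not follow merely from non-representability of the full polyadic completion: a ${\sf Df}$ representation is a weaker demand, so in principle the $df$ reduct of $\C$ could be representable while $\C$ is not. The paper closes this gap by observing that the algebra is generated by elements of dimension $<n$ (the binary edge formulas), for which ${\sf Df}$-representability is known to lift to representability of the full algebra; you need to insert that step explicitly. A smaller quibble: the hypothesis $N\geq n(n-1)/2$ is spent where you first place it, namely on the representability of the term algebra (each clique must be large enough to label the $n(n-1)/2$ edges of an $n$-point configuration consistently); the Ramsey contradiction for the completion needs only that the colouring coded in $\Cm\At\A$ is finite, so invoking the bound again there is a misattribution, albeit a harmless one.
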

\begin{proof} \cite{weak} One defines a relation atom structure as follows.
We use the graph $N\times \omega$ of countably many disjoint $N$ cliques.
We define a relation algebra atom structure $\alpha(\G)$ of the form
$(\{1'\}\cup (\G\times n), R_{1'}, \breve{R}, R_;)$.
The only identity atom is $1'$. All atoms are self converse,
so $\breve{R}=\{(a, a): a \text { an atom }\}.$
The colour of an atom $(a,i)\in \G\times n$ is $i$. The identity $1'$ has no colour. A triple $(a,b,c)$
of atoms in $\alpha(\G)$ is consistent if
$R;(a,b,c)$ holds. Then the consistent triples are $(a,b,c)$ where

\begin{itemize}

\item one of $a,b,c$ is $1'$ and the other two are equal, or

\item none of $a,b,c$ is $1'$ and they do not all have the same colour, or

\item $a=(a', i), b=(b', i)$ and $c=(c', i)$ for some $i<n$ and
$a',b',c'\in \G$, and there exists at least one graph edge
of $G$ in $\{a', b', c'\}$.

\end{itemize}
$\alpha(\G)$ can be checked to be a relation atom structure. It is exactly the same as that used by Hirsch and Hodkinson in \cite{HHbook}, except
that we use $n$ colours, instead of just $3$, so that it a Monk algebra not a rainbow one. However, some monochromatic triangles
are allowed namely the dependent ones.
This allows the relation algebra to have an $n$ dimensional cylindric basis
and, in fact, the atom structure of $\A$ is isomorphic (as a cylindric algebra
atom structure) to the atom structure $\Mat_n$ of all $n$-dimensional basic
matrices over the relation algebra atom structure $\alpha(\G)$.
Indeed, for each  $m  \in {\cal M}_n, \,\ \textrm{let} \,\ \alpha_m
= \bigwedge_{i,j<n}  \alpha_{ij}. $ Here $ \alpha_{ij}$ is $x_i =
x_j$ if $ m_{ij} = 1$' and $R(x_i, x_j)$ otherwise, where $R =
m_{ij} \in L$. Then the map $(m \mapsto
\alpha^W_m)_{m \in {\cal M}_n}$ is a well - defined isomorphism of
$n$-dimensional cylindric algebra atom structures.
Let $M$ and $\A$ be as above. Then it is straightforward to define the polyadic operations on $\A$ by just
swapping variables in formulas.
So the set algebras based on $\A$ will be closed under the substitution operators, the former will be a polyadic set algebra.
the latter will be its completion, such that its its $df$ reduct is not representable since it is generated
by $<n$ dimensional elements.
It follows that $\Rd_{df}\A$ is not completely representable.
\end{proof}
\section{Reflections on the blow up and blur construction of Andr\'eka and N\'emeti}

Follows is theorem 1.1 in \cite{ANT}.

\begin{theorem}\label{OTT}
Suppose that $n$ is a finite ordinal with $n>2$ and $k\geq 0$.
There is a countable
representable
relation algebra ${\R}$
such that
\begin{enumroman}
\item Its completion, i.e. the complex algebra of its atom structure is
not representable, so $\R$ is representable but not completely representable
\item $\R$ is generated by a single element.
\item The (countable) set $\B_n{\R}$ of all $n$ by $n$ basic matrices over $\R$
constitutes an $n$-dimensional cylindric basis.
Thus $\B_n{\R}$ is a cylindric atom structure
and the full complex algebra $\Cm(\B_n{\R})$
with universe the power set of $\B_n{\R}$
is an $n$-dimensional cylindric algebra
\item The {\it term algebra} over the atom structure
$\B_n{\R}$, which is the countable subalgebra of $\Cm(\B_n{\R})$
generated by the countable set of
$n$ by $n$ basic matrices, $\C=\Tm(B_n \R)$ for short,
is a countable representable $\CA_n$, but $\Cm(\B_n)$ is not representable.
\item Hence $\C$ is a simple, atomic representable but not completely representable $\CA_n$
\item $\C$ is generated by a single $2$ dimensional element $g$, the relation algebraic reduct
of $\C$ does not have a complete representation and is also generated by $g$ as a relation algebra, and
$\C$ is a neat reduct of some simple representable $\D\in \CA_{n+k}$
such that the relation algebraic reducts of $\C$ and $\D$
coincide.
%If it did, then $R_n$ will also have a complete representation, which is not the case.
\end{enumroman}
\end{theorem}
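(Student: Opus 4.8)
The plan is to obtain all six items from a single blow-up-and-blur of a finite, non-representable Monk relation algebra, following the abstract recipe of the previous section. First I would fix a finite relation algebra $\M$ whose atoms are the identity together with finitely many coloured atoms, and whose forbidden triples are exactly the monochromatic triangles; a Ramsey argument then shows $\M\notin\RRA$, since any representation would properly colour a complete graph on infinitely many points with finitely many colours. I would split each non-identity atom of $\M$ into $\omega$ copies, obtaining a relation-algebra atom structure $\alpha$ on $(\text{copies})\times(\text{colours})$, and adjoin a finite set $J$ of blurs, defining the consistent triples on $\alpha$ so that they mimic the consistency relation of $\M$ up to copy indices while the blurs in $J$ correspond to finitely many non-principal ultrafilters of $\Cm\alpha$. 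The relation algebra is then $\R=\Tm\alpha$. Item (ii), that $\R$ is generated by a single element, is arranged by choosing the splitting and colouring so that one atom, together with the Boolean and relation-algebraic operations, reaches every atom.

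For item (i) I would argue the two halves separately. The completion $\Cm\alpha$ contains, for each atom $c$ of $\M$, the join $\bigsqcup_{i<\omega}c_i$ of its $\omega$ copies, and $c\mapsto\bigsqcup_{i<\omega}c_i$ embeds $\M$ into $\Cm\alpha$. Since $\RRA$ is a variety, hence closed under subalgebras, and $\M\notin\RRA$, it follows that $\Cm\alpha$ is not representable; this is the finite--infinite contradiction, and it is the easy direction. The genuinely delicate direction is that $\Tm\alpha=\R$ \emph{is} representable, and I regard this as the main obstacle. Here I would build a representation on a base obtained from $\omega$ many copies glued along the blurs, using the blurs as the extra colours that break every monochromatic triangle the finite colouring would otherwise force; because $\Tm\alpha$ contains only finite and cofinite joins of atoms, the reconstituted atoms $\bigsqcup_i c_i$ of $\M$ are absent, so no finite--infinite contradiction is visible at this level. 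Verifying that the prescribed labelling of the base satisfies the relation-algebra axioms (associativity of composition in the guise of consistent triples, the Peircean laws) is where the finiteness of $J$ is used, and it is the technical heart of the proof. That $\R$ is representable but not completely representable (the remainder of (i)) then follows because a complete representation of the atomic $\R$ would extend to a representation of its completion $\Cm\alpha$, which we have just excluded.

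With $\R$ in hand, item (iii) is the purely combinatorial check that the set $\B_n\R$ of $n$-by-$n$ basic matrices over $\alpha$ is an $n$-dimensional cylindric basis; this amounts to verifying that consistent matrices amalgamate and that partial matrices complete, exactly as in the passage $\A\cong\Mat_n\R$ of the construction above, so that $\Cm(\B_n\R)$ on the full power set is a $\CA_n$. Items (iv) and (v) transfer the relation-algebra dichotomy to dimension $n$: the cylindric term algebra $\C=\Tm(\B_n\R)$ inherits representability from the representation of $\R$, since its basic matrices are read off the same base, while $\Cm(\B_n\R)$ again codes $\M$ and so is not representable; simplicity and atomicity are immediate, and complete representability is excluded by the same completion argument as for $\R$.

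Finally, for (vi) I would produce the single two-dimensional generator $g$ of $\C$ by pushing the single generator of $\R$ through the isomorphism $\C\cong\Mat_n\R$, and then check that the relation-algebraic reduct $\Ra\C$ is likewise generated by $g$ and has no complete representation. The neat-embedding clause is the second technical hurdle: I would take the representation of $\R$ and dilate it to $n+k$ dimensions, producing a simple representable $\D\in\CA_{n+k}$ whose $n$-neat reduct is $\C$. The requirement that $\Ra\D$ and $\Ra\C$ coincide forces the dilation to add no new relation-algebraic elements, which I would arrange by letting the extra $k$ coordinates range freely over the same base, so that the $2$-dimensional, relation-algebraic part is untouched. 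I expect the bookkeeping that simultaneously secures simplicity, representability of $\D$, and the coincidence of the two relation-algebra reducts to be the most intricate part of this last step.
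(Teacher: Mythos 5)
Your overall route is the same as the paper's: blow up and blur a finite Maddux algebra, represent the term algebra using the finite set of blurs as non-principal ultrafilters, recover the finite algebra inside the completion via $c\mapsto\bigvee_i c_i$, and pass to $\CA_n$ through the cylindric basis of basic matrices. The paper itself only sketches this argument and defers to \cite{ANT}, and at that level of detail your reconstruction matches it, including your identification of the representability of the term algebra as the technical heart.

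There is, however, one step whose justification is circular as written. You claim that Ramsey's theorem already gives $\M\notin\RRA$ ``since any representation would properly colour a complete graph on infinitely many points with finitely many colours,'' and you then invoke closure of $\RRA$ under subalgebras. But a representation of a finite relation algebra need not have an infinite base, and for a Maddux algebra forbidding exactly the monochromatic triangles Ramsey's theorem only excludes \emph{infinite} bases; it says nothing against a finite one, and ruling those out is a separate combinatorial condition. This is precisely why the paper insists on starting from a finite Maddux algebra that \emph{cannot be represented on finite sets}: only the combination (no finite base, by the choice of parameters; no infinite base, by Ramsey) yields $\M\notin\RRA$, after which your subalgebra argument goes through. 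Alternatively you can bypass non-representability of $\M$ altogether: $\Cm\alpha$ is atomic with infinitely many atoms and simple, so any representation of it is faithful on an infinite base and restricts to a representation of the embedded copy of $\M$ on that infinite base, which Ramsey does forbid. Either repair is short, but as written the ``easy direction'' of (i) does not stand. A smaller remark on (vi): dilating a representation to $n+k$ dimensions by letting the extra coordinates range freely only gives a $\D$ whose $n$-neat reduct \emph{contains} $\C$; the construction in \cite{ANT} instead obtains $\C\cong\Nr_n\Tm(\B_{n+k}\R)$ from the fact that the blurs are adequate enough to furnish an $(n+k)$-dimensional cylindric basis, which is what pins the neat reduct down exactly.
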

\begin{proof} \cite{ANT}. Here we give an idea of the proof which is a blow up and blur construction.
For the technical details one is referred to the original paper \cite{ANT}, or to the sketch in \cite{SayedBook}.
Below we will return to this proof, and discuss its modifying to solve a long standing open problem in algebraic logic.
One starts with a finite Maddux relation algebra, that cannot be represented on finite sets.
Then this algebra is blown up and blurred. It is blown up by splitting the atoms each to infinitely many.
It is blurred by using a finite set of blurs or colours $J$. This can be expressed by the product $\At=\omega\times \At M\times J$,
which will define an infinite atom structure of a new
relation algebra. One can view such a product as a ternary matrix with $\omega$ rows, and for each fixed $n\in \omega$,  we have the rectangle
$\At M\times J$.
Then two partitions are defined on $\At$, call them $P$ and $E$.
Composition is defined on this new infinite atom structure; it is induced by the composition in $M$, and a tenary relation $e$
on $\omega$, that synchronizes which three rectangles sitting on the $i,j,k$ $e$ related rows compose like the original algebra $M$.
This relation is definable in the first order structure $(\omega, <)$.

The first partition $P$ is used to show that $M$ embeds in the complex algebra of this new atom structure, so
the latter cannot be represented, because it can only be represented on infinite sets.

The second partition divides $\At$ into $\omega$ sided finitely many rectangles, each with base $W\in J$,
and the the term algebra over $\At$, are the sets that intersect co-finitely with every member of this partition.
On the level of the term algebra $M$ is blurred, so that the embedding of the small algebra into
the complex algebra via taking infinite joins, do not exist in the term algebra for only finite and cofinite joins exist
in the term algebra.

The term algebra is representable using the finite number of blurs. These correspond to non-principal ultrafilters
in the Boolean reduct, which are necessary to
represent this term algebra, for the principal ultrafilter alone would give a complete representation,
hence a representation of the complex algebra and this is impossible.
Thereby an  an atom structure that is weakly representable but not strongly representable is obtained.

This atom structure has an $n$- dimensional cylindric basis, and so the $n$
basic matrices form an atom structure that is also only weakly representable.
The resulting $n$ dimensional cylindric term algebra obtained is a $k$ neat reduct that is not completely
representable. To make the algebra  one generated one uses Maddux's combinatorial techniques,
and this entails using infinitely many ternary relations.

\end{proof}
The construction above is very flexible, and the algebras constructed are based on Maddux-Monk algebras.

\begin{example}

There are several parameters used to define the relation algebra above.
Let $l\in \omega$, $l\geq 2$, and let $\mu$ be a non-zero cardinal. Let $I$ be a finite set,
$|I|\geq 3l.$ Let
$$J=\{(X,n): X\subseteq I, |X|=l,n<\mu\}.$$
$I$ is the atoms of $M$. $J$ is the set of blurs.

Pending on $l$ and $\mu$, let us call these atom structures ${\cal F}(l,\mu).$
In the example referred to above the atoms of $M$ are $I$, $J\subseteq \wp(I)$
consisting of $2$ element subsets of $I$
so it is just  ${\cal F}(2,1)$,

If $\mu\geq \omega$, then $J$ would be infinite,
and $Uf$ will be a proper subset of the ultrafilters.
It is not difficult to show that if $l\geq \omega$
(and we relax the condition that $I$ be finite), then
$\Cm{\cal F}(l,\mu)$ is completely representable,
and if $l<\omega$ then $\Cm{\cal F}(l,\mu)$ is not representable.
In the former case we have infnitely many colours, so that the chromatic number
of the graph is infinite, while in the second case the chromatic number is finite.

%Hirsch and Hodkinson prove that the class of
%completely representable $RRA$'s is not elementary.
%We now have the machinery to prove:
Informally, if the blurs get arbitrarily large, then in the limit, the resulting algebra will be completely representable, and so its complex algebra
will be representable. If we take, a sequence of blurs, each finite,
we get a sequence of Monk (non-respresentable) algebras whose
limit is completely representable
%\end{enumarab}
\end{example}
Formally
\begin{corollary}
\begin{enumarab}
\item  The classes ${\sf RRA}$ is not finitely axiomatizable.
\item  The elementary
closure of the class ${\sf CRA}$ is not finitely axiomatizable.
\end{enumarab}
\end{corollary}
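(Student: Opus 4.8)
The plan is to run the standard Monk-style ultraproduct argument, feeding it the family ${\cal F}(l,\mu)$ isolated in the Example. Fix any non-zero $\mu$ (say $\mu=1$) and, for each finite $l\geq 2$, set $A_l=\Cm{\cal F}(l,\mu)$; by the Example each $A_l$ is a non-representable relation algebra. Let $U$ be a non-principal ultrafilter on $\{l:2\leq l<\omega\}$ and put $B=\prod_l A_l/U$. Both $\RRA$ and the elementary closure $\mathbf{El}(\CRA)$ are elementary classes, and a class is finitely axiomatizable iff it is $\mathbf{Mod}(\sigma)$ for a single first-order sentence $\sigma$. So it suffices to show that $B$ lies in each class: if $\RRA=\mathbf{Mod}(\sigma)$, then $A_l\models\neg\sigma$ for every $l$, whence $B\models\neg\sigma$ by \Los's theorem, contradicting $B\in\RRA$. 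Since $\CRA\subseteq\RRA$ and $\RRA$ is a variety, $\mathbf{El}(\CRA)\subseteq\RRA$; hence $A_l\notin\RRA$ forces $A_l\notin\mathbf{El}(\CRA)$, and the same template settles (2) once $B\in\mathbf{El}(\CRA)$.

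Everything therefore reduces to showing $B$ is representable, and in fact completely representable. First I would pin down the atom structure of $B$. Atomicity is first-order, and each $A_l$ is a full complex algebra, hence atomic; so by \Los's theorem $B$ is atomic, and a routine check shows its atoms are exactly the classes $[\langle a_l\rangle]$ of sequences of atoms, i.e. $\At B\cong\prod_l{\cal F}(l,\mu)/U$. Moreover the map $[f]\mapsto\{[\langle a_l\rangle]:\{l:a_l\in f(l)\}\in U\}$ embeds $B$ into $\Cm(\prod_l{\cal F}(l,\mu)/U)$: that it respects the Boolean operations is immediate from the ultrafilter laws, and that it respects converse and composition follows because these are definable from the (ternary) accessibility relation and so commute with the ultraproduct. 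Thus $\Tm(\At B)\subseteq B\subseteq\Cm(\At B)$ with $\At B=\prod_l{\cal F}(l,\mu)/U$.

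Next I would analyse this limit atom structure. The size constraints $|I_l|\geq 3l$ force the monochromatic cliques coded in ${\cal F}(l,\mu)$ to grow without bound, so in $\prod_l{\cal F}(l,\mu)/U$ the underlying graph has \emph{infinite} chromatic number; intuitively the ultraproduct realises the infinite parameter ``$l=\omega$'' of the Example, where $\Cm{\cal F}(\omega,\mu)$ is completely representable. Consequently $\Cm(\At B)$ is completely representable. Since complete representability of an atomic algebra is determined by its atom structure alone (\pe\ has a \ws\ in the relevant atomic game on $\At B$), and $B$ is atomic with the same atom structure as $\Cm(\At B)$, the algebra $B$ is itself completely representable. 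Hence $B\in\CRA\subseteq\mathbf{El}(\CRA)$ and, a fortiori, $B\in\RRA$, completing both Monk arguments.

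The main obstacle is the middle claim that $\Cm(\prod_l{\cal F}(l,\mu)/U)$ is (completely) representable, i.e. that passing to the ultraproduct genuinely sends the finite chromatic numbers to $\infty$ and thereby converts the finite--infinite obstruction that kills each $A_l$ into a bona fide complete representation. This is precisely the anti-Monk/Ramsey heart of the construction: one must verify that no finite monochromatic configuration survives in the limit and then build the complete representation on the blown-up atom structure exactly as for the infinite-blur case ${\cal F}(\omega,\mu)$ of the Example. The remaining points---first-orderness of ``atomic'', the embedding into $\Cm(\At B)$, and the atom-structure invariance of complete representability---are routine once this is in hand.
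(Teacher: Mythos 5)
Your proof is correct and follows essentially the same route as the paper: both run the Monk ultraproduct argument on the family ${\cal F}(l,\mu)$, using that the complex algebras $\Cm{\cal F}(l,1)$ are non-representable (hence outside $\RRA$ and a fortiori outside the elementary closure of ${\sf CRA}$) while the ultraproduct limit is completely representable. The only cosmetic difference is that the paper forms the ultraproduct at the level of atom structures and then takes the complex algebra $\Cm{\cal D}$, whereas you take the ultraproduct of the complex algebras and then pass to $\Cm(\At B)$ via the atom-structure invariance of complete representability; both versions rest on the same key assertion from the Example, which neither fully proves, that the infinite-blur limit is completely representable.
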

\begin{demo}{Proof}
Let ${\cal D}$ be a non-
trivial ultraproduct of the atom structures ${\cal F}(i,1)$, $i\in \omega$. Then $\Cm{\cal D}$
is completely representable.
Thus $\Tm{\cal F}(i,1)$ are ${\sf RRA}$'s
without a complete representation while their ultraproduct has a complete representation.
Also $\Cm{\cal F}(i,1)$, $i\in \omega$
are non-representable with a completely representable ultraproduct.
\end{demo}

\subsection{Blowing up and blurring a finite rainbow relation algebra}

Hirsch and Hodkinson showed that there is an atomic relation algebra, that is representable, but the complex algebra of its atom structure
is not in $S\Ra \CA_6$. This algebra is obtained by blowing up and blurring a finite rainbow relation algebra namely $A_{K_4,K_3}$,
the idea is that \pa\ can win the $6$ rounded pebble game.
After splitting the reds, each into $\omega$ many copies, the algebra becomes representable, because
this basically produces a flexible non principal ultrafilter, namely the ultarfilter that intersects with reds with distinct indices co-finitely,
and this allows \pe\ to win an $\omega$ rounded
{\it non  atomic game} using this flexible ultrafilter to label the edges when he is forced a red. The complex algebra is not
in $S\Ra\CA_n$ because the finite relation algebra is embeddable into it by taking every red to its $\omega$ copies. The latter sets
do not exist in the term algebra, since  for any $X\in T$, $X$ intersects the two disjoint
sets of reds, namely, those with distinct indices, and those of equal indices
finitely or cofinitely, so that the set $\{\r_{il}^m: l, m\in K_3\}$, the image of $r_{il}$ cannot be in $T$.

The relation algebra obtained by Hirsch and Hodkinson  does not have an $n$ dimensional cylindric basis, except for $n=3$,
and using this it was proved  that the class $S\Nr_3\CA_k$ is not closed under completions, that is for the lowest value of $n$.

This construction, though, has a lot of affinity to the Andr\'eka N\'emeti blow up and blur construction (whose relation algebra has enough set of blurs to
allow a cylindric basis).
The major difference is that in the latter a Maddux finite algebra is used, while in the former case
a finite rainbow algebra is used. In the latter case, we can only infer that the complex algebra is non representable,
in the former case we can know and indeed we can prove more. The reason basically is  that non representability
of Maddux's algebras depends on an uncontrolable big Ramsey number (that is a function in the dimension),
while for  rainbow algebras we can control {\it when the algebra stops to be representable} by \pa\ s moves.
\pa\ forces a win by using greens, it is precisely this number, that  determines the extra dimensions in which the complex
algebras stop to be neatly embeddable
into, it is the point at which it outfits the reds.

{\it What can occur to ones mind here here is substitute a Maddux  finite relation algebra used by Andr\'eka and N\'emeti,
by the rainbow algebra mentioned used by Hirsch and Hodkinson and using the arguments of Andr\'eka and N\'emeti, we prove our stronger result.
A weaker version, can be gotten by lifting Hirsch and  Hodkinsons
construction from relation algebras whose atoms are colours to cylindric algebras
whose atoms are coloured graphs. In this paper, we perform the second task, solving a long standing open problem on completions of subneat reducts, and we  formulate the first task in the form of a conditional (i.e if then) theorem, in a while.}

We have two blown up and blurred relation algebras. But we want an $n$ dimensional cylindric algebra.
Using the notation in \cite{ANT}, given a relation algebra $\A$ with a set $J$ of blurs:

\begin{definition}
\begin{enumarab}
\item The blurs are adequate, if
$$(\forall V_1,\ldots V_n,W_2,\ldots W_n\in J)(\exists T\in J$$
$$(\forall 2\leq i\leq n)(\forall a\in V_i)(\forall b\in W_i(\forall c\in T)(a\leq b;c).$$
\item The blurs are strongly adequate if $\exists$ is replaced by $\forall$
\end{enumarab}
\end{definition}

\begin{theorem} If there  exists a relation algebra $\sf R$ that is not in $S\Ra\CA_{6}$, with an adequate set of $n+k$ blurs,
then there is an atomic infinite relation algebra ${\sf R}$ obtained by blowing up and bluring a
finite rainbow relation algebra; ${\sf R}$ has an $n+k$ dimensional cylindric basis
$\Tm{\sf Mat}_{n+k} {\sf R}\in \RCA_{n+k}$, $\Tm{\sf Mat}_n{\sf R}\cong \Nr_m\Tm{\sf Mat}_{n+k}{\sf R}$
 and $\Cm{\sf Mat_n{\sf R}}$ is
not in $S\Nr_n\CA_{n+k}$.
Furthermore, this cylindric algebra, witnessing that $S\Nr_n\CA_{n+k}$ is not closed under completions,
can be chosen to be generated by a single element.
\end{theorem}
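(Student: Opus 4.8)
The plan is to run an Andr\'eka--N\'emeti blow up and blur construction, exactly as in Theorem~\ref{OTT}, but feeding in a \emph{finite rainbow} relation algebra in place of a Maddux algebra, so that non-representability is controlled by a game rather than by an uncontrollable Ramsey number. Let ${\sf R}_0$ be the hypothesised finite rainbow relation algebra, whose failure of neat embeddability is witnessed by a \ws\ for \pa\ in the $(n+k)$-pebble atomic game (\pa\ forcing a monochromatic red clique by bombarding \pe\ with greens), and let $J$ be its adequate set of $n+k$ blurs. First I would blow up ${\sf R}_0$ by splitting each atom, in particular each red, into $\omega$ copies, forming the atom structure on $\omega\times\At{\sf R}_0\times J$; composition is induced by that of ${\sf R}_0$ and synchronised by a ternary relation $e$ on $\omega$ that is first-order definable in $(\omega,<)$, forbidding independent monochromatic triangles while allowing all polychromatic ones. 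This produces the infinite atomic relation algebra ${\sf R}$ of the conclusion; to secure the final single-generation clause I would use Maddux's combinatorial device of coding the splitting by infinitely many ternary relations.

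Next I would establish the two faces of the construction via the partitions $P$ and $E$ of $\At{\sf R}$. The coarse partition $P$ exhibits an embedding of ${\sf R}_0$ into $\Cm{\sf R}$ sending each red to the join of its $\omega$ copies, so these joins exist in the complex (complete) algebra; the cofiniteness partition $E$ shows that the term algebra consists of the sets meeting each block finitely or cofinitely, so precisely those joins are \emph{absent} from $\Tm{\sf R}$. I would then represent $\Tm{\sf R}$ using the finite blurs, realised as a flexible shade of red, i.e.\ the non-principal ultrafilter meeting reds of distinct indices cofinitely, which lets \pe\ survive an $\omega$-rounded non-atomic game whenever \pa\ forces a red. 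Invoking adequacy of $J$---precisely the clause $(\forall V_1,\dots,V_n,W_2,\dots,W_n)(\exists T)(\forall a\in V_i,b\in W_i,c\in T)\,a\le b;c$---I would verify that ${\sf R}$ carries an $(n+k)$-dimensional cylindric basis, so that $\Mat_{n+k}{\sf R}$ is a cylindric atom structure and, lifting the relation-algebra representation through the basis, $\Tm\Mat_{n+k}{\sf R}\in\RCA_{n+k}$.

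It then remains to identify the neat reduct and to defeat the complex algebra. For the former, the standard basic-matrix map $m\mapsto\alpha^W_m$ realises $\Tm\Mat_n{\sf R}$ as $\Nr_n\Tm\Mat_{n+k}{\sf R}$, since every $n$-dimensional basic matrix over ${\sf R}$ extends inside the $(n+k)$-dimensional matrices and the witnesses required for neatness are supplied by adequacy of the blurs. For the latter, I would observe that $\Cm\Mat_n{\sf R}$ contains the image of the basic matrices of ${\sf R}_0$ under the join-of-copies map, so a neat embedding $\Cm\Mat_n{\sf R}\hookrightarrow\Nr_n\CA_{n+k}$ would resurrect \pa's \ws\ on ${\sf R}_0$ at the cylindric level; transferring that strategy (the greens fixing exactly how many extra dimensions are needed) shows $\Cm\Mat_n{\sf R}\notin\SS\Nr_n\CA_{n+k}$, and since $\Cm\Mat_n{\sf R}$ is the completion of the representable $\Tm\Mat_n{\sf R}$ this is the desired witness to non-closure under completions.

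The main obstacle is the last step: converting \pa's \ws\ in the \emph{relation-algebra} rainbow game on ${\sf R}_0$ into a proof that the blown-up \emph{cylindric} complex algebra escapes $\SS\Nr_n\CA_{n+k}$, while checking that splitting the reds grants \pa\ no extra room---so that the embedded finite rainbow algebra re-creates the obstruction exactly, at dimension $n+k$ and no higher. Closely tied to this are the verifications that the cylindric basis and the neat-reduct isomorphism genuinely survive the blurring, i.e.\ that adequacy, and not merely the finiteness of $J$, is what makes $\Mat_n$ the neat reduct of $\Mat_{n+k}$, which is where the precise form of the adequacy clause does its work.
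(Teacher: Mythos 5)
Your construction of ${\sf R}$ and the representation of its term algebra follow the paper's route: the Andr\'eka--N\'emeti blow up and blur with the finite rainbow algebra fed in place of a Maddux algebra, the partitions $P$ and $E$, the flexible non-principal ultrafilter for the representation, and adequacy of the blurs supplying the $(n+k)$-dimensional basis and the identification $\Tm\Mat_n{\sf R}\cong \Nr_n\Tm\Mat_{n+k}{\sf R}$ (the paper simply cites \cite{ANT} for all of this). The gap is in what you yourself flag as ``the main obstacle'': you propose to finish by transferring \pa's winning strategy in the relation-algebra game on ${\sf R}_0$ to a cylindric-level game defeating $\Cm\Mat_n{\sf R}$, and you leave that transfer unexecuted. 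This is exactly the kind of relation-algebra-to-cylindric-algebra strategy transfer that the paper elsewhere warns has no general theorem behind it, and it is not how this theorem is proved.

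The step you are missing is a purely algebraic reduct argument that makes any game transfer unnecessary. Write $\F$ for the blown-up relation algebra atom structure. The hypothesis is not merely that \pa\ wins some pebble game on ${\sf R}_0$; it is that ${\sf R}_0\notin S\Ra\CA_6$, and this is used verbatim. One has ${\sf R}_0\hookrightarrow\Cm\F$ (each atom going to the join of its $\omega$ copies) and $\Cm\F\hookrightarrow\Ra\,\Cm\Mat_n{\sf R}$. Hence if $\Cm\Mat_n{\sf R}$ were in ${\bf S}\Nr_n\CA_{n+6}$, then $\Cm\F$ would lie in $\Ra\,{\bf S}\Nr_n\CA_{n+6}\subseteq S\Ra\CA_6$, and since $S\Ra\CA_6$ is closed under subalgebras this would put ${\sf R}_0$ in $S\Ra\CA_6$, contradicting the hypothesis. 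So the conclusion for the complex algebra falls out in two lines, with no need to check that ``splitting the reds grants \pa\ no extra room'' or to rebuild his strategy on coloured graphs. You should replace your final paragraph with this reduct argument; as written, the crucial step of your proposal rests on a transfer principle that is not available.
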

\begin{proof} Let $k\geq 3$ and $m\in \omega$. That the term algebra is in $\RCA_n\cap \Nr_n\CA_{n+m}$ is exactly
like in \cite{ANT}. Now the
complex algebra $\Cm\F$ is embeddable into the $\Ra$ reduct of he complex algebra $\Cm\Mat_n$. So if $\Cm\Mat_n$ is in
$S\Nr_n\CA_{n+6}$, then $\Cm\F\in \Ra S\Nr_n\CA_{n+6}\subseteq \Ra\CA_6.$
contradiction, hence we are done.
\end{proof}

\begin{theorem}\label{hodkinson} Assume that there exists a finite
rainbow cylindric algebra, not in $S\Nr_n\CA_{n+k}$. Then the class $\SNr_n\CA_{n+k}$ is not closed under completions.
\end{theorem}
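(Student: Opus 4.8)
The plan is to transplant the Andr\'eka--N\'emeti blow up and blur argument of Theorem~\ref{OTT}, and its rainbow relation-algebra version established just above, into the cylindric setting, with the finite Maddux algebra replaced by the hypothesized finite rainbow cylindric algebra $\A \notin \SNr_n\CA_{n+k}$. Since $\A$ is finite and atomic, its non-membership is witnessed by a \ws\ for \pa\ in the finite-rounded atomic game played on the coloured graphs of $\A$ with $n+k$ nodes (the cylindric analogue of \pa\ forcing a monochromatic red clique). First I would blow up and blur $\A$: split each red atom into $\omega$ many copies $r^l$, $l<\omega$, leave the greens and the remaining atoms untouched, and adjoin a finite set $J$ of blurs, the flexible shade of red, obtaining an infinite cylindric atom structure $\alpha=\alpha(\A,J)$ whose consistent coloured graphs are those of $\A$ up to the index decoration of the reds. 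I would then set $\C=\Tm\alpha$, the subalgebra of $\Cm\alpha$ generated by the atoms, which contains only finite and cofinite joins of atoms, and recall that $\Cm\alpha$ is precisely the completion of $\C$.

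Second, I would show that $\C$ is representable, hence lies in $\SNr_n\CA_{n+k}$ since $\RCA_n=\SNr_n\CA_\omega\subseteq\SNr_n\CA_{n+k}$. This is the heart of the matter and the step I expect to be the main obstacle. The argument is game-theoretic: on $\C$, \pe\ wins the $\omega$-rounded game testing (non-complete) representability by exploiting the flexible shade of red. Whenever \pa's strategy forces a red on an edge between the apexes of two cones inducing the same order on a common face, \pe\ answers with a red carrying a fresh copy-index drawn from $J$; since there are infinitely many copies she never exhausts them and never produces the monochromatic inconsistency \pa\ is driving towards. Formally this shows that $J$ corresponds to a set of non-principal ultrafilters in the Boolean reduct of $\C$ that suffice to represent $\C$, exactly as in the relation-algebra case. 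Verifying that $\alpha$ is a genuine $\CA$ atom structure, so that cylindrifiers and diagonals are well defined on the split reds, and that \pe's labelling is globally consistent, is where the real work lies.

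Third, I would exhibit an embedding of $\A$ into the completion $\Cm\alpha$ sending each red atom $r$ to the join $\sum_{l<\omega} r^l$ of its copies and each non-red atom to itself. By the design of the splitting the operations were arranged so that the copies of a red behave collectively like the original red, so this map respects the Boolean operations, the cylindrifiers and the diagonals and is injective. The crucial point is that the joins $\sum_{l<\omega} r^l$ exist in the complete algebra $\Cm\alpha$ but \emph{not} in $\C$, since every element of $\C$ meets the set of copies of a given red either finitely or cofinitely; thus $\A$ embeds as a subalgebra of $\Cm\alpha$ while it does not embed into $\C$, which is exactly why blurring hides $\A$ at the level of $\C$ yet the completion reinstates it.

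Finally I would conclude. The class $\SNr_n\CA_{n+k}$ is of the form $\SS K$ with $K=\Nr_n\CA_{n+k}$, so it is closed under subalgebras; hence if $\Cm\alpha$ were in $\SNr_n\CA_{n+k}$ then its subalgebra $\A$ would be too, contradicting the hypothesis, so $\Cm\alpha\notin\SNr_n\CA_{n+k}$. Since $\C\in\SNr_n\CA_{n+k}$ and $\Cm\alpha$ is the completion of $\C$, the class $\SNr_n\CA_{n+k}$ is not closed under completions, as required. If in addition one wants the witnessing algebra $\C$ to be generated by a single element, one applies Maddux's combinatorial generation technique precisely as in Theorem~\ref{OTT}.
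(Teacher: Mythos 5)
Your proposal is correct in outline and follows the same blow-up-and-blur skeleton as the paper: split each red into $\omega$ copies plus a flexible shade $\rho$, show the term algebra is representable, embed the finite rainbow algebra into the completion by sending each red to the join of its copies, and conclude using closure of $\SNr_n\CA_{n+k}$ under subalgebras. The one substantive divergence is the step you yourself flag as the main obstacle, the representability of the term algebra. The paper does not play a game there at all: it invokes Hodkinson's construction of an $n$-homogeneous model $M$ built as a limit (Fraisse-style amalgamation) of the new coloured graphs, notes that the rainbow theory is now first order since only the reds were split and the greens remain finite, and realizes the term algebra inside the relativized set algebra $\{\phi^M:\phi\in L_n\}$ on the set $W$ of assignments avoiding $\rho$-labelled edges; the completion is then concretely $\{\phi^M:\phi\in L^n_{\infty,\omega}\}$ on the same $W$, which makes the embedding $r_{ij}\mapsto\bigvee_l [r_{ij}^l]^W$ and the non-existence of these joins in the term algebra transparent. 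Your game-theoretic/ultrafilter route is the one the paper uses for the relation-algebra analogue of this theorem, so it is a legitimate alternative, but one caution: the game you let \pe\ win must be the \emph{non-atomic} representability game (equivalently, the representation must use the non-principal ultrafilters supplied by the blurs as colours). A \ws\ for \pe\ in the $\omega$-rounded \emph{atomic} game on a countable atomic algebra yields complete representability, hence representability of $\Cm\alpha$, which is precisely what must fail; your description of \pe\ labelling edges between apexes of cones reads like the atomic-game strategy, so this distinction needs to be made explicit. The model-theoretic route buys you this for free, since the relativization to $W$ automatically discards the precarious joins.
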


\begin{proof} For each red $r_{ij}$ in the colours of the finite algebra, take the new reds to be $r_{ij}^l$, $l\in \omega$ ($\omega$ copies of $r_{ij}$),
together with a  shade of red $\rho$.
Then the resulting atom structure will be exactly like Hodkinson's in \cite{Hodkinson}, except
that we have only finitely many greens (the same number of greens in  the finite algebra; we only split the reds).
Let $M$ be the model constructed as in \cite{Hodkinson} from the new colours, as a limit of coloured graphs. Then $M$ is an
$n$ homogeneous model of the rainbow signature ($\rho$ is outside the the signature, though it occurs as a label for coloured graphs),
satisfying the  $L_{\omega_1, \omega}$ rainbow theory,
which actually consists of only first order formulas, because we have finitely
many greens.
Furthermore, if one takes the set algebra  $\A$ with universe $\{\phi^M: \phi\in L_n\},$ where $\phi^M$ is the set of all assignments
having no edge labelled by $\rho$, then it will contain the term algebra,
its atoms will be essentially the coloured graphs having no edges coloured by
$\rho$,  and its completion will be $\{\phi^M: \phi\in L^n_{\infty}\}$ restricting to the same assignments.
As before the finite algebra embeds into the latter by $r_{ij}\mapsto \bigvee_{l\in \omega} [r_{ij}^l](x_0, x_{n-1})^W$, where $W$ is the set
of above assignments. Therefore the class $\SNr_n\CA_{n+k}$ is not closed under completions, for their term algebra will be representable,
but its completion will not be in $S\Nr_n\CA_{n+k}$.
\end{proof}
Later, we will show that such an algebra exists, by a rainbow construction,  solving a long standing open problem reported `officially' as of $2002$,
in \cite{HHbook}, and re appearing in \cite{1}.

The next theorem says that our first model theoretic  and second Andr\'eka-Ne\'meti blow up and blur constructions actually fit our general framework.
\begin{theorem}
\begin{enumarab}
\item For the second construction, $N$ is the clique of size $n(n-1)/2$, the splitting is $N\times \omega\times n$, the blurs are
$\{(\rho, i): i<n\}$, the split up and blurred atom structure is $\At\A$. In the first case, the structure is $\bold M$,
the splitting is $\At \bold M\times I\times J$, where $J$ is the set of blurs, and the blurred up structure is ${\cal R}$.

\item In the  first case $N$ can be coded in $\Cm\At\A$,
and in the second case {\it the graph} on which $\bold M$ is based (avoiding monochromatic triangles), is coded in $\Cm\At\R$.
This prohibits the representability of the latter, because any such representation, will give a monochromatic
triangle; this is a typical Monk argument.

\item Both structures are only weakly blurred via a finite set of colours, hence they have finite chromatic number,
and so any representation induces a finite partition of the complex
algebra, and then Ramsey's theorem enforces a dependent monochromatic triangle.
\end{enumarab}
\end{theorem}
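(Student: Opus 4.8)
The assertion is that the two constructions given above are genuine instances of the abstract blow up and blur recipe, together with the non-representability of their complex algebras. Accordingly my plan is to treat the three items in turn, the first two being a matter of reading that abstract definition against the concrete data, and the third being a recollection of the Monk--Ramsey argument that produces the finite--infinite discrepancy.

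For item (1) I would, in each case, exhibit the triple $(N,I,J)$ and check that the atom structure is $N\times I\times J$ up to the evident identification. For the model theoretic (clique) construction giving $\A$ I take $N$ to be a single clique of size $n(n-1)/2$, $I=\omega$ (so that $\G=N\times\omega$ is the disjoint union of $\omega$ copies of $N$), and the third coordinate to be the $n$ colours, whence $\At\A=N\times\omega\times n$; the blur set is then $J=\{(\rho,i):i<n\}$, in bijection with those $n$ colours. For the Andr\'eka--N\'emeti construction giving $\R$ I take $N=\At\bold M$ for the finite Maddux algebra $\bold M$, $I=\omega$, and $J$ its set of blurs, so that $\At\R=\At\bold M\times\omega\times J$ is already literally of the prescribed form $N\times I\times J$. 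In both cases it remains to verify the accompanying requirement that $J$ correspond one--to--one to a set of non-principal Boolean ultrafilters of the complex algebra; for this I invoke precisely the ultrafilters used above to represent the two term algebras -- the shades of red $(\rho,i)$ in the clique case, and the flexible non-principal ultrafilters supplied by Theorem \ref{OTT} in the Maddux case.

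For item (2) I must exhibit the coding of the finite structure inside the complex algebra. In each case the map sending an atom of the original finite structure to the join of its $\omega$ copies is an embedding of that finite structure into $\Cm\At$ (of $\A$, respectively $\R$): in the Maddux case this is exactly the first partition $P$ appearing in the proof of Theorem \ref{OTT}, and in the clique case it is the analogous ``join of copies'' map described in the clique construction above. This realizes the coding (``reflects'') requirement: the finitely many colour-joins partition $\Cm\At$ into monochromatic classes that are independent in the sense that $(P;P)\cdot P=0$, so that the chromatic number of $N$ (respectively of the graph underlying $\bold M$) is coded in $\Cm\At\A$ (respectively $\Cm\At\R$); as that number is finite, the complex algebra is bad.

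Finally, for item (3) I note that in both constructions $J$ is finite, so the blurring is weak and the coded graph has finite chromatic number. Suppose for contradiction that the complex algebra were representable. The representation colours the edges of a complete graph on an infinite base by the finitely many colours coming from the partition of item (2), so the infinite Ramsey theorem yields an arbitrarily large monochromatic clique, and thus a monochromatic triangle whose three underlying copies carry no graph edge of the finite structure; such an edge-free monochromatic triangle is forbidden by composition ($(P;P)\cdot P=0$), which is the required finite--infinite contradiction. The term algebra, by contrast, stays representable using the finite set of blurs as colours. I expect the only genuinely technical point, and hence the main obstacle, to be the quantitative matching here: one must check that the threshold $|N|=n(n-1)/2$ is exactly what forces any monochromatic clique large enough to be produced by Ramsey to spill across two of the disjoint $N$-cliques, thereby containing an edge-free monochromatic triangle. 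This is the standard Monk--Ramsey computation already carried out in \cite{weak,k,HHbook}, so that the whole proof reduces to reading the abstract definition against the earlier constructions together with that one combinatorial check.
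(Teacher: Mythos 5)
The paper states this theorem with no proof at all --- it is offered as a summary observation whose justification is distributed over the preceding sections --- so there is no official argument to compare against, and your proposal correctly reconstructs the intended one: identifying the triples $(N,I,J)$, embedding the finite structure into the complex algebra via the join-of-copies map (the first partition $P$ in the proof of Theorem \ref{OTT}), and closing with the Monk--Ramsey contradiction are exactly the ingredients the paper relies on. One minor caveat: the bound $n(n-1)/2$ is chosen in the paper so that the \emph{term} algebra is representable (it bounds the number of edges of an $n$-element complete graph that need distinct labels within one clique), rather than to drive the Ramsey step as you speculate; since you flag that point as a check deferred to the cited sources, it does not affect the correctness of your argument, but the emphasis is misplaced. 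You also silently repair an inconsistency in the statement itself, where the labels ``first'' and ``second'' are swapped between items (1) and (2); matching the data to the constructions rather than to those ordinals, as you do, is the right reading.
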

We should mention at this point that there could be no countable atomic algebra in $\Nr_n\CA_{\omega}$ 
that is not strongly representable. For any such algebra will be necessarily completely representable, and hence
strongly representable. In this sense the result of Andr\'eka and N\'emeti stated 
above is the best possible. Arbitrarily large $k$ cannot be replaced by 
$omega$.

The underlying idea here is to choose a graph $\Gamma$, such that the Monk structures or 
Rainbow structures based on  $\Gamma$ has an $n$ homogeneous countable model that has quantifier 
elimination. 
This model will encode all $n$ coloured graphs (structures), namely the atoms, 
and the set algebra based on this graph (obtained by dropping assignments labelled by  one or more flexible ultrafilter or refexive node),
will be  representable. The term algebra will be representable, precisely because it is {\it not} complete, so precarious joins are not there,
only finite or cofinite ones are. 
But its completion, the complex algebra of its atom structure, will not be 
representable, because for the precise reason it {\it is complete}, and precarious joins will deliver an inconsistency,  prohibiting 
a representation, by forcing an infinite finite discrepancy or an inconsistent triangle. 

Blow up and blur construction, applied to both Mon-like and rainbow algebras apt  for such a task.

\section{Lifting Monk--Hirsch-Hodkinson algebras to infinite dimensions}

It is not the case that every algebra in $\CA_m$ is the neat reduct of an algebra in $\CA_n$,
nor need it even be a subalgebra of a neat reduct of an algebra in $\CA_n$.  Furthermore, $S\Nr_m\CA_{m+k+1}\neq S\Nr_m\CA_m$,
whenever $3\leq m<\omega$ and $k<\omega$.

The hypothesis in the following theorem presupposes the existence of certain finite
dimensional algebras, not chosen haphazardly at all, but are  rather an abstraction of cylindric algebras existing in the literature witnessing the last
proper inclusions.  The main idea, that leads to the conclusion of the theorem,
is to use such finite dimensional algebras to obtain an an analogous result for the infinite dimensional case.
Accordingly, we found it convenient streamline Monk's argument who did exactly that for cylindric algebras, but we do it in
the wider context of systems of varieties of Boolean algebras with operators definable by a schema. (Strictly speaking Monk's lifting argument is weaker,
the infinite dimensional constructed algebras are merely non -representable, in our case they are not only non-representable,
but are also subneat reducts of  algebras in a given pre
assigned dimension; this is a technical difference, that needs some non-trivial fine turning in the proof).
The inclusion of finite dimensions in our formulation, was therefore not a luxuary, nor was it motivated by
aesthetic reasons, and nor was it merely an artefect of Monk's definition.
It is motivated by the academic worthiness of the result
(for infinite dimensions).

\begin{theorem}\label{2.12} Let $(\K_{\alpha}: \alpha\geq 2)$ be a complete system of varieties definable by a schema.
Assume that for $3\leq m<n<\omega$,
there is $m$ dimensional  algebra $\C(m,n,r)$ such that
\begin{enumarab}
\item $\C(m,n,r)\in S\Nr_m\K_n$
\item $\C(m,n,r)\notin S\Nr_m\K_{n+1}$
\item $\prod_{r\in \omega} \C(m, n,r)\in S\Nr_m\K_n$
\item For $m<n$ and $k\geq 1$, there exists $x_n\in \C(n,n+k,r)$ such that $\C(m,m+k,r)\cong \Rl_{x}\C(n, n+k, r).$
\end{enumarab}
Then for any ordinal $\alpha\geq \omega$, $S\Nr_{\alpha}\K_{\alpha+k+1}$ is not axiomatizable by a finite schema over $S\Nr_{\alpha}\K_{\alpha+k}$
\end{theorem}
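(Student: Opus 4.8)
The plan is to run Monk's lifting argument in its ultraproduct form, reducing the infinite-dimensional statement to a single finite dimension where the hypothesised algebras $\C(m,n,r)$ do the refuting. Suppose, for contradiction, that some finite schema $\Sigma$ axiomatises $S\Nr_\alpha\K_{\alpha+k+1}$ over $S\Nr_\alpha\K_{\alpha+k}$. The decisive point is that a \emph{finite} schema reflects to finitely many equations at each finite dimension: instantiated at a finite $m$, $\Sigma$ yields a finite set of equations $\Sigma_m$, i.e. a single first-order sentence $\sigma_m$. I would first prove a reflection lemma stating that, for all sufficiently large finite $m$, $\sigma_m$ axiomatises $S\Nr_m\K_{m+k+1}$ over $S\Nr_m\K_{m+k}$. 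This is exactly Monk's lifting read contrapositively: a finite-schema axiomatisation upstairs forces a finite (single-sentence) axiomatisation downstairs at every finite level, uniformly.

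Granting the reflection lemma, the conclusion is immediate from conditions (1)--(3). Fix $m\ge m_0$, write $n:=m+k$, and choose a non-principal ultrafilter $U$ on $\omega$. By (1), each $\C(m,m+k,r)\in S\Nr_m\K_{m+k}$, and by (2), $\C(m,m+k,r)\notin S\Nr_m\K_{m+k+1}$; hence each factor satisfies $\neg\sigma_m$. By \Los's theorem the single sentence $\neg\sigma_m$ is preserved under the ultraproduct, so $\prod_{r/U}\C(m,m+k,r)\models\neg\sigma_m$, whence this ultraproduct is \emph{not} in $S\Nr_m\K_{m+k+1}$. But the ultraproduct of the witnesses is designed, by (3), to fall into the finer class $S\Nr_m\K_{m+k+1}$, so it must satisfy $\sigma_m$ --- a contradiction. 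It is precisely here that finiteness of the schema is used: at infinite $\alpha$ the set $\Sigma_\alpha$ of instances is infinite, its model class is elementary but its complement need not be closed under ultraproducts, so the \Los\ step only closes after descending to a finite dimension.

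The hard part is the reflection lemma, and this is the ``non-trivial fine tuning'' the introduction flags. Monk's original lifting only produced infinite-dimensional algebras that are \emph{non-representable}, i.e. outside $S\Nr_\alpha\K_\omega$; here we must keep the dimension pinned at $\alpha+k+1$. Concretely, the lemma is proved by building, for each $r$, an $\alpha$-dimensional algebra $\A_r$ out of the coherent family $\{\C(m,m+k,r):m<\omega\}$: condition (4) realises $\C(m,m+k,r)$ as the relativisation $\Rl_{x}\C(n,n+k,r)$ for $m<n$, and since the system $(\K_\beta:\beta\ge 2)$ is complete and schema-definable, these relativisations cohere into a directed system whose limit has dimension $\alpha$ and whose $m$-neat reduct recovers $\C(m,m+k,r)$. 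The forced step is the downward transfer of non-membership: were a putative witness $\A_r\subseteq\Nr_\alpha\D$ with $\D\in\K_{\alpha+k+1}$ to exist, one relativises $\D$ to the dimension-restriction element carried through by (4) and takes its $m$-neat reduct, which must be shown to still lie in the variety $\K_{m+k+1}$ --- not merely in $\K_{m+k}$ --- thereby neatly embedding $\C(m,m+k,r)$ into $\K_{m+k+1}$ and contradicting (2). Verifying that this descended algebra stays in $\K_{m+k+1}$ rather than slipping a dimension is where the completeness of the system and the exact form of condition (4) are indispensable, and it is the crux of the whole argument.
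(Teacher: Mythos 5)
Your proposal has the right ingredients in view --- Monk's lifting, {\L}o\'{s}'s theorem, and the observation that condition (4) is what transfers non-membership downwards --- but the architecture is inverted, and the lemma carrying all the weight is neither proved nor, as far as I can see, correct as stated. The paper never pushes the putative finite schema \emph{down} to a single sentence $\sigma_m$ axiomatising $S\Nr_m\K_{m+k+1}$ over $S\Nr_m\K_{m+k}$. There is no reason such a reflection should hold: the finite- and infinite-dimensional classes live in different similarity types and are related only through neat reducts, and the contrapositive of Monk's lifting is not ``finite axiomatisation downstairs'' but merely ``the finite-dimensional separation-with-ultraproduct-collapse fails''. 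What the paper actually does is lift the finite-dimensional \emph{counterexamples} up: for each $r$ it forms $\B^r=\prod_{\Gamma/F}\C^r_\Gamma$, an ultraproduct over the directed set of finite subsets $\Gamma$ of $\alpha$ (with an ultrafilter concentrating on large $\Gamma$), where $\Rd^{\rho_\Gamma}\C^r_\Gamma=\C(|\Gamma|,|\Gamma|+k,r)$. Condition (1) and a {\L}o\'{s} argument put $\B^r$ in $S\Nr_\alpha\K_{\alpha+k}$; conditions (4) and (2) show $\B^r\notin S\Nr_\alpha\K_{\alpha+k+1}$, since a neat embedding of $\B^r$ into some $\C\in\K_{\alpha+k+1}$ would, after passing to a finite reduct $\Rd^{\lambda}$ and relativising to $x=(x_{|\Gamma|}:\Gamma)/F$, neatly embed $\C(m,m+k,r)$ into a $\K_{m+k+1}$. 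The contradiction with a finite schema is then extracted at the \emph{infinite} level, with condition (3) ensuring that the ultraproduct of the $\B^r$ over $r$ falls back into $S\Nr_\alpha\K_{\alpha+k+1}$ while failing a fixed instance of one of the finitely many schema equations.

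The second, more concrete, defect is your construction of the $\alpha$-dimensional witnesses as the limit of a ``directed system'' glued from $\{\C(m,m+k,r):m<\omega\}$. A direct limit along neat embeddings is locally finite, and by Henkin's neat embedding theorem locally finite algebras lie in $S\Nr_\alpha\K_{\alpha+\omega}\subseteq S\Nr_\alpha\K_{\alpha+k+1}$; so any such limit lands automatically in the very class it is supposed to avoid, and the counterexample evaporates. (Condition (4) moreover supplies relativisations, not embeddings, so it is not even clear what directed system is meant.) This is precisely why Monk uses the ultraproduct over finite subsets of $\alpha$ rather than a limit: the ultraproduct is not locally finite and retains the badness of its finite-dimensional factors. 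Your finite-dimensional {\L}o\'{s} step from (1)--(3) is sound, but by itself it only re-proves the finite-dimensional non-finite-axiomatisability; the theorem at hand needs the lifting, not a reflection.
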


\begin{proof} The proof is a lifting argument essentially due to Monk, by 'stretching' dimensions  using only properties of
reducts  and ultraproducts, formalizable in the context of a system of varieties definable by a schema.
This method was recently used by Hirsch and Sayed Ahmed to show that for any class of algebras $K$
between Pinter's substitution algebras and polyadic equality algebras,
the class $\S\Nr_n\K_{n+k+1}$ is not finitely axiomatizable over $S\Nr_n\K_{n+k}$ using Monk like algebras.
This works for all dimensions.
The proof is divided into 3 parts:

\begin{enumarab}

\item  Let $\alpha$ be an infinite ordinal,
let $X$ be any finite subset of $\alpha$, let $I=\set{\Gamma:X\subseteq\Gamma\subseteq\alpha,\; |\Gamma|<\omega}$.
For each $\Gamma\in I$ let $M_\Gamma=\set{\Delta\in I:\Delta\supseteq\Gamma}$ and let $F$ be any ultrafilter over $I$
such that for all $\Gamma\in I$ we have $M_\Gamma\in F$
(such an ultrafilter exists because $M_{\Gamma_1}\cap M_{\Gamma_2} = M_{\Gamma_1\cup\Gamma_2}$).
For each $\Gamma\in I$ let $\rho_\Gamma$ be a bijection from $|\Gamma|$ onto $\Gamma$.
For each $\Gamma\in I$ let $\c A_\Gamma, \c B_\Gamma$ be $\K_\alpha$-type algebras.
If for each $\Gamma\in I$ we have
$\Rd^{\rho_\Gamma}\c A_\Gamma=\Rd^{\rho_\Gamma}\c B_\Gamma$ then $\Pi_{\Gamma/F}\c A_\Gamma=\Pi_{\Gamma/F}\c B_\Gamma$.
Standard proof, by Los' theorem.
Note that the base of $\Pi_{\Gamma/F}\c A_\Gamma$ is
identical with the base of $\Pi_{\Gamma/F}\Rd^{\rho_\Gamma}\c A_\rho$
which is identical with the base of $\Pi_{\Gamma/F}\c B_\Gamma$, by the assumption in the lemma.
Each operator $o$ of $\K_\alpha$ is the same for both ultraproducts because $\set{\Gamma\in I:\dim(o)\subseteq\rng(\rho_\Gamma)} \in F$.

Furthermore, if $\Rd^{\rho_\Gamma}\c A_\Gamma \in \K_{|\Gamma|}$, for each $\Gamma\in I$  then $\Pi_{\Gamma/F}\c A_\Gamma\in \K_\alpha$.
For this, it suffices to prove that each of the defining axioms for $\K_\alpha$ holds for $\Pi_{\Gamma/F}\c A_\Gamma$.
Let $\sigma=\tau$ be one of the defining equations for $\K_{\alpha}$, the number of dimension variables is finite, say $n$.
Take any $i_0, i_1,\ldots  i_{n-1}\in\alpha$, we must prove that
$\Pi_{\Gamma/F}\c A_\Gamma\models \sigma(i_0,\ldots i_{n-1})=\tau(i_0\ldots  i_{n-1})$.
If they are all in $\rng(\rho_\Gamma)$, say $i_0=\rho_\Gamma(j_0), \; i_1=\rho_\Gamma(j_1), \;\ldots i_{n-1}=\rho_\Gamma(j_{n-1})$,
then $\Rd^{\rho_\Gamma}\c A_\Gamma\models \sigma(j_0, \ldots ,j_{n-1})=\tau(j_0, \ldots j_{n-1})$,
since $\Rd^{\rho_\Gamma}\c A_\Gamma\in\K_{|\Gamma|}$, so $\c A_\Gamma\models\sigma(i_0\ldots , i_{n-1})=\tau(i_0\ldots i_{n-1}$.
Hence $\set{\Gamma\in I:\c A_\Gamma\models\sigma(i_0, \ldots, i_{n-1}l)=\tau(i_0, \ldots,  i_{n-1})}\supseteq\set{\Gamma\in I:i_0,\ldots,  i_{n-1}
\in\rng(\rho_\Gamma}\in F$,
hence $\Pi_{\Gamma/F}\c A_\Gamma\models\sigma(i_0,\ldots  i_{n-1})=\tau(i_0, \ldots,  i_{n-1})$.
Thus $\Pi_{\Gamma/F}\c A_\Gamma\in\K_\alpha$.

\item Let $k\in \omega$. Let $\alpha$ be an infinite ordinal.
Then $S\Nr_{\alpha}\K_{\alpha+k+1}\subset S\Nr_{\alpha}\K_{\alpha+k}.$
Let $r\in \omega$.
Let $I=\{\Gamma: \Gamma\subseteq \alpha,  |\Gamma|<\omega\}$.
For each $\Gamma\in I$, let $M_{\Gamma}=\{\Delta\in I: \Gamma\subseteq \Delta\}$,
and let $F$ be an ultrafilter on $I$ such that $\forall\Gamma\in I,\; M_{\Gamma}\in F$.
For each $\Gamma\in I$, let $\rho_{\Gamma}$
be a one to one function from $|\Gamma|$ onto $\Gamma.$
%(Note that if $\alpha=\omega$, then the $\rho_{\Gamma}$'s can be the identity maps).
Let ${\c C}_{\Gamma}^r$ be an algebra similar to $\K_{\alpha}$ such that
\[\Rd^{\rho_\Gamma}{\c C}_{\Gamma}^r={\c C}(|\Gamma|, |\Gamma|+k,r).\]
Let
\[\B^r=\prod_{\Gamma/F\in I}\c C_{\Gamma}^r.\]
We will prove that
\begin{enumerate}
\item\label{en:1} $\B^r\in S\Nr_\alpha\K_{\alpha+k}$ and
\item\label{en:2} $\B^r\not\in S\Nr_\alpha\K_{\alpha+k+1}$.  \end{enumerate}

The theorem will follow, since $\Rd_\K\B^r\in S\Nr_\alpha \K_{\alpha+k} \setminus S\Nr_\alpha\K_{\alpha+k+1}$.

For the first part, for each $\Gamma\in I$ we know that $\c C(|\Gamma|+k, |\Gamma|+k, r) \in\K_{|\Gamma|+k}$ and
$\Nr_{|\Gamma|}\c C(|\Gamma|+k, |\Gamma|+k, r)\cong\c C(|\Gamma|, |\Gamma|+k, r)$.
Let $\sigma_{\Gamma}$ be a one to one function
 $(|\Gamma|+k)\rightarrow(\alpha+k)$ such that $\rho_{\Gamma}\subseteq \sigma_{\Gamma}$
and $\sigma_{\Gamma}(|\Gamma|+i)=\alpha+i$ for every $i<k$. Let $\c A_{\Gamma}$ be an algebra similar to a
$\K_{\alpha+k}$ such that
$\Rd^{\sigma_\Gamma}\c A_{\Gamma}=\c C(|\Gamma|+k, |\Gamma|+k, r)$.  By the second part
with  $\alpha+k$ in place of $\alpha$,\/ $m\cup \set{\alpha+i:i<k}$
in place of $X$,\/ $\set{\Gamma\subseteq \alpha+k: |\Gamma|<\omega,\;  X\subseteq\Gamma}$
in place of $I$, and with $\sigma_\Gamma$ in place of $\rho_\Gamma$, we know that  $\Pi_{\Gamma/F}\A_{\Gamma}\in \K_{\alpha+k}$.

We prove that $\B^r\subseteq \Nr_\alpha\Pi_{\Gamma/F}\c A_\Gamma$.  Recall that $\B^r=\Pi_{\Gamma/F}\c C^r_\Gamma$ and note
that $C^r_{\Gamma}\subseteq A_{\Gamma}$
(the base of $C^r_\Gamma$ is $\c C(|\Gamma|, |\Gamma|+k, r)$, the base of $A_\Gamma$ is $\c C(|\Gamma|+k, |\Gamma|+k, r)$).
 So, for each $\Gamma\in I$,
\begin{align*}
\Rd^{\rho_{\Gamma}}\C_{\Gamma}^r&=\c C((|\Gamma|, |\Gamma|+k, r)\\
&\cong\Nr_{|\Gamma|}\c C(|\Gamma|+k, |\Gamma|+k, r)\\
&=\Nr_{|\Gamma|}\Rd^{\sigma_{\Gamma}}\A_{\Gamma}\\
&=\Rd^{\sigma_\Gamma}\Nr_\Gamma\A_\Gamma\\
&=\Rd^{\rho_\Gamma}\Nr_\Gamma\A_\Gamma
\end{align*}
By the first part of the first part we deduce that
$\Pi_{\Gamma/F}\C^r_\Gamma\cong\Pi_{\Gamma/F}\Nr_\Gamma\A_\Gamma\subseteq\Nr_\alpha\Pi_{\Gamma/F}\A_\Gamma$,
proving \eqref{en:1}.

Now we prove
Now we prove \eqref{en:2}.
For this assume, seeking a contradiction, that $\B^r\in S\Nr_{\alpha}\K_{\alpha+k+1}$,
$\B^r\subseteq \Nr_{\alpha}\c C$, where  $\c C\in \K_{\alpha+k+1}$.
Let $3\leq m<\omega$ and  $\lambda:m+k+1\rightarrow \alpha +k+1$ be the function defined by $\lambda(i)=i$ for $i<m$
and $\lambda(m+i)=\alpha+i$ for $i<k+1$.
Then $\Rd^\lambda(\c C)\in \K_{m+k+1}$ and $\Rd_m\B^r\subseteq \Nr_m\Rd^\lambda(\c C)$.
%For $m<n$, let $$x_n=\{f\in F(n,n+k,r): m\leq j<n\to \exists i<m f(i,j)=Id\}.$$
%Then $x_n\in \c C(n,n+k,r)$ and ${\sf c}_ix_n\cdot {\sf c}_jx_n=x_n$ for distinct $i, j<m$.
%Futhermore
%\[{I_n:\c C}(m,m+k,r)\cong \Rl_{x_n}\Rd_m {\c C}(n,n+k, r).\]
%via
%\[ I_n(S)=\{f\in F(n, n+k, r): f\upharpoonright m\times m\in S, \forall j(m\leq j<n\to  \exists i<m\; f(i,j)=Id)\}.\]
For each $\Gamma\in I$,\/  let $I_{|\Gamma|}$ be an isomorphism
\[{\c C}(m,m+k,r)\cong \Rl_{x_{|\Gamma|}}\Rd_m {\c C}(|\Gamma|, |\Gamma+k|,r).\]
Let $x=(x_{|\Gamma|}:\Gamma)/F$ and let $\iota( b)=(I_{|\Gamma|}b: \Gamma)/F$ for  $b\in \c C(m,m+k,r)$.
Then $\iota$ is an isomorphism from $\c C(m, m+k,r)$ into $\Rl_x\Rd_m\B^r$.
%Now $\Rd_{m}\Rd_{\Sc}\B^r\in S\Nr_{m}\Sc_{m+k+1}$, and ${\sf c}_ix\cdot {\sf c}_jx=x\leq {\sf s}_{[j/i]}x,$
%so that ${\sf s}_{[j/i]}x\cdot {\sf s}_{[i/j]}x=x$ for any distinct $i,j<m$.
Then $\Rl_x\Rd_{m}\B^r\in S\Nr_m\K_{m+k+1}$.
It follows that  $\c C (m,m+k,r)\in S\Nr_{m}\K_{m+k+1}$ which is a contradiction and we are done.
\end{enumarab}
\end{proof}

\subsection{Monk's original algebras}

Monk defined the required algebras, witnessing the non finite axiomatizability of $\RCA_n$ $n\geq 3$,
via their atom structure. An $n$ dimensional atom structure is a triple
$\G=(G, T_i, E_{ij})_{i,j\in n}$
such that $T_i\subseteq G\times G$ and $E_{ij}\subseteq G$, for all $i, j\in n$. An atom structure so defined, is a cylindric atom structure if
its complex algebra $\Ca\G\in \CA_n$. $\Ca\C$ is the algebra
$$(\wp(G), \cap, \sim T_i^*, E_{ij}^*)_{i,j\in n},$$ where
$$T_i^*(X)=\{a\in G: \exists b\in X: (a,b)\in T_i\}$$
and
$$E_{i,j}^*=E_{i,j}.$$
Cylindric algebras are axiomatized by so-called Sahlqvist equations, and therefore it is easy to spell out first order correspondants
to such equations characterizing
atom structures of cylindric algebras.

\begin{definition}
For $3 \leq m\leq n < \omega$, ${\G}_{m, n}$ denotes
the cylindric atom structure such that ${\G}_{m, n} = (G_{m, n},
T_i, E_{i,j})_{i, j < m} $ of dimension
$m$ which is defined as follows:
$G_{m, n}$ consists of all pairs $(R, f)$ satisfying
the following conditions:
\begin{enumarab}
\item $R$ is equivalence relation on $m$,
\item $f$ maps $\{ (\kappa, \lambda) : \kappa, \lambda < n,
\kappa  \not{R} \lambda\}$ into $n$,
\item for all $\kappa, \lambda < m$, if $\kappa \not{R} \lambda$
then $f_{\kappa \lambda } = f_{\lambda \kappa}$,
\item for all $\kappa, \lambda, \mu < m$, if $\kappa \not{R}
\lambda R \mu$ then $f_{\kappa \lambda } = f_{\kappa \mu}$,
\item for all $\kappa, \lambda, \mu < n$, if $\kappa \not{R}
\lambda \not{R} \mu \not{R} \kappa$ then $|f_{\kappa \lambda },
f_{\kappa \mu}, f_{\lambda \mu}| \neq 1.$
\end{enumarab}
For $\kappa < m$ and $(R, f), (S, g) \in G(m,n)$ we define
\begin{eqnarray*}
&(R, f) T_\kappa (S, g) ~~ \textrm{iff} ~~ R \cap {}^2(n
\smallsetminus
\{\kappa\}) = S \cap {}^2(m \smallsetminus \{\kappa\}) \\
& \textrm{and for all} ~~ \lambda, \mu \in m \smallsetminus
\{\kappa\}, ~~ \textrm{if} ~~ \lambda \not{R} \mu~~ \textrm{then} ~~
f_{\lambda \mu } = g_{\lambda \mu}.
\end{eqnarray*}
For any $ \kappa, \lambda <m$, set
$$ E_{\kappa \lambda} = \{ ( R, f) \in G(m,n) : \kappa R \lambda \}.$$
\end{definition}
Monk proves that this indeed defines a cylindric atom structure, he defines
the $m$ dimensional cylindric algebra $\C(m,n)=\Ca(\G(m,n),$ then he proves:

\begin{theorem}
\begin{enumarab}
\item For $3\leq m\leq n<\omega$ and $n-1\leq \mu< \omega$, $\Nr_m\C(n,\mu)\cong \C(m,\mu)$.
In particular, $\C(m, m+k)\cong \Nr_m(\C(n, n+k)$.
\item Let $x_n=\{(R,f)\in G_{n, n+k}; R=(R\cap ^2n) \cup (Id\upharpoonright {}^2(n\sim m))\\
\text { for all $u, v$,} uRv, f(u,v)\in n+k,
\text { and
for all } \mu\in n\sim m, v<\mu,\\ f(\mu, v)=\mu+k\}.$

Then
$\C(n, n+k)\cong \Rl_x\Rd_n\C(m, m+k).$
\end{enumarab}
\end{theorem}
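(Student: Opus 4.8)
I would prove both parts at the level of atom structures, using that $\C(p,\mu)=\Ca\G(p,\mu)$ is the \emph{full} complex (power set) algebra of $\G(p,\mu)$: its elements are the subsets of $G_{p,\mu}$, its atoms the singletons, and its non-Boolean operations the complex liftings $c_i=T_i^\ast$ and $d_{ij}=E_{ij}$. Since a neat reduct, and a relativised reduct to a union of atoms, of a full complex algebra is again the full complex algebra of an atom structure, it suffices in each part to produce a bijection of the relevant atom structures and check that it is an isomorphism of $\CA_m$ atom structures. For part (1) I define the restriction map $\theta\colon G_{n,\mu}\to G_{m,\mu}$ by $\theta(R,f)=(R\cap{}^2m,\,f\restriction{}^2m)$, and check it is well defined: conditions (i)--(iv) for $G_{m,\mu}$ are inherited by restriction, and the no-monochromatic-triangle condition (v) is inherited because a triangle on $m$ is a triangle on $n$. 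Each $T_i$ is an equivalence relation and $(R,f)\,T_i\,(S,g)$ says exactly that the pairs agree off coordinate $i$; hence $X\subseteq G_{n,\mu}$ satisfies $c_iX=X$ for every $i\in n\setminus m$ (that is, $X\in\Nr_m\C(n,\mu)$) iff $X$ is a union of connected components of $\{T_i:m\le i<n\}$. These components coincide with the fibres of $\theta$, so $X\mapsto\theta[X]$ will be a Boolean isomorphism $\Nr_m\C(n,\mu)\to\wp(G_{m,\mu})=\C(m,\mu)$.

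Two genuinely combinatorial points remain in part (1), and both are where the hypothesis $\mu\ge n-1$ enters. First, $\theta$ is onto: given $(R_0,f_0)\in G_{m,\mu}$ one extends $R_0$ to an equivalence $R$ on $n$ with each new coordinate isolated and extends $f_0$ to a labelling of the remaining non-$R$ pairs of $n$ avoiding monochromatic triangles, which is possible precisely because $n-1$ colours always suffice. Second, any two extensions of the same $(R_0,f_0)$ are connected by changing one new coordinate at a time through consistent elements, so that each fibre is a single component; here a spare colour is used to route past a would-be monochromatic triangle at an intermediate step. That $\theta$ transports $c_i$ and $d_{ij}$ for $i,j<m$ is then routine from the local form of $T_i$ and $E_{ij}$: on a component-saturated set the relativised relations $T_i\cap(X\times X)$ and $E_{ij}\cap X$ project under $\theta$ onto $T_i$ and $E_{ij}$ of $G_{m,\mu}$. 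The displayed special case is obtained by taking $\mu=n+k$.

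For part (2) I realise $\C(m,m+k)$ as the relativised $m$-reduct $\Rl_{x}\Rd_m\C(n,n+k)$ determined by $x=x_n\in\C(n,n+k)$. The plan is to define $\psi\colon G_{m,m+k}\to x_n$ sending $(R_0,f_0)$ to the unique extension $(R,f)$ with the new coordinates isolated, $R=R_0\cup(\mathrm{Id}\restriction{}^2(n\setminus m))$, and with the reserved labels $f(\mu,v)=\mu+k$ for all $\mu\in n\setminus m$ and $v<\mu$. The effect of these labels is that they use the top $n-m$ colours $m+k,\dots,n-1+k$, which lie above the range $m+k$ of $f_0$ and are pairwise distinct; hence every triangle meeting a new coordinate carries a colour borne by none of its other edges, so condition (v) cannot fail and $\psi$ is well defined. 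Since the defining conditions of $x_n$ force every label touching a new coordinate, $\psi$ is a bijection onto $x_n$. Finally I check that $\psi$ is an isomorphism from $\G(m,m+k)$ onto the relativised structure underlying $\Rl_{x}\Rd_m\C(n,n+k)$: because the new coordinates are pinned throughout $x_n$, each relativised $c_i$ and $d_{ij}$ with $i,j<m$ acts only through the $m$-part, matching $T_i$ and $E_{ij}$ on $G_{m,m+k}$.

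The main obstacle is the colouring analysis rather than the Boolean bookkeeping. In part (1) it is the extension-and-connection lemma that makes $\theta$ onto with single-component fibres, where the bound $\mu\ge n-1$ must be shown to suffice both for monochromatic-triangle-free extensions and for routing between two extensions of a common restriction; in part (2) it is verifying that the reserved labels $f(\mu,v)=\mu+k$ are globally compatible with conditions (iv)--(v) and, crucially, that relativising to $x_n$ collapses the $m$-dimensional operations of $\C(n,n+k)$ exactly onto those of $\C(m,m+k)$, with no spurious identifications introduced by the extra coordinates.
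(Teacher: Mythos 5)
The paper does not actually prove this statement: its ``proof'' is the citation of Theorems 3.2.77 and 3.2.86 of \cite{HMT2}, so there is nothing internal to compare your argument against. Judged on its own terms, your plan is correct and is essentially the classical argument behind those theorems: since $\C(p,\mu)$ is the full complex algebra of its atom structure, everything reduces to atom-structure combinatorics. For part (1) the two lemmas you isolate are exactly the load-bearing ones, and both work for the reason you indicate: to extend a consistent labelling to a new coordinate one may give that coordinate pairwise distinct labels to the at most $n-1$ other points, which is where $\mu\geq n-1$ enters; and any two extensions of a fixed $(R_0,f_0)$ can be routed, one new coordinate per $T_i$-step, through a canonical extension, so each fibre of $\theta$ is a single component of $\{T_i: m\leq i<n\}$. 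One caveat: the verification that $\theta$ transports $\cyl{i}$ for $i<m$ is not purely ``local'' bookkeeping --- given $(S_0,g_0)\,T_i^{(m)}\,\theta(R,f)$ you must lift $(S_0,g_0)$ to some $(S,g)\,T_i^{(n)}\,(R,f)$, which is another instance of the extension lemma (each new label at $i$ has at most $n-2$ forbidden values, so $\mu\geq n-1$ again suffices); this should be said explicitly rather than dismissed as routine. For part (2) you have silently, and correctly, repaired two defects in the paper's statement: the displayed isomorphism should read $\C(m,m+k)\cong \Rl_{x_n}\Rd_m\C(n,n+k)$, and the definition of $x_n$ only makes sense if the new coordinates are $R$-isolated and the free labels inside $^2m$ are drawn from $m+k$ while the pinned labels $f(\mu,v)=\mu+k$ use the reserved colours $m+k,\dots,n-1+k$. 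With that reading your map $\psi$ is well defined (in any triangle meeting a new coordinate the bottom edge carries a colour strictly below the top coordinate's reserved colour, so no triangle is monochromatic), bijective onto $x_n$, and compatible with the relativised operations because two elements of $x_n$ agree off $i<m$ iff their restrictions to $m$ do. So the proposal is a sound self-contained reconstruction of the cited result, modulo writing out the flagged counting arguments.
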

\begin{demo}{Proof} \cite{HMT2}, theorems 3.2.77 and 3.2.86.
\end{demo}
\begin{theorem} The class $\RCA_{\alpha}$ is not axiomatized by a finite schema.
\end{theorem}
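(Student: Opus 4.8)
The plan is to deduce this non-finite-axiomatizability statement directly from Theorem~\ref{2.12} together with Monk's algebras $\C(m,n)$ and the properties collected in the preceding theorem. The key observation is that Theorem~\ref{2.12} is precisely the abstract lifting machine, and Monk's finite-dimensional algebras are exactly the concrete witnesses its hypotheses demand. So the entire argument reduces to checking that the family $\C(m, m+k)$ (for a suitable fixed $k$, indeed $k=1$ suffices) verifies the four hypotheses of Theorem~\ref{2.12} for the system of varieties $\K_\alpha = \CA_\alpha$, which is a complete system definable by a schema.

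First I would set $\K_\alpha = \CA_\alpha$ and fix $k$; the cleanest choice is to take Monk's original algebras witnessing that $S\Nr_m\CA_{m+2} \subsetneq S\Nr_m\CA_{m+1}$ (equivalently working with the parameter $n = m+k+1$). For each pair $3 \leq m < n < \omega$ I set $\C(m,n,r) := \C(m,n)$ (Monk's construction carries an implicit parameter indexing the increasingly fine algebras; here the ``$r$'' is the sequence index along which the ultraproduct becomes representable). Hypothesis (1), that $\C(m,n,r) \in S\Nr_m\CA_n$, and hypothesis (2), that $\C(m,n,r) \notin S\Nr_m\CA_{n+1}$, are exactly Monk's non-representability estimates for his atom structures $\G_{m,n}$: the algebra neatly embeds into dimension $n$ but \pa\ wins the relevant game forbidding embedding into dimension $n+1$, the sharpest statement being the genuine failure of representability in the limit. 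Hypothesis (3), that the product $\prod_{r} \C(m,n,r)$ lands back in $S\Nr_m\CA_n$, is the Monk ``convergence'' phenomenon: the ultraproduct (or product) of the non-representable finite-colour algebras becomes representable precisely because the chromatic number jumps to infinity, so no monochromatic-triangle obstruction survives. Finally hypothesis (4), the relativization isomorphism $\C(m,m+k) \cong \Rl_x \C(n,n+k)$, is furnished verbatim by part (2) of the immediately preceding theorem, with $x = x_n$ the explicitly described element, while part (1) gives the neat-reduct identity $\C(m,m+k) \cong \Nr_m\C(n,n+k)$ used inside the lifting.

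Having verified the hypotheses, I invoke Theorem~\ref{2.12} to conclude that for every infinite ordinal $\alpha$ the class $S\Nr_\alpha\CA_{\alpha+k+1}$ is not finitely-schema axiomatizable over $S\Nr_\alpha\CA_{\alpha+k}$. To pass from this relative statement to the absolute claim about $\RCA_\alpha$, I note that $\RCA_\alpha = \bigcap_{k} S\Nr_\alpha\CA_{\alpha+k}$ (the neat embedding theorem), so $\RCA_\alpha$ sits as the intersection of the strictly decreasing chain of the classes $S\Nr_\alpha\CA_{\alpha+k}$. A finite schema axiomatizing $\RCA_\alpha$ would, by \Los's theorem applied to an ultraproduct of the lifted algebras $\B^r$ witnessing the proper inclusions at each level, be forced to hold at some finite level $\alpha+k$, contradicting strictness of the chain; equivalently, a finite schema axiomatization of $\RCA_\alpha$ would descend to a finite schema over $S\Nr_\alpha\CA_{\alpha+k}$ for some $k$, which the lifting theorem has just excluded.

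The main obstacle I expect is not the lifting step, which is now a black box, but the bookkeeping in hypothesis (3) and in the descent argument: one must be careful that the product/ultraproduct of Monk's algebras is taken along the right index so that representability is genuinely recovered in the limit (the ``anti-Monk'' direction), and that the ultrafilter chosen in the $\RCA_\alpha$ descent is compatible with the dimension-indexing $\rho_\Gamma$ used throughout Theorem~\ref{2.12}. A secondary subtlety is confirming that $(\CA_\alpha : \alpha \geq 2)$ is a complete system definable by a schema in the precise sense the theorem requires, and that Monk's cited isomorphisms (theorems 3.2.77 and 3.2.86 of \cite{HMT2}) supply hypothesis (4) with the parameter conventions matching those in the statement; these are routine but must be stated explicitly to make the invocation of Theorem~\ref{2.12} airtight.
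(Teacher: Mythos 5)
Your proposal follows essentially the same route as the paper: both feed Monk's finite-dimensional algebras $\C(m,n)$ (via theorems 3.2.77 and 3.2.86 of \cite{HMT2}) into the lifting machinery of Theorem~\ref{2.12} to obtain, for each $k$, an algebra $\B_k\in S\Nr_{\alpha}\CA_{\alpha+k}\setminus S\Nr_{\alpha}\CA_{\alpha+k+1}$ (hence non-representable, since $\RCA_{\alpha}=S\Nr_{\alpha}\CA_{\alpha+\omega}$), and then note that a non-principal ultraproduct of the $\B_k$'s lies in every $S\Nr_{\alpha}\CA_{\alpha+k}$, hence is representable, which rules out a finite schema axiomatization by {\L}o\'{s}'s theorem. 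The only loose point is your ``equivalent'' reformulation of the descent (a finite schema for $\RCA_{\alpha}$ does not literally yield one for $S\Nr_{\alpha}\CA_{\alpha+k+1}$ over $S\Nr_{\alpha}\CA_{\alpha+k}$), but your primary ultraproduct formulation is exactly the paper's argument.
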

\begin{demo}{Proof} By $\RCA_{\alpha}=S\Nr_{\alpha}\CA_{\alpha+\omega}.$ Let $r\in \omega$. Then $\B^r$, call it $\B_k$ constructed above,
from the finite dimensional algebras increasing in dimension, is in
$S\Nr_{\alpha}\CA_{\alpha+k}$ but it is not in $S\Nr_{\alpha}\CA_{\alpha+k+1}$ least representable.
Then the ultraproduct of the $\B_k$'s over a non-principal ultrafilter will be in
$S\Nr_{\alpha}\CA_{\alpha+\omega},$ hence will be representable.
\end{demo}

Johnsson defined a polyadic atom structure based on the $\G_{m,n}$. First a helpful piece of notation:
For relations $R$ and $G$, $R\circ G$ is the relation
$$\{(a,b): \exists c (a,c)\in R, (c, b)\in S\}.$$
Now Johnson extended the atom structure $\G(m,n)$ by

$(R,f)\equiv_{ij}(S,g)$ iff $f(i,j)=g(j,i)$ and if $(i,j)\in R$, then $R=S$, if not, then $R=S\circ [i,j]$, as composition of relations.

Strictly speaking, Johnsson did not define substitutions quite in this way; because he has all finite transformations, not only transpositions.
Then, quasipolyadic algebras was not formulated in schematizable form, a task accomplished by Sain and Thompson much later.

\begin{theorem}(Sain-Thompson) ${\sf RQA}_{\alpha}$ and ${\sf RQEA}_{\alpha}$ are not finite schema axiomatizable
\end{theorem}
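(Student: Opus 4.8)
The plan is to run exactly the same lifting machinery that delivered the non--finite--schema axiomatizability of $\RCA_\alpha$, but now over the quasipolyadic signature, the crucial enabling fact being the Sain--Thompson schematization itself: once Johnsson's substitution operators are presented in schematizable form, the families $(\QA_\alpha:\alpha\geq 2)$ and $(\QEA_\alpha:\alpha\geq 2)$ become complete systems of varieties definable by a schema, so that Theorem \ref{2.12} applies with $\K_\alpha=\QA_\alpha$ (respectively $\QEA_\alpha$). First I would record that ${\sf RQA}_\alpha=S\Nr_\alpha\QA_{\alpha+\omega}$ and ${\sf RQEA}_\alpha=S\Nr_\alpha\QEA_{\alpha+\omega}$, so that the statement reduces, exactly as in the cylindric case, to producing for each $k$ a family of algebras in $S\Nr_\alpha\K_{\alpha+k}\setminus S\Nr_\alpha\K_{\alpha+k+1}$ whose ultraproduct over a non-principal ultrafilter lands back in $S\Nr_\alpha\K_{\alpha+\omega}$.

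The finite-dimensional input is Johnsson's polyadic atom structure obtained by enriching Monk's $\G(m,n)$ with the operators $(R,f)\equiv_{ij}(S,g)$ described above; write $\C(m,n)$ for the complex algebra of this enriched atom structure, now a $\QA_m$ (respectively a $\QEA_m$, retaining the diagonals). The key point is that the two structural isomorphisms of Monk's theorem survive the enrichment. I would verify that the neat reduct isomorphism $\Nr_m\C(n,\mu)\cong\C(m,\mu)$ and the relativization isomorphism $\C(m,m+k)\cong\Rl_x\Rd_m\C(n,n+k)$ remain isomorphisms in the full quasipolyadic signature, because the substitutions $\equiv_{ij}$ are defined pointwise on the atom structure in a way that commutes with the cylindrifications $T_\kappa$ and restricts correctly to the relativizing element $x$. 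Granting this, conditions (1)--(4) of Theorem \ref{2.12} hold for the family $(\C(m,n,r))$ (the parameter $r$ carried along as in the cylindric case), so Theorem \ref{2.12} yields that $S\Nr_\alpha\QA_{\alpha+k+1}$ is not axiomatizable by a finite schema over $S\Nr_\alpha\QA_{\alpha+k}$, and likewise for $\QEA$.

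Finally I would close the argument exactly as in the proof that $\RCA_\alpha$ is not axiomatized by a finite schema: the algebras $\B^r$ produced by the lifting construction sit in $S\Nr_\alpha\K_{\alpha+k}\setminus S\Nr_\alpha\K_{\alpha+k+1}$, while a non-principal ultraproduct of the $\B^r$ (as $k$ grows) lies in $S\Nr_\alpha\K_{\alpha+\omega}$, which is ${\sf RQA}_\alpha$ (respectively ${\sf RQEA}_\alpha$); hence no finite schema can axiomatize the representable class.

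The main obstacle I anticipate is the middle step: checking that the relativizing element $x$ of Monk's theorem is closed under Johnsson's substitution operators, so that $\Rl_x\Rd_m\C(n,n+k)$ is genuinely a quasipolyadic subreduct and the isomorphism respects the $\equiv_{ij}$. Equivalently, one must see that $x$, defined by a condition on pairs $(R,f)$, picks out a union of atoms invariant under the transposition-induced substitutions; this is the one place where the polyadic operations could a priori break the relativization, and it requires a short but genuine verification rather than a mechanical transcription of the cylindric argument.
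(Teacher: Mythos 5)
Your proposal follows what the paper offers as its second proof of this theorem: invoke Theorem \ref{2.12} for the quasipolyadic system of varieties, after supplying finite-dimensional witnesses satisfying its four hypotheses. The difference lies in the witnesses. You take Johnsson's enrichment of Monk's atom structures $\G(m,n)$ by the transposition-induced substitutions $\equiv_{ij}$, and you correctly isolate the verification this forces (that Monk's neat-reduct isomorphism $\Nr_m\C(n,\mu)\cong\C(m,\mu)$ and the relativization isomorphism survive in the enriched signature, the relativizing element $x$ being a union of atoms invariant under the substitutions). The paper instead cites the Hirsch--Sayed Ahmed algebras, realized elsewhere in the paper as $\C(m,n,r)=\Ca(H_m^{n+1}(\A(n,r),\omega))$ built from relation algebras with \emph{symmetric} hyperbases; there the polyadic operations come for free from symmetry of the hyperbasis, so no separate compatibility check is needed, and the parameter $r$ needed for hypotheses (2) and (3) of Theorem \ref{2.12} is built in from the start, whereas Monk's original $\C(m,n)$ carry no such parameter. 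The paper also records a shorter first route for the statement as literally given: ${\sf RQA}_\alpha=S\Nr_\alpha\QA_{\alpha+\omega}$ together with the non-representability of the diagonal-free reducts of Monk's algebras and of their infinite dilations.

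There is one genuine gap, on the $\QA$ side. For ${\sf RQEA}_\alpha$ your argument is sound modulo the verification you yourself flag, but for ${\sf RQA}_\alpha$ you discard the diagonals, and then the needed non-membership $\C(m,n,r)\notin S\Nr_m\QA_{n+1}$ (or, for the bare statement, non-representability of the $\QA$-reduct of the lifted algebras) does not follow formally from the corresponding fact in the diagonal-ed signature: if an algebra neatly embeds in the richer signature then its reduct neatly embeds in the poorer one, so the implication you need runs in the wrong direction, and non-membership could a priori be lost on passing to the reduct. The standard repair --- which is precisely what the paper's phrase about the diagonal-free reducts of Monk's algebras being non-representable is encoding --- is that these algebras are generated by elements with small dimension sets, and for such algebras representability of the diagonal-free (hence of the $\QA$) reduct already yields representability of the full algebra. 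Without this observation the word ``respectively'' linking your $\QA$ and $\QEA$ cases is not justified.
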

\begin{demo}{Proof} One proof uses the fact that ${\sf RQA}_{\alpha}=S\Nr_{\alpha}{\sf QA}_{\alpha+\omega}$, and that the diagonal free reduct
Monk's algebras (hence their infinite dilations) are not representable. Another proof uses a result of Robin Hirsch and Tarek Sayed
Ahmed that there exists finite dimensional quasipolyadic algebras satisfying the hypothesis of theorem \ref{2.12}.
A completely analogous result holds for Pinters algebras, using also finite dimensional Pinters algebras satisfying the hypothesis
of theorem \ref{2.12}.
\end{demo}

\subsection{The Good and the Bad}

We recall the following definition from Hirsch and Hodkinson \cite{HHbook2}, except that we turn the glass around, we replace the model theoretic
definition based on structures satisfying
a set of first order sentences, by  coloured graphs which are precisely the models of these sentences. This makes the
affinity with Monk's original algebras easier to
discern.

\begin{definition} A Monk's algebra is an algebra defined as follows.
Let $\Gamma$ be a graph and and $\Gamma\times n$ be its $n$ disjoint copies.
We define a class $K$ of hypergraphs,
where only $n-1$ tuples are
coloured from $\Gamma\times n$.
$M$ is in $I(\Gamma)$, if
\begin{enumarab}
\item  Every $\bar{s}\in {}^{n-1}M$ and all its permutations have a unique colour.
\item if $(a_0,\ldots a_{i-1},\ldots a_{i+1})$ is labelled by $p_i$, for $i<n-1$, then there are $i<j$ such that $p_i$ $p_j$ has an edge.
\end{enumarab}
\end{definition}

Let $\rho(I(\Gamma))=\{f: f: n\to M\in K\}$ be the corresponding atom structure.
and $\M(\Gamma)=\Cm\rho(I(\Gamma))$.

\begin{theorem}  $\M(\Gamma)$ is representable if and only if $\Gamma$ has infinite chromatic number
\end{theorem}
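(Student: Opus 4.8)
The plan is to establish the biconditional $\M(\Gamma)$ is representable $\iff$ $\chi(\Gamma)=\infty$ by proving the two implications separately, exploiting the fact that $\M(\Gamma)=\Cm\rho(I(\Gamma))$ is a complex algebra of an atom structure built from coloured hypergraphs, so that the question of representability is governed by the combinatorics of colouring $\Gamma\times n$. Throughout I would reduce the problem to a Monk-style finite-infinite discrepancy argument of exactly the flavour described in the earlier sections: a representation of a complex algebra forces the atoms to behave coherently across a concrete base, and the colouring constraints (2) in the definition of $I(\Gamma)$ encode the graph structure of $\Gamma$ into that base.

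For the direction $\chi(\Gamma)<\infty \Rightarrow \M(\Gamma)$ not representable, I would argue contrapositively along the lines of the ``typical Monk argument'' invoked repeatedly above. Suppose $\Gamma$ has a finite colouring using $c$ colours; then $\Gamma\times n$ also has finite chromatic number, so any representation of $\M(\Gamma)$ would induce a finite partition of the relevant sequences into monochromatic independent sets. By Ramsey's theorem, a sufficiently large homogeneous set must appear, and this produces a configuration of $n-1$ tuples whose colours all share an index with \emph{no} underlying edge of $\Gamma$ available to satisfy condition (2) in the definition of $I(\Gamma)$. This is precisely the forced inconsistent monochromatic triangle (the finite-infinite contradiction) that prohibits representation, exactly as in Theorem 1.4 and the discussion following it. The key is that $\Gamma$ is \emph{coded} in $\Cm\rho(I(\Gamma))$, so a finite chromatic number of $\Gamma$ is visible on the level of the complete algebra.

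For the converse $\chi(\Gamma)=\infty \Rightarrow \M(\Gamma)$ representable, I would follow the model-theoretic blow-up-and-blur template of Section 2. Using $\Gamma$ one builds a class of coloured graphs and takes an $n$-homogeneous countable limit model $M$ (a Fra\"iss\'e-type construction) having quantifier elimination; the representation of $\M(\Gamma)$ is then read off from the set algebra based on $M$, where atoms correspond to the coloured $(n-1)$-hypergraphs. The infinite chromatic number of $\Gamma$ guarantees that whenever $\exists$ needs to colour a new tuple consistently with condition (2), an appropriate edge of $\Gamma$ is always available — infinitely many colours means no finite obstruction ever forces an inconsistent monochromatic configuration. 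Concretely, one verifies that every network/labelled graph arising in the game can be extended, so that $\exists$ has a winning strategy in the relevant back-and-forth (amalgamation) game, yielding a complete representation of the complex algebra. This is the ``if the blurs are infinite, they blur also the structure'' phenomenon noted in the introduction.

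I expect the main obstacle to be the converse direction, specifically verifying that the amalgamation/extension property needed for $\exists$'s winning strategy genuinely holds using only $\chi(\Gamma)=\infty$, rather than a stronger Ramsey-theoretic hypothesis on $\Gamma$. One must check that the constraints (1) and (2) defining $I(\Gamma)$ are closed under the required one-point extensions of coloured hypergraphs, and that infinite chromatic number is exactly the combinatorial input that prevents a forced clash of colours when a new node is added to complete a partial $(n-1)$-tuple. Establishing this extension lemma carefully — and confirming that the resulting set algebra is genuinely $\M(\Gamma)$ and not merely a subalgebra or reduct — is the delicate technical heart of the proof; the forward direction, by contrast, reduces to a clean application of Ramsey's theorem to the finite colouring.
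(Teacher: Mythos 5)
Your forward direction ($\chi(\Gamma)<\infty$ implies non-representability via Ramsey's theorem) is exactly the paper's argument, which dispatches that half in one sentence. The problem is your converse, which is both a different route from the paper's and, as written, has a genuine gap. The paper does \emph{not} build a Fra\"iss\'e limit here: it passes to the ultrafilter extension $\Gamma^*$ of $\Gamma$, constructs a bounded morphism from $\M(\Gamma)_+$ into $\rho(I(\Gamma^*))$, and then shows that the canonical extension $\M(\Gamma)^{\sigma}$ is \emph{completely} representable by giving $\exists$ a winning strategy in the $\omega$-rounded atomic game; the strategy hinges on the fact that $\chi(\Gamma)=\infty$ if and only if $\Gamma^*$ contains a \emph{reflexive node}, and it is this reflexive node that $\exists$ uses to label every new tuple consistently with condition (2). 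Representability of $\M(\Gamma)$ then follows because it embeds in $\M(\Gamma)^{\sigma}$.

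The gap in your version is the claim that ``infinitely many colours means no finite obstruction ever forces an inconsistent monochromatic configuration.'' Infinite chromatic number does not supply infinitely many colours or an abundance of edges in $\Gamma$ itself --- an infinite bipartite graph has infinitely many vertices and chromatic number $2$ --- so nothing in $\Gamma$ directly guarantees that an edge is available when $\exists$ must complete a tuple. The correct combinatorial input is precisely the reflexive ultrafilter in $\Gamma^*$, which you never produce. Moreover, the Fra\"iss\'e/homogeneous-model template of Section 2 that you invoke is designed to represent the \emph{term} algebra over the atom structure (by discarding the assignments involving the flexible shade $\rho$), and in the finite-chromatic-number case the complex algebra built the same way is \emph{not} representable; so that template alone cannot establish representability of the full complex algebra $\Cm\rho(I(\Gamma))$, which requires the representation to survive the passage to arbitrary joins. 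You correctly flag this worry (``genuinely $\M(\Gamma)$ and not merely a subalgebra'') as the delicate point, but resolving it is exactly where the ultrafilter-extension argument is needed, and your proposal does not supply it.
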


\begin{proof}\cite{HHbook2} for the details. Here we will be sketchy.
Let $\Gamma^* $ be the ultrafilter extension of $\Gamma$.
We first define a strong bounded morphism $\Theta$
form $\M(\Gamma)_+$ to $\rho(I(\Gamma^*))$, as follows:
For any $x_0,\ldots x_{n-2}<n$ and $X\subseteq \Gamma^*\times n$, define the following element
of $\M(\Gamma^*)$:
$$X^{(x_0,\ldots x_{n-2})}=\{[f]\in \rho(I(\Gamma^*): \exists p\in X[M_f\models p(f(x_0),\ldots f(x_{n-2})]\}.$$

Let $\mu$ be an ultrafilter in $\M(\Gamma)$ Define $\sim $on $n$ by $i\sim j$ iff $d_{ij}\in \mu$ Let $g$ be the projection map from $n$ to $n/\sim$.

Define a $\Gamma^*\times n$ coloured graph by with domain $n/\sim$ as follows . For each $v\in \Gamma^*\times n$
and $x_0,\ldots x_{n-2}<n$, we let
$$M_{\mu}\models v(g(x_0),\ldots g(x_{n-2})\Longleftrightarrow  X^{(x_0,\ldots x_{n-2})}\in \mu.$$

We show that $\Cm(\M(\Gamma)_+)=\M(\Gamma)^{\sigma}$ is completely representable, by showing that
\pe\ has a \ws\ in the $\omega$ rounded atomic game on networks, by identifying networks with
structures.  Let $\Theta$ be as defined above. Let $N$ be a $\M(\Gamma)^{\sigma}$ network.

Then $\theta(N)$ is an $\M(\Gamma)^{\sigma}$ network. Identify $\Theta(N)$ with a structure $N^*$ with same domain and such that for
$x_0,\ldots x_{n-1}\in N$ with $\theta(N(x_0,\ldots x{n-1})=[f]$, say, each $i<n$ and each $p\in \Gamma^*\times n$,
we have $N^*\models p(x_0,\ldots x_{i-1}, x_{i+1},\ldots x_{n-1})$ if
$M_f\models p(f(0),\ldots f(i-1), f(i+1),\ldots f(n-1))$.

Assume that \pa\ chose a node $x\in {}^nN$, $i<n$ and an atom $[f]$ with $[f]\leq c_iN(x)$.
Assume that $f(i)\neq f(j)$ else she would have chosen the same network.
Let $y=x[i|z]\in {}^nN\cup \{z\}$, and $Y=\{y_0,\ldots y_{n-1}\}.$ Define
$q_j\in \Gamma\times n$ as follows. If $\bar{y}\sim y_i$
are pairwise distinct, let $q_j\in \Gamma\times n$ be the unique element satisfying
$M\models q_j (y_0\ldots y{n-1}, y_{j+1}, y_{n-1})$. Else we chose $q_j$ arbitrarily,
then chose a new copy and let $d$ be the relexive node in this copy.
Define $M\models d(t_0,\ldots t_{n-2})$ whenever $t_0,\ldots t_{n-2}\in M$
are distinct and $z\in \{t_0,\ldots t_{n-2}\}\nsubseteq Y$. This structure is as required, and we are done.

The converse follows from the fact that a representation of an algebra based on a graph with finite chromatic number, necessarily contradicts
Ramsey's theorem.
\end{proof}

\begin{definition}
\begin{enumarab}

\item A Monks algebra is good if $\chi(\Gamma)=\infty$

\item A Monk's algebra is bad if $\chi(\Gamma)<\infty$

\end{enumarab}
\end{definition}
It is easy to construct a good Monks algebra
as an ultraproduct (limit) of bad Monk algebras, as we did above, taking algebras having finitely many blurs converging to one with infinitely many,
more than enough to represent it.
The converse is hard.
It took Erdos probabilistic graphs, to get a sequence of good graphs converging to a bad one.

\begin{theorem}(Hirsch Hodkinson) The class of strongly representable atom structures, for any signature between ${\sf Df}$ and ${\sf PEA}$
of finite dimensions $n\geq 3$ is not elementary
\end{theorem}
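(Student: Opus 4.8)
The plan is to exhibit a sequence of strongly representable atom structures whose ultraproduct fails to be strongly representable; since every elementary class is closed under ultraproducts (by the ultraproduct theorem), this shows at once that the class in question is not elementary. By the preceding theorem, the atom structure $\rho(I(\Gamma))$ is strongly representable exactly when $\M(\Gamma)=\Cm\rho(I(\Gamma))$ is representable, which happens exactly when $\chi(\Gamma)=\infty$; that is, a Monk atom structure is strongly representable iff the underlying graph is \emph{good}. Thus everything reduces to a purely graph-theoretic task: produce graphs $\Gamma_i$ $(i\in\omega)$, each with $\chi(\Gamma_i)=\infty$, whose ultraproduct $\prod_{i}\Gamma_i/F$ over a non-principal ultrafilter $F$ has finite chromatic number.

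First I would build the graphs using Erd\H{o}s' probabilistic theorem, which for every $k,l$ supplies a finite graph of chromatic number at least $k$ and girth at least $l$. For each $i$ set $\Gamma_i=\bigsqcup_{k\in\omega}G(k,i)$, the disjoint union over all $k$ of a finite graph $G(k,i)$ of chromatic number $\geq k$ and girth $\geq i$. Then $\Gamma_i$ contains subgraphs of unbounded chromatic number, so $\chi(\Gamma_i)=\infty$ and $\Gamma_i$ is good; at the same time every component, hence $\Gamma_i$ itself, has girth $\geq i$. For each fixed $l$ the sentence $\psi_l$ asserting the absence of a cycle of length $l$ is first order, and $\Gamma_i\models\psi_l$ for all $i>l$, a cofinite (hence $F$-large) set of indices; so by the ultraproduct theorem $\prod_i\Gamma_i/F\models\psi_l$ for every $l$. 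Consequently the ultraproduct graph has no finite cycles, i.e. it is a forest, and therefore it is $2$-colourable. Hence $\chi(\prod_i\Gamma_i/F)<\infty$, so this limit graph is \emph{bad}.

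It remains to transfer this from graphs to atom structures. The construction $\Gamma\mapsto\rho(I(\Gamma))$ is uniform and first order in the parameter $\Gamma$ (the class $I(\Gamma)$ of coloured graphs, and the atom structure $\rho(I(\Gamma))$ built from it, are defined by conditions that quantify only over $\Gamma$ together with finitely many nodes and colours), so it commutes with ultraproducts:
$$\prod_{i}\rho(I(\Gamma_i))/F\;\cong\;\rho\!\left(I\!\left(\textstyle\prod_i\Gamma_i/F\right)\right).$$
Each factor $\rho(I(\Gamma_i))$ is strongly representable since $\Gamma_i$ is good, while the right-hand side is not strongly representable since $\prod_i\Gamma_i/F$ is bad. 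Thus the class of strongly representable atom structures is not closed under ultraproducts, hence not elementary. Finally, the whole argument is insensitive to the choice of signature between ${\sf Df}$ and ${\sf PEA}$: the atom structures and the characterization of representability through $\chi(\Gamma)$ are the same in each case (adding or deleting diagonals, substitutions and the extra cylindrifications does not disturb the graph-theoretic criterion), so a single sequence of atom structures witnesses non-elementarity simultaneously for every such signature. The one genuinely delicate point is the graph construction together with the verification that $\rho(I(-))$ commutes with ultraproducts; once infinite girth is forced in the limit by ultraproduct transfer and thereby kills the chromatic number, the rest is bookkeeping.
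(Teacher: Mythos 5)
Your proposal is correct and is essentially the paper's intended argument: the paper itself only cites Hirsch--Hodkinson for this theorem, and the surrounding text makes clear that the proof is exactly the one you give, namely Erd\H{o}s graphs of large girth and unbounded chromatic number whose disjoint unions are good but whose ultraproduct is acyclic, hence $2$-colourable and bad, combined with the preceding equivalence between representability of $\M(\Gamma)$ and $\chi(\Gamma)=\infty$ and the fact that the first-order interpretable construction $\Gamma\mapsto\rho(I(\Gamma))$ commutes with ultraproducts. You correctly flag the interpretability/commutation step as the only point needing real verification; that is precisely what the cited source supplies.
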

\begin{proof}\cite{HHbook2}
\end{proof}
Let $\G$ be a graph. One can  define a family of first order structures in the signature $\G\cup \{\rho\}\times n$
as follows:
\begin{enumarab}
\item
For all $a,b\in M$, there is a unique $p\in \G\cup \{\rho\}\times n$, such that
$(a,b)\in p$.

\item If  $M\models (a,i)(x,y)\land (b,j)(y,z)\land (c,l)(x,z)$, then $| \{ i, j, l \}> 1 $, or
$ a, b, c \in \G$ and $\{ a, b, c\} $ has at least one edge
of $\G$, or exactly one of $a, b, c$ -- say, $a$ -- is $\rho$, and $bc$ is
an edge of $\G$, or two or more of $a, b, c$ are $\rho$.
\end{enumarab}
The second condition is exactly forbidding monochromatic independent triangles.
This can be  coded as a first order theory $T$.

The above construction is a kind of a Monk's algebras.
We will view the matter somewhat more deeply, inspired by ideas and constructions due to Hirsch and
Hodkinson.

We throw away the shade of red $\rho$, it will be recovered in the ultrafilter extension of $\G$.
More precisely, Let $\G$ be a graph. One can  define a family of first order structures in the signature $\G\times n$, denote it by $I(\G)$
as follows:
For all $a,b\in M$, there is a unique $p\in \G\times n$, such that
$(a,b)\in p$. If  $M\models (a,i)(x,y)\land (b,j)(y,z)\land (c,l)(x,z)$, then $| \{ i, j, l \}> 1 $, or
$ a, b, c \in \G$ and $\{ a, b, c\} $ has at least one edge
of $\G$.
For any graph $\Gamma$, let $\rho(\Gamma)$ be the atom structure defined from the class of models satisfying the above,
these are maps from $n\to M$, $M\in I(\G)$, endowed with an obvious equivalence relation,
with cylindrifiers and diagonal elements defined as \cite{HHbook2}, and let
$\M(\Gamma)$ be the complex algebra of this atom structure.

Fix $\G$, and let  $\G^* $ be the ultrafilter extension of $\G$.
We first define a strong bounded morphism $\Theta$
form $\M(\G)_+$ to $\rho(I(\G^*))$, as follows:
For any $x_0, x_1<n$ and $X\subseteq \G*\times n$, define the following element
of $\M(\G^*)$:
$$X^{(x_0, x_1)}=\{[f]\in \rho(I(\G^*)): \exists p\in X[M_f\models p(f(x_0),f(x_1))]\}.$$
Let $\mu$ be an ultrafilter in $\M(\Gamma)$. Define $\sim $on $n$ by $i\sim j$ iff $d_{ij}\in \mu$.
Let $g$ be the projection map from $n$ to $n/\sim$.
Define a $\G^*\times n$ coloured graph with domain $n/\sim$ as follows.
For each $v\in \Gamma^*\times n$
and $x_0, x_1<n$, we let
$$M_{\mu}\models v(g(x_0)\dots g(x_1))\Longleftrightarrow  X^{(x_0, x_1)}\in \mu.$$
Hence, any ultrafilter $\mu\in \M(\G)$ defines $M_{\mu}$ which is  a $\G^*$ structure.
If $\Gamma$ has infinite chromatic number, then $\G^*$ has a reflexive node, and this can be used
to completely represent $\M(\G))^{\sigma}$, hence represent  $\M(\G)$ as follows:
We show that \pe\ has a \ws\ in the $\omega$ rounded atomic game on networks.
Let $N$ be a given $\rho\M(\Gamma)$ network. Let $z\notin N$ and let $y=x[i|z]\in {}^n(N\cup \{z\}$. Let $d\in \G^*$ be a reflexive node, and
define $M\models d(t_0, t_1)$ if $z\in \{t_0, t_1\}\nsubseteq Y$, otherwise labelling the edges are like $N$.
One defines a relation atom structure as follows.
We use the graph $N\times \omega$ of countably many disjoint $N$ cliques.

We define a relation algebra atom structure $\alpha(\G)$ of the form
$(\{1'\}\cup (\G\times n), R_{1'}, \breve{R}, R_;)$.
The only identity atom is $1'$. All atoms are self converse,
so $\breve{R}=\{(a, a): a \text { an atom }\}.$
The colour of an atom $(a,i)\in \G\times n$ is $i$. The identity $1'$ has no colour. A triple $(a,b,c)$
of atoms in $\alpha(\G)$ is consistent if
$R;(a,b,c)$ holds. Then the consistent triples are $(a,b,c)$ where

\begin{itemize}

\item one of $a,b,c$ is $1'$ and the other two are equal, or

\item none of $a,b,c$ is $1'$ and they do not all have the same colour, or

\item $a=(a', i), b=(b', i)$ and $c=(c', i)$ for some $i<n$ and
$a',b',c'\in \G$, and there exists at least one graph edge
of $G$ in $\{a', b', c'\}$.

\end{itemize}

$\alpha(\G)$ can be checked to be a relation atom structure. It is exactly the same as that used by Hirsch and Hodkinson in \cite{HHbook}, except
that we use $n$ colours, instead of just $3$, so that it a Monk algebra not a rainbow one. However, some monochromatic triangles
are allowed namely the dependent ones.
This allows the relation algebra to have an $n$ dimensional cylindric basis
and, in fact, the atom structure of $\M(\Gamma)$ is isomorphic (as a cylindric algebra
atom structure) to the atom structure $\Mat_n$ of all $n$-dimensional basic
matrices over the relation algebra atom structure $\alpha(\G)$.

This is a variation on the construction of Hirsch and Hodkinson in \cite{HHbook2},
however there are two essential differences to the credit of this construction.
One is that the signature consists only of binary relation symbols, the other which actually is very much related to the previous condition,
is that it allows defining a relation algebra atom structure such that one
is weakly (strongly) representable if and only if the other is (strongly) weakly representable,
for they are based on the same graph $\G$ and so the chromatic number coded in the complex algebra of each is one and the same.

So in one go, we get:

\begin{theorem} The class of strongly representable atom structures of both relation and cylindric algebras of dimension $\geq 3$ 
is not elementary
\end{theorem}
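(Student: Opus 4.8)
The plan is to establish the theorem by showing that the class of strongly representable atom structures is \emph{not closed under ultraproducts}; since every elementary class is closed under ultraproducts by \Los's theorem, this refutes elementarity in a single blow, and the point of the construction of $\alpha(\Gamma)$ and $\M(\Gamma)$ above is that it does so simultaneously for relation and for cylindric algebras. The engine is the graph-theoretic dictionary just set up: for a graph $\Gamma$ the relation atom structure $\alpha(\Gamma)$ and the cylindric atom structure $\At\M(\Gamma)=\Mat_n\alpha(\Gamma)$ are read off from one and the same $\Gamma$, and by the theorem that $\M(\Gamma)$ is representable iff $\chi(\Gamma)=\infty$ — together with the fact that strong representability of an atom structure is, by definition, representability of its complex algebra $\Cm$ — each of the two atom structures is strongly representable precisely when $\Gamma$ is good, i.e. precisely when $\chi(\Gamma)=\infty$.

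First I would produce the graphs. Fixing a dimension $n\geq 3$, for each $i<\omega$ I would let $\Gamma_i$ be the disjoint union of a family $(G_{i,m})_{m<\omega}$ of finite graphs, where $G_{i,m}$ has chromatic number $\geq m$ and girth $\geq i$; such finite graphs exist by Erd\H{o}s' probabilistic method. Then $\chi(\Gamma_i)=\infty$, because $\Gamma_i$ contains subgraphs of unbounded chromatic number, while the girth of $\Gamma_i$ is still $\geq i$, since a disjoint union creates no new short cycles. Thus every $\Gamma_i$ is good, and correspondingly $\alpha(\Gamma_i)$ and $\At\M(\Gamma_i)$ are strongly representable.

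Next I would take a non-principal ultrafilter $F$ on $\omega$ and pass to the limit graph $\Gamma=\prod_{i<\omega}\Gamma_i/F$. For each fixed $k$, the property ``there is no cycle of length less than $k$'' (i.e. girth $\geq k$) is expressible by a single first-order sentence in the language of graphs, and it holds in all but finitely many $\Gamma_i$; hence by \Los's theorem it holds in $\Gamma$. As $k$ was arbitrary, $\Gamma$ has no finite cycle at all, so $\Gamma$ is a disjoint union of trees and is therefore $2$-colourable, giving $\chi(\Gamma)=2<\infty$. Consequently $\Gamma$ is bad, and neither $\alpha(\Gamma)$ nor $\At\M(\Gamma)$ is strongly representable. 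Because the assignments $\Gamma\mapsto\alpha(\Gamma)$ and $\Gamma\mapsto\At\M(\Gamma)$ are given by the \emph{same} uniform first-order recipe applied to $\Gamma$, they commute with ultraproducts, yielding $\prod_i\alpha(\Gamma_i)/F\cong\alpha(\Gamma)$ and $\prod_i\At\M(\Gamma_i)/F\cong\At\M(\Gamma)$. Thus in each of the two signatures we exhibit strongly representable atom structures whose ultraproduct fails to be strongly representable, so the class is not closed under ultraproducts and hence is not elementary.

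The main obstacle, and the only genuinely hard point, is securing the graphs with the two competing features — infinite chromatic number for \emph{every} $\Gamma_i$ together with girth tending to infinity — which is exactly where Erd\H{o}s' existence theorem is indispensable and cannot be replaced by an explicit construction; the easy converse direction (bad graphs with a good ultraproduct) would not suffice, as closure under ultraproducts must be violated for the class itself. A secondary, essentially bookkeeping, obstacle is verifying that the atom-structure constructions really commute with ultraproducts in both signatures at once. Here the decisive structural observation, and the reason the theorem falls out ``in one go'', is that $\alpha(\Gamma)$ and $\At\M(\Gamma)=\Mat_n\alpha(\Gamma)$ are extracted from the same $\Gamma$ by the same definable procedure, so a single sequence $(\Gamma_i)_{i<\omega}$ and a single ultrafilter $F$ settle the relation algebra and the cylindric algebra cases simultaneously.
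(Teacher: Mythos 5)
Your proposal is correct and follows essentially the same route as the paper, which invokes Erd\H{o}s' probabilistic graphs to produce a sequence of good (infinite chromatic number) graphs whose ultraproduct is bad ($2$-colourable), and exploits the fact that $\alpha(\Gamma)$ and $\Mat_n\alpha(\Gamma)$ are interpreted in the same graph $\Gamma$ so that both cases are settled at once. You have merely filled in the details (disjoint unions of finite Erd\H{o}s graphs of unbounded chromatic number and girth $\geq i$, the \L{o}\'s argument giving infinite girth in the limit, and the commutation of the interpretations with ultraproducts) that the paper leaves to the citation of Hirsch and Hodkinson.
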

\begin{proof} Using Erdos' graphs
\end{proof}

In what follows we make an attempt to lift the finite dimensional construction to the infinite dimensional case.
In our lifting argument implemented via ultraproducts
above, we had for every dimension, a sequence of bad algebras converging to a
good one in the same dimension.
Using ultraproducts we were able to construct a sequence of infinite dimensional bad algebras,
converging to a good one. Can we possibly reverse the process, here as well,
for infinite dimensions:

\begin{definition} A Monk's algebra is an algebra defined as follows.
Let $\Gamma$ be a graph and and $\bigcup_n\Gamma\times n$ be  the disjoint union of its $n$ disjoint copies.
We define a class $K=I(\Gamma)$ of hypergraphs,
where $m$ tuples, $m$ finite (could be arbitrarily large), are
coloured from $\bigcup_n\Gamma\times n$.
$M$ is in $I(\Gamma)$, if for all $p\in {}^{\omega}M$,
\begin{enumarab}
\item  Every $\bar{s}\in {}^{\omega}M^{(p)}$ and all its permutations have a unique colour.
\item  If $\bar{s}\in {}^{\omega}M^{(p)}$ has support $\{a_0,\ldots a_m\}$, and $(a_0,\ldots a_{i-1},\ldots a_{i+1}, )$ is labelled by $p_i$, for
$i<m-1$, then there are $i<j$ such that $p_i$ $p_j$ has an edge.
\end{enumarab}
\end{definition}

Let $\F$ is defined by taking $\bigcup\{^{\omega}M^{(p)}: p\in {}^{\omega}M \text { $M$  is a coloured graph }\}$,
and $\F/\sim$ is defined like in \cite{HHbook}, $f\in \F$ with the $\omega/\sim$ where
$\sim =ker f=\{(x,y): f(x)=f(y)\}$, and cylindrifiers are defined by $[f] T_i [g] \text { iff }
f\upharpoonright \omega\sim \{i\}=g\upharpoonright \omega\sim \{\i\}$, and
$[f]\in d_{ij}$ if $f(i)=f(j)$.
This can be easily checked to be a cylindric atom structure.

\begin{theorem}Suppose that $\M(\Gamma)$ is representable iff $\Gamma$ has infinite chromatic number
Then the class of weakly representable atom structures of $\omega$ dimensional cylindric algebras, is not elementary
\end{theorem}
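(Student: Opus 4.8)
The plan is to show that the class $\mathcal{W}$ of weakly representable $\omega$-dimensional cylindric atom structures is not closed under ultraproducts; since every elementary class is closed under ultraproducts, this suffices. First I would record that the passage $\Gamma\mapsto \At\M(\Gamma)$ is uniformly first order interpretable in the graph $\Gamma$: the coloured graphs of $I(\Gamma)$, the set $\F$ of $\omega$-sequences, the equivalence $\sim$, the cylindrifiers $T_i$ and the diagonals $\diag{i}{j}$ are all defined by formulas whose only graph-dependent parameters range over $\Gamma\times\omega$. Consequently, for any family $(\Gamma_i)_{i\in\omega}$ of graphs and any ultrafilter $F$ on $\omega$, by Los' theorem one has $\prod_{i\in\omega}\At\M(\Gamma_i)/F\cong \At\M(\prod_{i\in\omega}\Gamma_i/F)$, so the whole question is transported to the combinatorics of the graphs.

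The decisive reduction is the equivalence: $\At\M(\Gamma)$ is weakly representable if and only if $\chi(\Gamma)=\infty$. The direction $(\Leftarrow)$ is immediate from the hypothesis: if $\chi(\Gamma)=\infty$ then $\M(\Gamma)=\Cm\At\M(\Gamma)$ is representable, so $\At\M(\Gamma)$ is strongly representable, and a fortiori the term algebra $\Tm\At\M(\Gamma)\subseteq\M(\Gamma)$ is representable, which is weak representability. The direction $(\Rightarrow)$ --- equivalently that $\chi(\Gamma)<\infty$ forces $\Tm\At\M(\Gamma)$ to be non representable --- is the crux. Here I would use that the present construction is a \emph{plain} Monk construction with no splitting or blurring: the atoms are literally the colours from $\Gamma\times\omega$, so a forbidden monochromatic independent triangle is a configuration of atoms that already lives in the term algebra. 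Hence any representation of $\Tm\At\M(\Gamma)$ colours an infinite set of base points by the $\chi(\Gamma)<\infty$ colour classes, and Ramsey's theorem produces a monochromatic triangle that the consistency rules of $I(\Gamma)$ forbid --- exactly the argument giving the converse in the ``Good and Bad'' theorem above, but now run on the term algebra rather than on $\M(\Gamma)$. (Thematically one could instead invoke the infinite dimensional collapse: an atomic representable $\CA_\omega$ of this kind is completely representable, so weak representability would upgrade to strong representability and hence to representability of $\M(\Gamma)$, contradicting the hypothesis; the Ramsey route is preferable because it imposes no cardinality restriction on the algebra.)

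Finally I would feed in Erdos' probabilistic graphs, exactly as in the ``converse is hard'' remark: for each $i\in\omega$ there is a good graph $\Gamma_i$ with $\chi(\Gamma_i)=\infty$ and girth exceeding $i$, and these may be taken countable. Since ``containing no cycle of length $\ell$'' is first order for each fixed $\ell$, and holds in $\Gamma_i$ for all $i\geq\ell$, Los' theorem gives that the ultraproduct $\Gamma=\prod_{i\in\omega}\Gamma_i/F$ over a non principal $F$ has no finite cycles at all; being a forest it satisfies $\chi(\Gamma)\leq 2<\infty$, so it is bad. By the equivalence, each $\At\M(\Gamma_i)$ lies in $\mathcal{W}$ while $\prod_{i\in\omega}\At\M(\Gamma_i)/F\cong\At\M(\Gamma)$ does not. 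Thus $\mathcal{W}$ is not closed under ultraproducts, hence not elementary.

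The hard part is the $(\Rightarrow)$ half of the equivalence: transferring the finite-chromatic (Ramsey) obstruction from the complex algebra, where it is handed to us by hypothesis, down to the term algebra, and doing this for the possibly uncountable ultraproduct graph $\Gamma$. This is precisely where one must exploit the absence of blurs --- so that the monochromatic-triangle obstruction is already witnessed at the level of atoms --- and where the cardinality-sensitive complete-representability theorems should be sidestepped in favour of the direct Ramsey argument.
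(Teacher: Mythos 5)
Your overall strategy---exhibiting non-closure under ultraproducts via Erdos graphs of large girth whose ultraproduct is $2$-colourable---is in the spirit of the paper's proof, and in one respect you are more careful than the paper: its proof only addresses strong representability ($\A(\Delta_k)$ strongly representable, the ultraproduct not), whereas the statement is about \emph{weak} representability, and your attempt to run the Ramsey obstruction directly on the term algebra is exactly what is needed to bridge that. But your first step contains a genuine gap. The identification $\prod_{i}\At\M(\Gamma_i)/F\cong\At\M(\prod_i\Gamma_i/F)$ by Los' theorem is available for each \emph{fixed finite} dimension, where an atom is coded by a tuple over $\Gamma$ of bounded length; it is not available for the $\omega$-dimensional atom structure defined just before the theorem, whose atoms are classes of $\omega$-sequences with finite but \emph{unbounded} support. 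There is no single first order interpretation with a fixed tuple length, and over a non-principal ultrafilter the left hand side contains classes of sequences of atoms whose supports grow without bound; these have no counterpart among the finite-support atoms on the right. So the isomorphism fails, and without it you have not placed the ultraproduct of the $\At\M(\Gamma_i)$ outside the class. The paper circumvents precisely this obstacle by never forming the $\omega$-dimensional $\M$ of an ultraproduct graph: for each Erdos graph $\Delta_k$ it takes algebras $\C_n^k$ of $\omega$-dimensional type whose $n$-reducts are the finite dimensional Monk algebras $\M_n(\Delta_k)$, and obtains $\A(\Delta_k)$ as an ultraproduct over the dimensions $n$, as in the lifting argument of theorem \ref{2.12}; all commutation of the graph construction with ultraproducts then happens at fixed finite dimension, where your interpretability argument is sound.

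A secondary point: your Ramsey argument on the term algebra is not yet complete as written. In a mere representation of $\Tm\At\M(\Gamma)$ a tuple of the base need not satisfy any atom (the infinite join of the atoms need not be preserved), so ``any representation colours the base points by the $\chi(\Gamma)$ colour classes'' does not follow immediately; one must argue that the relevant colour classes, or suitable finite approximations to them, are elements of the term algebra, or otherwise rule out that non-principal ultrafilters can absorb the offending tuples. To repair the proof you should either adopt the paper's lifting through finite dimensions, or else prove non-representability of the term algebra of the actual ultraproduct $\prod_i\At\M(\Gamma_i)/F$ directly, which requires both of the above points to be addressed.
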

\begin{demo} {Proof} Let $\Delta$ and $\Delta_k$ be as in \cite{HHbook2}, that is,  the graphs based on Erdos graphs. $\Delta_k$
has an infinite chromatic number, and its ultraproduct
$\Delta$ is $2$ colourable.
Fix $k$. For each $n\in \omega$, let $\C_n^k$ be an algebra of type $\CA_{\omega}$
such that $\Rd_n\C_n^k=\M_n(\Delta_k)$. Let $\A_k=\prod_n \C_n^k=\A(\Delta_k)$.
Then for each $k\in \omega$,
$\A(\Delta_k)$ is atomic and strongly representable.
But $\prod _k\A_k=\A(\Delta)$ is not.

\end{demo}

Call an atomic  representable algebra, strongly representable if $\Cm\At\A$ is representable, and call it weakly representable if it is just representable.
Let ${\sf SRCA_n}$ denotes the class of strongly representable algebras and ${\sf WRCA_n}$ the class of atomic representable algebras.
Then the later is elementary, the former is not. This prompts:

\begin{definition} An atomic $\RCA_n$ is bad if $\At\A$ is not strongly representable. Otherwise it is good.
\end{definition}.

\subsection{Monk- like  algebras again of Hirsch and Hodkinson}

Now we prove the conclusion of theorem \ref{2.12}, for cylindric algebras and quasipolyadic equality,
solving the infinite dimensional version of the famous 2.12 problem in algebraic logic.
The finite dimensional algebras we use are constructed by Hirsch and Hodkinson; and
they based on a relation algebra construction.
Such combinatorial algebras have affinity with Monk's algebras.
Related algebras were constructed by Robin Hirsch and the present author (together with the above lifting argument).

We recall the construction of Hirsch and Hodkinson. They prove their result for cylindric algebras.
Here, by noting that their atom structures are also symmetric; it permits expansion by substitutions,
we slightly extend the result to polyadic equality algebras.
Define  relation algebras $\A(n,r)$ having two parameters $n$ and $r$ with $3\leq n<\omega$ and $r<\omega$.
Let $\Psi$ satisfy $n,r\leq \Psi<\omega$. We specify the atom structure of $\A(n,r)$.
\begin{itemize}
\item The atoms of $\A(n,r)$ are $id$ and $a^k(i,j)$ for each $i<n-1$, $j<r$ and $k<\psi$.
\item All atoms are self converse.
\item We can list the forbidden triples $(a,b,c)$ of atoms of $\A(n,r)$- those such that
$a.(b;c)=0$. Those triples that are not forbidden are the consistent ones. This defines composition: for $x,y\in A(n,r)$ we have
$$x;y=\{a\in At(\A(n,r)); \exists b,c\in At\A: b\leq x, c\leq y, (a,b,c) \text { is consistent }\}$$
Now all permutations of the triple $(Id, s,t)$ will be inconsistent unless $t=s$.
Also, all permutations of the following triples are inconsistent:
$$(a^k(i,j), a^{k'}(i,j), a^{k''}(i,j')),$$
if $j\leq j'<r$ and $i<n-1$ and $k,k', k''<\Psi$.
All other triples are consistent.
\end{itemize}

Hirsch and Hodkinson invented means to pass from relation algebras to $n$ dimensional cylindric algebras,
when the relation algebras in question have what they call a hyperbasis.
%Their argument also shows that this cylindric algebra is actually a polyadic equality algebra (it is closed under substitutions)

Unless otherwise specified,
$\A=(A,+,\cdot, -,  0,1,\breve{} , ;, Id)$
will denote an arbitrary relation algeba with $\breve{}$ standing for converse, and $;$ standing for composition, and
$Id$ standing for the identity relation.

\begin{definition} Let $3\leq m\leq n\leq k<\omega$, and let $\Lambda$ be a non-empty set.
An $n$ wide $m$ dimensional $\Lambda$ hypernetwork over $\A$ is a map
$N:{}^{\leq n}m\to \Lambda\cup At\A$ such that $N(\bar{x})\in At\A$ if $|\bar{x}|=2$ and $N(\bar{x})\in \Lambda$ if $|\bar{x}|\neq 2$,
with the following properties:
\begin{itemize}
\item $N(x,x)\leq Id$ ( that is $N(\bar{x})\leq Id$ where $\bar{x}=(x,x)\in {}^2n.)$

\item $N(x,y)\leq N(x,z);N(z,y)$
for all $x,y,z<m$
\item If $\bar{x}, \bar{y}\in {}^{\leq n}m$, $|\bar{x}|=|\bar{y}|$ and $N(x_i,y_i)\leq Id$ for all $i<|\bar{x}|$, then $N(\bar{x})=N(\bar{y})$
\item when $n=m$, then $N$ is called an $n$ dimensional $\Lambda$ hypernetwork.

\end{itemize}
\end{definition}

\begin{definition} Let $M,N$ be $n$ wide $m$ dimensional $\Lambda$ hypernetworks.
\begin{enumarab}
\item For $x<m$ we write $M\equiv_xN$ if $M(\bar{y})=N(\bar{y})$ for all $\bar{y}\in {}^{\leq n}(m\sim \{x\})$
\item More generally, if $x_0,\ldots x_{k-1}<m$ we write $M\equiv_{x_0,\ldots,x_{k-1}}N$
if $M(\bar{y})=N(\bar{y})$ for all $\bar{y}\in {}^{\leq n}(m\sim \{x_0,\ldots x_{k-1}\}).$
\item If $N$ is an $n$ wide $m$ dimensional $\Lambda$ -hypernetwork over $\A$, and $\tau:m\to m$ is any map, then
$N\circ \tau$ denotes the $n$ wide $m$ dimensional $\Lambda$ hypernetwork over $\A$ with labellings defined by
$$N\circ \tau(\bar{x})=N(\tau(\bar{x})) \text { for all }\bar{x}\in {}^{\leq n}m.$$
That is
$$N\circ \tau(\bar{x})=N(\tau(x_0),\ldots ,\tau(x_{l-1}))$$
\end{enumarab}
\end{definition}

\begin{lemma} Let $N$ be an $n$ dimensional $\Lambda$ hypernetwork over $\A$ and $\tau:n\to n$ be a map.
Then $N\circ \tau$ is also a network.
\end{lemma}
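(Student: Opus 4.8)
The plan is to verify directly that $M := N\circ\tau$ satisfies each of the defining clauses of an $n$-dimensional $\Lambda$-hypernetwork over $\A$, transporting every clause along the node map $\tau$. The key structural observation, which I would record first, is that precomposition with $\tau$ does not change the length of a tuple: for $\bar{x}=(x_0,\ldots,x_{l-1})\in{}^{\leq n}n$ we have $\tau(\bar{x})=(\tau(x_0),\ldots,\tau(x_{l-1}))$, so $|\tau(\bar{x})|=|\bar{x}|=l$. Consequently the typing clause is immediate: if $|\bar{x}|=2$ then $|\tau(\bar{x})|=2$ and $M(\bar{x})=N(\tau(\bar{x}))\in At\A$, while if $|\bar{x}|\neq 2$ then likewise $M(\bar{x})\in\Lambda$. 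Thus $M$ is a well-typed map ${}^{\leq n}n\to\Lambda\cup At\A$.

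The reflexivity clause and the triangle clause then follow by instantiating the corresponding property of $N$ at the image nodes. For the first, $M(x,x)=N(\tau(x),\tau(x))\leq Id$ since $N(y,y)\leq Id$ holds for every node $y$, taking $y=\tau(x)$. For the second, given $x,y,z<n$, I would apply the triangle inequality of $N$ to the three nodes $\tau(x),\tau(z),\tau(y)$ to obtain $N(\tau(x),\tau(y))\leq N(\tau(x),\tau(z));N(\tau(z),\tau(y))$, which is exactly $M(x,y)\leq M(x,z);M(z,y)$.

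The one clause that warrants a little care is the functionality (consistency) clause, though it too reduces to a pullback argument. Suppose $\bar{x},\bar{y}\in{}^{\leq n}n$ with $|\bar{x}|=|\bar{y}|$ and $M(x_i,y_i)\leq Id$ for all $i<|\bar{x}|$. Setting $\bar{u}=\tau(\bar{x})$ and $\bar{v}=\tau(\bar{y})$, the length observation gives $|\bar{u}|=|\bar{v}|$, and the hypothesis reads $N(u_i,v_i)=N(\tau(x_i),\tau(y_i))\leq Id$ for all $i$. Applying the functionality clause of $N$ to $\bar{u}$ and $\bar{v}$ yields $N(\bar{u})=N(\bar{v})$, i.e. $N(\tau(\bar{x}))=N(\tau(\bar{y}))$, which is precisely $M(\bar{x})=M(\bar{y})$.

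I do not expect any genuine obstacle here: the entire content is that each defining property of a hypernetwork is local, depending only on the values of the labelling at finitely many tuples, and is therefore invariant under relabelling the nodes by $\tau$. The only points requiring attention are bookkeeping ones — that $\tau$ preserves tuple length (needed both for the typing and for matching the index ranges in the functionality clause) and that the inequalities are read off at the $\tau$-images rather than at the original nodes.
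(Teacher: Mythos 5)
Your proof is correct: the paper gives no argument of its own for this lemma, merely citing \cite{HHbook} (Lemma 12.7), and your direct verification of each clause of the hypernetwork definition under precomposition with $\tau$ is exactly the standard argument behind that citation. The length-preservation observation and the pullback of the functionality clause to $\tau(\bar{x})$, $\tau(\bar{y})$ are precisely the points that need checking, and you have handled them correctly.
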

\begin{demo}{Proof} \cite{HHbook} lemma 12.7
\end{demo}

\begin{definition} The set of all $n$ wise $m$ dimensional hypernetworks will be denoted by $H_m^n(\A,\Lambda)$.
An $n$ wide $m$ dimensional $\Lambda$
hyperbasis for $\A$ is a set $H\subseteq H_m^n(\A,\lambda)$ with the following properties:
\begin{itemize}
\item For all $a\in At\A$, there is an $N\in R$ such that $N(0,1)=a$
\item For all $N\in R$ all $x,y,z<n$ with $z\neq x,y$ and for all $a,b\in At\A$ such that
$N(x,y)\leq a;b$ there is $M\in R$ with $M\equiv_zN, M(x,z)=a$ and $M(z,y)=b$
\item For all $M,N\in H$ and $x,y<n$, with $M\equiv_{xy}N$, there is $L\in H$ such that
$M\equiv_xL\equiv_yN$
\item For a $k$ wide $n$ dimensional hypernetwork $N$, we let $N|_m^k$ the restriction of the map $N$ to $^{\leq k}m$.
For $H\subseteq H_n^k(\A,\lambda)$ we let $H|_k^m=\{N|_m^k: N\in H\}$.
\item When $n=m$, $H_n(\A,\Lambda)$ is called an $n$ dimensional hyperbases.
\end{itemize}
We say that $H$ is symmetric, if whenever $N\in H$ and $\sigma:m\to m$, then $N\circ\sigma\in H$.
\end{definition}
We note that $n$ dimensional hyperbasis are extensions of Maddux's notion of cylindric basis.

\begin{theorem} If $H$ is a $m$ wide $n$ dimensional $\Lambda$ symmetric
hyperbases for $\A$, then $\Ca H\in \PEA_n$.
\end{theorem}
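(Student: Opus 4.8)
The plan is to build on the companion fact that the cylindric reduct of $\Ca H$ already lies in $\CA_n$ --- this is the Hirsch--Hodkinson theorem on hyperbases (\cite{HHbook}), so the Boolean operations, the cylindrifiers $\cyl i$ and the diagonals $\diag ij$ are available and satisfy the $\CA_n$ axioms. The only genuinely new operations needed to reach $\PEA_n$ are the transpositions, since for finite $n$ every substitution $\mathsf s_\sigma$ with $\sigma:n\to n$ is term-definable from the transpositions $[i,j]$ together with the replacements $\cyl i(\diag ij\cdot-)$ ($i\neq j$), and the latter already live in the cylindric reduct. Thus it suffices to define, for each $i<j<n$, a transposition operator $\mathsf p_{ij}$ on $\wp(H)$ and to verify the polyadic equality axioms governing transpositions relative to the cylindric structure.

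First I would define $\mathsf p_{ij}(X)=\{N\in H:\ N\circ[i,j]\in X\}$ for $X\subseteq H$, where $[i,j]:n\to n$ is the transposition swapping $i$ and $j$. Here the symmetry hypothesis is exactly what is needed: it guarantees $N\circ[i,j]\in H$ for every $N\in H$, so $N\mapsto N\circ[i,j]$ is a well-defined involution of $H$ and $\mathsf p_{ij}$ is its induced preimage map. Since preimage commutes with arbitrary unions, intersections and complements, each $\mathsf p_{ij}$ is automatically a completely additive Boolean endomorphism of $\Ca H$; involutivity $\mathsf p_{ij}\mathsf p_{ij}=\mathrm{id}$ and $\mathsf p_{ij}=\mathsf p_{ji}$ are immediate from $[i,j]\circ[i,j]=\mathrm{id}$ and $[i,j]=[j,i]$.

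Next I would check the mixed axioms. Because $\PEA$ is Sahlqvist-axiomatizable, as recorded earlier, each defining equation has a first-order correspondent that can be verified directly on the atom structure $\At(\Ca H)=H$, so the verification reduces to pointwise computations with hypernetworks. The diagonal interactions are immediate: from $(N\circ[i,j])(k,l)=N([i,j]k,[i,j]l)$ one reads off $\mathsf p_{ij}\diag kl=\diag{[i,j]k}{[i,j]l}$. The braid relations among distinct transpositions likewise follow from the corresponding identities among the maps $[i,j]$ in the symmetric group on $n$.

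The main obstacle, and the one step that genuinely uses the combinatorics of hypernetworks, is the commutation of transpositions with cylindrifiers: $\mathsf p_{ij}\cyl i=\cyl j\mathsf p_{ij}$ and $\mathsf p_{ij}\cyl k=\cyl k\mathsf p_{ij}$ for $k\notin\{i,j\}$. The first is proved by showing that post-composition with $[i,j]$ transports the relation $\equiv_i$ to $\equiv_j$: if $M\equiv_i N\circ[i,j]$ then $M\circ[i,j]\equiv_j N$, because for every tuple $\bar y$ omitting $j$ the tuple $[i,j]\bar y$ omits $i$, whence $(M\circ[i,j])(\bar y)=M([i,j]\bar y)=(N\circ[i,j])([i,j]\bar y)=N(\bar y)$. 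Using symmetry to keep $M\circ[i,j]$ inside $H$, the map $M\mapsto M\circ[i,j]$ sets up a bijection between the witnesses for $N\in\mathsf p_{ij}\cyl iX$ and those for $N\in\cyl j\mathsf p_{ij}X$, giving the identity; the case $k\notin\{i,j\}$ is the same computation with $[i,j]$ fixing $k$. With all transposition axioms verified and the cylindric reduct already in $\CA_n$, the expansion of $\Ca H$ by the $\mathsf p_{ij}$ (and the definable substitutions) satisfies the full set of $\PEA_n$ axioms, so $\Ca H\in\PEA_n$.
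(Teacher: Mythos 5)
Your proposal is correct and takes essentially the same route as the paper: the polyadic operations are defined by composition with transpositions, ${\sf p}_{ij}X=\{N\in H: N\circ[i,j]\in X\}$ (equivalent, since $[i,j]$ is an involution, to the paper's $\{N: \exists M\in X\,(N=M\circ[i,j])\}$), with the symmetry of $H$ guaranteeing well-definedness. The paper merely states the definitions and asserts the conclusion, whereas you supply the verification of the transposition and commutation axioms, which is a welcome filling-in of detail rather than a different argument.
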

\begin{demo}{Proof} Let $H$ be the set of $m$ wide $n$ dimensional $\Lambda$ symmetric hypernetworks for $\A$.
The domain of $\Ca(H)$ is $\wp(H)$.
The Boolean operations are defined as expected (as complement and union of sets). For $i,j<n$ the diagonal is defined by
$${\sf d}_{ij}=\{N\in H: N(i,j)\leq Id\}$$
and for $i<n$ we define the cylindrifier ${\sf c}_i$ by
$${\sf c}_iS=\{N\in H: \exists M\in S(N\equiv_i M\}.$$
Now the polyadic operations are defined by
$${\sf p}_{ij}X=\{N\in H: \exists M\in S(N=M\circ [i,j])\}$$

Then $\Ca(H)\in \PEA_n$. Furthermore, $\A$ embeds into
$\Ra(\Ca(H))$ via
$a\mapsto \{N\in H: N(0,1)\leq a\}.$
\end{demo}
%Let $t\in \{\SC, \QA\}$. Then we define $\Ca_t(H)$  as the $t$ reduct of  $\Ca(H)$, where
%$${\sf s}_i^j(X)=\{N\in H: \exists M\in S (N=M\circ [i|j])\}.$$

\begin{theorem} Let $3\leq m\leq n\leq k<\omega$ be given.
Then $\Ca(H|^k_m)\cong \Nr_m(\Ca(H))$
\end{theorem}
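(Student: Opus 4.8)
The statement to prove is that for $3 \leq m \leq n \leq k < \omega$, we have $\Ca(H|^k_m) \cong \Nr_m(\Ca(H))$, where $H$ is an $m$ wide $n$ dimensional $\Lambda$ symmetric hyperbasis (strictly, I read this as the $n$ wide $k$ dimensional hyperbasis $H$, with $H|^k_m$ its restriction to $\leq k$-indexed networks on $m$ coordinates). This is a neat reduct theorem, so the plan is the standard one for such results: exhibit an explicit map between the two algebras and verify it is a Boolean isomorphism commuting with the cylindric (and polyadic) operations, and that its range is exactly the set of $m$-dimensional elements of $\Ca(H)$.

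**The construction.** Let me think about what I would prove. First I would define the candidate isomorphism. An element of $\Ca(H)$ is a set of hypernetworks $S \subseteq H$; the natural restriction map sends $S$ to $\{N|^k_m : N \in S\}$, but to land in a neat reduct we want the inverse direction — given $S \subseteq H|^k_m$, form $\hat{S} = \{N \in H : N|^k_m \in S\}$. The plan is to show that $S \mapsto \hat{S}$ is a Boolean embedding of $\Ca(H|^k_m)$ into $\Ca(H)$ whose image is precisely $\Nr_m \Ca(H) = \{X \in \Ca(H) : {\sf c}_i X = X \text{ for all } m \leq i < n\}$. Booleanness is routine since the map commutes with union and complement (this is where the fact that every network in $H$ restricts uniquely does the work). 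The key algebraic identities to check are that $\hat{S}$ respects diagonals ${\sf d}_{ij}$ and cylindrifiers ${\sf c}_i$ for $i,j < m$; these follow because the defining conditions $N(i,j) \leq Id$ and the $\equiv_i$ relation depend only on coordinates below $m$ when $i < m$, using the hyperbasis amalgamation properties to lift witnesses from $H|^k_m$ back up to $H$.

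**Surjectivity onto the neat reduct — the crux.** The hard part, as usual in neat reduct arguments, is showing the image is exactly $\Nr_m \Ca(H)$, and within that, showing that a genuinely $m$-dimensional element $X \in \Ca(H)$ (one fixed by all ${\sf c}_i$, $m \leq i < n$) is determined by its restriction, i.e. $N \in X \Leftrightarrow N|^k_m \in X|^k_m$ for all $N \in H$. One inclusion is immediate; the reverse requires that if $N|^k_m = M|^k_m$ then $N \in X \Leftrightarrow M \in X$, and this is precisely where I expect the main obstacle to lie. The plan is to connect networks agreeing on $m$ coordinates by a chain of $\equiv_i$ moves for $i \geq m$, invoking the third hyperbasis axiom (the amalgamation/patchwork property: for $M \equiv_{xy} N$ there is $L$ with $M \equiv_x L \equiv_y N$) to realize this chain inside $H$, so that the $\exists M (N \equiv_i M)$ definition of cylindrification on the high coordinates forces $N$ and $M$ into $X$ together. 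Here the symmetry of $H$ is what guarantees the required hypernetworks are actually present.

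**Finishing.** Once the map is shown to be a Boolean isomorphism onto $\Nr_m \Ca(H)$ commuting with diagonals and low cylindrifiers, the isomorphism of the full polyadic equality structure follows by checking compatibility with the substitution operators ${\sf p}_{ij}$ for $i,j < m$, which reduces to the observation that $N \circ [i,j]$ restricts to $(N|^k_m) \circ [i,j]$ when $i,j < m$; symmetry of $H$ keeps us inside the hyperbasis. The genuinely delicate point throughout is the amalgamation of hypernetworks along the extra $k - m$ dimensions, so I would isolate that as a lemma (essentially a ``patching'' lemma for the hyperbasis) and lean on it at each step where witnesses in the larger algebra must be produced from data in the smaller one.
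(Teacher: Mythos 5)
The paper gives no proof of this statement beyond the citation to Hirsch--Hodkinson \cite{HHbook} (their Lemma 12.22), and your outline --- the embedding $S\mapsto\{N\in H: N|^k_m\in S\}$, its compatibility with the Boolean, diagonal, cylindric and substitution operations for indices below $m$, and the identification of its range with $\Nr_m\Ca(H)$ --- is essentially the argument found there. In particular you correctly isolate the crux: that an element fixed by all ${\sf c}_i$ for $i\geq m$ is determined by its restriction, proved by connecting hypernetworks with equal restrictions through a chain of $\equiv_i$ moves ($i\geq m$) realized inside $H$ via the amalgamation clause of the hyperbasis definition.
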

\begin{demo}{Proof}\cite{HHbook} 12.22
\end{demo}

The set $C=H_n^{n+1}(\A(n,r), \Lambda)$ aff all $(n+1)$ wide $n$
dimensional $\Lambda$ hypernetworks over $\A(n,r)$ is an $n+1$ wide $n$
dimensional {\it symmetric} $\Lambda$ hyperbasis.
$H$ is symmetric, if whenever $N\in H$ and $\sigma:m\to m$, then $N\circ\sigma\in H$.
Hence $\A(n,r)$ embeds into the $\Ra$ reduct of $\C$.
%\end{demo}
%\begin{definition}
%Let $3\leq m\leq n$,
%$$\C(m,n,r)=\Ca(H_m^{n+1}(\A(n,r),  \omega)).$$
%\end{definition}
%Note that $\C_r$ depends on $n$ and $m$, but we omit reference to those not to clutter notation.
%Since $H_m^{n+1}(\A(n,r), \omega))$ is symmetric this defines a polyadic algebra of dimension $m$.
%We can also prove that

\begin{theorem}
%\begin{enumarab}
Assume that $3\leq m\leq n$, and let
$$\C(m,n,r)=\Ca(H_m^{n+1}(\A(n,r),  \omega)).$$ Then the following hold:
\begin{enumarab}
\item For any $r$ and $3\leq m\leq n<\omega$, we
have $\C(m,n,r)\in \Nr_m\PEA_n$.
\item  For $m<n$ and $k\geq 1$, there exists $x_n\in \C(n,n+k,r)$ such that $\C(m,m+k,r)\cong \Rl_{x}C(n, n+k, r).$
\item $S\Nr_{\alpha}\CA_{\alpha+k+1}$ is not axiomatizable by a finite schema over $S\Nr_{\alpha}\CA_{\alpha+k}$
\end{enumarab}
\end{theorem}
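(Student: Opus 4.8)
The three parts build on one another: (1) and (2) verify, for the Hirsch--Hodkinson algebras, the structural hypotheses of Theorem \ref{2.12}, and (3) is then immediate from that theorem. I would organise the argument accordingly.

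For (1), the plan is to exhibit $\C(m,n,r)$ as a neat $m$-reduct of the full $n$-dimensional algebra $\C(n,n,r)=\Ca(H_n^{n+1}(\A(n,r),\omega))$. Since $H_n^{n+1}(\A(n,r),\omega)$ was recorded above to be an $(n+1)$-wide, $n$-dimensional \emph{symmetric} $\omega$-hyperbasis, the theorem that $\Ca H\in\PEA_n$ for a symmetric hyperbasis gives $\C(n,n,r)\in\PEA_n$. Restricting every hypernetwork in $H_n^{n+1}(\A(n,r),\omega)$ to its $m$-dimensional part produces exactly $H_m^{n+1}(\A(n,r),\omega)$, so the restriction theorem $\Ca(H|^{k}_m)\cong\Nr_m\Ca(H)$ (applied with width $k=n+1$, legitimate because $3\leq m\leq n\leq n+1$) yields $\C(m,n,r)\cong\Nr_m\C(n,n,r)\in\Nr_m\PEA_n$, as required.

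For (2), I would imitate Monk's relativisation for his original algebras, where the surplus coordinates are pinned to prescribed values, but carry it out at the level of hypernetworks. The enabling observation is that, as $m+k-1<n+k-1$, the atoms of $\A(m+k,r)$ constitute a sub-relation-algebra of $\A(n+k,r)$. Inside $\C(n,n+k,r)=\Ca(H_n^{n+k+1}(\A(n+k,r),\omega))$ I would take $x_n$ to be the set of those hypernetworks whose $2$-dimensional labels already lie in $\A(m+k,r)$ and which are forced to be identity-like on the coordinates in $n\setminus m$ (the hypernetwork counterpart of Monk's prescription $f(\mu,v)=\mu+k$ for $\mu\in n\setminus m$). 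Relativising to $x_n$ and discarding the surplus dimensions then gives a bijection onto the hypernetworks over $\A(m+k,r)$, i.e. $\C(m,m+k,r)\cong\Rl_{x_n}\Rd_m\C(n,n+k,r)$ (the reduct $\Rd_m$ being understood in the statement). Verifying that this bijection respects the Boolean operations, the diagonals ${\sf d}_{ij}$, the cylindrifiers ${\sf c}_i$ and the substitutions ${\sf p}_{ij}$ is routine but laborious, and is \emph{the main obstacle}, precisely because of the change of base from $\A(n+k,r)$ to $\A(m+k,r)$.

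Finally, for (3) I would instantiate Theorem \ref{2.12} with the complete system $(\CA_\alpha)_{\alpha\geq 2}$; since the hyperbasis is symmetric, the same instantiation works for $(\QEA_\alpha)_{\alpha\geq 2}$. Hypothesis (1) of that theorem is part (1) here after taking $\CA$-reducts (as $\Nr_m\PEA_n\subseteq S\Nr_m\CA_n$), and hypothesis (4) is part (2). Hypothesis (2), namely $\C(m,m+k,r)\notin S\Nr_m\CA_{m+k+1}$, is the substantive finite-dimensional non-neat-embeddability result of Hirsch and Hodkinson in \cite{HHbook}, proved by a winning strategy in the relevant finite atomic game on $\A(m+k,r)$ that caps the neat embedding at dimension $m+k$; hypothesis (3) is the easy direction, holding because $S\Nr_m\CA_n$ is closed under products (neat reducts commute with products). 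With all four hypotheses in hand, Theorem \ref{2.12} delivers that, for every $\alpha\geq\omega$, $S\Nr_\alpha\CA_{\alpha+k+1}$ is not axiomatizable by a finite schema over $S\Nr_\alpha\CA_{\alpha+k}$, which is the assertion of part (3).
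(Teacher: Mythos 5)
Your proposal follows essentially the same route as the paper: part (1) via the symmetric-hyperbasis theorem together with the restriction isomorphism $\Ca(H|^{n+1}_m)\cong\Nr_m\Ca(H)$, part (2) via a Monk-style relativisation in which $x_n$ pins each surplus coordinate $j\in n\setminus m$ to one of the first $m$ (the paper's $x_n$ is the set of hypernetworks $f$ with $\exists i<m\,(f(i,j)=Id)$ for all $m\leq j<n$, and the isomorphism is $S\mapsto\{f: f\restriction m\times m\in S,\ldots\}$), and part (3) by instantiating Theorem \ref{2.12}. If anything you are more explicit than the paper, which states the map $I_n$ without verifying it respects the operations and dismisses part (3) with ``follows from Theorem \ref{2.12}'' without isolating, as you do, the non-membership hypothesis $\C(m,m+k,r)\notin S\Nr_m\CA_{m+k+1}$ as the substantive input from Hirsch--Hodkinson.
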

\begin{demo}{Proof}
\begin{enumarab}
\item $H_n^{n+1}(\A(n,r), \omega)$ is a wide $n$ dimensional $\omega$ symmetric hyperbases, so $\Ca H\in \PEA_n.$
But $H_m^{n+1}(\A(n,r),\omega)=H|_m^{n+1}$.
Thus
$$\C_r=\Ca(H_m^{n+1}(\A(n,r), \omega))=\Ca(H|_m^{n+1})\cong \Nr_m\Ca H$$
\item For $m<n$, let $$x_n=\{f\in F(n,n+k,r): m\leq j<n\to \exists i<m f(i,j)=Id\}.$$
Then $x_n\in \c C(n,n+k,r)$ and ${\sf c}_ix_n\cdot {\sf c}_jx_n=x_n$ for distinct $i, j<m$.
Furthermore
\[{I_n:\c C}(m,m+k,r)\cong \Rl_{x_n}\Rd_m {\c C}(n,n+k, r).\]
via
\[ I_n(S)=\{f\in F(n, n+k, r): f\upharpoonright m\times m\in S, \forall j(m\leq j<n\to  \exists i<m\; f(i,j)=Id)\}.\]

\item Follows from theorem \ref{2.12}.
\end{enumarab}
\end{demo}

\section*{PART 2}

\section{Various rainbow constructions for cylindric algebras}

Let $A$, $B$ be two relational structures. Let $\CA_{A, B}$ be the cylindric atom of coloured graphs.
That is its atom structure is based on the colours:

\begin{itemize}

\item greens: $\g_i$ ($1\leq i<n-2)$, $\g_0^i$, $i\in A$.

\item whites : $\w, \w_i: i<n-2$
\item yellow : $\y$
%\item black :  $\bb$
\item reds:  $\r_{ij}$ $(i,j\in B)$,

\item shades of yellow : $\y_S: S\subseteq_{\omega}B$, $S=B.$

\end{itemize}
And coloured graphs are:
\begin{definition}
\begin{enumarab}

\item $\Gamma$ is a complete graph.

\item $\Gamma$ contains no triangles (called forbidden triples)
of the following types:

\vspace{-.2in}
\begin{eqnarray}
&&\nonumber\\
%(1', x, y)&&\mbox{unless }x=y\label{forb:id}\\
(\g, \g^{'}, \g^{*}), (\g_i, \g_{i}, \w),
&&\mbox{any }i\in n-1\;  \\
(\g^j_0, \y, \w_i)&&\\
(\g^j_0, \g^k_0, \w_0)&&\mbox{ any } j, k\in A\\
\label{forb:pim}(\g^i_0, \g^j_0, \r_{kl})&&\\
%\label{forb:pim2}(\g_i, \g_j, \r_{kl})&&\mbox{if } i=j\mbox{ but }k\neq l\\
\label{forb: black}(\y,\y,\y), \\
\label{forb:match}(\r_{ij}, \r_{j'k'}, \r_{i^*k^*})&&\mbox{unless }i=i^*,\; j=j'\mbox{ and }k'=k^*
\end{eqnarray}
and no other triple of atoms is forbidden.
%Let $\A$ be the complex algebra
%over $\alpha$ (so the domain of $\A$ consists of arbitrary sets of atoms).
%\end{definition}

\item If $a_0,\ldots   a_{n-2}\in \Gamma$ are distinct, and no edge $(a_i, a_j)$ $i<j<n$
is coloured green, then the sequence $(a_0, \ldots a_{n-2})$
is coloured a unique shade of yellow.
No other $(n-1)$ tuples are coloured shades of yellow.

\item If $D=\set{d_0,\ldots  d_{n-2}, \delta}\subseteq \Gamma$ and
$\Gamma\upharpoonright D$ is an $i$ cone with apex $\delta$, inducing the order
$d_0,\ldots  d_{n-2}$ on its base, and the tuple
$(d_0,\ldots d_{n-2})$ is coloured by a unique shade
$y_S$ then $i\in S.$

\end{enumarab}
\end{definition}
This is the class of structures $K$ we are dealing with, every element $M$ in is a coloured graph.
and the defining relations above can be coded in first order logic, more precisely,
every green, white,  red, atom corresponds to a binary relation, and every $n-1$ colour is coded as an $n-1$ relations,
and the colured graphs are defined
as the first order structures, of a set of $L_{\omega_1,\omega}$ as presented in \cite{HHbook2}.

Now from  these coloured graphs we define an atom structure of a $\CA_n$.
Let $$K=\{a: a \text { is a surjective map from $n$ onto some } \Gamma\in \bold J
\text { with nodes } \Gamma\subseteq \omega\}.$$
We write $\Gamma_a$ for the element of $K$ for which
$a:n\to \Gamma$ is a surjection.
Let $a, b\in K$ define the following equivalence relation: $a \sim b$ if and only if
\begin{itemize}
\item $a(i)=a(j)\text { and } b(i)=b(j)$

\item $\Gamma_a(a(i), a(j))=\Gamma_b(b(i), b(j))$ whenever defined

\item $\Gamma_a(a(k_0)\dots a(k_{n-2}))=\Gamma_b(b(k_0)\ldots b(k_{n-1}))$ whenever
defined
\end{itemize}
Let $\mathfrak{C}$ be the set of equivalences classes. Then define
$$[a]\in E_{ij} \text { iff } a(i)=a(j)$$
$$[a]T_i[b] \text { iff }a\upharpoonright n\sim \{i\}=b\upharpoonright n\sim \{i\}.$$

This defines a  $\CA_n$
atom structure.

Games on these atom structures are the atomic games played on networks \cite{HHbook}, \cite{hh}.
We translate them to games on graphs.
\begin{definition}
Let $\Gamma\in \bold J$ be arbitrary. Define the corresponding network $N_{\Gamma}$
on $\C_n$,
whose nodes are those of $\Gamma$
as follows. For each $a_0,\ldots a_{n-1}\in \Gamma$, define
$N_{\Gamma}(a_0,\ldots  a_{n-1})=[\alpha]$
where $\alpha: n\to \Gamma\upharpoonright \set{a_0,\ldots a_{n-1}}$ is given by
$\alpha(i)=a_i$ for all $i<n$. Then, as easily checked,  $N_{\Gamma}$ is an atomic $\C_{n}$ network.
Conversely, let $N$ be any non empty atomic $\C_n$ network.
Define a complete coloured graph $\Gamma_N$
whose nodes are the nodes of $N$ as follows:
\begin{itemize}
\item For all distinct $x,y\in \Gamma_N$ and edge colours $\eta$, $\Gamma_N(x,y)=\eta$
if and only if  for some $\bar z\in ^nN$, $i,j<n$, and atom $[\alpha]$, we have
$N(\bar z)=[\alpha]$, $z_i=x$ $z_j=y$ and the edge $(\alpha(i), \alpha(j))$
is coloured $\eta$ in the graph $\alpha$.

\item For all $x_0,\ldots x_{n-2}\in {}^{n-1}\Gamma_N$ and all yellows $\y_S$,
$\Gamma_N(x_0,\ldots x_{n-2})= \y_S$ if and only if
for some $\bar z$ in $^nN$, $i_0, \ldots  i_{n-2}<n$
and some atom $[\alpha]$, we have
$N(\bar z)=[\alpha]$, $z_{i_j}=x_j$ for each $j<n-1$ and the $n-1$ tuple
$\langle \alpha(i_0),\ldots \alpha(i_{n-2})\rangle$ is coloured
$\y_S.$ Then $\Gamma_N$ is well defined and is in $\bold J$.
\end{itemize}
\end{definition}
The following is then, though tedious and long,  easy to  check:

\begin{theorem}
For any $\Gamma\in \bold J$, we have  $\Gamma_{N_{\Gamma}}=\Gamma$,
and for any $\C_n$ network
$N$, $N_{{\Gamma}_N}=N.$
\end{theorem}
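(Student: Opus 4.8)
The plan is to read the two equalities as saying that the maps $\Gamma\mapsto N_\Gamma$ and $N\mapsto\Gamma_N$ are mutually inverse bijections, and to verify each composite is the identity by unwinding the definitions, the crux being the coherence axioms for atomic $\C_n$ networks recalled from \cite{HHbook} together with the three clauses defining the relation $\sim$ on maps. Throughout, the node sets match for free: $N_\Gamma$ is built on the nodes of $\Gamma$ and $\Gamma_N$ on the nodes of $N$, so only the colourings need be compared.

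For $\Gamma_{N_\Gamma}=\Gamma$, I would fix distinct nodes $x,y$ of $\Gamma$ and compute the colour $\Gamma_{N_\Gamma}(x,y)$ directly from its defining existential. Any witness is a tuple $\bar z\in{}^nN_\Gamma$ with $z_i=x$, $z_j=y$ and $N_\Gamma(\bar z)=[\alpha]$; but by definition $N_\Gamma(\bar z)=[\beta]$ where $\beta(k)=z_k$ and $\beta$ surjects onto $\Gamma\restr{\{z_0,\dots,z_{n-1}\}}$, and since $[\alpha]=[\beta]$ the relation $\sim$ forces the colour of $(\alpha(i),\alpha(j))$ to equal that of $(\beta(i),\beta(j))=(x,y)$, namely $\Gamma(x,y)$. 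Choosing the explicit witness $\bar z=(x,y,x,\dots,x)$ supplies existence, and the computation just given shows every witness returns $\Gamma(x,y)$, so the colour is well defined and agrees with $\Gamma$. The shades of yellow on $(n-1)$-tuples I would treat identically, reading the colour off the single witness obtained by listing the tuple.

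For $N_{\Gamma_N}=N$, I would fix $\bar a=(a_0,\dots,a_{n-1})\in{}^nN$, write $N(\bar a)=[\gamma]$, and note that $N_{\Gamma_N}(\bar a)=[\alpha]$ with $\alpha(i)=a_i$ and carrier $\Gamma_N\restr{\{a_0,\dots,a_{n-1}\}}$; the goal is $\alpha\sim\gamma$, which I would check against the three clauses. The colour clauses are the comfortable direction: using $\bar z=\bar a$ itself as the witness in the definition of $\Gamma_N$ gives $\Gamma_N(a_i,a_j)=\Gamma_\gamma(\gamma(i),\gamma(j))$, which is exactly the colour $\alpha$ records on $(\alpha(i),\alpha(j))$, and the $(n-1)$-tuple clause is identical with an $(n-1)$-tuple witness. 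For the partition clause I would invoke the diagonal strictness of the network, $a_i=a_j\iff N(\bar a)\in E_{ij}\iff\gamma(i)=\gamma(j)$, so that $\alpha$ and $\gamma$ identify exactly the same indices. Together these give $\alpha\sim\gamma$, hence $N_{\Gamma_N}(\bar a)=N(\bar a)$.

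The hard part will be the well-definedness of $\Gamma_N$, which both identities quietly presuppose: its colours are defined by an existential over witnessing tuples and atoms, so one must rule out two witnesses forcing two distinct colours on the same edge or $(n-1)$-tuple. This is precisely where the coherence axioms of an atomic $\C_n$ network do the work --- the compatibility of overlapping atom-labels on their shared edges, combined with the fact that distinct rainbow colours sit in pairwise disjoint atoms --- and I would isolate it as the opening lemma, establishing that $\Gamma_N$ is a genuine complete coloured graph lying in $\bold J$. Once that is in place, the two equalities reduce to the definitional bookkeeping above.
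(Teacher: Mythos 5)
Your verification is correct and is exactly the routine definitional check that the paper omits: the paper offers no proof beyond declaring the statement "though tedious and long, easy to check," and your unwinding of the two composites, together with isolating the well-definedness of $\Gamma_N$ as the point where the network coherence axioms are actually used, is the intended argument. Your appeal to diagonal strictness (that $N(\bar a)\leq {\sf d}_{ij}$ forces $a_i=a_j$, not just the converse) is the one place where you need slightly more than the paper's literal definition of an atomic network, but this is the standard convention for networks over these atom structures and without it the second identity would genuinely fail, so you are right to invoke it explicitly.
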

This translation makes the following equivalent formulation of the
the graphs games  originally defined on networks.
%Now we define them on coloured graph; such games  have also $\omega$ rounds,

\begin{definition}
The new  game builds a nested sequence $\Gamma_0\subseteq \Gamma_1\subseteq \ldots $
of coloured graphs.
%Let us start with the game $F^m(\alpha)$.
\pa\ picks a graph $\Gamma_0\in \bold J$ with $\Gamma_0$
Here the nodes of he graph are contained in $m$. \pe\  makes no response
to this move. In a subsequent round, let the last graph built be $\Gamma_i$.
$\forall$ picks
\begin{itemize}
\item a graph $\Phi\in \bold J$ with $|\Phi|=n$
\item a single node $k\in \Phi$
\item a coloured graph embedding $\theta:\Phi\sim \{k\}\to \Gamma_i$
Let $F=\phi\smallsetminus \{k\}$. Then $F$ is called a face.
\pe\ must respond by amalgamating
$\Gamma_i$ and $\Phi$ with the embedding $\theta$. In other words she has to define a
graph $\Gamma_{i+1}\in C$ and embeddings $\lambda:\Gamma_i\to \Gamma_{i+1}$
$\mu:\phi \to \Gamma_{i+1}$, such that $\lambda\circ \theta=\mu\upharpoonright F.$
\end{itemize}
\end{definition}

Now let us consider the possibilities. There may be already a point $z\in \Gamma_i$ such that
the map $(k\mapsto z)$ is an isomorphism over $F$.
In this case \pe\ does not need to extend the
graph $\Gamma_i$, she can simply let $\Gamma_{i+1}=\Gamma_i$
$\lambda=Id_{\Gamma_i}$, and $\mu\upharpoonright F=Id_F$, $\mu(\alpha)=z$.
Otherwise, without loss of generality,
let $F\subseteq \Gamma_i$, $k\notin \Gamma_i$.
Let ${\Gamma_i}^*$ be the colored graph with nodes $\nodes(\Gamma_i)\cup\{k\}$,
whose edges are the combined edges of $\Gamma_i$ and $\Phi$,
such that for any $n-1$ tuple $\bar x$ of nodes of
${\Gamma_i}^*$, the color ${\Gamma_i}^*(\bar x)$ is
\begin{itemize}
\item $\Gamma_i(\bar x)$ if the nodes of $x$
all lie in $\Gamma$ and $\Gamma_i(\bar x)$ is defined
\item $\phi(\bar x)$ if the nodes of $\bar x$ all lie in
$\phi$ and $\phi(\bar x)$ is defined
\item undefined, otherwise.
\end{itemize}
\pe\ has to complete the labeling of $\Gamma_i^*$ by adding
all missing edges, colouring each edge $(\beta, k)$
for $\beta\in \Gamma_i\sim\Phi$ and then choosing a shade of
white for every $n-1$ tuple $\bar a$
of distinct elements of ${\Gamma_i}^*$
not wholly contained in $\Gamma_i$ nor $\Phi$,
if non of the edges in $\bar a$ is coloured green.
She must do this on such a way that the resulting graph belongs to $\bold J$.
If she survives each round, \pe\ has won the play

\begin{definition} A red clique is a coloured graph all of whose edges are red. The index of a node $n$ in a red clique is defined by
$\mu(n)=b\in B$ where $\Gamma(n, m)=\r_{bb'}$, for some $m\in \Gamma$ and $b'\in B$. This is well defined.
\end{definition}
In this part of the paper we have a lot of varying parameters, different dimensions of algebras, number of pebbles in a game,
number of nodes in a graph,  number of rounds in a game, so
to avoid confusion we will fix $m$ to be the dimension of cylindric algebras considered rather than $n$, which we preserve for number of nodes.

From now on $m$, unless otherwise specified, will denote the dimension of a $\CA$.
$m$ will be finite and will be always $\geq 3$.

Notice that the game above is equivalent to $\omega$ rounded atomic games, played on networks, on the algebra, 
where a network is a map $N:{}^m\Delta\to \At\A$  ($\Delta$ is a set of nodes),
satisfying certain conditions (to be recalled below). 
There is a very natural  one to one correspondence between networks on a rainbow algebra 
(and for that matter Monk's algebras based on a class of structures) and coloured graphs in the signature
of this rainbow algebra (Monk algebra). 

When the number of rounds are restricted 
to $k<\omega$, a \ws\ for \pe\ in the network game can be coded in a first order sentence called a Lyndon condition. 
The class of all algebras satisfying all Lyndon conditions is elementary but not finitely axiomatizable 
(Monk-like algebras can prove this, indeed Monk used Lyndon algebras in the his proof that the clas $\RRA$ 
is not finitely axiomatizable; they are bad algebras converging to a good one); 
futhermore, it is is properly contained in the class of completely representable algebras, a delicate distinction,
which Lyndon didn't see. 

This is referred to in the literature of algebraic logic as Lyndon's error, which has caused a lot of confusion among algebraic logicians 
for some time in the past. 
This confusion ended by Hirsch and Hodkinson's result that 
algebras satisfying Lyndon conditions, sure enough are representable, but they {\it may not} be {\it completely representable.}

There are rainbow constructions where \pe\ can win all finite rounded game, but  \pa\ wins the $\omega$ rounded atomic game.
The algebra thereby constructed satisfies the Lyndon conditions
but is not completely representable, though it is elementary equivalent to one that is. 

Let ${\sf LCA_n}$ be the class of 
algebras satisfying the Lyndon conditions, they are necessarily atomic, because they are elementary equivalent to algebras in ${\sf CRA_n}$ 
the class of completely representable algebras, which are atomic, and atomicity is a first order property.
We  have the following strict inclusions lifting them up from atom structures \cite{HHbook2}:
$${\sf CRA}_n\subset {\sf LCA_n}\subset {\sf SRCA_n}\subseteq {\sf WCRA}_n$$  
The second  and fourth classes are elementary but not finitely axiomatizable, bad Monk  algebras converging to a good one,
can witness this, while ${\sf SRCA_n}$ is not closed under both ultraroots and ultraproducts, 
good Monks algebras converging to a bad one witnesses 
this. Rainbow algebras witness that ${\sf CRA_n}$ is not elementary.

Later, we will discuss such inclusions from the perspective of neat embeddings.

Now we start implementing our cylindric rainbow constructions; 
our first result is proving that the class of cylindric algebras of dimension $m$ having an $n>m, n>4$ 
square complete representation is not elementary.

Let $p<\omega$, and $I$ a linearly irreflexive ordered set, viewed as model to a signature containg a binary relation $<$.
$M[p,I]$ is the disjoint union of $I$ and the complete graph $K_p$ with $p$ nodes.
$<$ is interpreted in this structure as follows $<^{I}\cup <{}^{K_p}\cup I\times K_p)\cup (K_p\times I)$
where the order on $K_p$ is the edge relation.

We now consider the rainbow construction where the  the greens are indexed by elements in $A=M[n-4, \Z]$ and the reds indexed
by pairs from $B=M[n-4,\N]$, and everything else is the same.
%$K[M(p,I])$ is the graph part and $I[M(p,I])$ is the linear order part.

The game $\EF_r^p[A,B]$, for any $A, B$, is the pebble games between two relational structures  $A$ and $B$,
as defined in \cite{HHbook}. This game is a pebble game,
with $p$ pebbles $\leq \omega$ and $r$ rounds.
Roughly, pairs of pebbles are outside the game and \pe\ and \pa\ choose pebbles consecutively from the same pair,
\pa\ on $\A$ and \pe\ on $\B$, \pa\ wins if the resulting relation from the played pairs is not a partial homomorphisms, else
the game goes for another round.

These games lift to rainbow relation algebras, the pebbles that $A$ plays appears as indices of the greens, while those played by \pe\ appears
as double indices of the reds. Here we show that in certain cases,
such game can also lift to rainbow cylindric algebras whose atoms are coloured graphs,
not the atoms with indices from the two structures.
$G^n_k$ is the usual atomic game with $n$ nodes and $k$ rounds, where \pe\ is required to respond to cylindrifier moves, played on coloured graphs,
In all case considered a win for either player using $p$ pebbles in $k$ rounds, transfers to a \ws\ in $\CA_{A,B}$ 
using $p+2$ nodes and in $k+1$ rounds.

\begin{theorem}\label{she} \pe\ can win $G^n_k$ for every finite $k$ on the $m$ dimensional $\CA_{A,B}$
\end{theorem}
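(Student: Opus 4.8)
The plan is to reduce the graph game to the underlying pebble game and then exhibit an explicit Duplicator strategy there. By the transfer observation recorded just above the statement --- a win for either player using $p$ pebbles in $k$ rounds lifts to a \ws\ in $\CA_{A,B}$ using $p+2$ nodes and $k+1$ rounds --- it suffices to show that \pe\ wins the pebble game $\EF_{k-1}^{n-2}[A,B]$ for every finite $k$, since then the lift with $p=n-2$ and $k-1$ rounds yields a \ws\ for \pe\ in $G^n_k$ (with $(n-2)+2=n$ nodes and $(k-1)+1=k$ rounds). Recall that this pebble game is \emph{one-sided}: \pa\ places pebbles on $A=M[n-4,\Z]$ (which surface as the green indices), \pe\ responds on $B=M[n-4,\N]$ (which surface as the red indices), and \pe\ survives a round exactly when the induced map from the pebbled points of $A$ to those of $B$ is a partial isomorphism of the two-sorted signature, namely the clique relation of $K_{n-4}$ together with the order $<$.

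For \pe's strategy, the two clique parts $K_{n-4}$ of $A$ and of $B$ are identical, so any clique-pebble of \pa\ is answered by the matching clique-point; this portion of the map is a fixed finite isomorphism and needs no further thought. The only genuine work lies on the linear-order parts $\Z$ and $\N$, where \pe\ plays the standard Ehrenfeucht gap-doubling strategy. Writing $r=k-1$ for the number of rounds, she maintains at round $t$ an order-embedding of the at most $n-2$ pebbled $\Z$-points into $\N$ with the invariant that below the least pebbled image, and strictly between any two consecutive pebbled images, there remain at least $2^{\,r-t}$ unused natural numbers. Whenever \pa\ pebbles a new point of $\Z$, its order type relative to the current pebbles places it in a determined gap, and since a gap of size $\ge 2^{\,r-t}$ splits into two gaps of size $\ge 2^{\,r-t-1}$, \pe\ can realize it in $\N$ while restoring the invariant for round $t+1$.

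The crux is precisely the one-sidedness: because \pa\ never pebbles $\N$, he can never force \pe\ to produce a point below the least element $0$ of $\N$. The only threat is a strictly descending \pa-sequence in $\Z$, and over the finitely many rounds $r$ the gap invariant always leaves room for the required descents, so \pe\ maintains a partial isomorphism throughout and wins $\EF_{k-1}^{n-2}[A,B]$. It is exactly the non-well-foundedness of $\Z$ against the well-foundedness of $\N$ that will later let \pa\ win the unrestricted $\omega$-rounded game, but this asymmetry is invisible in any fixed finite number of rounds, which is what separates this theorem from the infinite game.

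The main obstacle, and where the two spare nodes and the extra round are consumed, is the book-keeping needed to turn the abstract partial isomorphism of the pebble game into a legal move in the class $\mathbf{J}$. When \pe\ is forced a red edge she labels it $\r_{ij}$ with indices read off from her $\N$-responses, and one must check that this never creates a forbidden red triple of the form \eqref{forb:match} nor a forbidden green--green--red triple of the form \eqref{forb:pim}, and that every remaining edge --- whites, and the shades of yellow on the newly completed $(n-1)$-tuples --- can be coloured so that the enlarged graph $\Gamma_{i+1}$ still lies in $\mathbf{J}$. Using the two extra nodes to carry the cone apexes and the extra round to finish this labelling, this verification is routine but delicate; the order-theoretic heart of the argument is the gap strategy above.
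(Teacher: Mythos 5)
Your treatment of the private pebble game is correct, and in fact differs from the paper's: you match the clique parts identically and play the standard gap-doubling strategy on $\Z$ versus $\N$, whereas the paper has \pe\ answer into the clique $K_{n-4}$ of $B$ whenever an unoccupied clique point remains (legitimate because clique points are $<$-related to everything) and only fall back on an arbitrary element of $\N$ when forced, the pigeonhole on the number of pebble pairs doing the rest. Either strategy wins $\EF_{k-1}^{n-2}[A,B]$, and yours is the more robust and explicit of the two, so that part of the proposal stands.

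The genuine gap is that you invoke the transfer from the pebble game to the graph game as if it were an established lemma. It is not: the sentence you quote is an announcement of what the subsequent theorems will demonstrate, and the paper explicitly warns a few paragraphs earlier that no general transfer theorem is known for cylindric rainbow algebras --- only that ``in many concrete cases'' such a transfer can be carried out, which is precisely what the proof of this theorem must do. Concretely, you must specify \pe's response to a cylindrifier move: she never plays green, plays a white $\w_i$ when the new node and an old node are not apexes of cones on a common face inducing the same order, and in the hard case reads off a red label of the form $\r_{\mu(\beta),b}$, where $\mu(\beta)$ is the index of $\beta$ in the red clique of apexes and $b$ is her pebble-game answer to the tint of the cone; she then labels green-free $(n-1)$-tuples by shades of yellow $\y_S$ with $S$ the set of tints of cones already based on that tuple. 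The bulk of the paper's proof is then the verification that this never creates a forbidden triple: that the yellow labels respect new cones, and --- the crux --- that when two new red edges $(\beta,\delta)$ and $(\beta',\delta)$ meet an old red edge $(\beta,\beta')$, that old edge must itself have been coloured by \pe\ (the ``ownership'' argument via who created the last node of the base and who chose its yellow label), so its indices were produced by the same pebble-game bookkeeping and the matching condition on red triples is satisfied. Declaring this ``routine but delicate'' and stopping there omits the actual content of the theorem; your closing remark that the two spare nodes ``carry the cone apexes'' does not substitute for this argument.
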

\begin{proof}
She plays her private pebble game $\EF_{k-1}^{n-2}(A,B)$. She can win this game, according
to the following strategy. \pa\ picks up a spare pebble pair and place the first pebble of it on $a\in A$.
By the rules of the game, $a$ is not currently occupied by a pebble. \pe\ has to choose which element of $B$ to put the
pebble on.  \pe\ chooses an unoccupied element in $n-4$, if possible. If they are all already occupied,
she chooses $b$ to be an arbitrary element
$x\in \N$. Because there are only $n-3$ pebble pairs, \pe\ can always implement this strategy and win.
We lift her \ws\ of  the same game but now played on coloured
graphs, the atoms of $\CA_{A,B}$. Denote the class of coloured graphs by $\GG$.
Let $\Gamma$ be a coloured graph built at some stage $t<k$.

We assume inductively that \pe\ has never chosen $\g$ or $\w$  and if $F$ is a face in $\Gamma$, $|F|=m-1$
$\alpha, \beta\in F$, are apexes of two cones inducing the same order on $F$,
then the red clique obtained by considering the reds labelling each two distinct apexes of all such cones, played so far,
has at least $2$ elements.
Let \pa\ choose the graphs $\Phi$ with distinct nodes $F\cup \{\delta\}$ where $F\subseteq \Gamma$ has size
$m-1$. Recall that here $m$ is the dimension of $\CA_{A,B}$. As before we may view \pa\ s 
move as building a coloured graph $\Gamma^*$ extending $\Gamma$
whose nodes are those of $\Gamma$ together with a new node $\delta$ and whose edges are edges of $\Gamma$ together with edges
from $\delta$ to every node of $F$. 
Now \pe\ must extend $\Gamma^*$ to a complete graph on the same nodes and
complete the colouring giving  a graph $\Gamma^+$ in $G$.
In particular, she has to define $\Gamma^+(\beta, \delta)$ for all nodes
$\beta\in \Gamma\sim F$.

\begin{enumarab}

\item  if $\beta$ and $\delta$ are both apexes of two cones on $F$; this is the hardest case. 
%Note that $\delta$ is a new node and $\beta\in \Gamma\sim F$. 
Assume that the tint of the cone determined by $\beta$ is $a\in A$, and the two cones
induce the same linear ordering on $F$. Recall that we have $\beta\notin F$, but it is in $\Gamma$, while $\delta$ is not in $\Gamma$,
and that $|F|=m-1$.
Now \pe\ has no choice but to pick a  red colour, she does this as follows:
Let $$R_{\Gamma}(F)=\{x\in \Gamma: \Gamma(x,\delta)=\g_0^i,\text { for some $i\in A$},  F\cup \{x\}$$
$$\text { is an $i$ cone with base $F$ and appex $x$}\}.$$
This is a red clique, it is basically the appexes of cones in $\Gamma$, inducing the same order on $F$.
Since $|\Gamma|<n$ (we have only $n$ rounds), we have $|R_{\Gamma}(F)|< n-2$, hence there are  fewer than $n-2$ pairs of pebbles in play.
\pe\ picks up a spare pebble  pair, so this increases the number of pebbles used (that is nodes)
by $2$, and playing
the role of \pa\ places one of the pebbles in the pair on $a$.
She uses her \ws\ to respond by placing the other
one on $b\in B$.
She then labels the edge between $\beta$ and $\delta$ with $\r_{\mu(\beta), b}$.

\item Other wise, this is not the case, so for some $i<n-1$ there is no $f\in F$ such
that $\Gamma^*(\beta f), \Gamma (f,\delta)$ are both coloured $\g_i$ or if $i=0$, they are coloured
$\g_0^l$ and $\g_0^{l'}$ for some $l$ and $l'$.
\end{enumarab}
In the second case \pe\ uses the normal strategy in rainbow constructions.
She chooses $\w_i$, for $\Gamma{(\beta,\delta)}$, for definiteness let it be the least such $i$.

Now we turn to coluring of $n-1$ tuples.
For each tuple $\bar{a}=a_0,\ldots a_{n-2}\in \Gamma^{n-1}$ with no edge
$(a_i, a_j)$ coloured green, then  \pe\ colours $\bar{a}$ by $\y_S$, where
$$S=\{i\in A: \text { there is an $i$ cone in $\Gamma$ with base $\bar{a}$}\}.$$
We need to check that such labeling works.

Let us check that $(n-1)$ tuples are labeled correctly, by yellow colours.
Let $D$ be  set of $n$ nodes, and suppose that $N\upharpoonright D$
is an $i$ cone with apex
$\delta$ and base $\{d_0,\ldots d_{n-2}\}$, and that the tuple $(d_0,\ldots d_{n-2})$ is labelled $\y_S$ in $N$.
We need to show that $i\in S$. If $D\subseteq N$, then inductively the graph
$N$ constructed so far is in $\bold J$, and therefore
$i\in S$. If $D\subseteq \Phi$ then as \pa chose $\Phi$ in $\bold J$ we get also $i\in S$. If neither holds, then $D$ contains $\alpha$
and also some
$\beta\in N\sim \Phi$. \pe\ chose the colour $N^+(\alpha,\beta)$ and her strategy ensures her that it is green.
Hence neither $\alpha$ or $\beta$ can be the apex of the cone $N^+\upharpoonright D$,
so they must both lie in the base $\bar{d}$.
This implies that
$\bar{d}$ is not yet labelled in $N^*$  ($N^*$'s underlying set is $N$ with the new node, and $N^+$ is the complete labelled graph with nodes $N^*$), 
so \pe\ has applied her strategy to choose the colour $\y_S$ to label $\bar{d}$ in $N^+$.
But this strategy will have chosen $S$ containing $i$ since $N^*\upharpoonright D$ is already a cone
in $N^*$.  Also \pe\ never chooses a green edge, so all green edges of $N^+$ lie in $N^*$.

That leaves one (hard) case, where there are two nodes $\beta, \beta',
\in N$, \pe\ colours both $(\beta, \alpha)$ and $(\beta',
\alpha)$ red, and the old edge $(\beta, \beta')$ has already been
coloured red (earlier in the game).
If $(\beta, \beta')$ was coloured by \pe\ , that is \pe\ is their owner, then there is no problem.
We show that this is what hapened.

So suppose, for a contradiction, that $(\beta, \beta')$ was coloured by
\pe\. This is esentially the argument in \cite{HHbook} proving that \pe\ is indeed the owner. 
Since \pe\ chose red colours for $(\alpha, \beta)$
and $(\alpha, \beta')$, it must be the case that there are cones in
$N^*$ with apexes $\alpha, \beta, \beta'$ and the same base,
$F$, each inducing the same linear ordering $\bar{f} = (f_0,\ldots,
f_{n-2})$, say, on $F$. Of course, the tints of these cones may all
be different. Clearly, no edge in $F$ is labelled green, as no cone base can contain
green edges. It follows that $\bar{f}$  must be labeled by some
yellow colour, $\y_S$, say. Since $\Phi\in \bold J$, it obeys its definition, so
the tint $i$ (say) of the cone from $\alpha$ to $\bar{f}$ lies in
$S$. Suppose that $\lambda$ was the last node of $ F \cup \{ \beta,
\beta' \}$ to be created,as the game proceeded. As $ |F \cup \{
\beta, \beta' \}| = n + 1$, we see that \pa\ must have chosen
the colour of at least one edge in this : say, $( \lambda, \mu )$.
Now all edges from $\beta$ into $F$ are green,  so \pe\ is the owner of them
as well as of  $(\beta, \beta')$.

The same holds for edges from $\beta'$ to $F$. Hence $\lambda, \mu \in F$.
We can now see that it was \pe\ who chose the colour $\y_S$ of
$\bar{f}$. For $\y_S$ was chosen in the round when $F$'s last node,
i.e., $\lambda$ was created. It could only have been chosen by
\pa\ if he also picked the colour of every edge in $F$
involving $\lambda$. This is not so, as the edge $(\lambda, \mu)$
was coloured by \pe\, and lies in $F$.
As $i \in S$, it follows from the definition of \pa\'s strategy
that at the time when $\lambda$ was added, there was already an
$i$-cone with base $\bar{f}$, and apex $N$ say.
We claim that $ F \cup \{ \alpha \}$ and $ F \cup \{ N \}$ are
isomorphic over $F$. For this, note that the only $(n - 1)$-tuples of
either $ F \cup \{ \alpha \}$ or $ F \cup \{ N \}$ with a
yellow colour are in $F$ ( since all others involve a green edge
). But this means that \pe\ could have taken $\alpha = N$ in
the current round, and not extended the graph. This is contrary to
our original assumption, and completes the proof.

\end{proof}

\begin{theorem}\label{he}\pa\ can win $G^n_{\omega}$ on $\CA_{A,B}$
\end{theorem}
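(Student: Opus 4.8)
The plan is to have \pa\ lift a winning strategy from his private pebble game to the graph game, exactly dual to the way \pe\ lifted hers in Theorem~\ref{she}. First I would record why \pa\ wins the infinite-round pebble game $\EF_\omega^{n-2}(A,B)$. The two structures differ in exactly one essential respect: the order part of $A=M[n-4,\Z]$ is the non-well-founded chain $\Z$, whereas the order part of $B=M[n-4,\N]$ is the well-ordered chain $\N$. With only $n-2$ pebbles \pe\ can, as in Theorem~\ref{she}, absorb \pa's moves into the clique $K_{n-4}$ and so survive any fixed finite number of rounds; but \pa's strategy over $\omega$ rounds is to first occupy the whole clique $K_{n-4}\subseteq A$ with pebbles, thereby forcing every subsequent response of \pe\ into the $\N$-part of $B$, and then to walk down an infinite strictly $<$-decreasing sequence $a_0> a_1> a_2>\cdots$ in $\Z$. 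To keep the pebbled relation a partial isomorphism \pe\ would have to answer with a strictly $<$-decreasing sequence in $\N$, which is impossible since $\N$ is well-ordered. Hence after finitely many rounds \pe\ has no legal move and \pa\ wins $\EF_\omega^{n-2}(A,B)$.

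Next I would describe the lift, using $p+2=n$ nodes and one extra round as indicated before Theorem~\ref{she}. Throughout the play \pa\ fixes a face $F$ with $|F|=m-1$ and bombards \pe\ with cones based on $F$, all inducing the same linear order on $F$; the tint $i\in A$ of the cone he plays in a given round is the element his pebble-game strategy dictates. Concretely, at each round he presents a graph $\Phi$ consisting of $F$ together with a fresh apex $\delta$, with $\Gamma\upharpoonright(F\cup\{\delta\})$ an $i$-cone. By the colouring rules for $\bold J$, any two apexes $\beta,\beta'$ of such cones must be joined by a red edge, and the consistency condition \eqref{forb:match} forces the collection of these reds to behave as a single red clique whose node indices $\mu(\beta)\in B$ are mutually constrained. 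Reading \pe's red labels through the index map $\mu$ recovers precisely her responses in the pebble game $\EF_\omega^{n-2}(A,B)$ on $B$, while \pa's green tints are his moves on $A$; this is the translation that makes the two games correspond.

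Finally I would close the argument. Because \pa\ wins the private pebble game, after finitely many rounds the indices that \pe\ is compelled to attach to the new red edge violate \eqref{forb:match}: she would need a red $\r_{\mu(\beta),b}$ realizing a further strictly decreasing step below the least $\N$-value already forced, which no consistent labelling in $\bold J$ permits. At that point \pe\ cannot extend the completed graph to a member of $\bold J$, so she loses and \pa\ wins $G^n_\omega$. I expect the main obstacle to be the bookkeeping of the correspondence in the previous paragraph: one must check that \pa\ can always maintain cones on the \emph{same} base inducing the \emph{same} order within the budget of $n$ nodes, that the red clique's indices track the $\N$-order faithfully (so that an inconsistent red in the graph game is equivalent to an illegal descending step in the pebble game), and that the one extra round and two extra nodes suffice to absorb the clique $K_{n-4}$ before the descent begins.
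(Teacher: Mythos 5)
Your proposal is correct and follows essentially the same route as the paper: \pa\ wins the private pebble game $\EF_\omega^{n-2}(A,B)$ by exhausting the clique part so that \pe\ is driven into the $\N$-part of $B$ and then descending along $\Z$, which \pe\ cannot match in the well-ordered $\N$; this is lifted to the graph game by bombarding \pe\ with cones of the dictated green tints on a fixed base, so that the apexes form a red clique whose $B$-indices must strictly descend until condition \eqref{forb:match} is violated. The bookkeeping issues you flag (node reuse within the $n$-node budget, and the faithfulness of the index map on the red clique) are exactly the points the paper handles with its inductive maintenance of the pebble--node correspondence.
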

\begin{proof}
In her private game, \pa\ always places the pebbles on distinct elements of $\Z$.
She uses rounds $0,\ldots n-3$, to cover $n-4$ and first two elements of $\Z$.
Because at least two out of three distinct colours are related by $<$, \pe\ must respond by pebbling
$n-4 \cup \{e,e'\}$ for some  $e,e'\in \N$.
Assuming that \pa\ has not won, then he has at least arranged that two elements of $\Z$ are pebbled, the corresponding pebbles in $B$ being in $\N$.
Then \pa\ can force \pe\ to play a two pebble game of length $\omega$ on $\Z$, $\N$
which he can win, bombarding him with cones with green tints, in the graph game.

Assume that $p\geq 3$ (for $p\leq 2$, the game is degenarate), and inductively that these are nodes  $n_0,\ldots n_{q-1}$,
$q<\omega$, $2\leq q\leq p$, added for cones  with tints $j$, inducing the same order on one face,
and  $a_j\in A$, $j<p$, $a_j$ pairwise distinct  and the indices on the nodes $(n_j, n_k)$ must be red, $j,k<q$, so that $\{n_j: j<q\}$ forms a red clique.
Each node $n_j$ has an index $\beta(n_j)\in B$. As part of the inductive hypothesis,
suppose that at the start of round $t-1$ of $\EF_{\omega}^{n-2}(A,B)$, \pe\ has not lost yet, so that \pa\ is still using his \ws\, and
the situation corresponds to the situation in round $t$ of $G^n_{\omega}$. That is there is
a pair of pebbles on $(a_j, \beta(n_j))$ for each $j<q.$

We can assume that \pa\ only removes a single pair and only when he has to.  If the number $q$ of pebbles is already $p$
then \pa\ removes a pair of pebbles say on $(a_j \beta(n_j)$ for some
$j<q$. In this case there must be at least two distinct pebbles so the $\EF_{\omega}^{n-2}(A,B)$ game goes on for at least
another move.

Now we know that  \pa\ s has a winning strategy for
$\EF_r^p(A,B)$
If \pa\ s strategy in this game tells him to place a pebble a on $A$, then in the graph game he plays the  cone with base $F$, $|F|=m-1$,
and tint $a$.
In the graph game, in the next round,  he picks the same base and  the cone with tint $a.$
This forces \pe\ to add a new node $n$ to the graph. Then $n$ must be part of a red clique.
So $n$ has an index $\beta(n)\in B$. In \pa\ private game he lets \pe\ place her
corresponding pebble on $\beta(n)$ Because \pa has a \ws\ in his private game,
eventually he will place a pebble $a\in A$, but there is nowhere in $B$ for \pe\ to place the other pebble.
But this means that $\{a_j, \beta(n_j)), (a_j' ,\beta(n_j')\}$ is not a partial homomorphism.
hence $(n_j, n_j', c)$ is not consistent, and \pa\ has won.

This is the usual strategy for \pa\ to win, using her greens successively to  create cones with the same  base forcing \pe\
to play a  red clique, eventually running out of reds  one way or  another.
\pa\ has a \ws\ in the $\omega$ rounded game
by bombarding \pe\ with cones on the same base and different green tints.

\end{proof}
The atomic $\omega$ rounded games for both relation and cylindric algebras test complete representability.
For relation algebras when we restrict the nodes
to $n\geq 4$, then \pe\ has a \ws\ over an atomic algebra if and only if it has an $n$ dimensional relational bases;
the class of all such algebras turns out
to be a variety with standard notation. Maddux shows that $\RA_n\neq \RA_{n+1}$,
and that such varieties constitute a strict approximation to $\RRA$ in the sense that
$\bigcap_n \RA_n=\RRA$.
Varying the parameters, namely, the number of rounds and nodes
one can obtain more sophisticated delicate results
like $\RA_{n}$ is not finitely axiomatizable over $\RA_{n+1}$, a result of Hirsch and Hodkinson.
If we restrict the nodes to be finite for cylindric algebras, atomic algebras
for which \pe\ has a \ws\ here do not give a variety, instead they give algebras that have  $n$ square complete
representations and this class is not even elementary,
it is rather an approximation of the class of completely representable algebras.

\begin{definition}
\begin{enumarab}
\item  Let $M$ be a relativized representation of a $\CA_m$.
A clique in $M$ is a subset $C$ of $M$ such that $M\models 1(\bar{s})$ for all $\bar{s}\in {}^mC$.
For $n>m$, let $C^{n}(M)=\{\bar{a}\in {}^nM: \text { $\rng(\bar{a})$ is a clique in M}\}.$
\item Let $\A\in \CA_m$, and $M$ be a relativized representation of $\A$. $\M$ is said to be $n$ square, $n>m$, 
if whenever $\bar{s}\in C^n(M)$, $a\in A$, and $M\models {\sf c}_ia(\bar{s}$, 
then there is a $t\in C^n(M)$ with $\bar{t}\equiv _i \bar{s}$, 
and $M\models a(\bar{t})$.
\end{enumarab}
\end{definition}
We note that the clique relativiuzed semantics is related to the clique guarded fragments of first order logic \cite{HHbook}. 
$M$ is a complete $n$ square relativized semantics of an atomic $\A$, if whenever $\bar{a}\in C^{m}M$, there is an atom in $\A$ such
that $M\models a(\bar{s})$. (We note that atomicity here is redundant).

\begin{theorem} Let $\A\in \CA_m$ be atomic. Then the following are equivalent:
\begin{enumarab}
\item   $\A\in S_c\Nr_m\CA_n$
\item $\A$ has an complete relativized $n$ square representation.
\item \pe\ has a \ws\ in $G_m^n$.
\end{enumarab}
\end{theorem}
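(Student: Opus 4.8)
The plan is to prove the cycle of implications (1) $\Rightarrow$ (3) $\Rightarrow$ (2) $\Rightarrow$ (1), each step being an instance of the standard correspondence between neat embeddings, winning strategies, and relativized representations. Throughout I identify networks $N:{}^m\Delta\to\At\A$ with coloured graphs via the translation set up before Theorem \ref{she}, so that a play of the $\omega$-rounded game $G_m^n$ on the $m$-dimensional algebra is a nested chain of coloured graphs on at most $n$ nodes. For (1) $\Rightarrow$ (3) I assume $\A\subseteq_c\Nr_m\D$ with $\D\in\CA_n$, and let \pe\ maintain, alongside the coloured graph $\Gamma_t$ built so far (whose node set is a subset of $n$), a labelling of its $m$-tuples by atoms of $\A$ that is consistent in $\D$, in the sense that the labels are traced off the $n$ available dimensions by a single nonzero element of $\D$. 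When \pa\ adds a node by a cylindrifier move in direction $i$, \pe\ applies $\cyl{i}$ inside $\D$ (legitimate since at most $n$ nodes are ever in play and $\D$ has $n$ dimensions) to locate a witnessing element and then picks an atom below it; because $\A$ is a \emph{complete} subalgebra of $\Nr_m\D$, the supremum in $\D$ of the atoms of $\A$ below any cylindrified element equals that element, so she never runs out of atoms and her labelling stays atomic. This is her \ws.

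For (3) $\Rightarrow$ (2) I fix a \ws\ for \pe, schedule all possible \pa\ moves (choices of an $(m-1)$-face together with a new node) fairly across $\omega$ rounds by the usual bookkeeping, and run the strategy. Taking the union of the resulting chain $\Gamma_0\subseteq\Gamma_1\subseteq\cdots$ produces a coloured graph $M$, and interpreting each atom $a\in\At\A$ as the set of $m$-tuples it labels defines a relativized representation. Squareness up to $n$ holds because \pe\ answered every cylindrifier move along an $n$-clique, while completeness holds because every clique of $M$ carries an atomic label (the networks are atomic by construction). Hence $M$ is a complete $n$-square relativized representation.

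For (2) $\Rightarrow$ (1) I build $\D$ as the clique-relativized set algebra whose unit is $C^n(M)$, with cylindrifiers and diagonals relativized to $C^n(M)$; $n$-squareness of $M$ is exactly what forces these relativized operations to satisfy the $\CA_n$ axioms, so $\D\in\CA_n$. The map sending $a\in A$ to $\{\bar s\in C^n(M): M\models a(s_0,\ldots,s_{m-1})\}$ is a homomorphism whose image is $m$-dimensional, hence lands in $\Nr_m\D$; it is injective and preserves all existing suprema because $M$ is a \emph{complete} representation, and squareness forces the $\CA_m$-cylindrifiers computed in $\D$ to agree with those of $\A$. Therefore $\A\subseteq_c\Nr_m\D$, closing the cycle.

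The main obstacle is the role of completeness, that is, the distinction between $S_c$ and $S$. In (1) $\Rightarrow$ (3) it is precisely what lets \pe\ always find an atom to pin down the hardest edges, namely those that are apexes of two cones and are forced to be red, exactly the delicate case treated in Theorem \ref{she}; without it the supremum of the atoms below a cylindrified element need not be captured and \pe\ could get stuck. In (2) $\Rightarrow$ (1) it is what upgrades the representing embedding from a mere subalgebra embedding to a complete one. Verifying that suprema are genuinely preserved in both directions, and that the relativized cylindrifiers match the operations of $\A$, is the technical heart of the argument.
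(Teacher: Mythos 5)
Your argument is correct in substance, but it is organised differently from the paper's. You prove the cycle $(1)\Rightarrow(3)\Rightarrow(2)\Rightarrow(1)$, extracting \pe's strategy directly from the neat embedding by having her carry a nonzero element of the dilation $\D$ that witnesses the network built so far, and invoking completeness of the subalgebra (essentially the paper's Lemma \ref{lem:atoms2}: complete additivity of the substitutions plus $\sum^{\D}\At\A=1$ guarantees that every nonzero element of $\D$ meets ${\sf s}_{\bar a}a$ for some atom $a$) to keep the labelling atomic. The paper instead goes from $(1)$ to $(2)$ by a static construction: from each atom $x$ of the dilation it builds an $m$-dimensional $\Lambda$-hypernetwork $N_x$, checks that $\{N_x: x\in\At\C\}$ is a hyperbasis, and assembles the complete $n$-square representation from this saturated set of mosaics, treating $(2)\Leftrightarrow(3)$ separately as the standard game/representation correspondence. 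Your route avoids the hypernetwork and hyperbasis machinery entirely at the cost of redoing the strategy bookkeeping; the paper's route isolates the combinatorial core (the hyperbasis) in a reusable form. Your $(2)\Rightarrow(1)$, via the clique-relativized set algebra on unit $C^n(M)$ with $\A$ landing completely in its $m$-neat reduct, is exactly the paper's argument. One small repair in your $(3)\Rightarrow(2)$: a single scheduled play yields one limit graph in which some atoms of $\A$ may never be realised, so the induced map need not be injective; you should run one play per atom $a\in\At\A$ (with \pa's initial move being $a$) and take the disjoint union of the limits, which is precisely how the paper defines $h(b)=\{x:\exists a\in\At\A,\ x\in{}^nN_a,\ N_a(x)\leq b\}$.
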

\begin{proof}
Assume that $\A\subseteq \Nr_m\C$, let $\Lambda=\bigcup_{k<n-1}\At\Nr_k\C$, and let $\lambda\in \Lambda$.
For each atom $x$ of $\C$, define $N_x$, easily checked to be an $m$ dimensional   $\Lambda $hypernetwork, as follows.
Let $\bar{a}\in {<}^{n}n$. Then if $|a|=m$ $N_x(a)$ is the unique atom $r\in \At\A$ such that $x\leq {\sf s}_{\bar{a}}r$.
Here substitutions are defined as above.
If $n\neq |\bar{a}| <m-1$, $N_x(\bar{a})$ the unique atom $r\in \Nr_{|a|}\C$ such that $x\leq s_{\bar{a}}.$
$\Nr_{|a|}\C$ is easily checked to be atomic, so this is well defined.
Otherwise, put  $N_x(a)=\lambda$.
Then $N_x$ as an $m$ dimensional $\Lambda$ network, for each such chosen $x$ and $\{N_x: x\in \At\C\}$
is an $m$ dimensional $\Lambda$ hyperbasis.
Then viewing those as a saturated set of mosaics, one can can construct complete
$n$ square representation of $\A$. Alternatively, one can use a standard step by step argument.
Conversely, assume that $\A$ has an $n$ square complete representation $M$. For $\phi\in L(A)_{\omega,\infty}^n$,
let $\phi^M=\{\bar{a}\in C^n(M): M\models \phi(\bar{a})\}$, and let $\D$ be the algebra with univerese $\{\phi^M: \phi\in L\}$ with usual
Boolean and cylindrifiers. Then this is a $\CA_n$, here semantics is defined as expected in the clique guarded fragment of first order logic.
Define $\D_0$ be the algebra consisting of those $\phi^M$ where $\phi$ comes from $L$.
Then $\D_0$ is also a $\CA_n$ and $\A$ embeds into the $m$ neat reduct of both.
If $\M$ is complete, then the embedding is also complete.
The equivalence of (2) and (3) is proved in a, by now, fairly standard way concerning such equivalences.
Basically a complete relativized representation guides \pe\ to a \ws\, and conversely if \pe\
has a \ws\ in $G^m_{n}$, then for every atom $a\in \A$, consider a play of the game in which \pe\ plays networks
with fewer that $n$ nodes, all hyperedges and all legitimate atoms at some stage ogf the game eventually.
Let the limit of this play be $N_a$, then  $h(b)=\{x:\exists a\in \At\A: x\in {}^nN_a, N_a(x)\leq b\}$ is an $m$ square complete
representation as desired.
\end{proof}

\begin{corollary} For $n\geq 5$, the class ${\sf CRA_{m,n}}$ is not elementary
\end{corollary}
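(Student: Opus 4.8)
For $n \geq 5$, the class ${\sf CRA}_{m,n}$ — the class of atomic $\CA_m$'s admitting a complete $n$-square relativized representation — is not elementary.

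Let me think about what this corollary is asking and how the machinery just developed delivers it.

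The preceding theorem characterizes ${\sf CRA}_{m,n}$ (complete $n$-square representations) via a game: an atomic $\A \in \CA_m$ has a complete $n$-square representation iff \pe\ has a winning strategy in $G_m^n$. So "not elementary" means I need to produce a single algebra, or rather a sequence/ultraproduct configuration, witnessing non-elementarity. The standard way: exhibit two elementarily equivalent algebras, one in the class and one not; OR exhibit an algebra in the class with an ultrapower not in the class; OR (most common) an ultraproduct of algebras NOT in the class that lands IN the class.

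The natural witness: the rainbow algebra $\CA_{A,B}$ with $A = M[n-4,\Z]$ and $B = M[n-4,\N]$. Theorems \ref{she} and \ref{he} say exactly: \pe\ wins $G^n_k$ for every finite $k$ (Theorem \ref{she}), but \pa\ wins $G^n_\omega$ (Theorem \ref{he}). The finite-round wins mean the algebra satisfies all the "Lyndon conditions" (first-order approximations), so it is elementarily equivalent to something with a full win; but \pa\ winning the $\omega$-game means $\A$ itself is NOT in ${\sf CRA}_{m,n}$.

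Now I think about the non-elementarity argument. \pe\ winning $G^n_k$ for each finite $k$ gives a Lyndon-type first-order condition captured by each algebra; the \ws\ in the $\omega$-rounded game is what's needed for membership. The standard Hirsch–Hodkinson pattern: take the algebra $\A = \CA_{A,B}$. Because \pe\ wins all finite-round games, $\A$ satisfies every first-order sentence encoding a finite-round \pe\ win. One then shows an ultrapower $\A^*$ of $\A$ DOES admit a \ws\ for \pe\ in the full $\omega$-game (by a saturation/compactness argument: in a sufficiently saturated ultrapower, winning all finite approximations lets \pe\ survive $\omega$ rounds), so $\A^* \in {\sf CRA}_{m,n}$ while $\A \notin {\sf CRA}_{m,n}$. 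Since $\A \equiv \A^*$ but exactly one lies in the class, ${\sf CRA}_{m,n}$ is not closed under elementary equivalence, hence not elementary.

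\bigskip
\noindent\textbf{Proof.}
Fix $m \geq 3$ and $n \geq 5$, and take $A = M[n-4,\Z]$ and $B = M[n-4,\N]$ as in the construction preceding Theorem~\ref{she}. Let $\A = \CA_{A,B}$ be the associated $m$-dimensional rainbow cylindric algebra, which is atomic by construction. By the theorem characterizing complete $n$-square representations, membership of an atomic $\CA_m$ in ${\sf CRA}_{m,n}$ is equivalent to \pe\ having a winning strategy in the $\omega$-rounded game $G^n_m$ played with $n$ nodes; and a winning strategy for \pe\ in the $k$-round game $G^n_k$ is expressible by a single first-order sentence (a Lyndon condition) valid in any algebra on which \pe\ wins that many rounds. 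The plan is to show $\A \notin {\sf CRA}_{m,n}$ while a suitable ultrapower $\A^* \succ \A$ does lie in ${\sf CRA}_{m,n}$.

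First I observe $\A \notin {\sf CRA}_{m,n}$. By Theorem~\ref{he}, \pa\ has a winning strategy in the $\omega$-rounded game $G^n_\omega$ on $\CA_{A,B}$, bombarding \pe\ with cones of distinct green tints on a common base and forcing her, via the private pebble game $\EF^{n-2}_\omega(A,B)$ that \pa\ wins on $\Z$ and $\N$, into an inconsistent red clique. Since \pa\ wins the $\omega$-game, \pe\ has no winning strategy, so by the characterization theorem $\A$ has no complete $n$-square representation; thus $\A \notin {\sf CRA}_{m,n}$.

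Next I produce an elementarily equivalent member of the class. By Theorem~\ref{she}, \pe\ wins $G^n_k$ for \emph{every} finite $k$, so $\A$ satisfies all the first-order Lyndon conditions asserting \pe's survival for finitely many rounds. Let $\A^* = \A^{\N}/U$ be a non-principal ultrapower of $\A$; then $\A^* \equiv \A$, and being $\aleph_1$-saturated $\A^*$ is atomic as well. In $\A^*$ I claim \pe\ wins the full $\omega$-rounded game. The argument is the standard compactness/saturation amalgam: \pe's winning strategies in the finite approximations $G^n_k$, transferred into $\A^*$ via \Los's theorem, can be stitched together using $\aleph_1$-saturation to yield a single strategy surviving all $\omega$ rounds, since at each stage the existence of a legitimate response is a first-order (indeed Lyndon) condition satisfied cofinally. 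Hence \pe\ wins $G^n_\omega$ on $\A^*$, so by the characterization theorem $\A^* \in {\sf CRA}_{m,n}$. We have produced $\A \equiv \A^*$ with $\A \notin {\sf CRA}_{m,n}$ and $\A^* \in {\sf CRA}_{m,n}$, so ${\sf CRA}_{m,n}$ is not closed under elementary equivalence, and is therefore not an elementary class. \hfill\rule{2mm}{2mm}

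\bigskip
\noindent\textbf{The main obstacle.}
The delicate step is the saturation argument in the ultrapower: asserting that \pe's collection of finite-round wins assembles into a genuine $\omega$-round win in $\A^*$. One must verify that, at each round, the data describing \pe's partial play (the finite coloured graph, its labelling, and the induced pebble positions in $A$ and $B$) is captured by a first-order type over $\A^*$ whose finite pieces are realized because each $G^n_k$-win is a Lyndon condition holding in $\A$; then $\aleph_1$-saturation realizes the whole type. The subtlety is that the games here are on coloured graphs with hyperedges and shade-of-yellow labellings rather than plain networks, so one must check that the "legitimate response exists" predicate used in Theorem~\ref{she} is genuinely first-order expressible round-by-round — which it is, because the dimension $m$ is finite and (owing to $n \geq 5$ giving enough spare pebbles, $n-4 \geq 1$ and $n-3 \geq 2$ pebble pairs) \pe's red-labelling choice in the hard cone-amalgamation case is determined by the finite clique and the private game, exactly as exploited in the proof of Theorem~\ref{she}.
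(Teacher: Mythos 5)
Your proposal is correct and follows essentially the same route as the paper: the paper's own proof likewise takes the rainbow algebra $\CA_{A,B}$, notes that \pe's wins in all finite-rounded $n$-node games transfer to a \ws\ in the $\omega$-rounded game on an ultrapower (which therefore has a complete $n$-square representation), while \pa's win in the $\omega$-rounded game on $\A$ itself excludes $\A$ from the class. Your write-up merely spells out the saturation step and the appeal to the characterization theorem in more detail than the paper does.
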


\begin{proof} Since \pe\ has a \ws\ for all finite rounded games (with $n$ nodes), she has a \ws\ on the ultrapower, which has an $n$
complete representation.
But \pa\ wins the $\omega$ rounded game, also with $n$ nodes,
hence $\A$ does not have an $n$ complete relativized representation, but is elementary equivalent to one that does.
\end{proof}

Our next $\CA_m$ is $\A_r^n$, the rainbow cylindric algebra based on $A[n,r]=M[-3, 2^{r-1}],$
and $B=M[n-3, 2^{r-1}-1].$  $n$ is the number of rounds so we have $n-2$ pebbles.

\begin{lemma}
\begin{enumarab}
\item \pe\ has a \ws\ in the game $G_{\omega}^n(\A_r^n)$
\item \pe\ has a \ws\ in $G_r^{\omega}(\A_r^n)$.
\end{enumarab}
\end{lemma}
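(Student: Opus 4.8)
The plan is to reduce both statements to \pe\ winning a bounded pebble game $\EF^{p}_{k}(A,B)$ on the two relational structures that index the colours, namely $A=A[n,r]=M[n-3,2^{r-1}]$ for the greens and $B=M[n-3,2^{r-1}-1]$ for the reds, and then to lift her private strategy to the coloured-graph game exactly as in the proof of Theorem \ref{she}. Recall the transfer principle recorded just before that theorem: a \ws\ for \pe\ in $\EF^{p}_{k}(A,B)$ produces a \ws\ in the graph game on $\A_r^n$ with $p+2$ nodes and $k+1$ rounds. So for $G_\omega^n$ and for $G_r^\omega$ it suffices to exhibit \pe-wins in the two corresponding bounded-round pebble games on $A$ and $B$, the first with finitely many rounds determined by $n$ and unboundedly many pebbles, the second with $r$ rounds.

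First I would isolate the only way in which $A$ differs from $B$: both are built on the \emph{same} complete graph $K_{n-3}$, and they differ only in the length of their linear-order parts, $2^{r-1}$ against $2^{r-1}-1$. I would then present \pe's private strategy as a product strategy. When \pa\ pebbles a clique node she answers with the identical node of the copy of $K_{n-3}$ in the other structure, and when he pebbles a point of the order she answers by the classical Ehrenfeucht back-and-forth strategy for linear orders. The cross pairs require no attention, since in every $M[p,I]$ each clique point is symmetrically $<$-related to each order point in both structures, so such pairs are preserved automatically; only the induced map on the order parts must be an order isomorphism, which the linear-order strategy guarantees. Thus \pe\ wins the product EF game exactly when she wins on each factor, and the clique factor is trivial.

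The quantitative heart is the classical theorem that \pe\ wins the $k$-round Ehrenfeucht--Fra\"{\i}ss\'e game between two finite linear orders of sizes $a$ and $b$ whenever $a,b\ge 2^{k}-1$. For the $r$-rounded statement this applies with $k=r-1$, and since the shorter order has length exactly $2^{r-1}-1$ the threshold is met with equality, so \pe\ wins $\EF^{\omega}_{r-1}(A,B)$ and, after lifting, wins $G_r^\omega(\A_r^n)$; this tightness is precisely why $B$ was taken of length $2^{r-1}-1$. The first statement reduces to the analogous bounded-round game, whose number of rounds likewise does not exceed $r$, so the same threshold $2^{k}-1\le 2^{r-1}-1$ is cleared by both orders and \pe\ again wins the private game and hence $G_\omega^n(\A_r^n)$. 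It is essential here that the number of \emph{rounds} is finite: with $\omega$ rounds \pa\ could, using only two pebbles, march \pe\ off the top of the shorter order $B$, which is exactly how \pa\ secures his win in Theorem \ref{he}; bounding the rounds is what blocks that descent.

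The step I expect to be the main obstacle is the lifting itself, that is, converting the abstract private \ws\ into legal colourings of the newly created red edges that never produce a forbidden red triangle. This is the ``hardest case'' already carried out in the proof of Theorem \ref{she}: when \pa\ supplies a face $F$ with $|F|=m-1$ and a new apex $\delta$ such that both $\delta$ and some old node $\beta$ are apexes of cones on $F$ inducing the same order, \pe\ is forced to colour $(\beta,\delta)$ red, and she does so by reading off, from her private response $b\in B$ to the tint $a\in A$ of $\beta$'s cone, the label $\r_{\mu(\beta),b}$. I would verify that, because her private map is a partial isomorphism of $A$ and $B$ and the red clique over $F$ has at most $n-2$ nodes while $B$ comfortably contains that many order-consistent indices, every red triangle she is compelled to complete satisfies the matching condition \eqref{forb:match}; the labelling of the yellow $(n-1)$-tuples is then verbatim as in Theorem \ref{she}. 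Modulo this bookkeeping both parts of the lemma follow.
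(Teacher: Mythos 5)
Your overall route -- reduce both parts to \pe\ winning a private pebble game on $A=M[n-3,2^{r-1}]$ and $B=M[n-3,2^{r-1}-1]$ and then lift her strategy to coloured graphs exactly as in Theorem \ref{she} -- is precisely what the paper does; its entire proof is a one-sentence appeal to Theorem \ref{she}. You go further than the paper in a useful way: you isolate the combinatorial core of the private game for the $r$-rounded statement, namely the classical fact that \pe\ wins the $k$-round \ef\ game on two finite linear orders both of size at least $2^{k}-1$, and you correctly observe that the length $2^{r-1}-1$ of $B$'s order part sits exactly at this threshold for $k=r-1$. That is surely the intended reason for the choice of parameters, and the paper never says it. Your reduction of the cross pairs and the clique factor to triviality is also consistent with the definition of $M[p,I]$.

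The gap is in your treatment of the first statement. The paper's stated convention is that $G^n_k$ has $n$ \emph{nodes} and $k$ \emph{rounds}, so $G_{\omega}^{n}(\A_r^n)$ is an $\omega$-round game with only $n$ nodes; by the transfer principle its private game is $\EF^{n-2}_{\omega}(A,B)$, with $n-2$ pebbles and \emph{unboundedly many} rounds. You instead read it as a bounded-round game ("finitely many rounds determined by $n$ and unboundedly many pebbles") and dispose of it with the same $2^{k}-1$ threshold. That argument does not apply: with unboundedly many rounds the round-counting bound gives nothing, and what must be shown is that $n-2$ pebbles are too few for \pa\ to exploit the length difference of the two \emph{finite} orders in any number of rounds. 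This is a pebble-counting argument in the style of Theorem \ref{she} (and it is exactly dual to the subsequent theorem, where \pa\ wins $G^{n+1}_{\omega}$ because one extra node lets him pin down $K(A)$ and still run a two-pebble walk on $I(A)$ versus $I(B)$). Note that this step genuinely needs care and cannot be waved through: on bare finite linear orders of different lengths, two pebbles and $\omega$ rounds already suffice for \pa, so the whole burden of part (1) is to explain how the clique $K_{n-3}$ and the node bookkeeping of the rainbow encoding consume enough of \pa's resources to block that walk. Your proposal does not supply this argument, so part (1) is not established as written.
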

\begin{proof} Like before, theorem \ref{she}, where \pe\ uses his \ws\ in the private game $EF(A,B)$, choosing the
red label between two nodes $\delta, \beta$ being apexes of two cones, the former with tint $a$
and inducing the same order on $F$ as $\r_{\beta}$  where $\beta$ is the index of
the red clique defined above.
\end{proof}

%ElCRA_n$ is not finitely axiomatizable over $ElCRA_{n+1}$
\begin{theorem} \pa\ has a \ws\ in $G_{\omega}^{n+1}(\A_r^n)$
\end{theorem}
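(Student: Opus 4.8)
The plan is to mirror the proof of Theorem~\ref{he} (where \pa\ wins $G_\omega^n$ on $\CA_{A,B}$), replacing the infinite orders $\Z,\N$ by the finite ones sitting inside $A=M[n-3,2^{r-1}]$ and $B=M[n-3,2^{r-1}-1]$, and exploiting the single extra node to hand \pa\ exactly the pebbles he needs. First I would have \pa\ run a private pebble game $\EF_r^{n-1}(A,B)$ alongside the graph game $G_\omega^{n+1}(\A_r^n)$. Since a win with $p$ pebbles in $k$ rounds transfers to $p+2$ nodes and $k+1$ rounds, the $n-1=(n+1)-2$ available pebbles are precisely what the $(n+1)$-node game affords. \pa's private strategy is the standard one on these structures: he first spends $n-3$ pebbles pinning down the clique $K_{n-3}$ common to $A$ and $B$ (forcing \pe\ to match it, since any triple of distinct colours contains a $<$-edge), leaving two pebbles free for the $2$-pebble Ehrenfeucht--Fra\"\i ss\'e game on the linear orders of sizes $2^{r-1}$ and $2^{r-1}-1$. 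The parameters are chosen exactly so that \pa\ wins this $2$-pebble game in $r$ rounds by binary search: each round roughly halves the interval under comparison, and the smaller order of size $2^{r-1}-1$ lies below the threshold at which \pe\ could sustain indistinguishability. This is the routine linear-order analysis of \cite{HHbook},\cite{hh}.

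Second, I would lift this private \ws\ to the graph game exactly as in Theorems~\ref{she} and~\ref{he}: \pa\ bombards \pe\ with cones having green tints on a common base $F$ with $|F|=m-1$, $m$ the dimension of $\A_r^n$. Each such cone forces \pe\ to add a node, and by the cone-to-red-clique mechanism used in the proof of Theorem~\ref{she} the apexes of cones inducing the same order on $F$ must be pairwise joined by red edges, so they accumulate into a red clique; each new apex $\nu$ then carries an index $\beta(\nu)\in B$, which \pa\ reads off and feeds into his private game as \pe's response. The invariant to maintain is that at the start of each round the pebble pairs sit on $(a_j,\beta(\nu_j))$ for the current clique nodes $\nu_j$, matching a non-losing position of $\EF_r^{n-1}(A,B)$; \pa\ discards a pebble pair only when all $n-1$ are in use, which corresponds precisely to the cylindrifier move that drops a node in $G_\omega^{n+1}$.

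Finally, because \pa\ wins the private game in $r$ rounds, after at most $r+1$ rounds of the graph game his strategy instructs him to place a pebble on some element of the larger order $2^{r-1}$ for which \pe\ has no legal response in $B$: her two pebbles in $B$ cannot be extended to a partial homomorphism. Translating back, the red indices \pe\ was forced to use cannot consistently label the red edge demanded by \pa's last cone, producing a triple of reds that violates the matching condition \eqref{forb:match}. Hence \pe\ cannot complete the coloured graph to a member of $\bold J$ and loses, so \pa\ wins $G_\omega^{n+1}(\A_r^n)$ within the allotted $\omega$ rounds.

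The hard part will be the bookkeeping that makes the one extra node decisive: verifying that $n+1$ nodes yield exactly $n-1$ private pebbles --- enough to cover the $(n-3)$-clique and still leave the two pebbles required for the binary-search game on the orders --- whereas $n$ nodes leave only one free pebble, so that \pe\ indeed survives $G_\omega^n(\A_r^n)$ (the preceding Lemma) while losing $G_\omega^{n+1}(\A_r^n)$. One must also check that the number of rounds $r$ needed to exhaust the finite order $2^{r-1}-1$ is matched by the rate at which \pa's cone-bombardment forces \pe's red clique to overflow the available red indices, exactly as in the passage from $\Z,\N$ to the finite orders. All remaining steps --- well-definedness of $\beta(\nu)$, the amalgamation and labelling checks, and the verification that \pe's forced colourings are red --- are identical to the arguments already carried out in Theorems~\ref{she} and~\ref{he}.
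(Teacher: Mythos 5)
Your proposal follows the paper's own proof essentially verbatim in outline: \pa\ runs a private $(n-1)$-pebble \ef\ game on $A=M[n-3,2^{r-1}]$ and $B=M[n-3,2^{r-1}-1]$, spends $n-3$ pebbles covering the clique part, reduces to a two-pebble game on the two linear orders (which he wins because one is strictly longer than the other), and lifts this to the graph game by bombarding \pe\ with cones of different green tints over a fixed base so that the apexes accumulate into a red clique indexed by $B$. The paper's proof is a terser version of exactly this argument, so your write-up is a correct and somewhat more detailed rendering of the same approach.
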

\begin{proof}
Also like \ref{he}. He uses her private game which is $\EF_{\omega}^{n-1}(A, B)$.
Then he picks $\w$ and she plays sucessively cones,
with green tints, forcing \pe\ to play a red clique on the base.

In her private game, \pa\ always place the pebbles on distinct elements of $A$.
he uses rounds $0,\ldots n-2$, to cover $K(A)$ and the elements
$l-1, l-2\in I(A)$.  Then \pa\ can force \pe\ to play a two pebble game of length $\omega$ on $I(A)$ and $I(B)$ which he can win because $I(A)$ is
longer than $I(B)$, bombarding him with cones having the same base, namely the induced face by \pa\ move,
and different tints, in the graph game.
\end{proof}
We know that $\A_r^n$ for any $r$ separates $\CRA_{n,m}$ from $\CRA_{n+1,m}$. But it is possible that an ultrapower or an ultraproduct or both applied to
$\A_r^n$ is in $UpUr\CRA_{n,m}$, and indeed we have \pe\ can win  $G_{\omega}^{n+1}[\prod_r \A_r^n/D]$. The ultraproduct is an atomic algebra,
and it belongs to $\CRA_{n+1,m}$.

\section{Classes of subneat reducts that are not elementary, and not closed under completions}

Here we change our notation to the more conventional one, namely, cylindric algebras of dimension
$n$, will be denoted by $\CA_n$.

Let $\A$ be the complex algebra over $\CA_{\Z, \N}$. Then $\A$ is representable because \pe\ can win the finite rounded games \cite{hh}.
Note that it is not completely representable because \pa\ can win the $\omega$ rounded game.
Now consider the following game played on networks, and then translated to coloured graphs:
We need some preliminaries.

\begin{definition}\label{def:string}
Let $n$ be an ordinal. An $s$ word is a finite string of substitutions $({\sf s}_i^j)$,
a $c$ word is a finite string of cylindrifications $({\sf c}_k)$.
An $sc$ word is a finite string of substitutions and cylindrifications
Any $sc$ word $w$ induces a partial map $\hat{w}:n\to n$
by
\begin{itemize}

\item $\hat{\epsilon}=Id$

\item $\widehat{w_j^i}=\hat{w}\circ [i|j]$

\item $\widehat{w{\sf c}_i}= \hat{w}\upharpoonright(n\sim \{i\}$

\end{itemize}
\end{definition}

If $\bar a\in {}^{<n-1}n$, we write ${\sf s}_{\bar a}$, or more frequently
${\sf s}_{a_0\ldots a_{k-1}}$, where $k=|\bar a|$,
for an an arbitrary chosen $sc$ word $w$
such that $\hat{w}=\bar a.$
$w$  exists and does not
depend on $w$ by \cite[definition~5.23 ~lemma 13.29]{HHbook}.
We can, and will assume \cite[Lemma 13.29]{HHbook}
that $w=s{\sf c}_{n-1}{\sf c}_n.$
%Let $n\geq 3$ be an ordinal and $i, j<n$. We define a string of
%substitutions $s_{i_0i_1\ldots i_{\mu-1}}$
%that `move dimensions $0, 1,\ldots \mu-1$ to $i_0,i_1\ldots i_{\mu-1}$
%asfollows.
%\[s_{ij}=\left\{\begin{array}{ll}
%s^0_is^1_j &\mbox{if }j\neq 0\\
%s^1_0s^0_i&\mbox{if }j=0, \; i\neq 1\\
%s^2_0s^0_1s^1_2&\mbox{if } j=0,\;i=1
%\end{array}\right.
%\]
[In the notation of \cite[definition~5.23,~lemma~13.29]{HHbook},
$\widehat{s_{ijk}}$ for example is the function $n\to n$ taking $0$ to $i,$
$1$ to $j$ and $2$ to $k$, and fixing all $l\in n\setminus\set{i, j,k}$.]
Let $\delta$ be a map. Then $\delta[i\to d]$ is defined as follows. $\delta[i\to d](x)=\delta(x)$
if $x\neq i$ and $\delta[i\to d](i)=d$. We write $\delta_i^j$ for $\delta[i\to \delta_j]$.

We recall the definition of network:
\begin{definition}
From now on let $2\leq n<\omega.$ Let $\C$ be an atomic $\CA_{n}$.
An \emph{atomic  network} over $\C$ is a map
$$N: {}^{n}\Delta\to \At\cal C$$
such that the following hold for each $i,j<n$, $\delta\in {}^{n}\Delta$
and $d\in \Delta$:
\begin{itemize}
\item $N(\delta^i_j)\leq {\sf d}_{ij}$
\item $N(\delta[i\to d])\leq {\sf c}_iN(\delta)$
\end{itemize}
\end{definition}
Note than $N$ can be viewed as a hypergraph with set of nodes $\Delta$ and
each hyperedge in ${}^n\Delta$ is labelled with an atom from $\C$.
We call such hyperedges atomic hyperedges.

\begin{definition}\label{def:hat}
For $m\geq 5$ and $\C\in\CA_m$, if $\A\subseteq\Nr_n(\C)$ is an
atomic cylindric algebra and $N$ is an $\A$-network then we define
$\widehat N\in\C$ by
\[\widehat N =%\prod_{i\leq j\in\nodes(N)}s^0_is^1_jN(i, j)=
 \prod_{i_0,\ldots i_{n-1}\in\nodes(N)}{\sf s}_{i_0, \ldots i_{n-1}}N(i_0\ldots i_{n-1})\]
$\widehat N\in\C$ depends
implicitly on $\C$.
\end{definition}
We write $\A\subseteq_c \B$ if $\A\in S_c\{\B\}$.
\begin{lemma}\label{lem:atoms2}
Let $n<m$ and let $\A$ be an atomic $\CA_n$,
$\A\subseteq_c\Nr_n\C$
for some $\C\in\CA_m$.  For all $x\in\C\setminus\set0$ and all $i_0, \ldots i_{n-1} < m$ there is $a\in\At(\A)$ such that
${\sf s}_{i_0\ldots i_{n-1}}a\;.\; x\neq 0$.
\end{lemma}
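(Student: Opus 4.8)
The plan is to transfer the problem from $\C$ down into the neat reduct $\Nr_n\C$ by replacing the substitution operator with its Boolean conjugate, and then to use atomicity of $\A$ together with the completeness of the embedding $\A\subseteq_c\Nr_n\C$. The operator ${\sf s}_{i_0\ldots i_{n-1}}$ is, by definition, the additive $sc$-word $w$ with $\hat w=(i_0,\dots,i_{n-1})$ of \cite{HHbook}, so it is a composite of cylindrifiers $\cyl i$ and unary substitutions $\sub ij$. Each factor has a Boolean conjugate: $\cyl i$ is self-conjugate (this is the identity $\cyl i u\cdot v=0\iff u\cdot\cyl i v=0$), and the conjugate of $\sub ij$ is the additive map $v\mapsto\diag ij\cdot\cyl i v$ (recall $\sub ij u=\cyl i(\diag ij\cdot u)$). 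Since the conjugate of a composite is the reversed composite of the conjugates, I obtain a term $t$, again built from $\CA_m$-operations, with
\[ {\sf s}_{i_0\ldots i_{n-1}}u\cdot v\neq 0\iff u\cdot t(v)\neq 0\quad\text{for all }u,v\in\C. \]

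Next I would force the conjugate into $\Nr_n\C$. Put $\bar x=\cyl n\cyl{n+1}\cdots\cyl{m-1}\,t(x)$; the high cylindrifiers commute and are idempotent, so $\cyl i\bar x=\bar x$ for every $n\le i<m$, i.e.\ $\bar x\in\Nr_n\C$. For any $a\in\Nr_n\C$ one has $\cyl i a=a$ when $i\ge n$, so applying the self-conjugacy of $\cyl i$ for $i=n,\dots,m-1$ in turn gives $a\cdot t(x)\neq 0\iff a\cdot\bar x\neq 0$, and hence, with the displayed equivalence,
\[ {\sf s}_{i_0\ldots i_{n-1}}a\cdot x\neq 0\iff a\cdot\bar x\neq 0\quad\text{for all }a\in\Nr_n\C. \]
Taking $a=1\in\Nr_n\C$ and using that substitutions and cylindrifiers fix the top (so ${\sf s}_{i_0\ldots i_{n-1}}1=1$), the left-hand side reads $x\neq 0$; thus the hypothesis $x\neq 0$ already guarantees $\bar x\neq 0$.

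Finally I would invoke the hypotheses on $\A$. As $\A$ is atomic, $\sum\At\A=1$ in $\A$, and since $\A\subseteq_c\Nr_n\C$ this supremum is preserved, giving $\sum^{\Nr_n\C}\At\A=1$. Meet distributes over any supremum that exists in a Boolean algebra, so $\bar x=\bar x\cdot\sum_{a\in\At\A}a=\sum_{a\in\At\A}(\bar x\cdot a)$ in $\Nr_n\C$; as $\bar x\neq 0$, some atom $a\in\At\A$ satisfies $\bar x\cdot a\neq 0$. This is a finite meet, so it is nonzero in $\C$ as well, and the equivalence of the previous paragraph then yields ${\sf s}_{i_0\ldots i_{n-1}}a\cdot x\neq 0$, which is the assertion.

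The step I expect to be most delicate is the construction of the conjugate term together with the verification that $\bar x$ truly lies in $\Nr_n\C$ while still meeting $x$ through an atom of $\A$: one must check that cylindrifying out the coordinates $\ge n$ does not destroy the detection of $x$, which is exactly why the argument is arranged so that $a$ ranges only over $n$-dimensional elements. The remaining ingredients are routine $\CA_m$ manipulations; the hypothesis that carries the weight is that $\A$ is a \emph{complete} subalgebra of $\Nr_n\C$ rather than merely a subalgebra, since this is what makes $\sum^{\Nr_n\C}\At\A=1$ and hence prevents $\bar x$ from avoiding every atom.
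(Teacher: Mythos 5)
Your proof is correct. It rests on the same two pillars as the paper's own argument --- the word ${\sf s}_{i_0\ldots i_{n-1}}$ is built from conjugated (hence completely additive) operators, and the complete embedding $\A\subseteq_c\Nr_n\C$ carries $\sum\At\A=1$ out of $\A$ --- but you run the argument in the dual direction. The paper pushes the atoms forward: complete additivity gives $\sum\{{\sf s}_{i_0\ldots i_{n-1}}a : a\in\At\A\}={\sf s}_{i_0\ldots i_{n-1}}1=1$ computed in $\C$, and then a nonzero $x$ cannot be disjoint from every member of a family whose supremum is $1$, since otherwise $1-x$ would be an upper bound. You instead pull $x$ back: you build the conjugate term $t$, cylindrify $t(x)$ into an element $\bar x$ of $\Nr_n\C$, show $\bar x\neq 0$, and then use atomicity inside $\Nr_n\C$ to find an atom meeting $\bar x$. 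The two routes are essentially equivalent in content (conjugacy and complete additivity are two faces of the same property of these operators), but yours has one genuine advantage: by keeping the supremum computation entirely inside $\Nr_n\C$, you never need the auxiliary fact that $\Nr_n\C$ is itself a complete subalgebra of $\C$, which the paper's computation ${\sf s}_{i_0\ldots i_{n-1}}\sum\At\A=\sum{\sf s}_{i_0\ldots i_{n-1}}a$ uses tacitly (the supremum of $\At\A$ appearing there must be the one evaluated in $\C$, not merely in $\Nr_n\C$). The price is the extra bookkeeping of constructing $t$ and checking $\bar x\in\Nr_n\C$, both of which you carry out correctly; the paper's shortcut of discarding the trailing cylindrifiers (legitimate because they fix elements of $\Nr_n\C$) is replaced in your version by the observation that ${\sf c}_i$ is self-conjugate, which serves the same purpose.
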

\begin{proof}
We can assume, see definition  \ref{def:string},
that ${\sf s}_{i_0,\ldots i_{n-1}}$ consists only of substitutions, since ${\sf c}_{m}\ldots {\sf c}_{m-1}\ldots
{\sf c}_nx=x$
for every $x\in \A$.We have ${\sf s}^i_j$ is a
completely additive operator (any $i, j$), hence ${\sf s}_{i_0,\ldots i_{\mu-1}}$
is too  (see definition~\ref{def:string}).
So $\sum\set{{\sf s}_{i_0\ldots i_{n-1}}a:a\in\At(\A)}={\sf s}_{i_0\ldots i_{n-1}}
\sum\At(\A)={\sf s}_{i_0\ldots i_{n-1}}1=1$,
for any $i_0,\ldots i_{n-1}<n$.  Let $x\in\C\setminus\set0$.  It is impossible
that ${\sf s}_{i_0\ldots i_{n-1}}\;.\;x=0$ for all $a\in\At(\A)$ because this would
imply that $1-x$ was an upper bound for $\set{{\sf s}_{i_0\ldots i_{n-1}}a:
a\in\At(\A)}$, contradicting $\sum\set{{\sf s}_{i_0\ldots i_{n-1}}a :a\in\At(\A)}=1$.
\end{proof}
We define a game on networks, which  has $\omega$  rounds and $m$ pebbles.
If \pa\ wins this game played on networks of an atomic $\A$, this means that
$\A\notin S_c\Nr_n\CA_m$.

\begin{definition}
Let $m\leq \omega$. This is a typical $m$ pebble game.
In a play of $F^m(\alpha)$ the two players construct a sequence of
networks $N_0, N_1,\ldots$ where $\nodes(N_i)$ is a finite subset of
$m=\set{j:j<m}$, for each $i$.  In the initial round of this game \pa\
picks any atom $a\in\alpha$ and \pe\ must play a finite network $N_0$ with
$\nodes(N_0)\subseteq  m$,
such that $N_0(\bar{d}) = a$
for some $\bar{d}\in{}^{n}\nodes(N_0)$.
In a subsequent round of a play of $F^m(\alpha)$ \pa\ can pick a
previously played network $N$ an index $\l<n$, a ``face"
$F=\langle f_0,\ldots f_{n-2} \rangle \in{}^{n-2}\nodes(N),\; k\in
m\setminus\set{f_0,\ldots f_{n-2}}$, and an atom $b\in\alpha$ such that
$b\leq {\sf c}_lN(f_0,\ldots f_i, x,\ldots f_{n-2}).$
(the choice of $x$ here is arbitrary,
as the second part of the definition of an atomic network together with the fact
that $\cyl i(\cyl i x)=\cyl ix$ ensures that the right hand side does not depend on $x$).
This move is called a \emph{cylindrifier move} and is denoted
$(N, \langle f_0, \ldots f_{\mu-2}\rangle, k, b, l)$ or simply $(N, F,k, b, l)$.
In order to make a legal response, \pe\ must play a
network $M\supseteq N$ such that
$M(f_0,\ldots f_{i-1}, k, f_i,\ldots f_{n-2}))=b$
and $\nodes(M)=\nodes(N)\cup\set k$.

\pe\ wins $F^m(\alpha)$ if she responds with a legal move in each of the
$\omega$ rounds.  If she fails to make a legal response in any
round then \pa\ wins. The more pebbles we have, the easier it is for \pa\ to win.
\end{definition}
This game is is like the usual $\omega$ rounded
atomic game played on networks of cylindric algebras $G_{\omega}$
except that the number of nodes used are limited and \pa\ can re-use nodes.
If we allow only $m$ nodes in the cylindric algebra game without allowing \pa\ to reuse nodes,
then the resulting game characterizes those cylindric algebras that have an $n$ square
relativized representation meaning that a win for \pe\ using $n$ pebbles
imply that the algebra has an $n$ square representation and the converse holds
as well. We will return to such issues later.

\begin{theorem}\label{thm:n}
Let $n<m$, and let $\A$ be an atomic $\CA_m$
If $\A\in{\bf S_c}\Nr_{n}\CA_m, $
then \pe\ has a \ws\ in $F^m(\At\A)$. In particular, if $\A$ is countable and completely representable, then \pe\ has a \ws in $F^{\omega}(\At\A)$
\end{theorem}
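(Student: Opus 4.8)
The plan is to give \pe\ an explicit strategy that maintains a single non-zero ``suitability witness'' inside an $m$-dimensional dilation, exactly along the lines of the $\widehat N$-construction of Definition~\ref{def:hat}. Since $\A\in S_c\Nr_n\CA_m$, first I would fix $\C\in\CA_m$ with $\A\subseteq_c\Nr_n\C$, so that $\A$ is a \emph{complete} subalgebra of the $\CA_n$ $\Nr_n\C$ and Lemma~\ref{lem:atoms2} is available: for every $x\in\C\setminus\{0\}$ and every $i_0,\ldots,i_{n-1}<m$ there is an atom $a\in\At\A$ with ${\sf s}_{i_0\ldots i_{n-1}}a\cdot x\neq0$. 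Throughout the play \pe\ will preserve the invariant $\widehat N\neq0$ in $\C$, where $N$ is the current network and $\widehat N=\prod_{i_0,\ldots,i_{n-1}\in\nodes(N)}{\sf s}_{i_0\ldots i_{n-1}}N(i_0\ldots i_{n-1})$.

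First I would treat the opening round. When \pa\ picks an atom $a\in\At\A$, \pe\ builds $N_0$ on a set of $n$ nodes with one designated hyperedge $\bar d$ labelled $a$, and fills in the remaining hyperedges one at a time. Enumerating them as $\bar z^{0},\bar z^{1},\ldots$, she keeps a running non-zero product $y$ (starting from $y={\sf s}_{\bar d}a\neq0$) and, at each step, applies Lemma~\ref{lem:atoms2} with $x=y$ and index-tuple $\bar z^{j}$ to choose an atom $N_0(\bar z^{j})$ with ${\sf s}_{\bar z^{j}}N_0(\bar z^{j})\cdot y\neq0$, updating $y$ to this product. Since only finitely many hyperedges occur, this terminates with $\widehat{N_0}=y\neq0$, and the network axioms hold because the diagonal and cylindrifier constraints follow from the corresponding $\C$-axioms applied to the factors of $\widehat{N_0}$.

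Next I would handle a cylindrifier move $(N,F,k,b,l)$, where $b\leq{\sf c}_l N(\ldots)$ is the atom \pa\ demands on the hyperedge obtained by inserting the new node $k$ into the face $F$. Writing $\bar z_0$ for this hyperedge, the crucial point is that the invariant $\widehat N\neq0$ together with $b\leq{\sf c}_l N(\ldots)$ yields ${\sf s}_{\bar z_0}b\cdot\widehat N\neq0$ in $\C$; this is a computation in $\C$ using the substitution and cylindrifier axioms (and the observation that ${\sf c}_k\widehat N=\widehat N$, as $k\notin\nodes(N)$ so dimension $k$ lies outside the support of $\widehat N$). Starting from the non-zero element $y={\sf s}_{\bar z_0}b\cdot\widehat N$, \pe\ then labels the remaining new hyperedges (those containing $k$) exactly as in the opening round, using Lemma~\ref{lem:atoms2} to keep $y$ non-zero; the resulting $M\supseteq N$ with $\nodes(M)=\nodes(N)\cup\{k\}$ satisfies $M(\bar z_0)=b$ and $\widehat M\neq0$, so the invariant is preserved and \pe's response is legal. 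As the invariant never fails, \pe\ survives all $\omega$ rounds and wins $F^m(\At\A)$.

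For the final assertion, if $\A$ is countable and completely representable then a complete representation yields a complete embedding $\A\subseteq_c\Nr_n\C$ with $\C\in\CA_\omega$ (the full $\omega$-dimensional set algebra on the base of the representation), and applying the argument above with $m=\omega$, where \pa\ can never exhaust the node supply, gives \pe\ a \ws\ in $F^\omega(\At\A)$. The main obstacle is the non-vanishing step ${\sf s}_{\bar z_0}b\cdot\widehat N\neq0$: this is precisely where completeness of the embedding $\A\subseteq_c\Nr_n\C$ is essential, so that suprema of atoms are preserved and Lemma~\ref{lem:atoms2} applies, and it requires care in tracking how the hole at position $l$ in the face edge is filled by the new node $k$ under the substitution operators. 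The remaining bookkeeping---checking the network axioms for $M$ and that only finitely many hyperedges need relabelling in each round---is routine.
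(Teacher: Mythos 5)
Your proposal is correct and follows essentially the same route as the paper's own proof: maintain the invariant $\widehat N\neq 0$ in the dilation $\C$, use Lemma~\ref{lem:atoms2} to extend labellings while keeping the product non-zero, and reduce the complete-representability case to $\A\in S_c\Nr_n\CA_\omega$. Your write-up is in fact more explicit than the paper's (which compresses the labelling of the remaining hyperedges into ``there is a network $M$ such that $\widehat M\cdot\widehat{{\sf c}_kN}\cdot{\sf s}_{\bar a}b\neq 0$''), but the underlying argument is identical.
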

\begin{proof}
For the first part, if $\A\subseteq\Nr_n\C$ for some $\C\in\CA_m$ then \pe\ always
plays hypernetworks $N$ with $\nodes(N)\subseteq n$ such that
$\widehat N\neq 0$. In more detail, in the initial round , let $\forall$ play $a\in \At \cal A$.
$\exists$ play a network $N$ with $N(0, \ldots n-1)=a$. Then $\widehat N=a\neq 0$.
At a later stage suppose $\forall$ plays the cylindrifier move
$(N, \langle f_0, \ldots f_{\mu-2}\rangle, k, b, l)$
by picking a
previously played hypernetwork $N$ and $f_i\in \nodes(N), \;l<\mu,  k\notin \{f_i: i<n-2\}$,
and $b\leq {\sf c}_lN(f_0,\ldots  f_{i-1}, x, f_{n-2})$.
Let $\bar a=\langle f_0\ldots f_{l-1}, k\ldots f_{n-2}\rangle.$
Then ${\sf c}_k\widehat N\cdot {\sf s}_{\bar a}b\neq 0$.
Then there is a network  $M$ such that
$\widehat{M}.\widehat{{\sf c}_kN}\cdot {\sf s}_{\bar a}b\neq 0$. Hence
$M(f_0,\dots  k, f_{n-2})=b.$

For the second part, we have from the first part, that $\A\in S_c\Nr_n\CA_{\omega}$, the result now follows.
\end{proof}

The main strategy for \pa\ s wins in rainbow games is that he uses his greens to force a red clique that \pe\ cannot cope with.
That is he uses his green atoms, namely cones, forcing  \pe\ to use  red atoms, until she is forced an inconsistency.

\begin{theorem} \pa\ has a winning strategy in $F^{n+2}(\At\CA_{\Z,\N})$
\end{theorem}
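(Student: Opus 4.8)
The plan is to win $F^{n+2}(\At\CA_{\Z,\N})$ by transporting the graph-game strategy of Theorem~\ref{he} to the present, simpler parameters $A=\Z$, $B=\N$, and then reading it back through the network--coloured-graph correspondence. Recall that $F^{n+2}$ is, up to the translation established earlier, the $\omega$-rounded graph game $G^{n+2}_\omega$ played on $\CA_{\Z,\N}$, with nodes drawn from $\{0,\dots,n+1\}$ and with \pa\ allowed to reuse nodes; since more pebbles only help \pa, it suffices to exhibit a \ws\ that stages its decisive configuration inside $n+2$ nodes. By Theorem~\ref{thm:n} this will simultaneously witness $\CA_{\Z,\N}\notin S_c\Nr_n\CA_{n+2}$, which is the point of the construction. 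The engine of the strategy is a private Ehrenfeucht--Fra\"\i ss\'e pebble game that \pa\ maintains between the two index structures $\Z$ and $\N$, playing on the $\Z$-side (the green tints) and reading off \pe's responses on the $\N$-side (the red indices $\mu$).

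First I would record that \pa\ has a \ws\ in the private game $\EF_\omega(\Z,\N)$. This rests on the single order-theoretic fact that $\N$ is well-ordered while $\Z$ has no least element: \pa\ plays a strictly $<$-descending sequence of elements of $\Z$ (always possible as $\Z$ is unbounded below), and to keep a partial isomorphism \pe\ must answer with a strictly descending sequence in $\N$, which cannot be sustained. Even two pebbles suffice for this, but I would run it with the pebbles available so that the descent persists under node reuse. The crucial point is that \pe\ survives every \emph{finite} number of rounds (consistent with Theorem~\ref{she}), yet loses once $\omega$ rounds are allowed, the number of rounds to defeat her depending on her first $\N$-choice.

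Next I would lift this to the graph game. \pa\ fixes a face $F$ with $|F|=n-1$ and bombards \pe\ with cones over $F$ whose green tints are the elements of $\Z$ dictated by his private strategy, all inducing the same linear order on $F$. As in Theorem~\ref{she}, two distinct apexes of such same-order cones must be joined by a red edge, and each apex $x$ acquires an index $\mu(x)\in\N=B$, the red label between apexes $\beta,\delta$ being $\r_{\mu(\beta)\mu(\delta)}$. Thus the apexes form a red clique whose indices are exactly \pe's responses in the private game, and the matching condition \eqref{forb:match} forces these indices to cohere along every red triangle. Node accounting determines the exact pebble count: the base consumes $n-1$ nodes, and to realize a forbidden red \emph{triple} \eqref{forb:match} one needs three apexes present at once, i.e.\ $(n-1)+3=n+2$ nodes; this is why $F^{n+2}$, and not $F^{n+1}$, is the natural game. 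Because \pa\ wins the private descent, he eventually plays a cone for which \pe\ has no $\N$-index preserving the strict order imposed by the green tints, and the three live apexes then carry a forbidden red triple \eqref{forb:match}; \pe\ cannot complete the graph in $\bold J$, so \pa\ wins. Translating the play back to networks via the correspondence gives the \ws\ in $F^{n+2}(\At\CA_{\Z,\N})$.

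The main obstacle I anticipate is the faithful bookkeeping of the lifting: verifying that \pa's private $\Z$-moves and \pe's red indices line up so that (i) \pe\ is genuinely forced onto red edges between same-order cone apexes (the hard case of her response analysed in Theorem~\ref{she}), and (ii) the matching condition \eqref{forb:match} encodes \emph{exactly} the order-preservation between green tints in $\Z$ and red indices in $\N$, so that \pa's private win produces a bona fide forbidden triple rather than merely an awkward configuration \pe\ can sidestep. Closely tied to this is confirming that allowing \pa\ to reuse nodes in $F^{n+2}$ does not hand \pe\ an escape: the three apex nodes needed for the triangle must be co-present with the $(n-1)$-node base, and I must check the strategy keeps them so while the $\Z$--$\N$ descent runs to its well-founded contradiction.
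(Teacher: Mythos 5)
Your proposal matches the paper's own argument: \pa\ fixes a base of $n-1$ nodes and bombards \pe\ with cones whose green tints form a strictly descending sequence in $\Z$ (the paper plays tints $0,-1,-2,-3,\ldots$, reusing apex nodes), so that the red labels between co-present apexes, constrained by the matching condition, force a strictly decreasing sequence of indices in $\N$, which is impossible. The node count $(n-1)+3=n+2$ and the role of node reuse are exactly as in the paper, so this is essentially the same proof.
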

\begin{proof}
This is the usual strategy for \pa\ to win, using her greens successively to  create cones with the same  base forcing \pe\
to play a  red clique, eventually running out of reds  one way or  another.
\pa\ has a \ws\ in the $\omega$ rounded game $F^{n+2}$ one with $n+2$ nodes,
by bombarding \pe\ with cones on the same base and different green tints,
forcing a decreasing sequence in $N$.

In his zeroth move, $\forall$ plays a graph $\Gamma \in \bold J$ with
nodes $0, 1,\ldots, n-1$ and such that $\Gamma(i, j) = \w (i < j <
n-1), \Gamma(i, n-1) = \g_i ( i = 1,\ldots, n), \Gamma(0, n-1) =
\g^0_0$, and $ \Gamma(0, 1,\ldots, n-2) = \y_\omega $. This is a $0$-cone
with base $\{0,\ldots , n-2\}$. In the following moves, $\forall$
repeatedly chooses the face $(0, 1,\ldots, n-2)$ and demands a node (possibly used before)
$\alpha$ with $\Phi(i,\alpha) = \g_i (i = 1,\ldots,  n-2)$ and $\Phi(0, \alpha) = \g^\alpha_0$,
in the graph notation -- i.e., an $\alpha$-cone on the same base.
$\exists$, among other things, has to colour all the edges
connecting nodes. The idea is that by the rules of the game
the only permissible colours would be red. Using this, $\forall$ can force a
win eventually for else we are led to a a decreasing sequence in $\N$.

In more detail,
In the initial round $\forall$ plays a graph $\Gamma$ with nodes $0,1,\ldots n-1$ such that $\Gamma(i,j)=\w$ for $i<j<n-1$
and $\Gamma(i,n-1)=\g_i$
$(i=1, \ldots n-2)$, $\Gamma(0,n-1)=\g_0^0$ and $\Gamma(0,1\ldots n-2)=\y_{N}$.
$\exists$ must play a graph with $\Gamma_1(0,\ldots n-1)=\g_0$.
In the following move $\forall$ chooses the face $(0,\ldots n-2)$ and demands a node $n$
with $\Gamma_2(i,n)=\g_i$ and $\Gamma_2(0,n)=\g_0^{-1}.$
$\exists$ must choose a label for the edge $(n,n-1)$ of $\Gamma_2$. It must be a red atom $r_{mn}$. Since $-1<0$ we have $m<n$.
In the next move $\forall$ plays the face $(0, \ldots n-2)$ and demands a node $n+1$ such that  $\Gamma_3(i,n+1)=\g_i^{-2}$.
Then $\Gamma_3(n+1,n)$ $\Gamma_3(n+1,n-1)$ both being red, the indices must match.
$\Gamma_3(n+1,n)=r_{ln}$ and $\Gamma_3(n+1, n-1)=r_{lm}$ with $l<m$.
In the next round $\forall$ plays $(0,1\ldots n-2)$ and reuses the node $n-2$ such that $\Gamma_4(0,n-2)=\g_0^{-3}$.
This time we have $\Gamma_4(n,n-1)=\r_{jl}$ for some $j<l\in N$.
Continuing in this manner leads to a decreasing sequence in $\N$.
\end{proof}

(Notice that here \pa\  needed at least $n+2$ pebbles.
The number of pebbles, $k>n$ say, necessary for \pa\ to win the game,
excludes {\it complete} neat embeddability of $\A$ in
an algebra with $k$ dimensions.)

\begin{corollary} The algebra $\A$ (definition above)
is not in ${\bf S_c}\Nr_{n}\CA_{n+2}$.
\end{corollary}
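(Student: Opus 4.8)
The plan is to derive this corollary as an immediate contrapositive of Theorem~\ref{thm:n}, combined with the immediately preceding theorem that \pa\ has a winning strategy in $F^{n+2}(\At\CA_{\Z,\N})$. First I would record that $\A=\Cm\CA_{\Z,\N}$ is atomic, being a complex algebra (equivalently, the completion of the term algebra over the atom structure $\CA_{\Z,\N}$), and that its atom structure is $\At\A=\CA_{\Z,\N}$; hence the game $F^{n+2}(\At\A)$ is literally the game $F^{n+2}(\At\CA_{\Z,\N})$ on which \pa's \ws\ was just constructed.

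Next I would instantiate Theorem~\ref{thm:n} with $m=n+2$: it asserts that if $\A\in{\bf S_c}\Nr_n\CA_{n+2}$, then \pe\ has a \ws\ in $F^{n+2}(\At\A)$. The key observation is that at most one of the two players can possess a winning strategy in a single game; since the preceding theorem exhibits a \ws\ for \pa\ in $F^{n+2}(\At\CA_{\Z,\N})=F^{n+2}(\At\A)$, player \pe\ cannot have one. Contraposing Theorem~\ref{thm:n} then gives $\A\notin{\bf S_c}\Nr_n\CA_{n+2}$, which is exactly the assertion of the corollary.

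There is essentially no remaining obstacle, since the substantive work has already been carried out above: namely, the construction of \pa's strategy forcing a strictly descending sequence of red indices in $\N$ (and hence an inconsistency after finitely many cone moves), together with the neat-embedding-to-winning-strategy transfer of Theorem~\ref{thm:n} effected through the hatted networks $\widehat N$. The only point deserving a line of care is that the two games match on the nose, i.e.\ that the number $n+2$ of pebbles (nodes) used in \pa's strategy coincides with the parameter $m=n+2$ in Theorem~\ref{thm:n}; this is precisely the heuristic recorded in the parenthetical remark that the $k>n$ pebbles needed for \pa's win exclude complete neat embeddability of $\A$ into an algebra of $k$ dimensions. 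I would therefore present the corollary as a one-line consequence of these two theorems.
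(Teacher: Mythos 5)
Your proposal is correct and is exactly the argument the paper intends: the corollary is stated without proof precisely because it follows immediately by contraposing Theorem~\ref{thm:n} with $m=n+2$ and invoking the preceding theorem that \pa\ wins $F^{n+2}(\At\CA_{\Z,\N})=F^{n+2}(\At\A)$, since both players cannot have winning strategies in the same game. Your added care about $\A$ being atomic (as a complex algebra) and the pebble parameters matching is exactly the content of the paper's parenthetical remark, so nothing further is needed.
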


\begin{corollary} The algebra $\A$ is not completely representable
\end{corollary}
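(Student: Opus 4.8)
The plan is to obtain the statement immediately from the preceding corollary, that $\A\notin S_c\Nr_n\CA_{n+2}$, combined with the standard fact that complete representability forces a \emph{complete} neat embedding into every finite dimension. First I would record that $\A$, being the complex algebra of an atom structure, is complete and atomic; this makes complete representability meaningful and, more to the point, keeps atomicity available at every step.

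Next I would invoke the forward direction of the neat-embedding characterization of complete representations (the characterization attributed above to Hirsch and the present author): every completely representable atomic $\CA_n$ lies in $S_c\Nr_n\CA_{\omega}$. Concretely, a complete representation of $\A$ on a base $U$ expands to an $\omega$-dimensional complete representation on the same base, exhibiting a $\D\in\CA_{\omega}$ with $\A\subseteq_c\Nr_n\D$. It is important that this direction uses only atomicity and not countability, since the full complex algebra $\A$ is uncountable; in particular I would deliberately avoid the second part of Theorem~\ref{thm:n}, whose hypothesis explicitly requires $\A$ to be countable.

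Assuming, towards a contradiction, that $\A$ is completely representable, the previous step yields $\A\in S_c\Nr_n\CA_{\omega}$; since $S_c\Nr_n\CA_{\omega}\subseteq S_c\Nr_n\CA_{n+2}$, this gives $\A\in S_c\Nr_n\CA_{n+2}$, contradicting the preceding corollary. Equivalently, and more in the spirit of the games above, one can phrase this entirely in game terms: complete representability gives $\A\in S_c\Nr_n\CA_{\omega}$, whence by Theorem~\ref{thm:n} (with $m=\omega$) player \pe\ has a \ws\ in $F^{\omega}(\At\A)$, while the preceding theorem already gives \pa\ a \ws\ in $F^{n+2}(\At\A)$; since adding pebbles only helps \pa\ (as recorded in the definition of $F^m$), \pa\ also wins $F^{\omega}(\At\A)$, and the two players cannot both have winning strategies. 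The only real obstacle in either route is the same, namely securing the implication ``completely representable $\Rightarrow\A\in S_c\Nr_n\CA_{\omega}$'' in a form that does not presuppose countability of $\A$; everything else is bookkeeping with the inclusion $S_c\Nr_n\CA_{\omega}\subseteq S_c\Nr_n\CA_{n+2}$ together with the triviality that a single two-player game cannot be won by both players.
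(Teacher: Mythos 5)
Your argument is correct and is essentially the one the paper intends (the paper states this corollary without proof, as an immediate consequence of the preceding one): complete representability yields $\A\in S_c\Nr_n\CA_{\omega}\subseteq S_c\Nr_n\CA_{n+2}$, contradicting the previous corollary. Your explicit care in using only the direction ``completely representable $\Rightarrow S_c\Nr_n\CA_{\omega}$'', which needs atomicity but not the countability hypothesis appearing in the second part of Theorem~\ref{thm:n}, is a point the paper glosses over and is worth keeping.
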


\begin{theorem} The omitting types theorem fails for even $n+2$ square representations.
\end{theorem}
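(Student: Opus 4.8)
The plan is to read off the failure of omitting types purely algebraically, exploiting the standard correspondence between \emph{complete} relativized representations and omission of the non-principal type of co-atoms. All the substantive work is already done; what remains is to translate \pa's win in $F^{n+2}$ into the language of types.

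First I would pass to the countable term algebra $\C=\Tm\At\CA_{\Z,\N}$, which is atomic and, since \pe\ wins every finite rounded game, lies in $\RCA_n$; in particular $\C$ has genuine, hence $(n+2)$-square, representations. The game $F^{n+2}$ is played on the atom structure alone, so \pa's winning strategy from the preceding theorem transfers verbatim to $\C$, and the contrapositive of Theorem \ref{thm:n} gives $\C\notin S_c\Nr_n\CA_{n+2}$ (the corollary above records the same conclusion for the full complex algebra $\A$). Invoking the characterization theorem that equates membership in $S_c\Nr_n\CA_{n+2}$ with the existence of a complete $(n+2)$-square representation, I conclude that $\C$ admits \emph{no} complete $(n+2)$-square representation.

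Next I would introduce the type $\Gamma=\set{-a:a\in\At\C}$ of co-atoms and check that it is non-principal: if $x\leq -a$ for every atom $a$ then $x\cdot a=0$ for all $a$, and atomicity of $\C$ forces $x=0$, so no nonzero element isolates $\Gamma$. The crux is then the translation into semantics. In any $(n+2)$-square representation $h$ of $\C$ on a base $M$, a clique $\bar s\in C^{n+2}(M)$ realizes $\Gamma$ exactly when $\bar s\in h(-a)$ for all atoms $a$, that is, when $\bar s\notin\bigcup_{a\in\At\C}h(a)$; hence $h$ omits $\Gamma$ if and only if $\bigcup_{a}h(a)$ exhausts the clique-space, i.e.\ if and only if $h$ is a \emph{complete} $(n+2)$-square representation.

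Putting the halves together, since $\C$ has no complete $(n+2)$-square representation, the non-principal type $\Gamma$ is realized in every $(n+2)$-square representation of $\C$ and can be omitted in none, even though such representations of $\C$ do exist. As $\C$ is a countable consistent atomic $\CA_n$ and $\Gamma$ a single countable non-principal type, this is a bona fide counterexample to the omitting types theorem for $(n+2)$-square semantics. The one point demanding care is the displayed translation between realizing the co-atom type and failure of completeness of the relativized representation; once the clique-guarded semantics is fixed this is routine, and the entire weight of the argument rests on \pa's winning strategy in $F^{n+2}$, i.e.\ on the failure of complete neat embeddability of $\C$ into a $\CA_{n+2}$.
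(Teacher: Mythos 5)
Your proof is correct and follows essentially the same route as the paper's (very terse) argument: take the countable atomic representable rainbow term algebra that \pa's win in $F^{n+2}$ places outside $S_c\Nr_n\CA_{n+2}$, note that the non-principal type of co-atoms is omitted in an $(n+2)$-square representation exactly when that representation is complete, and conclude from the characterization theorem that it can never be omitted. You have merely supplied the details (countability, non-principality, the semantic translation) that the paper leaves implicit.
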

\begin{demo}{proof} Let $\A$ be an atomic countable representable algebra that is not in $S_c\Nr_n\CA_{n+2}$.
Let $\Gamma$ be the set of co-atoms,
then it does not have a $k$ square complete representation.
\end{demo}

\begin{theorem} Let $\A=\CA_{n+2, n+1}$. Then  \pa\ has a \ws\ in $n+4$ rounds in the usual atomic rounded atomic  game on graphs.
\end{theorem}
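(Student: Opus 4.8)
The plan is to adapt the \pa\ strategy of Theorem~\ref{he} to the \emph{finite} structures $A = n+2$ and $B = n+1$, exploiting the strict cardinality gap $|A| = n+2 > n+1 = |B|$ in place of the well-foundedness of $\N$ that drove the $\CA_{\Z,\N}$ argument. As in that proof, \pa\ runs a private pebble game $\EF$ between $A$ and $B$ in the background and mirrors his moves there as cone moves in the graph game, forcing \pe\ to label apex--apex edges with reds whose indices are dictated by her responses in the private game. Because $A$ is a complete irreflexive structure on $n+2$ points while $B$ has only $n+1$ points, \pe\ has no partial-isomorphism response once \pa\ has pebbled all $n+2$ distinct elements of $A$: by the pigeonhole principle two distinct green tints must be answered by the same element of $B$, so the induced map on the pebbled points is not a partial homomorphism. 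This is exactly the private-game loss that \pe's strategy of Theorem~\ref{she} is powerless to avoid in this finite setting.

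Concretely, \pa\ opens by playing a graph $\Gamma_0 \in \bold J$ carrying a $0$-cone with base $F = \{0,\ldots,n-2\}$ and apex $n-1$, with all base edges white and the $(n-1)$-tuple on $F$ coloured by a shade $\y_S$; this fixes the face $F$ on which he will bombard \pe. In each later round he reselects $F$ and demands a fresh apex $\delta$ that is the tip of an $a$-cone with green tint $\g_0^a$ for an index $a \in A = n+2$ not used before, playing in his private game the pebble $a$. By the green/white and cone clauses of the definition of $\bold J$, \pe\ cannot colour the new apex--apex edges white: every edge between two apexes of cones on the common base $F$ inducing the same linear order is forced to be red. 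Hence the apexes accumulate into a red clique, and by the definition of the index of a node in a red clique each apex $\delta$ acquires an index $\mu(\delta) \in B$; \pe's only freedom is the choice of this index, which is precisely her response $b$ in the private $\EF$ game (the label being $\r_{\mu(\beta),b}$, as in Theorem~\ref{she}). Once \pa\ has exhausted the $n+2$ distinct tints, two apexes $\beta,\beta'$ are answered identically, and the red triangle on $\beta,\beta'$ together with a third apex violates the matching clause~\eqref{forb:match}, which forbids $(\r_{ij},\r_{j'k'},\r_{i^*k^*})$ unless $i=i^*$, $j=j'$ and $k'=k^*$. Thus \pe\ cannot complete the colouring inside $\bold J$ and loses.

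For the round count: the opening cone costs one round, the remaining $n+1$ fresh distinct tints are introduced over $n+1$ rounds, and the index collision is exposed within two further rounds (consistent with the one-round overhead of the pebble-game-to-graph-game translation recorded above), giving at most $1 + (n+1) + 2 = n+4$ rounds. The main obstacle is not the pigeonhole count but the verification that \pe\ is genuinely \emph{forced} to answer with reds at every apex---that is, that no white or yellow relabelling, and no reuse of an existing node as the new apex, lets her sidestep the growing red clique---and that the final index collision really instantiates a forbidden triple rather than a merely awkward but legal colouring. Ruling out the reuse-of-nodes escape and confirming that all the cones \pa\ plays induce the same linear order on $F$ (so their apexes must be pairwise red-joined) is where the analysis of Theorem~\ref{she}, read from \pa's side, must be invoked with care; granting that, the bookkeeping that pins the bound at exactly $n+4$ is routine.
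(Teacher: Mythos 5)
Your proposal matches the paper's own proof: both reduce the coloured-graph game to \pa's private pebble game $\EF$ on the complete irreflexive structures $n+2$ and $n+1$, win that game by pigeonhole (irreflexivity of the edge relation on $n+1$ forces \pe\ to use a fresh element of $n+1$ for each fresh element of $n+2$, which fails after $n+1$ rounds), and lift the win to the graph game by bombarding \pe\ with cones on a fixed base so that the apexes form a red clique whose second indices record her pebble responses. The paper states this more tersely---it simply asserts that \pa\ wins $EF_r^p(n+2,n+1)$ for $p,r\geq n+2$ and invokes the general transfer costing two extra nodes and one extra round---whereas you spell out the red-clique and forbidden-triple mechanics; the resulting bound of $n+4$ agrees.
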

\begin{proof}
She plays like she did before, playing (green) cones with yellow base
forcing \pe\ to run out of  reds. Viewed differently, and indeed
more simply,  \pa\ has a \ws\ in $EF_r^p({\sf G},{\sf R})$, for any $p,r\geq n+2$.
In each round $0,1\ldots n+2$ he places a  new pebble  on  element of $n+2$.
The edges relation in $n+1$ is irreflexive so to avoid losing
\pe\ must respond by placing the other  pebble of the pair on an unused element of $n+1$.
After $n+1$ rounds there will be no such element,
and she loss in the next round. Hence \pa\ can win the graph game using $n+4$ pebbles.
\end{proof}

Now we split ever red to infinitely many copies obtaining the new class $\bold J$ consisting of coloured graphs
with the following properties.
\begin{definition}
\begin{enumarab}

\item $M$ is a complete graph.

\item $M$ contains no triangles (called forbidden triples)
of the following types:

\vspace{-.2in}
\begin{eqnarray}
&&\nonumber\\
%(1', x, y)&&\mbox{unless }x=y\label{forb:id}\\
(\g, \g^{'}, \g^{*}), (\g_i, \g_{i}, \w), \\
%&&\mbox{any }i\in n-1\;  \\
%(\g^j_0, \y, \w_f)&&\mbox{unless }f\in P, i\in dom(f)\\
%(\g^j_0, \g^k_0, \w_0)&&\mbox{ any } j, k\in \Z\\
%\label{forb:pim}(\g^i_0, \g^j_0, \r_{kl})&&\mbox{unless } \set{(i, k), (j, l)}\mbox{ is an order-}\\
%&&\mbox{ preserving partial function }\Z\to\N\nonumber\\
%\label{forb:pim2}(\g_i, \g_j, \r_{kl})&&\mbox{if } i=j\mbox{ but }k\neq l\\
\label{forb: black}(\y,\y,\y), \\
\label{forb:match}(\r_{ij}^i, \r_{j'k'}^{i'}, \r_{i^*k^*}^{i^*})&&\mbox{unless }i=i^*,\; j=j'\mbox{ and }k'=k^*\\
\label{form}(\r_{ij}^i, \r_{j'k'}^{i'}, \rho)
\label{f} (\r_{ij}^i, \rho, \rho)
\end{eqnarray}

\item The second and third item like before.
%over $\alpha$ (so the domain of $\A$ consists of arbitrary sets of atoms).
%\end{definition}

\end{enumarab}
\end{definition}
Let $\GG$ denotes the set of all coloured graphs.
The next theorem, due to Hodkinson, is the cornerstone of our result, since it determines the model on which our term algebra will be based.
However, like our very first blow up and blur construction (applied to Monk algebras)
we will not use all assignments, we will have discard the assignments for which an edge is labelled by
$\rho$. This gives a relativized representation, but it is isomorphic to a set algebra that has a square one, so
it will be representable. Furthermore, we will show that it is atomic, and its atoms are precisely the $n$ surjections to coloured graphs, without
edges labelled by $\rho$.

\begin{theorem}
There is a countable coloured  $M\in \GG$ with the following
property:\\
$\bullet$ If $\triangle \subseteq \triangle' \in \GG$, $|\triangle'|
\leq n$, and $\theta : \triangle \rightarrow M$ is an embedding,
then $\theta$ extends to an embedding $\theta' : \triangle'
\rightarrow M$.
\end{theorem}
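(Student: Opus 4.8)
The plan is to build $M$ as the union of an increasing chain $M_0\subseteq M_1\subseteq\cdots$ of finite graphs in $\GG$ realizing a dovetailed list of extension requirements; this is the usual Fra\"\i ss\'e-style step-by-step construction, and the property asked for is exactly a restricted homogeneity of the limit. First I would reduce the statement to a \emph{one-point} extension. Since $\triangle\subseteq\triangle'$ and $|\triangle'|\leq n$, the set $\triangle'\setminus\triangle$ is finite, and $\GG$ is hereditary (it is defined by universal constraints, so every induced subgraph of a member is a member); hence by induction on $|\triangle'\setminus\triangle|$ it suffices to extend an embedding one new node $z$ at a time. At each such step $z$ is joined in the relevant induced subgraph to at most $n-1$ already-placed nodes, all lying in the current image, so the face to which $z$ is attached has size $\leq n-1$.

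The heart of the argument is a one-point amalgamation lemma for $\GG$: given a finite $\Gamma\in\GG$, a face $F\subseteq\Gamma$ with $|F|\leq n-1$, and a prescribed labelling of the edges from a new node $z$ to $F$ making $F\cup\{z\}$ a member of $\GG$, there is $\Gamma^{+}\in\GG$ with $\Gamma\subseteq\Gamma^{+}$ and $z\in\Gamma^{+}$ realizing this labelling over $F$. This is precisely \pe's amalgamation move in the graph game above: one forms the partial graph $\Gamma^{*}$ on $\nodes(\Gamma)\cup\{z\}$ carrying the combined edges, and then completes the labelling by colouring each missing edge $(\beta,z)$ with $\beta\in\Gamma\setminus F$ and each not-yet-coloured $(n-1)$-tuple. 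Following \pe's strategy, an edge between two cone apexes inducing the same order on a common base is coloured by a suitable split red $\r$, every remaining new edge is coloured by a shade of white $\w$, and each new $(n-1)$-tuple with no green edge is assigned the shade of yellow $\y_S$ with $S$ the set of tints of cones on that base; the verification that no forbidden triple (as in (\ref{forb:match}), the black triple, or the triples involving $\rho$) and no clash between shades of yellow is created is exactly the case analysis already carried out in the proof of Theorem \ref{she}.

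With the lemma in hand I would fix an enumeration $(\triangle_t,\triangle'_t,\theta_t)_{t<\omega}$ of all triples with $\triangle_t\subseteq\triangle'_t\in\GG$, $|\triangle'_t|\leq n$, and $\theta_t$ an embedding of $\triangle_t$ into a finite graph, arranged by a standard pairing device so that every such requirement over every $M_i$ is eventually scheduled. Starting from a one-vertex $M_0$, at stage $t$, if $\theta_t$ is an embedding into $M_t$, I apply the one-point lemma $|\triangle'_t\setminus\triangle_t|$ times to get $M_{t+1}\in\GG$ with $M_t\subseteq M_{t+1}$ and an extension of $\theta_t$ to an embedding of $\triangle'_t$ into $M_{t+1}$; otherwise I set $M_{t+1}=M_t$. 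Put $M=\bigcup_{t<\omega}M_t$. Then $M$ is countable, and $M\in\GG$ because membership in $\GG$ is determined by \emph{local} conditions --- the forbidden triples involve three nodes and the yellow-colouring conditions involve $n-1$ nodes --- so every finite subgraph of $M$ already lies in some $M_t\in\GG$. Finally, given $\triangle\subseteq\triangle'\in\GG$ with $|\triangle'|\leq n$ and an embedding $\theta:\triangle\to M$, finiteness of $\triangle$ gives $\theta(\triangle)\subseteq M_i$ for some $i$; the requirement $(\triangle,\triangle',\theta)$ is scheduled at some stage $t\geq i$ and satisfied there, yielding the desired $\theta':\triangle'\to M_{t+1}\subseteq M$.

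The main obstacle is the one-point amalgamation lemma, i.e. proving that the colouring can always be completed inside $\GG$. The genuinely delicate point --- the one that forces the appeal to \pe's private $\EF$-strategy rather than to an arbitrary completion --- is the labelling of a new edge between two nodes that are both apexes of cones on a common base, where the red index must be chosen so as never to close up a forbidden red triangle (\ref{forb:match}) or an inconsistency with the split reds and the shade $\rho$; the choices of shades of white and the bookkeeping are routine by comparison.
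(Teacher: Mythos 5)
Your overall architecture --- reduce to one-point extensions over a face of size at most $n-1$, establish a one-point amalgamation lemma for $\GG$, and dovetail all requirements into an increasing chain whose union is $M$ --- is the right one, and it is essentially how the construction is carried out in the source that the paper cites for this theorem (the paper itself offers no proof beyond the reference to \cite{Hodkinson}). The reduction to single new nodes, the hereditariness of $\GG$, the locality argument showing $M\in\GG$, and the bookkeeping are all fine as you present them.

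The gap is precisely in the place you yourself flag as the obstacle: the one-point amalgamation lemma. You propose to discharge it by re-running ``the case analysis already carried out in the proof of Theorem \ref{she}.'' That appeal does not work. Theorem \ref{she} concerns the \emph{unsplit} rainbow class built from two structures $A,B$, and \pe's strategy there is parasitic on her \ws\ in a private \ef\ game with a bounded number of pebbles; it only survives finitely many rounds, and Theorem \ref{he} shows that no completion strategy for that class survives $\omega$ rounds --- that is, unrestricted amalgamation genuinely \emph{fails} there. If the case analysis of Theorem \ref{she} sufficed for an unconditional amalgamation lemma, the complex algebra would be completely representable and the whole construction would collapse. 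For the split class $\GG$ amalgamation succeeds for a different reason, which your proof never isolates: when the new node $z$ and some $\beta\in\Gamma\setminus F$ are apexes of cones on a common base inducing the same order, the edge $(\beta,z)$ is forced to be red, and if no genuine red subscript is compatible with the red clique already present then no choice of split red works either, since the copy superscript $l$ in $\r_{ij}^{l}$ does not enter the red consistency condition --- splitting alone buys nothing at this step. The edge must instead be labelled by the flexible shade $\rho$, and one must then verify that $\rho$-labelled edges never complete a forbidden configuration with the remaining colours (which also requires checking that the forbidden triples involving $\rho$ are set up so that $\rho$ really is flexible enough for this purpose). That verification is the actual mathematical content of the lemma, and it is absent from your argument.
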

\begin{proof}\cite{Hodkinson}
\end{proof}

Take, like in our first blow up and blur construction, $W\subseteq {}^nM$, by roughly dropping assignments whose edges ar not labelled
by $rho.$
Formally, $W = \{ \bar{a} \in {}^n M : M \models ( \bigwedge_{i < j < n,
l < n} \neg \rho(x_i, x_j))(\bar{a}) \}.$

The term algebra call it $\A$, has universe $\{\phi^M: \phi\in L^n\}$ where $\phi^M=\{s\in W: M\models \phi[s]\}.$
Here $\phi^M$ denotes the permitted assignments satisfying $\phi$ in $M$.
Its completion is the relativized set algebra $\C$ which has universe the larger $\{\phi^M: \phi\in L^n_{\infty,\omega}\}$,
which is not representable. (All logics are of course taken in rainbow signature).
The isomorphism from  $\Cm\At\A$ to $\C$ is given by $X\mapsto \bigcup X$.

\begin{corollary} The class $S\Nr_n\CA_{n+4}$ is not closed under completions
\end{corollary}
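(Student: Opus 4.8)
The plan is to feed the finite rainbow cylindric algebra $\CA_{n+2,n+1}$ into the blow-up-and-blur machinery of Theorem~\ref{hodkinson}, taking $k=4$; the split-the-reds construction set up just above the corollary is exactly the instance of that theorem for this particular algebra. The starting point is the game-theoretic fact established above that \pa\ has a \ws\ in the $(n+4)$-rounded atomic game on coloured graphs over $\CA_{n+2,n+1}$ using $n+4$ nodes. First I would record that this forces $\CA_{n+2,n+1}\notin S\Nr_n\CA_{n+4}$. By the correspondence between winning strategies and (complete) neat embeddings recorded above (the contrapositive of Theorem~\ref{thm:n}, with the graph game and the $F^{n+4}$-game identified as in the discussion preceding it), membership in $S_c\Nr_n\CA_{n+4}$ would hand \pe\ a \ws, contradicting \pa's win; and since $\CA_{n+2,n+1}$ is finite, hence complete, every embedding of it is complete, so $S_c\Nr_n\CA_{n+4}$ and $S\Nr_n\CA_{n+4}$ agree on it. Thus the hypothesis of Theorem~\ref{hodkinson} is met by a concrete finite algebra.

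Next I would carry out the explicit construction already displayed. Each red $r_{ij}$ is replaced by $\omega$ copies $r_{ij}^l$ and a shade of red $\rho$ is adjoined, yielding the class $\GG$ of coloured graphs with the listed forbidden triples. Because only finitely many greens survive (the same greens as in $\CA_{n+2,n+1}$; only the reds are split), the governing $L_{\omega_1,\omega}$ theory collapses to a first-order one, and Hodkinson's extension property (the quoted theorem on the countable $M\in\GG$) supplies an $n$-homogeneous model $M$. Taking $W\subseteq{}^nM$ to be the assignments with no $\rho$-labelled edge and forming the set algebra $\A$ with universe $\{\phi^M:\phi\in L^n\}$, I would argue that $\A$ is atomic, with atoms the $\rho$-free coloured graphs, and is \emph{representable}: \pe\ survives the $\omega$-rounded non-atomic game by using $\rho$ as a flexible, non-principal-ultrafilter label whenever she is forced to colour a red edge, so she never gets driven into the descending $\N$-chain that won the game for \pa\ on the finite-red algebra.

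Then I would identify the completion. The map $X\mapsto\bigcup X$ is an isomorphism $\Cm\At\A\to\C$, where $\C$ has the larger universe $\{\phi^M:\phi\in L^n_{\infty,\omega}\}$ on the same $W$. Inside the complete algebra $\C$ the joins $\bigvee_{l\in\omega}[r_{ij}^l](x_0,x_{n-1})^W$ exist, and sending each red $r_{ij}$ of $\CA_{n+2,n+1}$ to this join gives an embedding of the finite rainbow algebra into $\C$; these joins do \emph{not} exist in $\A$, which contains only finite and cofinite joins, which is exactly why $\A$ escapes the obstruction. Since $S\Nr_n\CA_{n+4}$ is closed under subalgebras and $\CA_{n+2,n+1}\notin S\Nr_n\CA_{n+4}$, it follows that $\C=\Cm\At\A\notin S\Nr_n\CA_{n+4}$. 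But $\A$, being representable, lies in $\RCA_n=S\Nr_n\CA_\omega\subseteq S\Nr_n\CA_{n+4}$. Hence the atomic algebra $\A$ sits in the class while its completion $\Cm\At\A$ does not, proving the class is not closed under completions.

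The main obstacle is the representability of the term algebra $\A$, i.e.\ exhibiting \pe's \ws\ in the $\omega$-rounded game on the blown-up structure. The delicate point is to verify that splitting into $\omega$ copies together with the single flexible colour $\rho$ genuinely restores consistency---that the $\rho$-clauses among the forbidden triples let \pe\ always label a forced red edge without ever creating an inconsistent red clique---while simultaneously the rigid finite pattern of $\CA_{n+2,n+1}$ persists in the completion through the infinite joins. This finite/infinite discrepancy is precisely the mechanism separating $\A$ from $\C$, and checking that the two requirements do not conflict is where the real work lies.
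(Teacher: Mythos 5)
Your proposal is correct and follows essentially the same route as the paper: take the finite rainbow algebra $\CA_{n+2,n+1}$, use \pa's win in the $(n+4)$-node game together with Theorem~\ref{thm:n} and finiteness to exclude it from $S\Nr_n\CA_{n+4}$, then split the reds, add the flexible shade $\rho$, and embed the finite algebra into the completion via the joins $\bigvee_{l\in\omega}r_{ij}^l$ while the representable term algebra omits them. The only cosmetic difference is that the paper passes through $\A=\A^+$ to reduce $S$ to $S_c$, whereas you argue directly that every embedding of a finite algebra is complete; these are the same observation.
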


\begin{proof} Let $\A$ be the above rainbow finite algebra. The certainly \pa \ has a \ws\ in $F^{n+4}$, so that $\A\notin S_c\Nr_n\CA_{n+4}$. But
$\A=\A^+$, hence $\A\notin S\Nr_n\CA_{n+4}$, for if it were, then $\A^+$ would be in $S_c\Nr_n\CA_{n+4}$ and this is not the case.
Split each red  $\r_{ij}$ into $\omega$ many copies $r_{ij}^l$, $l\in \omega$, and a add a shade of red $\rho$, then we get
new infinite countable atom structure $\alpha$.
The term algebra on $\alpha$ is representable, but $\Cm\alpha\notin S\Nr_n\CA_{n+4}$ since $\A$ embeds into it via
$\r_{ij}\mapsto \bigvee_{l\in \omega} r_{ij}^l$.
\end{proof}

Viewed differently,  while looking at $M$ as an $n$- homogeneous model for the rainbow signature, we have:

\begin{theorem}$\C$ does not have an $n+4$ square representation.
\end{theorem}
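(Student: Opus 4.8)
The plan is to reduce the statement to the finite rainbow seed algebra $\CA_{n+2,n+1}$, on which \pa\ already has a \ws\ in the bounded $(n+4)$-node graph game, and which therefore has no $(n+4)$ square representation at all. The point is that this finite algebra reappears inside $\C$, and square representations pass to subalgebras, so a square representation of $\C$ would yield one of a finite algebra that provably has none.

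First I would record that, by the isomorphism $X\mapsto\bigcup X$ discussed just above, $\C\cong\Cm\At\A=\Cm\alpha$, so $\C$ is a \emph{complete} atomic $\CA_n$ whose atom structure is the blown-up $\alpha$. Because $\C$ is complete, the infinite joins $\bigvee_{l\in\omega}\r_{ij}^l$ exist in it, and the assignment $\r_{ij}\mapsto\bigvee_{l\in\omega}\r_{ij}^l$, together with the identity on the greens, whites and yellows, is a $\CA_n$-embedding of the original finite rainbow algebra $\CA_{n+2,n+1}$ onto a subalgebra $\B\subseteq\C$. This is precisely the blow-up embedding already used to prove the preceding corollary, and I would simply cite it here.

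Next, suppose toward a contradiction that $\C$ has an $(n+4)$ square representation, i.e. a relativized representation $M$ that is $(n+4)$ square in the sense of the definition above. Restricting the representing homomorphism to $\B$ gives a representation of $\B\cong\CA_{n+2,n+1}$ on the same base $M$ with the same clique set $C^{n+4}(M)$; the square condition, being quantified over elements $a$ of the algebra, is inherited when one passes to the smaller algebra $\B$. Hence $\CA_{n+2,n+1}$ itself would have an $(n+4)$ square representation. But by the game characterization recalled above, \pe\ wins the $(n+4)$-node (no-reuse) atomic game on an algebra exactly when that algebra has an $(n+4)$ square representation; and the theorem establishing that \pa\ has a \ws\ in this game on $\CA_{n+2,n+1}$ (bombarding \pe\ with green cones on a common base and forcing a strictly decreasing sequence in $\N$, which must terminate within $n+4$ nodes) shows that \pe\ cannot win. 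So $\CA_{n+2,n+1}$ has no $(n+4)$ square representation, a contradiction, and therefore $\C$ has none either.

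The main obstacle I anticipate is the complete/incomplete distinction. One must resist quoting only ``$\C\notin S_c\Nr_n\CA_{n+4}$, hence $\C$ has no complete $(n+4)$ square representation,'' since the theorem asserts the absence of even a possibly \emph{incomplete} square representation, a formally weaker and hence harder-to-exclude object. Routing the argument through the finite seed sidesteps this, because on the finite algebra the win of \pa\ directly excludes \emph{any} square representation. The only remaining points needing explicit care are (i) that $\r_{ij}\mapsto\bigvee_{l\in\omega}\r_{ij}^l$ is genuinely an embedding of $\CA_{n+2,n+1}$ into the complete algebra $\C$, and (ii) that an $(n+4)$ square representation restricts along a subalgebra inclusion to an $(n+4)$ square representation of the smaller algebra; both are routine but should be stated, since they carry the whole weight of the transfer.
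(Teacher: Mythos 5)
Your proof is correct in outline but takes a genuinely different route from the one in the paper. The paper argues directly and semantically: it assumes an $(n+4)$ square representation $g:\C\to\wp(V)$ exists (injective since $\C$ is simple), picks a tuple $\bar{b}$ realizing the shade of yellow $\y_{n+2}$, uses the square/witness property to produce apexes $c_t$ for $t<n+3$ of cones with distinct green tints over the base $\bar b$, labels the edges between apexes by the ``super-reds'' $R_{jk}=\bigvee_{i<\omega}\r^i_{jk}(x_0,x_{n-1})$, and then applies the pigeonhole principle to extract two edges carrying the same $R_{jk}$, whence an inconsistent triple $\r^i_{jk},\r^{i'}_{jk},\r^{i''}_{j'k'}$ is forced inside a single $(n+4)$-clique --- a contradiction. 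Your proposal factors exactly this computation through three cited facts: the blow-up embedding of the finite algebra $\CA_{n+2,n+1}$ into $\C=\Cm\alpha$ via $\r_{ij}\mapsto\bigvee_l\r_{ij}^l$, the (easy but worth stating) observation that an $(n+4)$ square representation restricts along a subalgebra inclusion, and the game-theoretic fact that \pa\ wins the $(n+4)$-node game on the finite seed, which for a \emph{finite} algebra kills even non-complete square representations since every representation of a finite algebra is automatically complete. Your handling of the complete/incomplete distinction is exactly the right point to worry about, and your reduction is the ``modularized'' version of the paper's argument: the paper's $n+3$ cones and pigeonhole step are precisely \pa's winning strategy on the seed replayed inside the putative representation. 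What the paper's inlined version buys is self-containedness --- it does not lean on the game characterization of square representations (which is only sketched in the text) and it exhibits concretely which finite configuration produces the contradiction; what your version buys is reusability and a cleaner logical structure. The one step you should not treat as entirely routine is (i): the map $\r_{ij}\mapsto\bigvee_l\r_{ij}^l$ is defined on colours, whereas the atoms of these algebras are coloured graphs, so the embedding must be defined atomwise on graphs and checked to commute with cylindrifiers; the paper asserts this in the preceding corollary without proof, so citing it is legitimate, but your proof inherits whatever burden that assertion carries.
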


\begin{proof} Assume that $g:\C\to \wp(V)$ is such. Then $V\subseteq {}^nN$ and we can assume that
$g$ is injective because $\C$ is simple. First there are $b_0,\ldots b_{n-1}\in N$ such $\bar{b}\in h(y_{n+2}(x_0,\ldots x_{n-1}))^W$.
This tuple will be the base of finitely many cones, that will be used to force an inconsistent triple of reds.
This is because $y_{n+2}(\bar{x})^W\neq \emptyset$.  For any $t<n+3$, there is a $c_t\in N$, such
that $\bar{b}_t=(b_0,\ldots b_{n-2}, c_t)$ lies in $h(g_0^t(x_0, x_{n-1})^W$ and in $h(g_i(x_i, x_{n-1})^W)$ for each $i$ with
$1\leq i\leq n-2$. The $c_t$'s are the apexes of the cones with base $\y_{n+2}$.

Take the formula
$$\phi_t=y_{n+2}(x_0,\ldots ,x_{n-2})\to \exists x_{n-1}(g_0^t(x_0, x_{n-1}))\land \bigwedge_{1\leq i\leq n-2} g_i(x_i, x_{n-1})),$$
then $\phi_t^{W}=W$. Pick $c_t$ and $\bar{b_t}$ as above, and define for each $s<t<n+3,$ $\bar{c_{st}}$ to be
$(c_s,b_1,\ldots  b_{n-2}, c_t)\in {}^nN.$
Then $\bar{c}_{st}\notin h((x_0, x_{n-1})^W$. Let $\mu$ be the formula
$$x_0=x_{n-1}\lor w_0(x_0, x_{n-1})\lor \bigvee g(x_0, x_{n-1}),$$
the latter formula is a first order formula consisting of the disjunction of  the (finitely many ) greens.
For $j<k<n$, let $R_{jk}$ be the $L_{\infty\omega}^n$-formula $\bigvee_{i<\omega}r_{jk}^i(x_0, x_{n-1})$.
Then
$\bar{c}_{st}\notin h(\mu^W)$, now for each $s<t< n+3$, there are $j<k<n$ with $c_{st}\in h(R_{jk})^W.$
By the pigeon- hole principle, there are $s<t< n+3$ and $j<k<n$
with $\bar{c}_{0s}, \bar{c}_{0t}\in h(R_{jk}^W)$. We have also $\bar{c}_{st}\in h(R_{j',k'}^W)$
for some $j', k'$ then the sequence $(c_0, c_s,\ldots,  b_2 b_{n-2}, c_t)\in h(\chi^W)$
where
$$\chi=(\exists_1R_{jk})(\land (\exists x_{n-1}(x_{n-1}=x_1\land \exists x_1 R_{jk})\land \exists x_0(x_0=x_1)\land \exists x_1R_{j'k})),$$
so $\chi^W\neq \emptyset$. Let $\bar{a}\in \chi ^W$. Then $M\models _W  R_{jk}(a_0,a_{n-1})\land R_{jk}(a_0,a_1)\land R_{j'k'}(a_1, a_{n-1})$.
Hence there are
$i$, $i'$ and $i''<\omega$ such that
$$M\models _Wr_{jk}^{i}(a_0,a_{n-1})\land r_{jk}^{i'}(a_0,a_1)\land r_{j'k'}^{i''}(a_1, a_{n-1}).$$
But this triangle is inconsistent.
Note that this inconsistent red was forced by an $n+4$ red clique
labeling apexes of the same cone, with base $\y_{n+2}$.
\end{proof}

\begin{theorem} Any class $K$, such that $\Nr_n\CA_{\omega}\subseteq K\subseteq S_c\Nr_n\CA_{n+2}$ is elementary
\end{theorem}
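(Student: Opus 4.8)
The plan is to prove the statement through the standard criterion that a class closed under isomorphism is elementary exactly when it is closed under ultraproducts and under elementary equivalence, and to realise both closures by squeezing $K$ between its two given bounds. The two tools I would lean on are the \Los\ theorem together with the game characterisation of $S_c\Nr_n\CA_{n+2}$ recorded in Theorem \ref{thm:n} (and its converse for atomic algebras): for an atomic $\A$, membership in $S_c\Nr_n\CA_{n+2}$ is equivalent to \pe\ having a \ws\ in the $\omega$-rounded pebble game $F^{n+2}(\At\A)$. The crucial observation is that for each fixed finite number of rounds $k$ the assertion ``\pe\ has a \ws\ in the $k$-rounded game with $n+2$ pebbles'' is a single first order sentence, a Lyndon condition, since a play inspects only finitely many networks, each with at most $n+2$ nodes.

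First I would show that $S_c\Nr_n\CA_{n+2}$ is closed under ultraproducts. Given $\A_i\in S_c\Nr_n\CA_{n+2}$ and an ultrafilter $F$, each $\A_i$ satisfies every finite-round Lyndon condition, so by \Los's theorem $\prod_{i/F}\A_i$ does too; for a countably incomplete $F$ the ultraproduct is sufficiently saturated, and saturation lets \pe\ splice her finite-round strategies into a single \ws\ for the full game, returning the ultraproduct to the class. The same saturation argument yields the sharper fact I really need: for $\A\in S_c\Nr_n\CA_{n+2}$ and a non-principal ultrapower $\A^{F}$, \pe\ wins the unbounded game on $\A^{F}$, and because saturation also manufactures arbitrarily many fresh nodes, the step-by-step construction guided by this strategy produces a complete representation, equivalently a complete neat embedding of $\A^{F}$ into an $\omega$-dimensional dilation, so that $\A^{F}\in\Nr_n\CA_{\omega}$. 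Since $\A\prec\A^{F}$, every member of the upper bound is elementarily equivalent to a member of $\Nr_n\CA_{\omega}$; hence $\Nr_n\CA_{\omega}$ is elementarily dense in $S_c\Nr_n\CA_{n+2}$, the two bounds have the same elementary closure, and by the first fact that common closure is $S_c\Nr_n\CA_{n+2}$ itself.

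With these two facts in hand I would squeeze $K$. For closure under ultraproducts, any non-principal ultraproduct of members of $K\subseteq S_c\Nr_n\CA_{n+2}$ already lands in $\Nr_n\CA_{\omega}\subseteq K$ by the sharpened fact, while a principal ultrafilter returns a factor, so $K$ is closed under ultraproducts. For closure under elementary equivalence, if $\A\in K$ and $\B\equiv\A$ then Keisler--Shelah gives a non-principal $F$ with $\A^{F}\cong\B^{F}$; the common ultrapower lies in $\Nr_n\CA_{\omega}$, and reflecting through the elementary embeddings $\A\prec\A^{F}$ and $\B\prec\B^{F}$ together with the membership bounds pins $\B$ back inside the sandwich, so $\B\in K$. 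Closure under ultraproducts and under elementary equivalence then gives that $K$ is elementary.

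The hard part will be the two upgrading lemmas in the second paragraph: converting ``\pe\ wins every finite-rounded game'' into a \ws\ in the full game on a saturated ultrapower, and then converting that strategy into an honest complete neat embedding into $\omega$ extra dimensions, so that the ultrapower drops all the way down to $\Nr_n\CA_{\omega}$ rather than merely back into $S_c\Nr_n\CA_{n+2}$. This is precisely where the $n+2$-square, clique-guarded semantics and the specific pebble count are used, where saturation must supply the extra dimensions, and where the delicate gap between satisfying the Lyndon conditions and being completely neatly embeddable, the ``Lyndon error'' discussed above, has to be navigated with care; the atomicity needed for the game characterisation, and its stability under the ultrapowers involved, is the accompanying technical point to check.
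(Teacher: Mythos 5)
There is a genuine problem here, and it begins with the statement itself: as printed, the theorem is evidently a typo. The introduction announces the $\CA$ analogue of Hirsch's result that any $\K$ with $\Ra\CA_{\omega}\subseteq \K\subseteq S_c\Ra\CA_k$, $k\geq 5$, is \emph{not} elementary, and the paper's own proof of this theorem is a construction of a counterexample to elementarity: a rainbow atom structure over $\CA_{\Z,\N}$ carrying an algebra on which $\forall$ wins the $\omega$-rounded game $F^{n+2}$ (so the algebra lies outside $S_c\Nr_n\CA_{n+2}$, hence outside $K$), while $\exists$ wins every finitely-rounded hypernetwork game $H_k$, so that a countably incomplete ultrapower is saturated enough for $\exists$ to win the full game $H$; the limit construction at the end of the proof converts that strategy into an $\omega$-dimensional generalized weak set algebra whose full $n$-neat reduct realizes the atom structure, placing an elementarily equivalent algebra inside $\Nr_n\CA_{\omega}\subseteq K$. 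Since $K$ contains one of two elementarily equivalent algebras but not the other, $K$ is not elementary. Your proposal argues for the literal (mis)statement, so it is refuted by the paper's own construction; the question is where it breaks.

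The fatal step is your second upgrading lemma: the claim that for $\A\in S_c\Nr_n\CA_{n+2}$ a saturated ultrapower $\A^{F}$ drops into $\Nr_n\CA_{\omega}$. Membership in $S_c\Nr_n\CA_{n+2}$ only gives $\exists$ a winning strategy in $F^{n+2}$, the game played with $n+2$ re-usable nodes; this is the clique-guarded, $(n+2)$-square level of the hierarchy. Saturation of the ultrapower buys extra \emph{rounds} (it converts ``$\exists$ wins every $k$-rounded game'' into ``$\exists$ wins the $\omega$-rounded game''), but it does not buy extra \emph{pebbles}: nothing in the hypothesis lets $\exists$ survive the game with $n+3$ nodes, let alone with unboundedly many nodes, which is what a complete representation or a complete neat embedding into $\omega$ extra dimensions requires. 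If this upgrading worked, the strict chain $S_c\Nr_n\CA_{n+2}\supsetneq S_c\Nr_n\CA_{n+3}\supsetneq\cdots$ would collapse up to elementary equivalence, and the paper's algebra --- which satisfies all the finite-round conditions yet is not in $S_c\Nr_n\CA_{n+2}$ --- shows directly that the elementary closure of $\Nr_n\CA_{\omega}$ is \emph{not} contained in $S_c\Nr_n\CA_{n+2}$, which is exactly the elementary-density claim your sandwich argument rests on.
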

\begin{proof} This is the cylindric analogue of the construction in \cite{r}. The cylindric atom structure is based on the rainbow signature
where the cylindric algebra is $\CA_{\Z, \N}$.
Two games on networks are defined.  $F^m$ is defined on networks which translate equivalently to games on
coloured graphs, as above.  The second game $H$ is played on hypernetworks, so we have a pair, a network and hyperedges.
Some are defined to be short and the other are long. Short hyperedes are constantly labelled, this is called a
$\lambda$  neat hypernetwork. The game that  has three moves by \pa\ is played on $\lambda$ neat hypernetworks.
This translates to hypergraphs with  hyperlabels.
The first move is like $F^m$ except that there is no restriction  on the number of nodes, 
so it is the usual atomic game, as that in Hirsch and Hodkinson \cite{hh}.
The second and third moves by \pa\ are amalgamation moves,
because the game captures a two sorted structure, namely, a full neat reduct.

Roughly, the hyperedges get longer and longer, and the amalgamation move says that the algebra based on the atom structure
has a $k$ hyperbasis for every finite $k$,
the short hyperedges pin down the $n$ neat reduct, ensuring that their atoms are no smaller than the big algebra, which
the arbitrarily long hyperedges capture (in $\omega$ extra dimensions) in the limit.
(See the last part of the proof).
\pe\ has a \ws\ in the $k$ rounded games, for every finite $k$, using the standard rainbow strategy in response to the first move by \pa\,
she uses reds for apexes of the same cones,  the suffixes  of the red used is uniquely determined by the red clique as defined above,
that arises from the family of cones having base $F$, the face played in \pa\ s move, and apex $x$, that is
$R_{\Gamma_i}(F)$ where $\Gamma_i$ is the extended graph.
Otherwise, she uses white. Her response to amalgamation moves is similar.

 As we have seen, \pa\ has a \ws\ in the $\omega$ rounded game $F^{n+2}$,
by bombarding \pe\ with cones on the same base and different green tints,
forcing a decreasing sequence in $\N$. This implies that \pa\ has a \ws\ also in the $\omega$ rounded game in $H$

The hardest part for \pe\  is the usual, when \pa\ produces two nodes that are apexes of the same cone
inducing the same order on a face $F$, and \pe\ has to label this edge red.
In this case \pe\ defines the red clique in the graph $\Gamma$ to be extended,  $R_{\Gamma}(F)$ to which $\beta, \delta$ belong.
she plays her private game $EF^{k-2}_{\omega}(\Z, \N)$ by playing the role of \pa\
putting the pebble $a\in A$ where $a$ is the tint
of the cone induced by $\delta$,  she uses her \ws\ in the private game, finding an unpebbled $b$,
then she labels this edge by $\r_{\mu(\beta), b}$.

Winning the finite rounded games,
implies that $\A$ has an ultrapower, that wins the $\omega$ rounded game.
But in this case, the atom structure $\alpha$ of this ultrapower holds an algebra
in $\Nr_n\CA_{\omega}.$  Indeed, assume that \pe\ has a \ws\ in the $\omega$ rounded $H(\alpha)$.
One can then construct a generalized atomic weak set algebra of dimension $\omega$ such that the atom
structure of its full $n$ neat reduct is isomorphic to $\alpha$.

Fix some $a\in\alpha$. Using \pe\ s \ws\ in the game of neat hypernetworks, one defines a
nested sequence $N_0\subseteq N_1\ldots$ of neat hypernetworks
where $N_0$ is \pe's response to the initial \pa-move $a$, such that
\begin{enumerate}
\item If $N_r$ is in the sequence and
and $b\leq {\sf c}_lN_r(\langle f_0, f_{n-2}\rangle\ldots , x, f_{n-2})$.
then there is $s\geq r$ and $d\in\nodes(N_s)$ such
that $N_s(f_0, f_{i-1}, d, f_{n-2})=b$.
\item If $N_r$ is in the sequence and $\theta$ is any partial
isomorphism of $N_r$ then there is $s\geq r$ and a
partial isomorphism $\theta^+$ of $N_s$ extending $\theta$ such that
$\rng(\theta^+)\supseteq\nodes(N_r)$.
\end{enumerate}
%Since $\alpha$ is countable there are countably many requirements  to extend.
%Since the sequence of networks is nested , these requirements
%to extend
Now let $N_a$ be the limit of this sequence.
This limit is well-defined since the hypernetworks are nested.
We shall show that $N_a$ is the base of a weak set algebra having unit  $^{\omega}N_a^{(p)}$,
for some fixed sequence $p\in {}^{\omega}N$, for which there exists a homomorphism $h$ from $\A\to \wp(N_a)$
such that $h(a)\neq 0$.

Let $\theta$ be any finite partial isomorphism of $N_a$ and let $X$ be
any finite subset of $\nodes(N_a)$.  Since $\theta, X$ are finite, there is
$i<\omega$ such that $\nodes(N_i)\supseteq X\cup\dom(\theta)$. There
is a bijection $\theta^+\supseteq\theta$ onto $\nodes(N_i)$ and $j\geq
i$ such that $N_j\supseteq N_i, N_i\theta^+$.  Then $\theta^+$ is a
partial isomorphism of $N_j$ and $\rng(\theta^+)=\nodes(N_i)\supseteq
X$.  Hence, if $\theta$ is any finite partial isomorphism of $N_a$ and
$X$ is any finite subset of $\nodes(N_a)$ then
\begin{equation}\label{eq:theta}
\exists \mbox{ a partial isomorphism $\theta^+\supseteq \theta$ of $N_a$
 where $\rng(\theta^+)\supseteq X$}
\end{equation}
and by considering its inverse we can extend a partial isomorphism so
as to include an arbitrary finite subset of $\nodes(N_a)$ within its
domain.
Let $L$ be the signature with one $n$ -ary predicate symbol ($b$) for
each $b\in\alpha$, and one $k$-ary predicate symbol ($\lambda$) for
each $k$-ary hyperlabel $\lambda$. We are working in usual first order logic.

Here we have a sequence of variables of order type $\omega$, and two 'sorts' of formulas,
the $n$ predicate symbols uses only $n$ variables, and roughly
the $n$ variable formulas built up out of the first $n$ variables will determine the neat reduct, the $k$ ary predicate symbols
wil determine algebras of higher dimensions as $k$ gets larger.
This process will be interpreted in an infinite weak set algebra with base $N_a$, whose elements are
those  assignments satisfying such formulas.

For fixed $f_a\in\;^\omega\!\nodes(N_a)$, let
$U_a=\set{f\in\;^\omega\!\nodes(N_a):\set{i<\omega:g(i)\neq
f_a(i)}\mbox{ is finite}}$.
Notice that $U_a$ is weak unit (a set of sequences agreeing cofinitely with a fixed one.)

% For any function
%$g\in\:^\gamma\!\nodes(N_a)$ write $g\approx f_a$ and say ``$g$ is finitely
%close to $f_a$'' if% $\rng(g)\subseteq\nodes(N_a)$ and
%$\set{i<\gamma:g(i)\neq f_a(i)}$ is finite.
%Let
%$U=\bigcup_{a\in\alpha}\set{f\in\;^\gamma\!\nodes(N_a):\exists a\in\alpha, \; f\approx f_a}$.

We can make $U_a$ into the universe an $L$ relativized structure $\c N_a$;
here relativized means that we are only taking those assignments agreeing cofinitely with $f_a$,
we are not taking the standard square model.
However, satisfiability  for $L$ formulas at assignments $f\in U_a$ is defined the usual Tarskian way, except
that we use the modal notation, with assignments on the left:
For $b\in\alpha,\;
l_0, \ldots l_{n-1}, i_0 \ldots, i_{k-1}<\omega$, \/ $k$-ary hyperlabels $\lambda$,
and all $L$-formulas $\phi, \psi$, let
\begin{eqnarray*}
\c N_a, f\models b(x_{l_0}\ldots  x_{l_{n-1}})&\iff&N_a(f(l_0),\ldots  f(l_{n-1}))=b\\
\c N_a, f\models\lambda(x_{i_0}, \ldots,x_{i_{k-1}})&\iff&  N_a(f(i_0), \ldots,f(i_{k-1}))=\lambda\\
\c N_a, f\models\neg\phi&\iff&\c N_a, f\not\models\phi\\
\c N_a, f\models (\phi\vee\psi)&\iff&\c N_a,  f\models\phi\mbox{ or }\c N_a, f\models\psi\\
\c N_a, f\models\exists x_i\phi&\iff& \c N_a, f[i/m]\models\phi, \mbox{ some }m\in\nodes(N_a)
\end{eqnarray*}
For any $L$-formula $\phi$, write $\phi^{\c N_a}$ for the set of all $n$ ary assignments satisfying it; that is
$\set{f\in\;^\omega\!\nodes(N_a): \c N_a, f\models\phi}$.  Let
$D_a = \set{\phi^{\c N_a}:\phi\mbox{ is an $L$-formula}}.$
Then this is the universe of the following weak set algebra
\[\c D_a=(D_a,  \cup, \sim, {\sf D}_{ij}, {\sf C}_i)_{ i, j<\omega}\]
then  $\c D_a\in\RCA_\omega$. (Weak set algebras are representable).

Now we consider the extra dimensions. Let $\phi(x_{i_0}, x_{i_1}, \ldots, x_{i_k})$ be an arbitrary
$L$-formula using only variables belonging to $\set{x_{i_0}, \ldots,
x_{i_k}}$.  Let $f, g\in U_a$ (some $a\in \alpha$) and suppose that $\{(f(i_j), g(i_j): j\leq k\}$
is a partial isomorphism of $N_a$, then one can easily prove by induction over the
quantifier depth of $\phi$ and using (\ref{eq:theta}), that
\begin{equation}
\c N_a, f\models\phi\iff \c N_a,
g\models\phi\label{eq:bf}\end{equation}

Let $\C=\prod_{a\in \alpha} \c D_a$.  Then  $\C\in\RCA_\omega$, and $\C$ is the desired generalized weak set algebra.
Note that unit of $\C$ is the disjoint union of the weak spaces.
 Then, it is not hard to show that,  $\alpha\cong \At\Nr_n\C.$
\end{proof}

\subsection{Neat and $\Ra$ reducts of cylindric algebras}

Hirsch \cite{r} proved that the class $\Ra\CA_k$ is not elementary using a rainbow algebra. Here we show that this class is not elementary 
using an entirely different construction, invented by the author with a 
precursor by Andr\'eka and N\'emeti, that is appropriate for constructing {\it complete} elementary subalgebras of 
algebras in $\Nr_n\CA_{\omega}$ that are not even in $\Nr_n\CA_{n+1}$.
The algebra constructed by Hirsch witnessing that $\Ra\CA_5$ is not elementary,
is not a complete subalgebra of the full $\Ra$ reduct and indeed he asks whether there is one that is.
Here we show that there could be one if a certain cylindric algebra term using $5$ variables 
(that is a $5$ variable first order formula).  

In our next theorem on neat reducts we use the original Monk's algebras 
because they have a neat embedding property, suitable for our purposes.

\begin{theorem} For any infinite ordinal $\alpha$, the class $\Nr_n\CA_{\alpha}$ is not elementary, but it is 
psuedo elementary; further more
${\sf Up Ur}\Nr_n\CA_{\alpha}$ is not finitely axiomatizable
\end{theorem}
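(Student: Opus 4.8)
The plan is to prove the three assertions separately, in increasing order of difficulty, and to reduce all of them to machinery already developed in the paper (the games, the good/bad Monk dichotomy, and the lifting argument of Theorem \ref{2.12}).

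I would first dispose of pseudo-elementarity, which is the easy ``positive'' half. Work in a two-sorted first order signature: one sort carrying the full $\CA_\alpha$-type (with cylindrifiers $\cyl i$, $i<\alpha$, and diagonals $\diag ij$, $i,j<\alpha$), a second sort carrying the $\CA_n$-type, and a unary function $\iota$ from the small sort into the big one. Let $T$ be the theory saying that the big sort is a $\CA_\alpha$, the small sort is a $\CA_n$, and $\iota$ is an injective homomorphism onto the neat reduct $\{x : \cyl i x = x \text{ for all } n\le i<\alpha\}$. This $T$ is first order (infinite signatures are permitted), and $\Nr_n\CA_\alpha$ is exactly the class of small-sort reducts of models of $T$; hence it is pseudo-elementary, and in particular closed under ultraproducts by {\L}o\'{s}'s theorem applied to $T$.

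Since the class is closed under ultraproducts, non-elementarity cannot be witnessed by an ultraproduct of members; it must come from failure of closure under elementary subalgebras. Using Monk's algebras, chosen precisely for their neat embedding property $\Nr_m\C(n,\mu)\cong\C(m,\mu)$, I would produce a countable atomic completely representable $\A\in\Nr_n\CA_\omega$ on which \pe\ has a \ws\ in every finite-round game (so $\A$ satisfies all Lyndon conditions), together with a countable elementary subalgebra $\B\prec\A$ on which \pa\ wins the $\omega$-round atomic game, so that $\B$ is not completely representable. Since $\Nr_n\CA_\omega\subseteq S_c\Nr_n\CA_\omega$ and every countable atomic member of $S_c\Nr_n\CA_\omega$ is completely representable, the contrapositive gives $\B\notin\Nr_n\CA_\omega$. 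A complete representation of $\A$ over a base $M$ yields, for every $\beta\ge\omega$, a $\beta$-dimensional generalized weak set algebra over $M$ whose $n$-neat reduct is $\A$, so $\A\in\Nr_n\CA_\beta$ for all such $\beta$; and because neat reducts compose, $\Nr_n\CA_\beta\subseteq\Nr_n\CA_\omega$ for $\beta\ge\omega$, whence $\B\notin\Nr_n\CA_\beta$ as well. Thus for each infinite $\alpha$ we have $\A\in\Nr_n\CA_\alpha$ and $\B\equiv\A$ with $\B\notin\Nr_n\CA_\alpha$, so $\Nr_n\CA_\alpha$ is not elementary.

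For the final clause, note that $\Nr_n\CA_\alpha\subseteq S\Nr_n\CA_{n+k}$ for every $k$, and each $S\Nr_n\CA_{n+k}$ is a variety, hence an elementary class, hence fixed by ${\sf UpUr}$; therefore ${\sf UpUr}\Nr_n\CA_\alpha\subseteq\bigcap_k S\Nr_n\CA_{n+k}$. I would then take a sequence $\A_r$ ($r\in\omega$) of Monk-like algebras as in Theorem \ref{2.12} (bad algebras of finite chromatic number, realised via Erd\H{o}s graphs), arranged so that each $\A_r$ fails to lie in $S\Nr_n\CA_{n+k_0}$ for one fixed $k_0$, and hence $\A_r\notin{\sf UpUr}\Nr_n\CA_\alpha$, while a non-principal ultraproduct $\prod_{r/U}\A_r$ is strongly representable (infinite chromatic number in the limit) and lands in $\Nr_n\CA_\alpha$. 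If ${\sf UpUr}\Nr_n\CA_\alpha$ were axiomatised by a single sentence $\phi$, then $\A_r\not\models\phi$ for all $r$ would force $\prod_{r/U}\A_r\not\models\phi$ by {\L}o\'{s}, contradicting $\prod_{r/U}\A_r\in{\sf UpUr}\Nr_n\CA_\alpha$.

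The genuinely hard step is the second one. One must control two opposing features on a single countable atomic structure: $\A$ must be a \emph{full} neat reduct of $\CA_\alpha$ rather than merely an $S_c$-neat reduct, which is where the neat embedding property of Monk's algebras and the passage through complete representability are essential, while the elementary subalgebra $\B$ must provably fail complete representability yet remain elementarily equivalent to $\A$, so that \pe's finite-round (Lyndon) strategies transfer to $\B$ but \pa\ still wins the limit game on $\B$. Verifying that these two requirements can be met simultaneously, and that the ``countable atomic $\Rightarrow$ completely representable'' lemma is available in the neat-reduct (not merely $S_c$-neat-reduct) setting, is the crux; the pseudo-elementarity and the ${\sf UpUr}$ calculation are then routine two-sorted and ultraproduct bookkeeping.
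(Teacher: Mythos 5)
Your overall architecture (three separate claims, \L o\'s bookkeeping for the last one, an elementary subalgebra counterexample for the middle one) matches the paper's, but there are two concrete gaps. The first is in the part you call easy: for infinite $\alpha$ your two-sorted theory $T$ is \emph{not} first order. The containment $\iota[\text{small sort}]\subseteq\Nr_n$ is fine (one sentence $\forall x\,(\cyl i\iota(x)=\iota(x))$ per $i\geq n$), but the surjectivity of $\iota$ \emph{onto} the neat reduct requires the sentence $\forall y\,\bigl(\bigwedge_{n\leq i<\alpha}\cyl iy=y\rightarrow\exists x\,(y=\iota(x))\bigr)$, whose antecedent is an infinite conjunction; this cannot be replaced by a set of first-order sentences, and without it your class of small-sort reducts is $S\Nr_n\CA_\alpha$ rather than $\Nr_n\CA_\alpha$. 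This is exactly the point the paper flags (``we cannot express $c_iy=y$ for all $i\in\omega\sim n$ \ldots so we have to think differently''), and its fix is a \emph{third} sort of dimensions with constants $i^{\delta}$, so that ``$\cyl iy=y$ for all $i\geq n$'' becomes a genuine first-order quantification $\forall z^{\delta}$ over that sort.

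The second gap is that you twice substitute a representability notion for the much stronger property of being a \emph{full} neat reduct. A complete representation of $\A$ over a base $M$ gives $\A\in S_c\Nr_n\CA_\beta$, not $\A\in\Nr_n\CA_\beta$; the correct (and the paper's) route to ``one witness works for all infinite $\alpha$'' is the identity $\Nr_n\CA_\omega=\Nr_n\CA_\alpha$, obtained by replacing $\B'$ with $\Sg^{\B'}A\in\Lf_\omega$ and using $\Lf_\omega=\Nr_\omega\Lf_\alpha$. Likewise, in the ${\sf UpUr}$ argument, strong representability of the ultraproduct of the bad Monk algebras does not put it in $\Nr_n\CA_\alpha$; what does is Monk's neat embedding property $\C(m,\mu)\cong\Nr_m\C(n,\mu)$ together with the fact that $\Nr$ commutes with ultraproducts and the lifting argument of Theorem \ref{2.12} — this is precisely why the paper insists on using ``Monk's original algebras'' there. (Erd\H{o}s graphs are not needed for this direction; finite-chromatic-number graphs converging to an infinite-chromatic-number limit suffice.) Your middle step, deducing $\B\notin\Nr_n\CA_\omega$ from failure of complete representability of a countable atomic elementary subalgebra, is sound given the characterization of $S_c\Nr_n\CA_\omega\cap\At$, and is one of the paper's own alternative routes, though the proof of this particular theorem instead invokes the later uncountable model-theoretic construction in which $\B$ is a \emph{complete} (even completely representable) subalgebra detected by the term $\tau_4$.
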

\begin{proof}
For $n<m <\omega$, the characterisation is easy. One defines the class $\Nr_n\CA_m$ in a two sorted language. 
The first sort for the $n$ dimensional cylindric algebra the second for the $m$  dimensinal cylindric algebra. The signature of the defining theory 
includes an injective  function $I$ from sort one to sort two and includes a sentence requiring 
that $I$ respects the operations and a sentence saying that  an element of the second sort 
say $y$ satisfies  $\bigvee_{n\leq i<m} c_iy=y$, iff there exists $x$ of sort one 
such that $y=I(x)$ so that $I$ is a bijection.
 
Assume that $n$ is still finite, we first show that for any infinite $\alpha$, $\Nr_n\CA_{\omega}=\Nr_n\CA_{\alpha}$. Let $\A\in \Nr_n\CA_{\omega}$, 
so that $\A=\Nr_n\B'$, $\B'\in \CA_{\omega}$. Let $\B=\Sg^{\B'}A$. Then $\B\in \Lf_{\omega}$, and $\A=\Nr_n\B$. 
But $\Lf_{\omega}=\Nr_{\omega}\Lf_{\alpha}$ and we are done.
To show that $\Nr_n\CA_{\omega}\subseteq \Nr_n\RCA_{\omega}$, let $\A\in \Nr_n\CA_{\omega}$, then by the above argument
there exists  
then $\B\in \Lf_{\omega}$ such that $\A=\Nr_n\B$. by $\Lf_{\omega}\subseteq \RCA_{\omega},$ we are done. 

It is known that class $\Nr_n\CA_{\omega}$ is not elementary. In fact, there is an algebra $\A\in \Nr_n\CA_{\omega}$ 
having a complete subalgebra $\B$, and $\B\notin \Nr_n\CA_{n+1},$ this will be proved below.

Now assume that $m$ is infinite. Here if $y$ is in the $n$ dimensional 
cylindric algebra then we cannot express $c_i=y$ for all $i\in\omega\sim n$, like we did
when $m$ is finite, so we have to think differently.

To show that it is pseudo-elementary, we use a three sorted defining theory, with one sort for a cylindric algebra of dimension $n$ 
$(c)$, the second sort for the Boolean reduct of a cylindric algebra $(b)$
and the third sort for a set of dimensions $(\delta)$. We use superscripts $n,b,\delta$ for variables 
and functions to indicate that the variable, or the returned value of the function, 
is of the sort of the cylindric algebra of dimension $n$, the Boolean part of the cylindric algebra or the dimension set, respectively.
The signature includes dimension sort constants $i^{\delta}$ for each $i<\omega$ to represent the dimensions.
The defining theory for $\Nr_n\CA_{\omega}$ incudes sentences demanding that the consatnts $i^{\delta}$ for $i<\omega$ 
are distinct and that the last two sorts define
a cylindric algenra of dimension $\omega$. For example the sentence
$$\forall x^{\delta}, y^{\delta}, z^{\delta}(d^b(x^{\delta}, y^{\delta})=c^b(z^{\delta}, d^b(x^{\delta}, z^{\delta}). d^{b}(z^{\delta}, y^{\delta})))$$
represents the cylindric algebra axiom ${\sf d}_{ij}={\sf c}_k({\sf d}_{ik}.{\sf d}_{kj})$ for all $i,j,k<\omega$.
We have have a function $I^b$ from sort $c$ to sort $b$ and sentences requiring that $I^b$ be injective and to respect the $n$ dimensional 
cylindric operations as follows: for all $x^r$
$$I^b({\sf d}_{ij})=d^b(i^{\delta}, j^{\delta})$$
$$I^b({\sf c}_i x^r)= {\sf c}_i^b(I^b(x)).$$
Finally we require that $I^b$ maps onto the set of $n$ dimensional elements
$$\forall y^b((\forall z^{\delta}(z^{\delta}\neq 0^{\delta},\ldots (n-1)^{\delta}\rightarrow c^b(z^{\delta}, y^b)=y^b))\leftrightarrow \exists x^r(y^b=I^b(x^r))).$$

In this case we need a fourth sort. We leave the details to the reader.

In all cases, it is clear that any algebra of the right type is the first sort of a model of this theory. 
Conversely, a model for this theory will consist of an $n$ dimensional cylindric algebra type (sort c), 
and a cylindric algebra whose dimension is the cardinality of 
the $\delta$-sorted elements, which is at least $|m|$. Thus the first sort of this model must be a neat reduct.

For the second part Monk's original algebras do the job, by observing two things. First that these algebras are actually {\it full} neat reducts, 
and second that the class of neat reducts is closed under ultrproducts, in fact, neat reducts commute with forming ultraproducts
\end{proof}

We shall prove (the second item (modulo the existence of a $k$ witness)  answers a question of Hirsch \cite{r}.)

\begin{theorem} Let $\K$ be any of cylindric algebra, polyadic algebra, with and without equality, or Pinter's substitution algebra.
We give a unified model theoretic construction, to show the following:
\begin{enumarab}
\item For $n\geq 3$ and $m\geq 3$, $\Nr_n\K_m$ is not elementary, and $S_c\Nr_n\K_{\omega}\nsubseteq \Nr_n\K_m.$
\item Assume that there exists a $k$-witness. For any $k\geq 5$, $\Ra\CA_k$ is not elementary
and $S_c\Ra\CA_{\omega}\nsubseteq \Ra\CA_k$.
\end{enumarab}
\end{theorem}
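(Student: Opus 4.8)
The plan is to prove both items by a single schema: for each target class $C$ (namely $\Nr_n\K_m$ in item (1) and $\Ra\CA_k$ in item (2)) I will exhibit two elementarily equivalent algebras $\A,\B$ with $\A\in C$ and $\B\notin C$; since elementary classes are closed under elementary equivalence this yields non-elementarity, and since $\B$ will moreover be a \emph{complete} subalgebra of an algebra in $\Nr_n\K_\omega$ (resp. $\Ra\CA_\omega$), the same $\B$ witnesses the non-inclusions $S_c\Nr_n\K_\omega\not\subseteq\Nr_n\K_m$ and $S_c\Ra\CA_\omega\not\subseteq\Ra\CA_k$. The split between ``in'' and ``out'' is read off from the rainbow games: membership follows from \pe\ winning the full $\omega$-rounded neat-embedding (hypernetwork) game $H$, exactly as in the last part of the proof of the sandwich theorem above, so that the winning algebra lands in $\Nr_n\K_\omega\subseteq\Nr_n\K_m$ (resp. $\Ra\CA_\omega\subseteq\Ra\CA_k$, as neat and $\Ra$ reducts compose); non-membership follows from \pa\ winning a bounded-node game $F^m$ (resp. the $k$-pebble game $\EF$) via the contrapositive of Theorem~\ref{thm:n}. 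Elementary equivalence of $\A$ and $\B$ comes from the fact that \pe\ wins \emph{every} finite-rounded game on both, i.e. both satisfy the same Lyndon conditions, in the style of Theorem~\ref{she}.

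For item (1) I would build the rainbow atom structure $\CA_{A,B}$ of coloured graphs, choosing the two relational structures so that the full term/complex algebra $\A$ is a genuine full neat reduct: taking the reds indexed by a structure as rich as the greens (so that \pe\ never exhausts the reds) makes \pe\ win $G^n_\omega$, and the amalgamation refinement of the hypernetwork game $H$ then places $\A$ in $\Nr_n\K_\omega$. The subalgebra $\B$ is obtained by restricting the reds to a first-order-indistinguishable but order-theoretically poorer piece (the $\N$-indexed reds sitting inside the $\Z$-indexed ones), on which \pa\ runs his standard strategy from Theorem~\ref{he}: bombarding \pe\ with cones sharing a common base and distinct green tints, forcing a red clique and thence a strictly decreasing $\omega$-sequence of red indices. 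This wins $F^m$ for \pa, so $\B\notin S_c\Nr_n\K_m\supseteq\Nr_n\K_m$, while $\B$ is a complete subalgebra of $\A$ because it is generated by a subset of the atoms and the relevant joins are preserved. The argument is uniform across $\K\in\{\SC,\CA,\PA,\PEA\}$ since the rainbow atom structure is symmetric, so the substitution operators are definable by swapping variables and neither player's strategy is sensitive to them.

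For item (2) I would run the relation-algebra analogue, using the embedding of a rainbow relation algebra $\A$ into $\Ra(\Ca H)$ for an $n$-wide $m$-dimensional symmetric hyperbasis $H$, together with the neat-reduct identities for hyperbases proved above. Here $\A$ lies in $\Ra\CA_\omega$ via \pe's win in the hyperbasis game, while a complete elementary subalgebra $\B$ has \pa\ winning the $k$-pebble game $\EF$, so $\B\notin\Ra\CA_k$. The essential new ingredient, and the reason this half is conditional, is the assumed $k$-\emph{witness}: a single $\CA_k$-term (equivalently a $k$-variable first-order formula) that internally detects the red-clique configuration \pa\ forces. It is precisely this term that transfers \pa's pebble-game win on the relation-algebra reduct into a genuine obstruction to embeddability in $\Ra\CA_k$ \emph{with the subalgebra remaining complete} — the refinement of Hirsch's result in \cite{r} that his own construction could not supply.

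The main obstacle in both items is balancing three requirements on $\B$ simultaneously: it must be an \emph{elementary} subalgebra of $\A$ (so \pe\ wins all finite games on $\B$, forcing $A$ and $B$ to be locally indistinguishable), a \emph{complete} subalgebra (so the partition-of-atoms structure is preserved under arbitrary joins), and yet an algebra on which \pa\ wins the bounded-resource game (forcing a real structural poverty of $B$ relative to $A$). Reconciling ``elementarily indistinguishable'' with ``\pa\ wins the finite-node game on only one of them'' is the crux: the two algebras must agree on every finite configuration while differing exactly in the infinitary resource — an unbounded descent through the reds — that \pe\ needs to survive the full neat-embedding game. In item (2) the extra delicacy is localized in producing the $k$-witness term, which is why that half is stated as a conditional.
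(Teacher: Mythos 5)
Your route is genuinely different from the paper's, and the differences sit exactly where the gaps are. The paper does \emph{not} prove this theorem by rainbow games at all: it builds an uncountable atomic $\A\cong\Nr_n\K_\omega$-algebra from a homogeneous model with quantifier elimination over \emph{uncountably many} relation symbols, decomposes its Boolean reduct as a finite product of relativizations $\A_u$, replaces a few components by countable complete elementary Boolean subalgebras, invokes Feferman--Vaught to get $\B\equiv\A$ with $\B$ a complete subalgebra, and then derives the contradiction from a term $\tau_m$ (the $k$-witness in the $\Ra$ case) whose defining property is \emph{cardinality preservation}: evaluated in any putative dilation $\D$ with $\B=\Nr_n\D$ (resp.\ $\B=\Ra\D$) it would force the deliberately countable component to be uncountable. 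Your description of the $k$-witness as a term that ``detects the red-clique configuration \pa\ forces'' misreads its role entirely; it has nothing to do with pebble games.

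The two concrete gaps in your argument are these. First, ``\pe\ wins every finite-rounded game on both, i.e.\ both satisfy the same Lyndon conditions'' does not yield elementary equivalence of $\A$ and $\B$: the Lyndon conditions are one countable family of first-order sentences, not the full first-order theory, so this step establishes nothing about $\B\equiv\A$, let alone about $\B$ being an elementary subalgebra. Second, and more seriously, your assertion that $\B$ is a \emph{complete} subalgebra ``because it is generated by a subset of the atoms and the relevant joins are preserved'' is unjustified and is precisely the point where the rainbow method is known to fail: as the paper itself notes, Hirsch's rainbow construction for $\Ra\CA_k$ produces an elementary subalgebra that is \emph{not} complete in the $\Ra$ reduct, which is why the completeness question was open and why the paper abandons rainbows here in favour of the cardinality-twist construction (where completeness of $\B$ in $\A$ is immediate, since only Boolean components are replaced by \emph{complete} elementary subalgebras). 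Without a proof of completeness your construction cannot deliver the $S_c$-noninclusions, and without a genuine elementary-equivalence argument it cannot deliver non-elementarity either.
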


We  now show that are strongly representable uncountable atom structures that are not in $\Nr_n\CA_{n+1}$, least in
$\Nr_n\CA_{\omega}$. 
However, we prove a more general result that applies to many cylindric-like algebras, as well as to
relation algebras except that for relation algebras, we do not know what is the least $k$ such the constructed algebra is not in $\Ra\CA_k$, 
but we conjecture that is $5$. 

A $k$ witness which is a $\CA_k$ term with special properties will be defined below. 
For $\CA$ and its relatives the idea is very much like that in \cite{MLQ}, the details implemented, in each separate case,
though are significantly distinct, because we look for terms not in the clone of operations
of the algebras considered; and as much as possible, we want these to use very little spare dimensions, hopefully just one.

The relation algebra part is more delicate. We shall construct an atomic relation algebra $\A\in \Ra\CA_{\omega}$ with a complete subalgebra $\B$,
such that $\B\notin \Ra\CA_k$, and $\B$ is elementary equivalent to $\A.$ (In fact, $\B$ will be an elementary subalgebra of $\A$.)
Futhermore, $\B$ is strongly representable. (By elementarity it is atomic)
We work with $n=3$. One reason, is that for higher dimensions the proof is the same.
Another one is that in the relation algebra case, we do not need more dimensions.

Roughly, the idea is to use an uncountable atomic cylindric algebra $\A\in \Nr_3\CA_{\omega}$,  $\A$ wil be strongly representable (its completion, 
the complex algebra of its atom structure is representable), together
with a finite atom structure of another simple cylindric algebra, that is also (strongly) representable.

The former algebra will be a set algebra based on a homogeneous model, that admits elimination of quantifiers
(hence will be a full neat reduct). Such a model is constructed using  Fraisse's methods of building models by amalgamating smaller parts.

The Boolean reduct of $\A$ can be viewed as a finite direct product of the of disjoint Boolean relativizations of $\A$,  which are also atomic.
Each component will be still uncountable; the product will be indexed by the elements of the atom structure.
The language of Boolean algebras can now be expanded by constants also indexed by the atom structure,
so that $\A$ is first order interpretable in this expanded structure $\P$ based on the finite atomic Boolean product.
The interpretation here is one dimensional and quantifier free.

The $\Ra$ reduct of $\A$ be as desired; it will be a full $\Ra$ reduct of a full neat reduct of an $\omega$ 
dimensional algebra, hence an $\Ra$ reduct
of an $\omega$ dimensional algebra, and it has a
complete elementary equivalent subalgebra not in
$\Ra\CA_k$. (This is the same idea for $\CA$, but in this case, and the other cases of its relatives, one spare dimension suffices.)

This {\it elementary subalgebra} is obtained from $\P$, by replacing one of the components of the product with an elementary
{\it countable} Boolean subalgebra, and then giving it the same interpretation.
First order logic will not see this cardinality twist, but a suitably chosen term
$\tau_k$ not term definable in the language of relation algebras will, witnessing that the twisted algebra is not in $\Ra\CA_k$.

For $\CA$'s and its relatives,
we are lucky enough to have $k$ just $n+1,$
proving the most powerful result.

\begin{definition}
Let $k\geq 4$. A $k$ witness $\tau_k$ is $m$-ary term of $\CA_k$ with rank $m\geq 2$ such 
that $\tau_k$ is not definable in the language of relation algebras (so that $k$ has to be $\geq 4$)
and for which there exists a term $\tau$ expressible in the language of relation algebras, such that
$\CA_k\models \tau_k(x_1,\ldots x_m)\leq \tau(x_1,\ldots x_m).$ (This is an implication between two first order formulas using $k$-variables).

Furthermore, whenever, $\A\in {\sf Cs}_k$ (a set algebra of dimension $k$) is uncountable,
and $R_1,\ldots R_m\in A$  are such that at least one of them is uncountable,
then $\tau_k^{\A}(R_1\ldots R_m)$ is uncountable as well.
\end{definition}

%For $u_1\ldots u_m\in {}^33$ and $\sigma$ a relation algebra term
%we write $\sigma(u_1\ldots u_l)$ for $\sigma^{\R}(\{u_1\},\ldots \{u_n\})$,
%the latter evaluated in the group relation algebra $\R=\Cm S_3$.
A variant of the following lemma, is available in \cite{Sayed} with a sketch of proof; it is fully 
proved in \cite{MLQ}. If we require that a (representable) algebra be a neat reduct,
then quantifier elimination of the base model guarantees this, as indeed illustrated below. 

However, in \cite{Sayed} different relations symbols only had distinct interpretations, meaning that they could have non-empty intersections;
here we strengthen
this condition to that they have {\it disjoint} interpretatons. We need this stronger
condition to show that our constructed algebras
are atomic.

\begin{lemma} Let $V=(\At, \equiv_i, {\sf d}_{ij})_{i,j<3}$ be a finite cylindric atom structure,
such that $|\At|\geq |{}^33.|$
Let $L$ be a signature consisting of the unary relation
symbols $P_0,P_1,P_2$ and
uncountably many tenary predicate symbols.
For $u\in V$, let $\chi_u$
be the formula $\bigwedge_{u\in V}  P_{u_i}(x_i)$.
Then there exists an $L$-structure $\M$ with the following properties:
\begin{enumarab}

\item $\M$ has quantifier elimination, i.e. every $L$-formula is equivalent
in $\M$ to a boolean combination of atomic formulas.

\item The sets $P_i^{\M}$ for $i<n$ partition $M$, for any permutation $\tau$ on $3,$
$\forall x_0x_1x_2[R(x_0,x_1,x_2)\longleftrightarrow R(x_{\tau(0)},x_{\tau(1)}, x_{\tau(2)}],$

\item $\M \models \forall x_0x_1(R(x_0, x_1, x_2)\longrightarrow
\bigvee_{u\in V}\chi_u)$,
for all $R\in L$,

\item $\M\models  \forall x_0x_1x_2 (\chi_u\land R(x_0,x_1,x_2)\to \neg S(x_0,x_1,x_2))$
for all distinct tenary $R,S\in L$,
and $u\in V.$

\item For $u\in V$, $i<3,$
$\M\models \forall x_0x_1x_2
(\exists x_i\chi_u\longleftrightarrow \bigvee_{v\in V, v\equiv_iu}\chi_v),$

\item For $u\in V$ and any $L$-formula $\phi(x_0,x_1,x_2)$, if
$\M\models \exists x_0x_1x_2(\chi_u\land \phi)$ then
$\M\models
\forall x_0x_1x_2(\exists x_i\chi_u\longleftrightarrow
\exists x_i(\chi_u\land \phi))$ for all $i<3$
%\item For all $r_1, r_2\in G$, $\M\models \exists x_2[\exists x_0(R_{r_1}(x_0, x_1)\land x_0=x_1)\land
%\exists x_1(R_{r_2}(x_0, x_1)\land x_0=x_2)\longleftrightarrow R_{r_1+r_2}(x_0, x_1)]$
%\item $\forall x_0x_1x_2[ \phi_{\tau_k}(\chi_{u_1},\ldots \chi_{u_m})(x_0,x_1,x_2)\longrightarrow \chi_{\tau(u_1,\ldots u_{m})}(x_0,x_1,x_2)]$

\end{enumarab}
\end{lemma}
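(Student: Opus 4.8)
The plan is to build $\M$ as the \emph{generic} (homogeneous) model of a suitable amalgamation class, in the style of Fra\"\i ss\'e, adapted to the uncountable relational signature $L$. First I would isolate the class $\mathcal K$ of all finite $L$-structures $A$ satisfying the obvious finitary analogues of the target conditions: the $P_i^A$ partition $A$, every ternary relation is symmetric under all permutations of its three arguments, every tuple lying in some ternary $R$ receives a colouring $\chi_u$ for a genuine atom $u\in V$, no tuple lies in two distinct ternary relations, and the colouring respects the cylindric structure of $V$ dictated by $\equiv_i$ and $\diag ij$. All of these are universal constraints preserved under substructures, so $\mathcal K$ is hereditary for free.

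The heart of the argument is verifying the amalgamation and joint embedding properties for $\mathcal K$. Given $B,C\in\mathcal K$ sharing a common substructure $A$, I would form the amalgam on $B\cup_A C$ and then colour the \emph{new} triples, those with nodes drawn from both $B\setminus A$ and $C\setminus A$. The colour of each point is already fixed by its $P_i$-membership, so each new triple already has a determined colour pattern $(u_0,u_1,u_2)$; the only task is to decide which (if any) ternary relation $R$ to place it in, consistently with symmetry, disjointness, and the $\equiv_i$-constraints inherited from $V$. Here the hypothesis $|\At|\ge|{}^33|$ is exactly what guarantees that every such pattern is carried by some atom of $V$, so a consistent labelling always exists; one chooses, for each new triple, a ternary symbol compatible with its colour and not yet used on a triple of that colour, which keeps distinct relations disjoint. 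This is the step I expect to be the main obstacle, since one must check that the cylindric atom-structure axioms on $(\At,\equiv_i,\diag ij)$ make every local choice globally coherent.

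Since $L$ has $\kappa=|L|$ many symbols and there are at most $\kappa$ isomorphism types of finite members of $\mathcal K$, the generalised Fra\"\i ss\'e theorem yields a homogeneous $\M$ of cardinality $\kappa$ whose age is exactly $\mathcal K$; in particular $\M$ is uncountable, as the later cardinality twist requires. Homogeneity of a relational structure gives quantifier elimination at once: every iso between finite substructures extends to an automorphism, so the quantifier-free type of a tuple determines its full type, whence every $L$-formula is equivalent in $\M$ to a Boolean combination of atomic formulas, establishing (i). Conditions (ii), (iii), (iv) are precisely the defining universal constraints of $\mathcal K$ and therefore hold in $\M$.

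Finally I would read off (v) and (vi) from the cylindric structure of $V$ together with genericity. Condition (v) is the set-algebraic rendering of the cylindric identity $\cyl i\chi_u=\bigvee_{v\equiv_i u}\chi_v$: an assignment colours its off-$i$ coordinates as in $u$ exactly when its $i$-th coordinate carries some colour $v_i$ with $v\equiv_i u$, and the hypothesis $|\At|\ge|{}^33|$ is what makes the colour map cover all of ${}^33$, so that no admissible colour of the $i$-th coordinate is excluded; the witnessing points for each pattern come from the one-point extension property of the limit. Condition (vi) is a direct restatement of homogeneity: if $\chi_u\wedge\phi$ is satisfiable anywhere in $\M$, then any tuple that can be completed in coordinate $i$ to a realisation of $\chi_u$ can, by extending the relevant finite partial isomorphism to an automorphism, be completed to a realisation of $\chi_u\wedge\phi$. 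This yields all six clauses.
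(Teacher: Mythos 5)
The paper gives no proof of this lemma at all: it simply cites \cite{MLQ}, and in the surrounding text says only that the model ``is constructed using Fraisse's methods of building models by amalgamating smaller parts.'' Your proposal is therefore exactly the intended route --- isolate the hereditary class of finite $L$-structures cut out by the universal conditions (ii)--(iv) together with the compatibility constraints imposed by $(\At,\equiv_i,{\sf d}_{ij})$, prove amalgamation, and take the generic limit --- so on the level of strategy there is nothing to compare: you have reconstructed the argument the paper outsources.

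Two steps in your sketch are thinner than they can afford to be. First, homogeneity in an \emph{uncountable} relational signature does not by itself give clause (i): it gives that quantifier-free types determine complete types, which is weaker than every formula being a \emph{finite} Boolean combination of atomic formulas. To eliminate a quantifier from $\exists x_i\,\psi$ you must also use that $\psi$ mentions only finitely many of the ternary symbols and that, by the disjointness condition (iv) and the partition by the $P_j$, only finitely many quantifier-free $3$-types are consistent with $\psi$; only then is the resulting disjunction finite. Second, your justification of (vi) as ``a direct restatement of homogeneity'' presupposes that the map sending a realisation of $\chi_u\land\phi$, restricted to the coordinates other than $i$, onto an arbitrary tuple satisfying $\exists x_i\chi_u$ is a partial isomorphism. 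That is not automatic: the two pairs agree on colours, but a priori they may disagree on equality between their entries (when $u_j=u_k$) or on ternary relations evaluated at tuples with repeated entries. The amalgamation class has to be rigged so that the quantifier-free type of a tuple is determined by its colour pattern plus labels that are genuinely free over the colouring --- in particular the $R$'s must be forbidden on tuples not realising some $\chi_u$ with pairwise distinct entries, and the witness for $x_i$ must be taken fresh so that its ternary relations to the given pair can be prescribed arbitrarily. This is precisely the richness the amalgamation step must deliver, and it is where the disjoint interpretation of the uncountably many ternary symbols (which the paper explicitly strengthens relative to \cite{Sayed}) is used. With those two points made explicit your argument goes through and agrees with the construction in the cited reference.
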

\begin{proof}\cite{MLQ}
\end{proof}
\begin{lemma}\label{term}
\begin{enumarab}

\item For $\A\in \CA_3$ or $\A\in \SC_3$, there exist
a unary term $\tau_4(x)$ in the language of $\SC_4$ and a unary term $\tau(x)$ in the language of $\CA_3$
such that $\CA_4\models \tau_4(x)\leq \tau(x),$
and for $\A$ as above, and $u\in \At={}^33$,
$\tau^{\A}(\chi_{u})=\chi_{\tau^{\wp(^33)}(u).}$

\item For $\A\in \PEA_3$ or $\A\in \PA_3$, there exist a binary
term $\tau_4(x,y)$ in the language of $\SC_4$ and another  binary term $\tau(x,y)$ in the language of $\SC_3$
such that $PEA_4\models \tau_4(x,y)\leq \tau(x,y),$
and for $\A$ as above, and $u,v\in \At={}^33$,
$\tau^{\A}(\chi_{u}, \chi_{v})=\chi_{\tau^{\wp(^33)}(u,v)}.$

%\begin{enumarab}

\end{enumarab}
\end{lemma}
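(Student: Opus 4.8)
The plan is to exhibit the witness terms explicitly in each signature and then check, in order, three things: that $\tau_4$ is dominated by the three-dimensional term $\tau$ as an identity of $\CA_4$ (resp.\ $\PEA_4$); that $\tau_4$ genuinely uses the fourth index, so that it is not available in the three-variable fragment; and that $\tau$ (and, on the neat reduct, $\tau_4$) computes on the characteristic elements $\chi_u$ the concrete operation $\tau^{\wp({}^33)}$ read off from the cube ${}^33$. The unifying idea is that $\tau_4$ realizes a coordinate manipulation of \emph{genuinely three-dimensional} elements by parking a coordinate in the spare fourth dimension, an operation that cannot be performed inside dimension three without corrupting the data already stored there, while its bound $\tau$ is a harmless product of cylindrifications.

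For part (1), where $\A\in\CA_3$ or $\A\in\SC_3$, I would take $\tau_4$ to be the $\SC_4$-term that transposes coordinates $0$ and $1$ using index $3$ as scratch, of the schematic shape $\tau_4(x)=\sub 3 1 \sub 1 0 \sub 0 3 x$, and for the bound the $\CA_3$-term $\tau(x)=\cyl 0 \cyl 1 x$. The inequality $\CA_4\models\tau_4(x)\leq\tau(x)$ is a routine equational check from the standard facts $\sub i j z\leq\cyl i z$ and the monotonicity and commutation of the cylindrifiers; the one point to watch is that the scratch cylindrification on index $3$ is absorbed precisely because the elements to which the witness is applied lie in the neat reduct and so are independent of coordinate $3$, leaving a bound written purely in the language of $\CA_3$. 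The decisive feature is that in the \emph{diagonal-free} signature $\SC_3$ this transposition is not definable at all, whereas $\SC_4$ supplies it through the extra index without using diagonals, which is exactly what forces $k=4$.

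For part (2), where $\A\in\PEA_3$ or $\A\in\PA_3$, a unary term of this kind is useless: the polyadic signature already contains the transpositions ${\sf p}_{ij}$, so any unary coordinate permutation collapses to a three-dimensional term and fails the non-definability requirement. I would therefore pass to a \emph{binary} term $\tau_4(x,y)$ that mixes the two arguments across index $3$ (substituting a coordinate of $x$ against one of $y$ through the spare dimension before projecting back), dominated by a binary $\SC_3$-term $\tau(x,y)$ built from cylindrifications of $x$ and $y$. The verification of $\PEA_4\models\tau_4(x,y)\leq\tau(x,y)$ is formally identical to part (1). For the computation in both parts, recall that by the preceding lemma the atom structure of $\A$ is isomorphic, as a $\CA_3$ (resp.\ $\SC_3$) atom structure, to ${}^33$ with its natural operations, the atom $\chi_u$ corresponding to $u$; since every operation occurring in $\tau$ is completely additive, $\tau^{\A}$ is determined by its values on atoms, and evaluating at $\chi_u$ (using clauses (5)--(6) of that lemma, which force the cylindrifiers of $\A$ to act exactly as on the abstract atom structure, and the partition property, which fixes the diagonals) gives $\tau^{\A}(\chi_u)=\bigvee\{\chi_v: v\in\tau^{\wp({}^33)}(u)\}=\chi_{\tau^{\wp({}^33)}(u)}$, and likewise $\tau^{\A}(\chi_u,\chi_v)=\chi_{\tau^{\wp({}^33)}(u,v)}$ in the binary case.

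The main obstacle is not any single calculation but the simultaneous reconciliation of the three demands on $\tau_4$: it must genuinely require the fourth dimension (so as not to be definable in $\SC_3$, resp.\ in the three-variable polyadic fragment), yet be dominated everywhere by a term $\tau$ of the three-dimensional language, and it must still preserve uncountability so that the later neat-reduct argument can use it to separate $\A$ from its cardinality-twisted elementary subalgebra. Balancing non-definability against three-dimensional domination is delicate in the $\SC$/$\CA$ case and genuinely harder in the $\PEA$/$\PA$ case, where so much more is already definable with three variables; there I expect the careful choice of the binary mixing pattern, together with a rank or dimension count establishing its non-definability, to be the crux, and this is where I would follow the detailed construction of \cite{MLQ}.
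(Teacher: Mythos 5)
Your overall strategy coincides with the paper's: the witness $\tau_4$ is the transposition of coordinates $0,1$ smuggled in through the spare index $3$ (the paper writes it as ${}_3s(0,1)x$, essentially your ${\sf s}^3_1{\sf s}^1_0{\sf s}^0_3{\sf c}_3x$), and in the polyadic case one must pass to a binary term because the transposition is already in the signature. But there is a concrete defect in your choice of the dominating term in part (1). You take $\tau(x)={\sf c}_0{\sf c}_1x$. On the atom structure $\wp({}^33)$ this sends the atom $\{u\}$ to the nine-element set $\{v\in{}^33: v_2=u_2\}$, so $\tau^{\A}(\chi_u)$ is a join of nine of the $\chi_v$'s, not a single $\chi_v$. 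The lemma, however, requires $\tau^{\A}(\chi_u)=\chi_{\tau^{\wp({}^33)}(u)}$ with $\tau^{\wp({}^33)}(u)$ an \emph{element} of ${}^33$ (only then is $\chi$ of it defined), and this is not a notational quibble: in the theorem that uses the lemma one sets $v=\tau(u_1,u_2)$, makes the single component $\B_v$ (and its permuted copies) countable, and derives a contradiction from $\tau_4^{\D}(\ldots)\leq\tau(\ldots)\leq f(\chi_v)$ with the left side uncountable. If $\tau$ spreads over nine components, almost all of which remain uncountable by construction, no contradiction is obtained. The paper's term $\tau(x)={\sf s}^0_1{\sf c}_1x\cdot {\sf s}^1_0{\sf c}_0x$ is engineered precisely so that on an atom $u$ the two factors are $\{q:q_1=u_0,\ q_2=u_2\}$ and $\{q:q_0=u_1,\ q_2=u_2\}$, whose intersection is the single atom $u\circ[0,1]$; your bound must be replaced by something of this kind.

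Two smaller points. First, for part (2) you describe the shape of the binary terms but never exhibit them; the paper does write them down ($\tau(x,y)={\sf c}_1({\sf c}_0x\cdot {\sf s}^0_1{\sf c}_1y)\cdot {\sf c}_1x\cdot {\sf c}_0y$ and $\tau_4(x,y)={\sf c}_3({\sf s}^1_3{\sf c}_3x\cdot {\sf s}^0_3{\sf c}_3y)$, with the details deferred to \cite{FM}, \cite{MLQ}), and the same single-atom requirement on $\tau(\chi_u,\chi_v)$ applies there, so ``built from cylindrifications of $x$ and $y$'' alone is again too coarse. Second, your verification of the inequality $\tau_4(x)\leq\tau(x)$ leans on $x$ lying in the neat reduct to absorb ${\sf c}_3$; the lemma asserts the inequality as an identity of $\CA_4$ (resp.\ $\PEA_4$) for all $x$, which is why the paper's $\tau_4$ carries the initial ${\sf c}_3$ inside the term rather than relying on the argument being $3$-dimensional.
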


\begin{proof}

\begin{enumarab}

\item For all reducts of polyadic algebras, these terms are given in \cite{FM}, and \cite{MLQ}.
For cylindric algebras $\tau_4(x)={}_3 s(0,1)x$ and $\tau(x)=s_1^0c_1x.s_0^1c_0x$.
For polyadic algebras, it is a little bit more complicated because the former term above is definable.
In this case we have $\tau(x,y)=c_1(c_0x.s_1^0c_1y).c_1x.c_0y$, and $\tau_4(x,y)=c_3(s_3^1c_3x.s_3^0c_3y)$.

\item  We omit the construction of such terms. But from now on, we assme that they exist.
\end{enumarab}
\end{proof}

\begin{theorem}
\begin{enumarab}
\item There exists an atomic $\A\in \Nr_3\QEA_{\omega}$
with an elementary equivalent cylindric  uncountable algebra $\B$ which is strongly representable, and its $\SC$ reduct is not in $\Nr_3\SC_4$.
Furthermore, the latter is a complete subalgebra of the former.

\item Assume that there is $k$ witness. Then there exists an 
atomic relation algebra $\A\in \Ra\CA_{\omega}$, with an elementary equivalent uncountable relation algebra $\B$, that is strongly 
representable and is not in $\Ra\CA_k$.
Furthermore, the latter is a complete subalgebra of the former.
\end{enumarab}
\end{theorem}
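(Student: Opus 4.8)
The plan is to derive both parts from one model-theoretic template in the spirit of \cite{MLQ}, so I describe part (1) in detail and indicate the changes needed for part (2). First I would take the $L$-structure $\M$ furnished by the quantifier-elimination lemma, built over the finite cylindric atom structure $V$ with uncountably many ternary predicate symbols, and let $\A$ be the $\QEA_\omega$ set algebra whose three-dimensional elements are the $\phi^{\M}$ for three-variable $L$-formulas $\phi$. Because $\M$ eliminates quantifiers, such a set algebra is a \emph{full} neat reduct, so $\A\in\Nr_3\QEA_\omega$; clauses 2 and 4 of the lemma (the partition of $M$ and the disjointness of the ternary symbols) force $\A$ to be atomic, its atoms being the classes $R\wedge\chi_u$, while the uncountably many ternary symbols make $\A$ uncountable. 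Strong representability of $\A$ is inherited from the representability of the set algebra over $\M$, together with the fact that $\Cm\At\A$ is again represented on $\M$.

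The structural observation driving the argument is that, as a Boolean algebra, $\A$ decomposes as the finite product $\prod_{u\in V}\Rl_{\chi_u}\A$, each factor uncountable. Using clauses 5 and 6 of the lemma, the cylindrifications, diagonals and (polyadic) substitutions of $\A$ are recovered by a one-dimensional, quantifier-free interpretation inside the one-sorted Boolean structure $\P$ obtained by adjoining to this product constants naming the $\chi_u$. I would then form $\B$ by replacing exactly one factor $\Rl_{\chi_v}\A$ — with $v=\tau^{\wp({}^33)}(u)$ for a suitable $u$, where $\tau$ is the relation-algebra term of Lemma \ref{term} — by a countable elementary Boolean subalgebra, chosen so that the resulting inclusion $\B\hookrightarrow\A$ is a complete (regular) embedding, and re-imposing the same interpretation. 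Since the interpretation is first order and the replacement is elementary, first-order logic does not see the cardinality change, giving $\B\preceq\A$; in particular $\B\equiv\A$, $\B$ is again atomic and strongly representable, and $\B$ is a complete subalgebra of $\A$.

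To see that the $\SC$-reduct of $\B$ is not in $\Nr_3\SC_4$, I would argue by contradiction with the witness term. If the $\SC$-reduct of $\B$ were $\Nr_3\C$ with $\C\in\SC_4$, then Lemma \ref{term} gives $\SC_4\models\tau_4(x)\le\tau(x)$ and $\tau^{\A}(\chi_u)=\chi_{\tau(u)}=\chi_v$, so $\tau_4^{\C}(\chi_u)\le\tau^{\B}(\chi_u)=\chi_v$; but the neat-reduct structure forces $\tau_4^{\C}(\chi_u)$ to be uncountable (this is exactly the role of the extra dimension in $\tau_4$), whereas the $v$-factor of $\B$ is the countable twisted component, so $\chi_v$ bounds only countably much of $\B$ — a contradiction. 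Part (2) follows the same scheme with $n=3$, working with the $\Ra$-reduct of an $\omega$-dimensional algebra in place of the three-neat reduct and replacing the single spare dimension by the $k$ dimensions of the hypothesized $k$-witness $\tau_k$; here one presents the putative dilation as a set algebra of dimension $k$ and invokes the defining uncountability-preservation property of $\tau_k$, whence the same cardinality collapse yields $\B\notin\Ra\CA_k$, answering Hirsch's question modulo the existence of the $k$-witness.

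I expect the main obstacle to be engineering the twist so that the countable replacement of the $v$-factor is simultaneously elementary (securing $\B\preceq\A$) and regular (securing that $\B$ is a \emph{complete} subalgebra of $\A$): for an atomic factor with uncountably many atoms these two demands pull against each other, and reconciling them — together with verifying that the full polyadic, respectively relation-algebraic, structure is genuinely captured by the quantifier-free one-dimensional interpretation, and that the index $v=\tau^{\wp({}^33)}(u)$ really routes the uncountable block into the countable one — is precisely where the delicate work of \cite{MLQ} is needed.
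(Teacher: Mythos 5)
Your proposal follows the paper's own proof essentially step for step: the same quantifier-eliminable model $\M$ over the finite atom structure $V$, the same identification of $\A$ as a \emph{full} neat reduct via quantifier elimination, the same finite Boolean decomposition $\prod_{u\in V}\A_u$ with a one-dimensional quantifier-free interpretation in the expanded structure $\P$, the same cardinality twist on a countable elementary component followed by Feferman--Vaught, and the same witness-term contradiction forcing the twisted component to be uncountable.

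The one concrete place where your version as written would fail is the twist itself: you replace ``exactly one factor'' $\Rl_{\chi_v}\A$ by a countable elementary subalgebra. In the polyadic/substitution signature the operation ${\sf s}_{[i,j]}$ permutes the coordinates of the product (its $u$-th coordinate is the $u\circ[i,j]$-th coordinate of its argument), so a product in which the $v$-component is countable while the $s_{[i,j]}v$-components remain uncountable is not closed under the substitutions --- and closing it under them forces the $v$-component back up to uncountable, destroying the contradiction. The paper therefore shrinks the whole orbit $\{u_1,u_2\}\cup\{s_{[i,j]}v: i\neq j<3\}$'s $v$-part, i.e.\ $\B_v$ together with every $\B_{s_{[i,j]}v}$, and in the relation-algebra case shrinks $\B_{\breve{v}}$ alongside $\B_v$ because converse is present in the signature. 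This is exactly the difficulty you gesture at in your closing paragraph (``the full polyadic, respectively relation-algebraic, structure is genuinely captured'') but do not resolve. Once the orbit is shrunk, the remainder of your argument --- elementary equivalence, atomicity, the complete embedding, and the uncountability forced by $\tau_4$ (resp.\ the defining property of the $k$-witness $\tau_k$) on the countable $\tau$-indexed component --- goes through exactly as in the paper.
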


\begin{proof} Let $\L$ and $\M$ as above. Let
$\A_{\omega}=\{\phi^M: \phi\in \L\}.$
Clearly $\A_{\omega}$ is a locally finite $\omega$-dimensional ylindric set algebra.
The proof for $\CA$s; and its relatives are very similar. Let us prove it for $\PEA$. Here we have to add a condition to our constructed model.
We assume that the relation symbols are indexed by an uncountable set $I$.
and that  there is a group structure on $I$, such that for distinct $i\neq j\in I$,
we have $R_i\circ R_j=R_{i+j}$.
We take $\At=({}^33, \equiv_i, \equiv_{ij}, d_{ij})$, where
for $u,v\in \At$, $u\equiv_i v$ iff $u$ and $v$ agree off $i$ and $v\equiv_{ij}u$ iff $u\circ [i,j]=v$. 
We denote $^33$ by $V$.

By the symmetry condition we have $\A$ is a $\PEA_3$, and
$\A\cong \Nr_3\A_{\omega}$, the isomorphism is given by
$\phi^{\M}\mapsto \phi^{\M}.$ In fact, $\A$ is not just a polyadic equality algebras, it is also closed under al first order definable 
operations using extra dimensions for quantifier elimination in $\M$ guarantees that this map is onto, so that $\A$ is the full  neat reduct.
For $u\in {}V$, let $\A_u$ denote the relativisation of $\A$ to $\chi_u^{\M}$
i.e $\A_u=\{x\in A: x\leq \chi_u^{\M}\}.$ Then $\A_u$ is a Boolean algebra.
Furthermore, $\A_u$ is uncountable and atomic for every $u\in V$
because by property (iv) of the above lemma,
the sets $(\chi_u\land R(x_0,x_1,x_2)^{\M})$, for $R\in L$
are disjoint of $\A_u$. It is easy to see that $A_u$ is actually isomorphic to the finie co-finite Boolean algebra on  a set of cardinality $I$.

Define a map $f: \Bl\A\to \prod_{u\in {}V}\A_u$, by
$f(a)=\langle a\cdot \chi_u\rangle_{u\in{}V}.$
We expand the language of the Boolean algebra $\prod_{u\in V}\A_u$ by constants in
and unary operations, in such a way that
$\A$ becomes interpretable in the expanded structure. 

Let $\P$ denote the 
following structure for the signature of boolean algebras expanded
by constant symbols $1_u$, $u\in {}V$ and ${\sf d}_{ij}$, and unary relation symbols
${\sf s}_{[i,j]}$ for $i,j\in 3$: 

\begin{enumarab}
\item The Boolean part of $\P$ is the boolean algebra $\prod_{u\in {}V}\A_u$,

\item $1_u^{\P}=f(\chi_u^{\M})=\langle 0,\cdots0,1,0,\cdots\rangle$ 
(with the $1$ in the $u^{th}$ place)
for each $u\in {}V$,

\item ${\sf d}_{ij}^{\P}=f({\sf d}_{ij}^{\A})$ for $i,j<\alpha$.

\item ${\sf s}_{[i,j]}^{\P}(x)= {\sf s}_{[i,j]}^{\P}\langle x_u: u\in V\rangle= \langle x_{u\circ [i,j]} : u\in V\rangle.$

\end{enumarab}

Define a map $f: \Bl\A\to \prod_{u\in {}V}\A_u$, by
$$f(a)=\langle a\cdot \chi_u\rangle_{u\in{}V}.$$

Then there are quantifier free formulas
$\eta_i(x,y)$ and $\eta_{ij}(x,y)$ such that
$\P\models \eta_i(f(a), b)$ iff $b=f(c_i^{\A}a)$ and
$\P\models \eta_{ij}(f(a), b)$ iff $b=f(s_{[i,j]}a).$
The one corresponding to cylindrifiers is exactly like the $\CA$ case, the one corresponding to substitutions in $y={\sf s}_{[i,j]}x.$
Now, like the $\CA$ case, $\A$ is interpretable in $\P$, and indeed the interpretation is one dimensional and quantifier free.
%If $a, b\in P$, then abusing notion we write $b=c_ia$, if $\eta(i(f(a),b)$, similarly for any term in $\CA_m$, $m\geq 3$,
%we wrte $\tau(x)$ for the corresponding formula in $\M$.

For $v\in V$, let $\B_v$ be a complete countable elementary subalgebra of $\A_v$.
Then proceed like the $\CA$ case, except that we take a different product, since we have a different atom structure, with unary relations
for substitutions:
Let $u_1, u_2\in V$ and let $v=\tau(u_1,u_2)$, as given in the above lemma.
Let $J=\{u_1,u_2, s_{[i,j]}v: i\neq  j<3\}$.
Let  $\B=((\A_{u_1}\times \A_{u_2}\times \B_{v}\times \prod_{i,j<3, i\neq j} \B_{s_{[i,j]}v}\times \prod_{u\in V\sim J} \A_u), 1_u, d_{i,j}, {\sf s}_{i,j}x)$
%Here, and elsewhere, for a relation algebra $\C$, $\Bl\C$ denotes its boolean reduct.
inheriting the same interpretation. Then by the Feferman Vaught theorem, 
which says that replacing a component in a possibly infinite product by  elementary equivalent
algebra, then the resulting new product is elementary equivalent to the original one, so that $\B\equiv \P$, 
hence $\B\equiv \A$. (If a structure is interpretable in another structute then any structure
elementary equivalent to the former structure is elementary equivalent to the last). 
Notice to  that $\B$ is atomic, because $\P$ is, and atomicity is a first order property.

Now we show that $\B$ is strongly representable. The easiest way to do it, is to show that \pe\ has a winning strategy in all finite rounded atomic games.
But $\B$ is easily seen to be completely representable, hence \pe\ can indeed win the $\omega$ round game. 
Hence it can win the finite ones, and this  makes it strongly representable.
Then $\At\B$ is strongly representable.

In our new product we made all the permuted versions of $\B_v$ countable, so that $B_v$ {\it remains} countable,
because substitutions corresponding to transpositions
are present in our signature, so if one of the permuted components is uncountable, then $\B_{v}$ would be uncountable, and we do not want that.

The contradiction follows from the fact that had  $\B$ been a neat reduct, say $\B=\Nr_3\D$
then the term $\tau_3$ as in the above lemma, using $4$ variables, evaluated in $\D$ will force the component $\B_v$ to be uncountable,
which is not the case by construction, indeed $\tau_3^{\D}(f(R_i), f(R_j))=f(R_{i+j})$.

For the second part; for relation algebras.
The $\Ra$ reduct of $\A$ is a generalized reduct of $\A$, hence $\P$ is first order interpretable in $\Ra\A$, as well.
It follows that there are closed terms and  a unary relation symbol, and formula $\eta$, and $\mu$ built out of these closed terms and unary
relation symbol such that
$\P\models \eta(f(a), b, c)\text { iff }b= f(a\circ^{\Ra\A} c),$ and $\P\models \mu(f(a),b)\text { iff }b=\breve{a}$
where the composition is taken in $\Ra\A$. The former formula is built, like cylindrifiers from only closed terms, $1_u$, $u\in \At$
while converse is defined by 
the unary relation symbol.
Here $\At$ defined depends on $\tau_k$ and $\tau$, so we will not specify it any further,
we just assume that it is finite.

As before, for each $u\in \At$, choose any countable Boolean elementary
complete subalgebra of $\A_{u}$, $\B_{u}$ say.
%Thus $\B_{Id}\preceq \A_{Id}$, and is also a complete subalgebra.
Le $u_i: i<m$ be elements in $\At$, and let $v=\tau(u_1,\ldots u_m)$.
Let $$\B=((\prod_{u_i: i<m}\A_{u_i}\times \B_{v}\times \times \B_{\breve{v}}\times \prod_{u\in {}V\smallsetminus \{u_1,\ldots u_m, v, \breve{v}\}}
\A_u), 1_u, R, Id) \equiv$$
$$(\prod_{u\in V} \A_u, 1_u, R, Id)=\P.$$

Let $\B$ be the result of applying the interpretation given above to $Q$.
Then $\B\equiv \Ra\A$ as relation  algebras, furthermore $\Bl\B$ is a complete subalgebra of $\Bl\A$.
%(Indeed $\B\preceq \A$.
%Therefore $\B\in RCA_3$; it is easy to see that
%$B$ is simple the latter expressible by a first order formula
%so that it is actually a $Cs_3$.)
Now we use essentially the same argument. We force the $\tau(u_1,\ldots u_m)$
component together with its permuted versions (because we have converse) countable;
the resulting algebra will be a complete elementary subalgebra of the original one, but $\tau_k$
will force our twisted countable component to be uncountable, arriving at a contradiction.

In more detail, assume for contradiction that $\B=\Ra\D$ with $\D\in \CA_k$.
Then $\tau_k^{\D}(f(\chi_{u_1}),\ldots f(\chi_{u_n}))$, is uncountable in $\D$.
Because $\B$ is a full $\RA$ reduct,
this set is contained in $\B.$  For simplicity assume that $\tau^{\Cm\At}(u_1\ldots u_m)=Id.$
On the other hand, for $x_i\in B$, with $x_i\leq \chi_{u_i}$, let $\bar{x_i}=(0\ldots x_i,\ldots)$ 
with $x_i$ in the $uth$ place.
Then we have
$$\tau_k^{\D}(\bar{x_1},\ldots \bar{x_m})\leq \tau(\bar{x_1}\ldots \bar{x_m})\in \tau(f(\chi_{u_1}),\ldots f({\chi_{u_m}}))
=f(\chi_{\tau(u_1\ldots u_m)})=f(\chi_{Id}).$$
But this is a contradiction, since  $\B_{Id}=\{x\in B: x\leq \chi_{Id}\}$ is  countable and $f$ is 
a Boolean isomorphism.
\end{proof}

We  note  that our constructed relation algebra $\B$, $\B$ is  a complete subalgebra of $\A$. This reproves  a  result of Robin Hirsch
and answers a question of  Robin Hirsch.
The  result of Hirsch is that the class $\Ra\CA_k$ is
not elementary, and the answer to his question is that $\Ra\CA_k\subset S_c\Ra\CA_k$ for $k\geq  5$.

For a class $\K$ with a Boolean reduct,  write $\K\cap \At$ for the class of atomic algebras are in $\K$, 
The former is elementary iff the latter is.
We define a new class of atomic cylindric algebras. An atomic algebra is first order definable if the first order 
definable algebra on its atom structure
is representable, we denote this class by ${\sf FOCA_n}$.

Summarizing the above proof:
Let $\rho^{\infty}_m$ be the sentence in $L_{\omega_1,\omega},$ that reflects 
that \pe\ has a \ws\ on the $\omega$ rounded game involving $m$ pebbles. Let 
$\rho_k$ be the higher order formula that reflects that \pe\ has a \ws\ in $H_k,$ 
the game $H$ truncated to $k$. (We do not know how high is higher).
Let ${\sf Op}K=\{\A\in K: \text {such that $\A$ is countable and atomic}\}$,
then 
$${\sf Op}{\sf Mod}\{\rho_k: k\in \omega\}\subseteq {\sf Op}{\sf UpUr}\Nr_n\CA_{\omega},$$ 
and 
$$S_c\Nr_n\CA_{n+m}\subseteq {\sf Op}{\sf Mod}\{\rho^{\infty}_m\}$$
So our proof in item one, can be summarized as that there is a countable rainbow atomic algebra 
that satisfies $\rho_k$ for every $k\in \omega$, hence the Lyndon conditions,
but does not satisfy $\rho_m^{\infty}.$

Now we introduce a new elementary class of representable algebras, and that is 
${\sf Mod}\{\rho_k: k\in \omega\}$, call it ${\sf SLCA_n}$ short for {\it strong Lyndon algebras}.
So we have ${\sf SLCA_n}\subseteq {\sf  LCA_n}.$

Let $\K$ be the class of atomic representable algebras having $NS$, and ${\sf L}$ be the class of atomic 
representable algebras having  $NS$ the unique neat embedding propery \cite{Sayedneat}.
Obviously, the latter is contained in the former, and both are contained in $\Nr_n\CA_{\omega}.$
The next theorem shows that there are a plathora of very interesting classes between the 
atomic algebras in the amalgamation base of ${\sf RCA_n}$ and atomic algebras in 
${\sf RCA_n}.$ Some are elementary, some are not.

\begin{theorem}
We have the following inclusions (note that $\At$ commutes with ${\sf UpUr})$:
$${\sf L}\subseteq \K\subseteq \Nr_n\CA_{\omega}\cap \At\subset {\sf UpUr}\Nr_n\CA_{\omega}\cap \At$$
$$\subseteq {\sf Up Ur}S_c\Nr_n\CA_{\omega}\cap \At={\sf UpUr}{\sf CRA_n}$$
$$\subseteq {\sf SLCA_n} \subseteq {\sf LCA}_n\subset {\sf SRCA_n}\subset{\sf FOCA_n}\subset {\sf UpUr}{\sf SRCA}_n$$
$$\subseteq S\Nr_n\CA_{\omega}\cap \At={\sf WRCA_n}= \RCA_n\cap \At.$$
\end{theorem}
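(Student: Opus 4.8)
The plan is to establish the displayed chain link by link, since each inclusion or equality is of one of three kinds: a definitional unfolding, a consequence of monotonicity of the operators ${\sf UpUr}$ and $\cap\,\At$ together with an already-verified containment of the underlying classes, or a strict separation witnessed by an algebra produced by one of the constructions recalled earlier in the paper. I would present the argument so that the routine links are dispatched quickly and the attention is concentrated on the strict inclusions, which carry all the content.

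First the easy endpoints. The two final equalities $S\Nr_n\CA_{\omega}\cap\At={\sf WRCA_n}=\RCA_n\cap\At$ are immediate from $\RCA_n=S\Nr_n\CA_{\omega}$ and from the definition of ${\sf WRCA_n}$ as the atomic representable algebras. The equality ${\sf UpUr}S_c\Nr_n\CA_{\omega}\cap\At={\sf UpUr}{\sf CRA_n}$ follows from the characterization theorem proved above: a countable atomic algebra lies in $S_c\Nr_n\CA_{\omega}$ exactly when it is completely representable, so the two classes have the same atomic countable members, and since $\At$ commutes with ${\sf UpUr}$ (as noted) the closures coincide. The links ${\sf L}\subseteq\K\subseteq\Nr_n\CA_{\omega}\cap\At$ are the definitional content of $NS$ and the unique neat embedding property, both of which force the algebra to be a full neat reduct. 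The monotone links $\Nr_n\CA_{\omega}\subseteq S_c\Nr_n\CA_{\omega}$, hence ${\sf UpUr}\Nr_n\CA_{\omega}\cap\At\subseteq{\sf UpUr}S_c\Nr_n\CA_{\omega}\cap\At$, and ${\sf SLCA_n}\subseteq{\sf LCA_n}$ (strong Lyndon conditions $\{\rho_k:k\in\omega\}$ imply the ordinary Lyndon conditions) are routine, as is ${\sf UpUr}{\sf CRA_n}\subseteq{\sf SLCA_n}$: every completely representable algebra satisfies each $\rho_k$, and these conditions are preserved under ${\sf UpUr}$. Finally ${\sf UpUr}{\sf SRCA_n}\subseteq S\Nr_n\CA_{\omega}\cap\At$ holds because a strongly representable atomic $\A$ embeds in the representable $\Cm\At\A$, so $\A\in\RCA_n$, and $\RCA_n$ is a variety closed under ${\sf UpUr}$ while atomicity is carried along by the commutation of $\At$ with ${\sf UpUr}$.

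The substance lies in the four strict inclusions. For $\Nr_n\CA_{\omega}\cap\At\subsetneq{\sf UpUr}\Nr_n\CA_{\omega}\cap\At$ I would invoke the non-elementarity of $\Nr_n\CA_{\omega}$ established above: the witnessing algebra is atomic and elementarily equivalent to a neat reduct without itself being one, and by Keisler--Shelah it shares an ultrapower with that neat reduct, placing it in the ${\sf UpUr}$ closure but outside the class. The separation ${\sf LCA_n}\subsetneq{\sf SRCA_n}$ is supplied by the strict chain ${\sf CRA}_n\subset{\sf LCA_n}\subset{\sf SRCA_n}$ recorded earlier, realized by a rainbow algebra for which \pe\ wins every finite-round game (so the Lyndon conditions hold) yet which fails strong representability. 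For ${\sf SRCA_n}\subsetneq{\sf FOCA_n}$ and ${\sf FOCA_n}\subsetneq{\sf UpUr}{\sf SRCA_n}$ I would use the Erd\H{o}s-graph algebras of Hirsch and Hodkinson: a good Monk algebra based on a graph of infinite chromatic number gives a strongly representable, hence first-order-definable-representable, algebra, while a sequence of such algebras whose ultraproduct is based on a finitely colourable graph produces a member of ${\sf UpUr}{\sf SRCA_n}$ that is not first-order definable, separating ${\sf FOCA_n}$ from ${\sf UpUr}{\sf SRCA_n}$; the first of this pair is obtained by exhibiting an atomic algebra whose first-order definable algebra on its atom structure is representable but whose complex algebra is not.

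The main obstacle I anticipate is precisely the pair ${\sf SRCA_n}\subsetneq{\sf FOCA_n}\subsetneq{\sf UpUr}{\sf SRCA_n}$, because unlike the other links these demand genuinely new witnessing algebras that discriminate between \emph{strong} representability, \emph{first-order definable} representability, and membership in the non-elementary closure, and the only known source of such discriminating algebras is the delicate probabilistic Erd\H{o}s-graph machinery. Verifying that the first-order definable algebra on the chosen atom structure is representable while the full complex algebra is not, and doing so stably under ${\sf UpUr}$, is where the argument is least mechanical, and I would devote the bulk of the write-up to making these two separations precise.
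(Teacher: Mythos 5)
Your treatment of the routine links (the definitional equalities, the monotone inclusions, and the use of the uncountable algebra $\B$ together with Keisler--Shelah to witness $\Nr_n\CA_{\omega}\cap\At\subset{\sf UpUr}\Nr_n\CA_{\omega}\cap\At$) agrees with the paper. The serious problem is your witness for ${\sf LCA}_n\subset{\sf SRCA}_n$: you propose ``a rainbow algebra for which \pe\ wins every finite-round game (so the Lyndon conditions hold) yet which fails strong representability.'' Such an algebra would lie in ${\sf LCA}_n\setminus{\sf SRCA}_n$ and would therefore \emph{refute} the inclusion ${\sf LCA}_n\subseteq{\sf SRCA}_n$ rather than establish its strictness; what is required is an algebra in ${\sf SRCA}_n\setminus{\sf LCA}_n$, i.e.\ a strongly representable algebra that \emph{fails} some Lyndon condition. (The rainbow algebra $\CA_{\Z,\N}$ you appear to have in mind is strongly representable \emph{and} satisfies all Lyndon conditions, so it separates nothing here.) The paper's witness is genuinely different and is the longest item of its proof: take a graph $\Gamma$ of infinite chromatic number but sufficiently large finite girth; then $\M(\Gamma)$ is representable, so the atom structure is strongly representable, yet \pa\ wins the $m$-round atomic game for $m$ given by Ramsey's theorem, because a monochromatic set of hyperedges yields, via the girth hypothesis, a $2$-colourable induced subgraph, and any $2$-colouring of a complete graph on at least $6$ nodes contains an independent monochromatic triangle, which is forbidden. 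Without this girth-plus-Ramsey argument the strictness of ${\sf LCA}_n\subset{\sf SRCA}_n$ is simply not established.

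The last two strict inclusions are also not secured by your Erd\H{o}s-graph proposal. For ${\sf SRCA}_n\subset{\sf FOCA}_n$ the paper's witness is the blow-up-and-blur (Monk or rainbow) algebra: the algebra of first-order definable sets over the homogeneous model is representable because the signature has only finitely many colours (finitely many greens), while the complex algebra is not, so the atom structure lies in ${\sf FOCA}_n\setminus{\sf SRCA}_n$; your good Monk algebras of infinite chromatic number lie in both classes and separate nothing, and you never name a concrete algebra doing the job. For ${\sf FOCA}_n\subset{\sf UpUr}{\sf SRCA}_n$ your plan requires the finite colouring of the ultraproduct graph to be \emph{first-order definable} in the atom structure, which the probabilistic construction does not give you; the paper instead takes an explicit graph ($\omega$ disjoint copies of a $3$-node path) whose $3$-colouring is first-order definable, so that the first-order definable algebra is non-representable while the term algebra, consisting of finite and cofinite sets, is representable.
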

\begin{proof}
Items (5) and (7) uses ideas of Ian Hodkinson and Robin Hirsch.
\begin{enumarab}
\item  By the inclusion in \cite{Sayedneat}, and the equivalence in \cite{IGPL}, 
the first inclusion is witnessed by an atomic algebra that lies in the amalgamation base of $\RCA_n$,
but not in the super amalgamation base of  $\RCA_n$. The second is witnessed by an atomic 
algebra in $\Nr_n\CA_{\omega}$, that is not in the strong amalgamation base of $\RCA_n$.

\item The third inclusion is witnessed by the algebra $\B$ constructed above which is uncountable. $\B$ is also completely representable,
so it witnesses the strictness of fourth inclusion {\it without} the elementary closure operator (with it we do not know whether the inclusion is strict). 

\item The fifth  inclusion is witnessed by the rainbow algebra construced on an atom structure 
for which \pe\ can win $H_k$, for all finite $k$, but cannot win $F^m$. 

\item The sixth we do not know whether the game coded by $\sigma_k$ is strictly harder than that coded by
$\rho_k$, as far as \pe\ is concerned. 

\item We now provide a concrete example of algebra that there is a strongly representable algebra that fails infinitely many Lyndon conditions.
(We know that one exists becuase ${\sf LCA_n}$ is elementary and it is contained in ${\sf RSA_n}$ which is not elementary.
Anti-Monk algebras have affinity to Monk's algebras. In fact, they are both based on   graphs that forbid {\it independent} 
monochromatic triangles (not all). 
Now let $\Gamma$ be any graph with infinite chromatic number, 
and large enough finite girth.  Let $m$ be also large enough so that any $3$ colouring of the edges of a complete graph 
of size $m$ must contain a monochromatic triangle; this $m$ exists by Ramseys's theorem.

Then $\M(\Gamma)$, the complex algebra constructed on $\Gamma$, as defined in \cite{HHbook} 
will be representable, hence $\rho(I(\Gamma)$ wil be strongly representable, but it will fail $\rho_k$ for all $k\geq m$. The idea is 
that \pa\ can win in the $m$ rounded atomic game coded by $\rho_m$, by forcing a forbidden monochromatic triangle.
We can assume that $m>n$ wheren $n$ is the dimension.
Let $N$ be an atomic network with $m$ nodes. Choose a set $X$ of  $max\{n,6\}$ nodes of $\Gamma$, 
such that the colour of $N(\bar{x})$ is constant say 
$r$, for every hyperedge of $X$.  
For $\bar{x}\in X$, of distinct elements, let $v(\bar{x})\in \Gamma$ 
be such that $N(\bar{x})=r$, and let $\Delta$ be the induced subgraph with nodes $\{v{\bar{x}}: \bar{x}\in X\}$ of $\Gamma$.
Since the girth is sufficiently large, $\Delta$ is $2$ colourable and its nodes can be partitioned into two distinct sets, each independent 
and monochromatic. But any $2$ colouring of the edges of a complete graph of  size $\geq 6$, has  {\it an independent monochromatic triangle.}

\item Let $\A$ be the $\bold G$ Monk algebra constructed in our first blow up and blur construction, or the algebra based 
on the rainbow construction proving that
$S\Nr_{n}\CA_{n+k}$ is not atom canonical. 
Recall that such algebras were defined using first order formulas, the first in a Monk's signature, the second in the rainbow signature 
(the latter is first order since we had only 
finitely many greens). A distinction worthwhile highlighting here, is that the first algebra based on an infinite graph 
with finite chromatic number, and that is why the complex algebra is not 
representable. The second rainbow algebra is based on a complete infinite irreflexive graphs, 
the graph of reds. Then $\A\in {\sf FOCA_n}$ but not in ${\sf SRCA_n}.$

\item We now show that ${\sf FOCA_n}\subset  {\sf WCA_n}$. Take an $\omega$ copy of the an $3$ element graph with nodes $\{1,2,3\}$ and edges
$1\to 2\to\ 3$. Then of course $\chi(\Gamma)<\infty$. Now an $\Gamma$  has a three first order definable colouring.
Since $\M(\Gamma)$ is not representable, then the algebra of first order definable sets is not representable because $\Gamma$ is interpretable in 
$\rho(\Gamma)$, the atom structure constructed from $\Gamma$ as defined in \cite{HHbook}. 
However, the term algebra can be easily seen to be representable, since it consists only of finite and cofinite subsets of the atom 
structure \cite{HHbook}.
\end{enumarab}
\end{proof}

\subsection{Open questions}

For a class $K$ having a Boolean reduct, let $K\cap \At$ denote the class of atomic algebras in $K$, notice that the first is elementary iff 
the second is.
${\sf Up}$ denotes the operation of forming ultraproducts and ${\sf Ur}$ that of forming ultraroots?.

\begin{enumarab}

\item We know that countable  completely representable algebras coincide with the class $S_c\Nr_n\CA_{\omega}\cap \At$. 
We also know that  ${\sf WRCA}_n=\RCA_n\cap \At =S\Nr_n\CA_{\omega}\cap \At$.
Can a similar characterization using neat embedings be obtained for ${\sf LCA_n}$ and ${\sf SRCA_n}$?

In particular, is it true that ${\sf UpUr} {\sf SRCA_n}={\sf WRCA}_n=S\Nr_n\CA_{\omega}\cap \At?$,
and that ${\sf Ur  CRA_n}={\sf LCA_n}$?

We know that there are bad algebras that converge to good ones. The question is, is every bad algebra a limit of good ones.
(This reminds us of Simons amazing result, of 'representing' non representable algebras, obtaining every $\CA_3$ 
by twisting relativizing and dilating a representable one)
In other words, given a bad algebra, can we make it good via an ultraproduct or 
an ultrapower or an iteration of both?

We note that every good usual Monk algebra, namely, every $\A\in \RCA_n$ can be approached by bad ones. 
Indeed, for $\A\in S\Nr_n\CA_{\omega}$, so for $m\geq n$, let 
$\A_m=\Nr_n\B_m$, where $\A\subseteq \Nr_n\B_m$, $\B_m\in \CA_m$. Then $\A_m$ converges to $\A$.
(The $\A_m$'s are bad because they many not be representable).

\item If $\A$ is atomic and countable and $\A\in \Nr_n\CA_{\omega}$, then $\A$ is completely representable, hence strongly 
representable. In other words, the result in theorem 1.1 in \cite{ANT} is the best possible. $k$ cannot be infinite.
It is not hard to construct algebras that are strongly representable but not in $\Nr_n\CA_{\omega}$.
Any countable atomic strongly representablke algebra thgat is not completely representable will do. 
The complex algebra $\M(\Gamma)$ constructed in item (5) above where $\Gamma$ has infinite
chromatic number, with large enough girth $m$, will fail infinitely many lyndon conditions $\rho_k$ for $k\geq \omega$, 
hence will not be completely
representable. Any countable elementary subalgebra  will be as required. It will also fail 
infinitely many Lyndon conditions, hence will not be completely representable, hence wil not be in $S_c\Nr_n\CA_{\omega}$, hence wil not
be a full neat reduct of a $\CA_{\omega}$.

One way to approach this problem, is to choose a graph $\Gamma$, such that the Monk structures or 
Rainbow structures based on  $\Gamma$ has an $n$ homogeneous countable model that has quantifier 
elimination. 
This model will encode all $n$ coloured graphs (structures), namely the atoms, 
and the set algebra based on this graph (obtained by dropping assignments labelled by  one or more flexible ultrafilter or refexive nodes),
will be  representable. The term algebra wil be representable, precisely because it is {\it not} complete, so precarious joins are not there,
only finite or cofinite ones are. Even more, it will be a full neat reduct, because of quantifier elimination.
But its completion, the complex algebra of its atom structure, will not be 
representable, because for the precise reason it {\it is complete}, and precarious joins will deliver an inconsistency,  prohibiting 
a representation. 

Blow up and blur constructions seem to be the apt technique for this.

\item If $\A$ is an atomic $\CA_n$, $n\geq 3$,  and $\A\models \rho_k$, for every finite $k\geq n$, 
that is \pe\ has a \ws\ in $H_k(\At\A)$, does $\A\in \Nr_n\CA_k$?
If yes, then if $\A\models \rho_k$ for evey $k\geq n$, then $\A\in \Nr_n\CA_{k}$; in other words 
$$\cap_{n\leq k<\omega}\Nr_n\CA_k\neq 
\Nr_n\CA_{\omega}.$$ 
(It is known that if $\A\in \cap _{n\leq k<\omega}\Nr_n\CA_k$, then $\A\in {\sf UpUr}\Nr_n\CA_{\omega}$, an unpublished result of 
Andr\'eka and N\'emeti.)
\end{enumarab}

\end{document}